\newcommand\numberthis{\addtocounter{equation}{1}\tag{\theequation}}
\def \beq{\begin{equation}}
\def \eeq{\end{equation}}
\def\V{\mathcal{V}}
\newcommand{\R}{{\mathbb R}}
\newcommand{\T}{\textbf{T}}
\newcommand{\X}{{\mathcal{X}}}
\newcommand{\z}{\text{\tt{z}}}
\def\bb1{{1\hspace*{-2.4pt}\rm{l}}}
\newcommand {\curl} {{\rm curl}\,}
\newcommand{\lek}{\widetilde{\lambda}^{\epsilon,\kappa}_{\gamma^*}}
\newtheorem{theorem}{Theorem}[section]
\newtheorem{definition}[theorem]{Definition}
\newtheorem{proposition}[theorem]{Proposition}
\newtheorem{corollary}[theorem]{Corollary}
\newtheorem{lemma}[theorem]{Lemma}
\theoremstyle{definition}
\newtheorem{remark}[theorem]{Remark}
\newtheorem{hypothesis}[theorem]{Hypothesis}
\numberwithin{equation}{section}
\begin{document}

\noindent 
\begin{center}
\textbf{\large Low lying spectral gaps induced by slowly varying magnetic fields}\\ 
\vspace{0,3cm}
\end{center}

\begin{center}
{April 5, 2017}
\end{center}

\vspace{0.2cm}

\begin{center}
{\bf
Horia D. Cornean\footnote{Department of Mathematical Sciences,
  Aalborg University, Fredrik Bajers Vej 7G, 9220 Aalborg, Denmark}, \ 
  Bernard Helffer\footnote{Laboratoire
de Math\'ematiques Jean Leray, Universit\'{e} de Nantes, France and Laboratoire de Math\'ematiques d'Orsay, Univ.
Paris-Sud, CNRS, Universit\'e Paris-Saclay, 91405 Orsay, France.}  and Radu
Purice\footnote{Institute
of Mathematics Simion Stoilow of the Romanian Academy, P.O.  Box
1-764, Bucharest, RO-70700, Romania.}}
\end{center}

\begin{abstract}
We consider a periodic Schr\"odinger operator in two dimensions perturbed by a weak magnetic field whose intensity slowly varies around a positive mean. We show in great generality that the bottom of the spectrum of the corresponding magnetic Schr\"odinger operator develops spectral islands  separated by gaps, reminding of a Landau-level structure. 

First, we construct an effective {Hofstadter-like} magnetic matrix which accurately describes the low lying spectrum of the full operator. The construction of this effective magnetic matrix does not require a gap in the spectrum of the non-magnetic operator, only that the first and the second Bloch eigenvalues do not cross but their ranges might overlap.  The crossing case is more difficult and will be considered elsewhere. 

Second, we perform a detailed spectral analysis of the effective matrix using a
gauge-covariant magnetic pseudo-differential calculus adapted to slowly varying
magnetic fields. {As an application, we prove in the overlapping case the
appearance of spectral islands separated by gaps.}
\end{abstract}


\section{Introduction}

In this paper we analyze  the gap structure appearing at the bottom of the spectrum of a
two dimensional periodic Hamiltonian which is  
perturbed by a magnetic field that is neither
constant, nor vanishing at infinity, but which is supposed to have {\it 'weak
variation'}, in a sense made precise in Eq.\eqref{Bek} below. Our main
purpose is to show the appearance of a structure of narrow spectral islands separated by open spectral gaps. We shall
also investigate how the size of these spectral objects varies with
the smallness and the weak variation of the magnetic field.

We therefore contribute to the mathematical understanding of the so called 
Peierls substitution at weak magnetic fields \cite{Pe}; this problem has been mathematically analyzed by various authors
(Buslaev \cite{Bus}, Bellissard \cite{B, B2}, Nenciu \cite{Ne-LMP,Ne-RMP}, Helffer-Sj\"ostrand \cite{HS, Sj}, Panati-Spohn-Teufel \cite{PST}, Freund-Teufel \cite{FrTe}, De Nittis-Lein \cite{dNL}, 
Cornean-Iftimie-Purice \cite{IP14,CIP}) in order to investigate the validity domain of various models developed by physicists like Kohn \cite{Koh} and Luttinger \cite{Lu}. An exhaustive discussion on the physical background of this problem can be found in \cite{Ne-RMP}.

\subsection{Preliminaries}

On the configuration space $\X:=\mathbb{R}^2$ we consider a lattice
$\Gamma\subset\X$ generated by two linearly independent vectors
$\{e_1,e_2\}\subset\X$, and we also consider a smooth, $\Gamma$-periodic potential
$V:\X\rightarrow\mathbb{R}\,$. The dual lattice of $\Gamma$ is defined as 
$$
\Gamma_*\,:=\,\left\{\gamma^*\in\X^*{=\R^2}\,\mid\,\langle\gamma^*,\gamma\rangle
/(2\pi)\in \mathbb{Z}\,
,\ \forall\gamma\in\Gamma\right\}.
$$
Let us fix an \textit{elementary cell}:
$$
E\,:=\,\Big\{y=\sum\limits_{j=1}^2t_je_j\in\mathbb{R}^2\,\mid\,
-1/2\leq t_j<1/2\,,\
\forall j\in\{1,2\}\Big\}\,.
$$
We consider the quotient group $\X/\Gamma$ that is canonically isomorphic to
the $2$-dimensional torus $\mathbb{T}$.
The dual basis $\{e^*_1,e^*_2\}\subset\X^*$ is defined by
$\langle e^*_j,e_k\rangle =(2\pi)\delta_{jk}\,$, and we have
${\Gamma_*=\oplus_{j=1}^2\mathbb{Z}e^*_j}\,
$.
 We define
$\mathbb{T}_*:=\X^*/\Gamma_*$ and
$E_*$ by 
$$
E_*\,:=\,\Big\{\theta =\sum\limits_{j=1}^2t_je_j^*\in\mathbb{R}^2\,\mid\,
-1/2\leq t_j<1/2\,,\
\forall j\in\{1,2\}\Big\}\,.
$$

Consider the differential operator $-\Delta+V$, which is essentially self-adjoint on the 
Schwartz set $\mathscr{S}(\X)$. Denote by $H^0$ its self-adjoint extension in $\mathcal{H}:=L^2(\X)$.
The map
\beq\label{Floquet-1}
\big(\mathscr{V}_\Gamma\varphi\big)(\theta,x)\ :=\
|E|^{-1/2}\underset{\gamma\in\Gamma}{\sum}e^{-i<\theta,x-\gamma>}\varphi(x-\gamma)\,,
\quad\forall \,{(x,\theta)}\in\X\times E_*,\,\forall \, \varphi\in\mathscr{S}(\X)\,,
\eeq
(where $|E|$ is the Lebesgue measure of the elementary cell $E$) induces a unitary operator
$\mathscr{V}_\Gamma$ from $L^2(\X)$ onto $ L^2\big(E_*;L^2(\mathbb{T})\big)$. 
Its inverse $\mathscr{V}_\Gamma^{-1}$ is given by:
\begin{equation}\label{inv}
(\mathscr{V}_\Gamma^{-1}  \psi) (x) =|E_*|^{-\frac 12}  \int_{E_*} e^{i < \theta,x> } \psi(\theta,x) \, d \theta\,.
\end{equation}

 We know from the Bloch-Floquet theory
(see for example \cite{RS-4})  the following:\\

\noindent 1. We have a fibered structure: 
\begin{equation}\label{fibop}
\hat{H}^0:=\mathscr{V}_\Gamma H^0\mathscr{V}_\Gamma^{-1}=
\int_{E_*}^\oplus\hat{H}^0(\theta)d\theta,\quad 
{\hat{H}^0(\theta):=\big(-i\nabla-\theta\big)^2+V}\; {\rm in}\; L^2(\mathbb{T})\,.
\end{equation}

\noindent 2.  The map $E_*\ni\theta\mapsto\hat{H}^0(\theta)$ has an extension to $\X^*$ given by $$\hat{H}^0(\theta+\gamma^*)=e^{i<\gamma^*,\cdot >}
 \hat{H}^0(\theta)e^{-i<\gamma^*,\cdot >},
$$
and it is analytic in the norm resolvent topology.

\noindent 3.
There exists a family of continuous functions
$E_*\ni\theta\mapsto\lambda_j(\theta)\in\mathbb{R}$  with periodic continuous extensions to $\X^*\supset E_*$, indexed by
$j\in\mathbb{N}$
such that $\lambda_j(\theta)\leq\lambda_{j+1}(\theta)$ for every
$j\in\mathbb{N}$ and $\theta\in E_*$, and
$$
\sigma\big(\hat{H}^0(\theta)\big)=\underset{j\in\mathbb{N}}{\bigcup}\{
\lambda_j(\theta)\}.
$$

\noindent 4. There exists an orthonormal family of measurable eigenfunctions
$
E_*\ni\theta\mapsto\hat{\phi}_j(\theta,\cdot)\in L^2(\mathbb{T})$, $j\in\mathbb{N}$, such that $\|\hat{\phi}_j(\theta,\cdot)\|_{L^2(\mathbb{T})}=1$ and
$
\hat{H}^0(\theta)\hat{\phi}_j(\theta,\cdot)=\lambda_j(\theta)\hat{\phi}_j(\theta,\cdot)\,.$

\medskip

\begin{remark}\label{Rem-lambda-0}
It was proved in \cite{KS} that $\lambda_0(\theta)$ is always simple in a neighborhood of $\theta=0$ and has a nondegenerate global minimum on $E_*$ at $\theta=0$, minimum which we may take equal to zero (up to a shift in energy). For the convenience of the reader, we include a short proof of these facts in Appendix \ref{A-lambda0}.
\end{remark}

We shall also need one of the following two conditions.

\begin{hypothesis}\label{Hyp-1b} {\it Either $ \sup (\lambda_0)<\inf(\lambda_1)$, i.e. a non-crossing condition with a gap},
\end{hypothesis}

{\it or}

\begin{hypothesis}\label{Hyp-1a} {\it The eigenvalue $\lambda_0(\theta)$ remains simple for all  $\theta\in\mathbb{T}_*$, but $ \sup(\lambda_0) \geq\inf(\lambda_1)$, i.e. a non-crossing condition with range overlapping and no gap.} 
\end{hypothesis}

Because $H^0$ has a real symbol, we have $\overline{\hat{H}^0(\theta)}= \hat{H}^0(-\theta)\,$. Also, since $\lambda_0(\cdot)$ is simple, it must be an even function  
\begin{equation}\label{evenness}
\lambda_0(\theta) = \lambda_0(-\theta)\,.
\end{equation}

\subsection{The main result}

Now let us consider the magnetic field perturbation, which is a $2$-parameter family of magnetic fields
\beq\label{Bek}
B_{\epsilon,\kappa}(x)\,:=\,\epsilon B_0\,+\,\kappa\epsilon B(\epsilon x)\,,
\eeq
indexed by $(\epsilon,\kappa)\in[0,1]\times[0,1]\,$. \\
Here $B_0>0$ is constant, while  
$B:\X\rightarrow\mathbb{R}$ is smooth and bounded together with all its derivatives.  
 Let us choose some smooth  {\it vector potentials} $A^0:\X\rightarrow\X$ and
$A:\X\rightarrow\X$ such that:
\begin{equation}\label{defAepsilon0}
B_0=\partial_1A^0_2-\partial_2A^0_1\,, B=\partial_1A_2-\partial_2A_1\,,
\end{equation}
and 
\begin{equation}\label{defAepsilon}
A^{\epsilon,\kappa}(x):=\epsilon A^0(x)+\kappa A(\epsilon x)\,, \, B_{\epsilon,\kappa}=\partial_1A^{\epsilon,\kappa}_2-\partial_2A^{\epsilon,
\kappa}_1\,.
\end{equation}
The vector potential $A^0$ is always in the {\it transverse gauge}, i.e.
\beq \label{defA0}
A^0(x)\,=\,(B_0/2)\big(-x_2,x_1\big).
\eeq
We consider the following magnetic Schr\"{o}dinger operator, essentially self-adjoint on $\mathscr{S}(\X)$:
\begin{equation}\label{mainH}
H^{\epsilon,\kappa}:=(-i\partial_{x_1} - A^{\epsilon,\kappa}_1)^2 + (-i\partial_{x_2} - A^{\epsilon,\kappa}_2)^2 + V\,.
\end{equation} 
When $\kappa=\epsilon=0$ we recover
 the periodic Schr\"odinger  Hamiltonian without magnetic
field $H^{0}$.\\

Our main goal is to prove  that for $\epsilon$ and $\kappa$ small enough, the bottom of the spectrum of 
$H^{\epsilon,\kappa}$ develops gaps of width of order $\epsilon$ separated by spectral islands 
(non-empty but not necessarily connected) whose width is slightly smaller than $\epsilon$, see below for the precise statement.

\begin{theorem}  \label{mainTh} Consider either Hypothesis \ref{Hyp-1b} or Hypothesis \ref{Hyp-1a}. 
Fix an integer $N>1$. Then there exist some constants  $C_0,C_1,C_2 >0\,$, and 
$\epsilon_0,\kappa_0\in(0,1)\,$,  such
that for
any $\kappa\in(0,\kappa_0]$ and $\epsilon\in(0,\epsilon_0]$ there exist $a_0<b_0<a_1<\cdots <a_N<b_N$ 
with $a_0= \inf\{\sigma(H^{\epsilon,\kappa})\}$ so that:
\begin{align}
&\sigma(H^{\epsilon,\kappa})\cap \left [a_0,b_N\right ]\subset \bigcup_{k=0}^N [a_k,b_k]\,,\quad{\rm dim}
\big({\rm Ran} E_{[a_k,b_k]}(H^{\epsilon,\kappa}) \big )= + \infty\,,\nonumber \\
& b_k-a_k\leq C_0 \,\kappa\epsilon + C_1\, \epsilon^{4/3}\, , \; 0\leq k\leq N\,,\quad {\rm and}\quad  a_{k+1}-b_k\geq\frac{1}{C_2}  \epsilon \, ,\; 0\leq k\leq N-1\,.\label{feb-19}
\end{align}
Moreover, given any compact set $K\subset\mathbb{R}$, there exists $C >0\,$, such that, for $(\kappa,\epsilon) \in [0,1]\times[0,1]\,,$ we have (here ${\rm dist}_H$ means Hausdorff distance):
\beq\label{febr-20}
{\rm dist}_H \big (\sigma (H^{\epsilon,\kappa}) \cap K, \sigma (H^{\epsilon,0}) \cap K \big )\leq C \sqrt{\kappa \epsilon}\,.
\eeq
\end{theorem}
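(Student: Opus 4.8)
The plan is to deduce the full statement from the (anticipated) structural results of the paper: an effective Hofstadter-like magnetic matrix $M^{\epsilon,\kappa}$ whose spectrum approximates $\sigma(H^{\epsilon,\kappa})$ near the bottom up to errors of order $\epsilon^{4/3}$ (or better), together with the magnetic pseudodifferential analysis of that matrix. The starting point is Remark \ref{Rem-lambda-0} and \eqref{evenness}: $\lambda_0$ has a nondegenerate minimum at $\theta=0$, so near the band bottom the effective symbol looks like a harmonic oscillator Hamiltonian (a positive-definite quadratic form in the magnetic momenta) plus lower-order corrections. The constant part $\epsilon B_0$ of the field \eqref{Bek} quantizes this oscillator into Landau-type levels spaced by a distance $\asymp \epsilon$ (the effective cyclotron frequency being $B_0$ times the square root of the determinant of the Hessian of $\lambda_0$ at $0$). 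First I would invoke the reduction theorem to replace $H^{\epsilon,\kappa}$ near its spectral bottom by $M^{\epsilon,\kappa}$; then use the gauge-covariant magnetic Weyl calculus adapted to the slowly varying field $B(\epsilon\cdot)$ to construct, for each $k=0,\dots,N$, quasimodes and Landau-type spectral projections, obtaining $N+1$ spectral islands $[a_k,b_k]$ whose mutual separation is bounded below by $\epsilon/C_2$ and whose individual widths are controlled by the size of the perturbations.

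The width bound $b_k-a_k\le C_0\kappa\epsilon+C_1\epsilon^{4/3}$ should be read as the sum of two effects. The term $C_0\kappa\epsilon$ comes from the inhomogeneous piece $\kappa\epsilon B(\epsilon x)$: in the pseudodifferential calculus this enters as a symbol of size $\kappa$ multiplying the leading $\epsilon$-scale term, so it broadens each Landau level by $O(\kappa\epsilon)$; quantitatively this is where the magnetic pseudodifferential calculus on slowly varying fields does its work, since one needs the dispersion of the effective symbol over the torus $\mathbb{T}_*$ to be controlled. The term $C_1\epsilon^{4/3}$ is the intrinsic error: the anharmonic corrections to $\lambda_0(\theta)$ beyond the quadratic approximation contribute at order $\epsilon^{3/2}$ after rescaling momenta by $\sqrt\epsilon$, but the sharper $\epsilon^{4/3}$ presumably reflects the accuracy to which the effective matrix itself reproduces $\sigma(H^{\epsilon,\kappa})$, combined with a Feshbach/Grushin-type error. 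To get $a_0=\inf\sigma(H^{\epsilon,\kappa})$ one notes that below the first island there is nothing in the spectrum, which follows from the same quasimode-plus-projection argument applied at the very bottom. The infinite dimensionality of each $\mathrm{Ran}\,E_{[a_k,b_k]}$ follows because the Landau projections of a magnetic operator with a genuinely two-dimensional constant-field component have infinite rank (infinite degeneracy per unit area); this is inherited by the island projections through the norm-resolvent comparison.

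For the Hausdorff-distance estimate \eqref{febr-20}, the key observation is that $A^{\epsilon,\kappa}-A^{\epsilon,0}=\kappa A(\epsilon\cdot)$ and hence $H^{\epsilon,\kappa}-H^{\epsilon,0}$ is, up to the linear-in-momentum cross terms, of size $\kappa$ times a bounded function plus $\kappa$ times $A(\epsilon\cdot)\cdot(-i\nabla-A^{\epsilon,0})$. Since $A(\epsilon\cdot)$ is bounded uniformly in $\epsilon$ and the magnetic momentum is controlled by $(H^{\epsilon,0}+i)^{1/2}$, one estimates $\|(H^{\epsilon,\kappa}-z)^{-1}-(H^{\epsilon,0}-z)^{-1}\|$ on compact sets away from the spectrum, but the cross term only gives an operator relatively bounded with relative bound $O(\kappa)$ in the form sense — this is why the bound degrades to $\sqrt{\kappa\epsilon}$ rather than $\kappa\epsilon$: a form-perturbation argument on a compact energy window $K$, where the available a priori bound on the magnetic kinetic energy is $O(1)$ in the relevant spectral subspace but $O(\epsilon^{-1})$ would be needed for a clean linear estimate, forces one to take square roots. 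Concretely, for $z$ at distance $d>0$ from both spectra, the standard inequality $\mathrm{dist}_H(\sigma(A),\sigma(B))\le \|A-B\|$ for the resolvent differences, combined with the form bound $\pm(H^{\epsilon,\kappa}-H^{\epsilon,0})\le \delta H^{\epsilon,0}+C_\delta\kappa^2\delta^{-1}$ optimized in $\delta$, yields the $\sqrt{\kappa\epsilon}$ on $K$.

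The main obstacle I anticipate is the first term: establishing the effective magnetic matrix $M^{\epsilon,\kappa}$ and proving it captures $\sigma(H^{\epsilon,\kappa})$ near the band bottom with error $O(\epsilon^{4/3})$ under the merely-non-crossing Hypothesis \ref{Hyp-1a} (ranges overlapping, no spectral gap). Without a gap one cannot use a naive Riesz projection in energy; instead the reduction must be done fiberwise in the Bloch variable $\theta$ using the simplicity of $\lambda_0(\theta)$ on all of $\mathbb{T}_*$, and then one must quantize the resulting (globally defined but only Bloch-fiberwise separated) band via the magnetic pseudodifferential calculus while controlling that the contributions of the higher bands $\lambda_j$, $j\ge1$, stay above the window $[a_0,b_N]$ despite $\inf\lambda_1\le\sup\lambda_0$. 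Handling this overlap — showing the off-diagonal coupling between the first band and the rest is small in the right operator-norm sense uniformly in $\epsilon,\kappa$ — is the delicate technical heart, and is exactly what the paper flags as easier than the crossing case but still nontrivial.
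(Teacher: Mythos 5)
Your overall architecture (reduce $H^{\epsilon,\kappa}$ near its spectral bottom to an effective Hofstadter-like object, then analyze it near the nondegenerate minimum of $\lambda_0$ against a Landau-type comparison operator) does match the paper's Steps 1--3, but the step that actually yields the theorem is missing. Quasimodes and ``Landau-type spectral projections'' can only show that spectrum is \emph{present} near the levels $(2n+1)m\epsilon B_0$; they cannot show that the intervals between the islands are \emph{free} of spectrum, which is the content of $a_{k+1}-b_k\geq \epsilon/C_2$ in \eqref{feb-19}. The paper's mechanism is an explicit quasi-resolvent $\widetilde{r}_\lambda(\epsilon a)$ for $\mathfrak{Op}^{\epsilon,\kappa}(\lambda^\epsilon)$ (Proposition \ref{mainP}), glued from the resolvent of the quadratic model localized at scale $\delta$ around each translate of the minimum and a shifted operator outside, with the scaling $\epsilon=\delta^\mu$, $\mu=3$; the resulting relative error $O(\epsilon^{1/3})$ is precisely where the $\epsilon^{4/3}$ in the width bound comes from --- not from the Feshbach/effective-matrix accuracy (which is $O(\epsilon^2)$), nor from an $\epsilon^{3/2}$ anharmonic count. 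Moreover, your assertion that the nonconstant piece merely ``broadens each level by $O(\kappa\epsilon)$'' is exactly the delicate point: the perturbation $\kappa\epsilon B(\epsilon x)$ is of the \emph{same order} as the gaps, and gap boundaries are in general not Lipschitz in the field (see \cite{B}); the paper must prove this Lipschitz-type behavior by comparing $r^{\epsilon,\kappa}(z)$ with the constant-field resolvent $r^{\epsilon}(z)$ inside a Riesz integral (estimate \eqref{nou-3} and the bound on the band operator $h_n'$), and nothing in your plan substitutes for this.

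The argument you sketch for \eqref{febr-20} does not work as stated. Only $B$ is assumed bounded; a vector potential with ${\rm curl}\, A=B$ is in general unbounded (it grows linearly), so ``$A(\epsilon\cdot)$ is bounded uniformly in $\epsilon$'' is false and $H^{\epsilon,\kappa}-H^{\epsilon,0}$ is neither bounded nor relatively form-bounded with a small constant in any gauge-independent way; in addition, your optimization $\pm(H^{\epsilon,\kappa}-H^{\epsilon,0})\leq\delta H^{\epsilon,0}+C\kappa^2\delta^{-1}$ contains no $\epsilon$ at all, so it cannot produce $\sqrt{\kappa\epsilon}$. The correct route --- the one the paper takes by quoting \cite{CP-1} --- is gauge-covariant magnetic perturbation theory: one compares resolvents after factoring out the phase $e^{-i\kappa\int_{[x,y]}A_\epsilon}$, so the error is controlled by the field difference $\kappa\epsilon B(\epsilon\cdot)$ alone, and H\"older-$1/2$ continuity of spectra in the field gives $C\sqrt{\kappa\epsilon}$. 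Similarly, your proposed ``fiberwise reduction in the Bloch variable'' for the overlapping Hypothesis \ref{Hyp-1a} is not available for $H^{\epsilon,\kappa}$ itself, since the magnetic field destroys periodicity; the paper instead builds magnetic almost-Wannier functions, proves $\|[H^{\epsilon,\kappa},\widetilde{\pi}^{\epsilon,\kappa}_0]\|=O(\epsilon)$, and controls the Feshbach complement via the shifted operator $K^{\epsilon,\kappa}=H^{\epsilon,\kappa}+m_1\widetilde{\pi}^{\epsilon,\kappa}_0\geq m_1-C\epsilon$ --- a concrete device your plan would need but does not contain.
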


\medskip

\begin{remark}
Exactly one of the two non-crossing conditions described in Hypothesis \ref{Hyp-1b} and \ref{Hyp-1a} is generically satisfied when the potential $V$ does not have any special symmetries. The crossing case will be considered elsewhere.
\end{remark}

\begin{remark} A natural conjecture is that the spectrum is close (say modulo $o (\epsilon)$) to the union of the spectra of the Schr\"odinger operators with a constant magnetic field $ \epsilon B_0 + \epsilon \kappa \beta$ with $\beta$ in the range of $B$. This leads us to the conjecture that the best $C_0$ in \eqref{feb-19} could be the variation of $B$, i.e. $\sup B -\inf B$. Anyhow, from \eqref{feb-19} we see that a condition for the appearance of gaps is $\kappa<1/(C_0 C_2)$. 
\end{remark}

{
\subsection{More on the state of the art}

This paper is dedicated to the proof of \eqref{feb-19}. The estimate \eqref{febr-20} is a direct consequence of the results of \cite{CP-1}, but we included it here in order to make a comparison with previous results obtained for constant magnetic fields (i.e. $\kappa=0$), where the values of the $a_k$'s and $b_k$'s from \eqref{feb-19} are found with much better accuracy, see Theorem \ref{partTha}.

To the best of our knowledge, in the overlapping case (see Hypothesis
\ref{Hyp-1a}) there are no previous results about the appearance of spectral
gaps of order $\epsilon$ at the bottom of the spectrum of magnetic
Bloch-Schr\"odinger operators, not even in the constant magnetic field case
($\kappa=0$). Our proof treats both the gapped and the overlapping case on an
equal footing. Also, as we will explain below, our approach is not just a
natural generalization of the constant magnetic field case with a gap; on the
contrary. See the next Section for a detailed discussion of the constant field
case with a gap. 

Our first important new achievement is showing that the spectrum of the
Hofstadter-like magnetic matrix constructed in Subsection 3.2 (acting on
$\ell^2(\Gamma)$) is at a Hausdorff distance of order $\epsilon^2$ from the
bottom of the spectrum of the full magnetic Schr\"odinger operator
$H^{\epsilon,\kappa}$. The proof of this fact relies in an essential way on the
magnetic pseudodifferential calculus. Moreover, in Subsection 3.3 we show that
the spectrum of this effective magnetic matrix is at a Hausdorff distance of
order $\epsilon\kappa$ from the spectrum of a magnetic pseudodifferential
operator with an $\epsilon$-dependent periodic symbol acting on $L^2(\R^2)$. We
note that in the case of a constant magnetic field perturbation ($\kappa=0$),
these two effective objects would be isospectral with an one dimensional
$\epsilon$-pseudodifferential operator (see \eqref{Opepsilon}). If $\kappa\neq
0$, the reduction to just one dimension is no longer possible.  

Our second main achievement is a careful and detailed spectral analysis of the
above mentioned effective magnetic pseudodifferential operator.
One  major complication in the non-constant magnetic field case is that we need
to deal with two dimensional Landau-type comparison operators, unlike the
constant magnetic field case where the comparison operators were one dimensional
harmonic oscillators. One of the most difficult technical issues which we
encounter here is that we need to show that the $\epsilon$-spaced mini-gaps of
the comparison operator do not close down when the slowly variable non-constant
field perturbation (of order  $\epsilon \kappa$) is different from zero. In
other words, the perturbation is of the same order of magnitude as the size of
the mini-gaps, which demands Lipschitz continuity of the inner mini-gap
boundaries with respect to the magnetic perturbation. See \cite{B} for a
thorough discussion of why this is not true in general.

In order to control the various error terms appearing in this case, we had to
refine the already existent magnetic pseudodifferential calculus in order to
deal with slowly variable magnetic fields. As far as we can tell, our calculus
cannot be directly compared with the one developed by Lein and de Nittis in
\cite{dNL}; moreover these authors are not controlling how the errors behave as
a function of the spectral parameter $z$ and this control is crucial in 
our proof.
 }

\subsection{Main steps in the proof}

Some properties of the bottom of the spectrum of the non-magnetic periodic operator can be found in Appendix \ref{A-lambda0}. The magnetic quantization is summarized in Appendix \ref{A-sl-var-symb}. 
Given a magnetic field $B$  of class $BC^\infty(\X)$ (i.e. bounded together with all its derivatives) 
and a choice of a vector potential $A$ for it (i.e. 
such that $\curl A =B$), one can define a {\it  twisted pseudo-differential
calculus} (see \cite{MP1}), that generalizes the standard  Weyl calculus 
and associates with any  symbol $F\in S^m_\rho(\Xi)$ the following operator in
$\mathcal{H}$ (for all $u\in\mathscr{S}(\X)$ and $x\in\X$):
\beq\label{OpA}
\big(\mathfrak{Op}^A(F)u\big)(x)\ :=\
(2\pi)^{-2}\int_\X\int_{\X^*}e^{i\langle
\xi,x-y\rangle}e^{-i\int_{[x,y]}A}\, F\left(\frac{x+y}
{2},\xi\right )u(y)\,d\xi\,dy\,,
\eeq
 where $\int_{[x,y]}A$ is the integral over the oriented segment $[x,y|$ of the $1$-form associated with $A$.\\
For  $F(x,\xi)= \xi^2 + V (x)\,$, $\mathfrak {Op}^A (F)$ is the usual Schr\"odinger operator in \eqref{mainH}.

\medskip

\noindent For the convenience of the reader, we now list the  main ideas appearing in the proof.

\medskip

\noindent {\bf Step 1: Construction of an effective magnetic matrix, see Subsections \ref{SS-W-func} and \ref{SS-inv-magn-band}}. Because $\lambda_0(\theta)$ is always  assumed to be isolated, one can associate with it an orthonormal projection $\pi_0$ which commutes with $H^0$. Note that $\pi_0$ might {\bf not} be a spectral projection for $H^0$, unless there is a gap between the first band and the rest. But in both cases, the range of $\pi_0$ has a basis consisting of  exponentially localized Wannier functions \cite{CHN, Ne-RMP, FMP}. 
When $\epsilon$ and $\kappa$ are small enough, we can construct an orthogonal system of exponentially localized {\it almost Wannier functions} starting from the unperturbed Wannier basis, and show that the corresponding orthogonal projection $\pi^{\epsilon,\kappa}_0$ is {\bf almost invariant} for $H^{\epsilon,\kappa}$. Note that in the  case with a gap, $\pi^{\epsilon,\kappa}_0$ can be constructed to be a true spectral projection for $H^{\epsilon,\kappa}$. Next, using a Feshbach-type argument, we prove that the low lying spectrum of $H^{\epsilon,\kappa}$ is at a Hausdorff distance of order $\epsilon^2$ from the spectrum of the reduced operator $\pi^{\epsilon,\kappa}_0H^{\epsilon,\kappa}\pi^{\epsilon,\kappa}_0$. 

In the basis of magnetic almost Wannier functions, the  reduced operator $\pi^{\epsilon,\kappa}_0H^{\epsilon,\kappa}\pi^{\epsilon,\kappa}_0$ defines an {\it effective magnetic matrix}   acting on  $ \ell^2(\Gamma)$. Hence if this magnetic matrix has spectral gaps of order $\epsilon$, the same holds true for the bottom of the spectrum of $H^{\epsilon,\kappa}$.

\medskip

\noindent {\bf Step 2: Replacing the magnetic matrix with a magnetic pseudodifferential operator with periodic symbol, see Subsection \ref{SS-magn-Bloch-func}}. Adapting the methods of \cite{HS, CIP} in the case of a constant magnetic field $\epsilon B_0$, i.e. for $\kappa=0$, one can define a {\it periodic magnetic Bloch band function} $\lambda^\epsilon$ which is a perturbation of order $\epsilon$ of $\lambda_0$. 
Considering this magnetic Bloch band function as a periodic symbol, we may define its {\it magnetic quantization} (see Subsection \ref{SS-Main-Def}) in the magnetic field $B_{\epsilon,\kappa}$, denoted by $\mathfrak{Op}^{A^{\epsilon,\kappa}}(\lambda^\epsilon)$. It turns out that the spectrum of $\mathfrak{Op}^{A^{\epsilon,\kappa}}(\lambda^\epsilon)$ is located at a Hausdorff distance of order $\kappa\epsilon$ from the spectrum of the effective operator $\pi^{\epsilon,\kappa}_0H^{\epsilon,\kappa}\pi^{\epsilon,\kappa}_0$. Hence if $\mathfrak{Op}^{A^{\epsilon,\kappa}}(\lambda^\epsilon)$ has gaps of order $\epsilon$ (provided that $\kappa$ is smaller than some constant independent of $\epsilon$), the same is true for the bottom of the spectrum of $H^{\epsilon,\kappa}$.

\medskip

\noindent {\bf Step 3: Spectral analysis of $\mathfrak{Op}^{A^{\epsilon,\kappa}}(\lambda^\epsilon)$, see Section \ref{S-magn-q-Bloch-func}.}  Here we compare the spectrum of $\mathfrak{Op}^{A^{\epsilon,\kappa}}(\lambda^\epsilon)$ with the spectrum of a Landau-type quadratic symbol defined using the Hessian of $\lambda^\epsilon$ near its simple, isolated minimum; this is achieved by proving that the magnetic quantization of an explicitly defined symbol is in fact a {\it quasi-resolvent} for the magnetic quantization of $\lambda^\epsilon$ (see Subsection \ref{SSS-quasi-inv}). The main technical result is Proposition \ref{mainP}.

An important technical component of the proof of Proposition \ref{mainP} is the expansion of a magnetic Moyal calculus for symbols with {\it weak spatial variation} (see Appendix \ref{A-cl-sl-var}), that replaces the Moyal calculus for a constant field as appearing in \cite{BGH, HS3}. Some often used notations and definitions are recalled in Appendix \ref{A-not}.

\section*{Acknowledgements}

HC acknowledges financial support from Grant 4181-00042 of the Danish Council for Independent Research $|$ Natural Sciences. 
  BH and  RP would like to thank  Aalborg University for the financial support of numerous stays in Aalborg in 2015 and 2016.

\section{{Previous spectral results under the gap condition and with a constant magnetic field}}

When $\kappa=0$ and under the gap-Hypothesis \ref{Hyp-1b}, some sharp spectral results were proved by B.~Helffer and J. Sj\"ostrand in \cite{HS}. {They not only show that gaps open, but they also prove that the width of the spectral islands is of order $\mathcal O( \epsilon^\infty)$.} 

For the convenience of the reader, we recall their precise statement.
Let $\lambda_0(\theta_1,\theta_2)$ denote the isolated eigenvalue and let $B_0=1\,$. Then if $\epsilon$ is small enough, $H^{\epsilon,0}$ continues to have an isolated spectral island $I_\epsilon$ near the range of $\lambda_0\,$ \cite{B2, CP-1}. {In this case, one can prove \cite{Ne-LMP, Ne-RMP} that the  magnetic Hamiltonian restricted to the spectral subspace corresponding to $I_\epsilon$ is {\bf unitarily equivalent} with a magnetic matrix acting on $\ell^2(\Gamma)$. The crucial advantage of working with a constant magnetic field in the gapped case is that the above mentioned magnetic matrix is {\bf isospectral} with an {\bf one-dimensional} 
$\epsilon$-pseudo-differential operator, whose symbol $\lambda_\epsilon$
admits an asymptotic expansion in $\epsilon$ and 
  whose principal symbol is $\lambda_0$ (see formula (6.13) in \cite{HS}).} By
$\epsilon$-pseudo-differential operator, acting in $L^2(\mathbb{R})$, we mean the following object:
  \beq\label{Opepsilon}
\big(\mathfrak{Op^w_\epsilon}(\lambda_\epsilon)u\big)(x)\ :=\
(2\pi \epsilon)^{-1} \,\int_\mathbb{R}\int_{\mathbb{R}}e^{i\langle
\xi,x-y\rangle/\epsilon } \, \lambda_\epsilon \big(\frac{x+y}
{2},\xi\big)u(y)\,d\xi\,dy\,,\quad  u\in \mathscr{S}(\mathbb{R})\,.
\eeq
Note that the operator $ \mathfrak{Op^w_\epsilon}(\lambda_\epsilon)$ can also
be considered as the usual Weyl quantification  of the symbol $(x,\xi) \mapsto \lambda_\epsilon (x,\epsilon \xi)$.
 It is important to note that we have quantified the symbol into an operator 
 on $L^2(\mathbb R)$. 
  The spectral analysis of a Hamiltonian of the form $\mathfrak{Op}^w_\epsilon
(\lambda)$ in $(1D)$ (that is standard for $\lambda(x,\xi)=x^2+\xi^2$) has been
extended to general Hamiltonians in \cite{HelRo} (see also \cite{HS1}).
 Near the minimum of $\lambda_0$  one can perform a
semi-classical analysis of the bottom of the spectrum under the hypothesis that the minimum is
non degenerate, assumption which is satisfied in our case (see Appendix \ref{A-lambda0}). One can also perform a semi-classical analysis
near each energy  $E\in \lambda_0(\mathbb R^2)$ such that $\nabla \lambda_0 \neq
0$ on $\lambda_0^{-1} (E)$ (under an assumption that the connected components of
$\lambda_0^{-1} (E)$ are compact) and prove a Bohr-Sommerfeld  formula for the
eigenvalues of one-well microlocal problems. Each time this implies as a
byproduct the existence of gaps, the spectrum at the bottom  being contained,
due to the tunneling effect,  inside the union 
of exponentially small (as $\epsilon \rightarrow 0$)  intervals centered at a
point asymptotically close to the value computed by the Bohr-Sommerfeld rule.  
This is indeed what is done in great detail in \cite{HS1} and \cite{HS3} under 
specific conditions on the potential and for the square lattice, which imply that $\lambda_0(\theta)$ 
is close in a suitable sense to ${\bf t} \left(\cos \theta_1 + \cos \theta_2)\right)$, where $\bf t$ 
is a tunneling factor. See Section 9 in \cite{HS1}.
In our context, one of the main results in \cite{HS} is the following.
\begin{theorem}  \label{partTh} Under Hypothesis \ref{Hyp-1b}, given a positive integer $N>1$ there exist $\epsilon_0>0$ and $C>0$,  such
that for $\epsilon\in(0,\epsilon_0]$ there exist $a_0<b_0<...<a_N<b_N$ such that $a_0= \inf\{\sigma(H^{\epsilon,0})\}$ and:
\begin{align*}
&\sigma(H^{\epsilon,0})\cap \left [a_0,b_N\right ]\subset \bigcup_{k=0}^N [a_k,b_k]\,,\\
&b_k-a_k= \mathcal O ( \epsilon^\infty), \; 0\leq k\leq N,\quad {\rm and}\quad  a_{k+1}-b_k\geq \epsilon/C,\; 0\leq k\leq N-1\,.
\end{align*}
Moreover, 
$$ {\rm dim\big (Ran}E_{[a_k,b_k]}(H^{\epsilon,0}) \big )= + \infty\,.$$
\end{theorem}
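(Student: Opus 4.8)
The plan is to transfer the whole question to the one-dimensional $\epsilon$-pseudodifferential operator $\mathfrak{Op}^w_\epsilon(\lambda_\epsilon)$ of \eqref{Opepsilon} and then run a semiclassical analysis near the bottom of its symbol. First I would exploit the gap Hypothesis~\ref{Hyp-1b}: there $\lambda_0$ is separated from the rest of $\sigma(\hat{H}^0(\theta))$ by a fixed gap, so for $\epsilon$ small the constant-field magnetic perturbation theory (in the spirit of Nenciu \cite{Ne-LMP,Ne-RMP}) produces a genuine isolated spectral island $I_\epsilon$ of $H^{\epsilon,0}$ near the range of $\lambda_0$, equal to the part of $\sigma(H^{\epsilon,0})$ lying below the range of $\lambda_1$; moreover $H^{\epsilon,0}$ restricted to ${\rm Ran}\,E_{I_\epsilon}(H^{\epsilon,0})$ is unitarily equivalent to a magnetic matrix on $\ell^2(\Gamma)$, which, the field being constant, is in turn isospectral with $\mathfrak{Op}^w_\epsilon(\lambda_\epsilon)$ on $L^2(\mathbb{R})$, where $\lambda_\epsilon=\lambda_0+\mathcal O(\epsilon)$ inherits the $\Gamma_*$-periodicity of $\lambda_0$ and admits a full asymptotic expansion in powers of $\epsilon$ with principal part $\lambda_0$ (formula (6.13) of \cite{HS}). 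Hence it suffices to produce, for $\mathfrak{Op}^w_\epsilon(\lambda_\epsilon)$, intervals $[a_k,b_k]$, $0\le k\le N$, with $a_{k+1}-b_k\ge\epsilon/C$, $b_k-a_k=\mathcal O(\epsilon^\infty)$, and infinite-dimensional spectral subspaces; pulling them back along the unitary/isospectral identifications and setting $a_0:=\inf\sigma(H^{\epsilon,0})$ then gives the theorem, since for $\epsilon$ small the window $[a_0,b_N]$ sits below the range of $\lambda_1$ and nothing else of the spectrum enters.

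Next I would localise near the minimum of the principal symbol. By Remark~\ref{Rem-lambda-0}, $\lambda_0$ has a unique nondegenerate global minimum on $E_*$ at $\theta=0$, normalised to $\lambda_0(0)=0$, repeated at every point of $\Gamma_*$ by periodicity. Freezing the symbol at one such well and keeping its quadratic Taylor part yields a Landau-type quadratic symbol whose $\epsilon$-Weyl quantization is unitarily a harmonic oscillator with eigenvalues $\mu_k=c_0(2k+1)\epsilon+\mathcal O(\epsilon^2)$, $c_0>0$ an explicit constant built from ${\rm Hess}\,\lambda_0(0)$. The one-dimensional semiclassical machinery of \cite{HelRo,HS1,HS3} --- harmonic approximation near the well, followed by a Bohr--Sommerfeld analysis of the one-well microlocal problem --- then shows that, in a fixed neighbourhood of $0$, the part of $\sigma(\mathfrak{Op}^w_\epsilon(\lambda_\epsilon))$ coming from a single well lies in $\mathcal O(\epsilon^\infty)$-neighbourhoods of a Bohr--Sommerfeld sequence asymptotic to $\mu_0,\dots,\mu_N$. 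Choosing $\epsilon_0$ small makes these $N+1$ clusters pairwise at distance $\ge\epsilon/C$, which gives $\sigma(H^{\epsilon,0})\cap[a_0,b_N]\subset\bigcup_{k=0}^N[a_k,b_k]$ together with the lower bounds $a_{k+1}-b_k\ge\epsilon/C$.

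The delicate point is the width bound $b_k-a_k=\mathcal O(\epsilon^\infty)$. The $k$-th island is not a single eigenvalue but the union of the spectra of the coupled one-well microlocal problems sitting over the infinitely many, periodically arranged minima of $\lambda_0$, so its spread is governed by the inter-well tunneling. Here I would bring in Agmon-type exponentially weighted estimates for $\mathfrak{Op}^w_\epsilon(\lambda_\epsilon)$: each one-well eigenvalue equals, modulo $\mathcal O(\epsilon^\infty)$, a Bohr--Sommerfeld quantity which, by periodicity of the symbol, is the \emph{same} for every well, whereas the interaction matrix elements between distinct wells are $\mathcal O(e^{-S/\epsilon})$, with $S>0$ the Agmon distance between adjacent minima of $\lambda_0$ in the metric associated with $\lambda_\epsilon$. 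Feeding these into an interaction-matrix (finite-dimensional perturbation) argument, the total oscillation of the $k$-th band is $\mathcal O(e^{-S/\epsilon})=\mathcal O(\epsilon^\infty)$. I expect the careful bookkeeping of these WKB and Agmon estimates --- in particular checking that the one-well Bohr--Sommerfeld expansion is well-independent to all orders and that no smaller exponential scale hides inside the band --- to be the main obstacle; this is precisely the analysis carried out in detail in \cite{HS,HS1,HS3}.

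Finally, for the infinite multiplicity and the choice of $a_0$: since $\epsilon B_0$ is constant and $V$ is $\Gamma$-periodic, $H^{\epsilon,0}$ commutes with the magnetic translations $\{U_\gamma\}_{\gamma\in\Gamma}$, which form a projective unitary representation of $\Gamma$ on $L^2(\mathbb{R}^2)$ with no nonzero finite-dimensional invariant subspace (equivalently, $\mathfrak{Op}^w_\epsilon(\lambda_\epsilon)$ commutes with a group of $\epsilon$-magnetic translations with the same property). Hence any nonzero spectral projection of $H^{\epsilon,0}$ has infinite rank; since the harmonic approximation supplies Hermite-type trial states with energy in each window $[a_k,b_k]$, these windows meet $\sigma(H^{\epsilon,0})$, and therefore ${\rm dim}\big({\rm Ran}\,E_{[a_k,b_k]}(H^{\epsilon,0})\big)=+\infty$. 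Taking $a_0:=\inf\sigma(H^{\epsilon,0})$ and using the stability of the bottom of the spectrum under the constant magnetic perturbation (so that $a_0$ belongs to the lowest cluster, cf. \cite{B2,CP-1}) completes the argument.
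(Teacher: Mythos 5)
Your overall route --- reduction under Hypothesis \ref{Hyp-1b} to the one-dimensional $\epsilon$-pseudodifferential operator $\mathfrak{Op}^w_\epsilon(\lambda_\epsilon)$ via the Nenciu/Helffer--Sj\"ostrand constructions, semiclassical analysis at the nondegenerate minimum of $\lambda_0$, and the magnetic-translation argument for infinite multiplicity --- is exactly the route the paper relies on (Theorem \ref{partTh} is quoted from \cite{HS}, and Section 2 sketches precisely this reduction). The problem is the mechanism you invoke for what you call the delicate point, the width bound $b_k-a_k=\mathcal O(\epsilon^\infty)$: you assert Agmon-type exponentially weighted estimates giving inter-well interaction elements of size $\mathcal O(e^{-S/\epsilon})$. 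For $\mathfrak{Op}^w_\epsilon(\lambda_\epsilon)$ the wells are wells of a symbol in phase space, not of a potential; tunneling occurs in both position and momentum, and the microlocal exponential decay estimates needed to define such an $S$ and to prove such a bound are, as the paper explicitly notes in Remark \ref{remarca-marts}, only established in particular cases (Harper-like models, square lattice, specific conditions on $V$) in \cite{HS1}. This is exactly why the theorem states the band width as $\mathcal O(\epsilon^\infty)$ while the exponential bound $\exp(-S/\epsilon)$ is only a conjecture. As written, your key step borrows an ingredient that is not available in the generality of Theorem \ref{partTh}.

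Fortunately you do not need it. The $\mathcal O(\epsilon^\infty)$ width already follows from the one-well Bohr--Sommerfeld analysis you cite (\cite{HelRo,HS1}): an all-order quasimode construction, combined with a parametrix/microlocalization argument showing that any energy in the relevant window at distance $\geq \epsilon^M$ from the one-well Bohr--Sommerfeld values lies in the resolvent set, confines the spectrum of $\mathfrak{Op}^w_\epsilon(\lambda_\epsilon)$ near the bottom to $\mathcal O(\epsilon^\infty)$-neighbourhoods of those values; and these values are well-independent because the symbol is $\Gamma_*$-periodic (equivalently, the operator commutes with the $\epsilon$-magnetic translations), so each cluster sits in a single interval of width $\mathcal O(\epsilon^\infty)$ centered at one value $\mu_k(\epsilon)$, with no interaction-matrix or tunneling input. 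With that replacement, the remainder of your argument --- the gap lower bound $\epsilon/C$ from the spacing of the $\mu_k$, non-emptiness of each window via quasimodes, infinite rank of the spectral projections from the absence of nonzero finite-dimensional subspaces invariant under the magnetic translations, and $a_0=\inf\sigma(H^{\epsilon,0})$ via spectral stability --- is sound and coincides with the proof the paper refers to.
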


One can actually have the  same description with $N=N(\epsilon)$ in an interval
$[-\infty, E]$  under the following conditions that are always satisfied for $|E-\inf \lambda_0|$ small enough:
\begin{itemize}
\item 
$ \lambda_0^{-1}((-\infty,E]) = \cup_{\gamma^* \in \Gamma^*} \tau_{\gamma^*} W_0(E)$\,.
\item  $W_0(E)$ is connected and  contains $\theta=0$ which is the unique critical point of $\lambda_0$ in $W_0(E)\,$.
\item $\tau_{\gamma^*} W_0(E)\cap\tau_{\tilde\gamma^*} W_0(E) = \emptyset$ if $\gamma^* \neq \tilde \gamma^*\,$.
\end{itemize}

The proof in Section 2 in \cite{HS1} gives also  in each interval $[a_k,b_k]$ an orthonormal basis $\phi_{\gamma^*}$ (indexed by $\Gamma^*$)
of the image of  $E_{[a_k,b_k]}(H^{\epsilon,0}) $  of functions $\phi_{\gamma^*}$ being localized near $\gamma^*$. 
Here we have identified  this image with the image of $E_{[a_k,b_k]}(\mathfrak{Op}_\epsilon (\lambda_\epsilon))$ in $L^2(\mathbb R)$. 
Finally these results imply the following statement.

\begin{theorem}  \label{partTha} Under Hypothesis \ref{Hyp-1b} and  the above assumptions, for any $L \in \mathbb N^*$,  there exist $\epsilon_0>0$ and $C>0$,  such
that for $\epsilon\in(0,\epsilon_0]$ there exist $N(\epsilon)$ and $a_0<b_0<...<a_N<b_N$ such that $a_0= \inf\{\sigma(H^{\epsilon,0})\}$ and:
\begin{align*}
&\sigma(H^{\epsilon,0})\cap (-\infty, E)  \subset \bigcup_{k=0}^N [a_k,b_k]\,,\\
&|b_k-a_k| \leq C\, \epsilon^L  \mbox{ for } 0\leq k\leq N(\epsilon)\,,\quad {\rm and}\quad  a_{k+1}-b_k\geq \epsilon/C \mbox{ for } 0\leq k\leq N(\epsilon)-1\,.
\end{align*}
Moreover $a_k$ is determined by a Bohr-Sommerfeld rule 
$$a_k = f( (2k+1) \epsilon,\epsilon)\,,$$
 where $t \mapsto f(t,\epsilon)$ 
has a complete expansion in powers of $\epsilon$, $f(0,0)=\inf \lambda_0\,,$ and $\partial_t f(0,0) \neq 0$  (see \cite{HelRo} or \cite{HS1}).
\end{theorem}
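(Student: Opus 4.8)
\medskip

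\noindent\textit{Strategy of proof.} The plan is to reduce $H^{\epsilon,0}$ to a one--dimensional $\epsilon$--pseudodifferential operator and then run the standard semiclassical multiple--well machinery on it, uniformly down to the energy $E$. First I would exploit the gap Hypothesis \ref{Hyp-1b} with $\kappa=0$: for $\epsilon$ small and $E$ close to $\inf\lambda_0$ the part of $\sigma(H^{\epsilon,0})$ below $E$ sits inside an isolated spectral island that stays strictly below $\inf\lambda_1$ \cite{B2,CP-1}. Following Nenciu \cite{Ne-LMP,Ne-RMP}, the restriction of $H^{\epsilon,0}$ to the corresponding spectral subspace of $L^2(\X)$ is, in a basis of magnetic Wannier functions and after using the magnetic translations of the \emph{constant} field $\epsilon B_0$, unitarily equivalent to a Hofstadter--type magnetic matrix on $\ell^2(\Gamma)$; for a constant flux a partial (Zak--type) Fourier transform turns this matrix into $\mathfrak{Op}^{w}_{\epsilon}(\lambda_\epsilon)$ on $L^2(\mathbb R)$, where $\lambda_\epsilon$ is $2\pi\mathbb Z$--periodic in each variable, has a complete asymptotic expansion in powers of $\epsilon$ and principal symbol $\lambda_0$ (formula (6.13) in \cite{HS}). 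It therefore suffices to describe $\sigma\big(\mathfrak{Op}^{w}_{\epsilon}(\lambda_\epsilon)\big)\cap(-\infty,E]$.

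Under the three bulleted assumptions, the classical sub-level set $\{\lambda_0\le E\}\subset\mathbb R^2$ is a disjoint union $\bigcup_{\gamma^*\in\Gamma^*}\tau_{\gamma^*}W_0(E)$ of congruent topological disks, each containing a single nondegenerate critical point --- the minimum --- at its centre (Appendix \ref{A-lambda0}). On one such well I would put $\lambda_\epsilon$ into a semiclassical Birkhoff normal form around the minimum (Helffer--Robert \cite{HelRo}, Helffer--Sj\"ostrand \cite{HS1}); the associated one-well operator has, modulo $\mathcal O(\epsilon^\infty)$, the discrete spectrum $\{f((2k+1)\epsilon,\epsilon)\}_{k\ge 0}$, with $t\mapsto f(t,\epsilon)$ admitting a complete expansion in $\epsilon$, $f(0,0)=\inf\lambda_0$, and $\partial_t f(0,0)$ a positive multiple of $\sqrt{\det\mathrm{Hess}\,\lambda_0(0)}$, hence $\neq 0$ (harmonic approximation), so that the single-well levels are $\epsilon$-spaced. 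Since $\lambda_\epsilon$ is $\Gamma^*$--periodic all translated wells carry the same levels, and one concludes that $\mathfrak{Op}^{w}_{\epsilon}(\lambda_\epsilon)$ has below $E$ exactly $N(\epsilon)+1$ spectral clusters, centred at $a_k:=f((2k+1)\epsilon,\epsilon)$, with $N(\epsilon)$ of order $(E-\inf\lambda_0)/\epsilon$.

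It remains to estimate the width of each cluster by tunneling. Because the wells $\tau_{\gamma^*}W_0(E)$ are pairwise at positive Agmon distance, the single-well quasimodes decay exponentially (Agmon estimates), so the hopping amplitudes between neighbouring wells are $\mathcal O(e^{-S/\epsilon})$ for some $S>0$; feeding this into an interaction-matrix / finite-difference comparison shows that each cluster --- which is in fact a spectral band, there being infinitely many wells --- has width $\mathcal O(e^{-S/\epsilon})\le C\epsilon^L$ for any fixed $L$, once $\epsilon$ is small. Setting $b_k:=a_k+\mathcal O(e^{-S/\epsilon})$ and using $a_{k+1}-a_k\ge\epsilon/C$ from the normal-form spacing gives $a_{k+1}-b_k\ge\epsilon/(2C)$; transporting everything back through the unitary equivalences of the first step yields the stated description of $\sigma(H^{\epsilon,0})\cap(-\infty,E)$, including the orthonormal basis $\{\phi_{\gamma^*}\}$ of the range of $E_{[a_k,b_k]}(H^{\epsilon,0})$ localized near $\gamma^*$.

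The crux is this last step performed \emph{uniformly in} $k$ up to the $\epsilon$-dependent index $N(\epsilon)$: one must show that the exponentially narrow bands and the $\epsilon$-order gaps survive \emph{simultaneously} for all clusters below $E$, which requires Agmon constants that do not deteriorate from one well to the next and a control of how the well geometry (hence $S$) depends on the energy. For the square lattice and a symmetric $V$ --- where $\lambda_0(\theta)$ is close to $\mathbf t(\cos\theta_1+\cos\theta_2)$ with $\mathbf t$ a tunneling factor --- this uniform analysis is precisely what is carried out in Sections~2 and~9 of \cite{HS1}, and it is from there that the Bohr--Sommerfeld form $a_k=f((2k+1)\epsilon,\epsilon)$ is obtained.
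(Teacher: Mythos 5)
Your overall route — using Hypothesis \ref{Hyp-1b} with $\kappa=0$ to reduce $H^{\epsilon,0}$, via Nenciu's magnetic-matrix reduction and the constant-flux Zak transform, to the one-dimensional $\epsilon$-pseudodifferential operator \eqref{Opepsilon} with periodic symbol $\lambda_\epsilon$, and then invoking the Bohr--Sommerfeld / normal-form analysis of \cite{HelRo,HS1} on the wells $\tau_{\gamma^*}W_0(E)$ — is exactly the route the paper takes (the paper gives no detailed proof here; Theorem \ref{partTha} is recorded as a consequence of \cite{HS,HS1,HelRo}). The genuine gap is in your bandwidth estimate. You claim each cluster has width $\mathcal O(e^{-S/\epsilon})$ ``by Agmon estimates'' for quasimodes attached to the wells, and deduce $|b_k-a_k|\leq C\epsilon^L$ from that. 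But $\mathfrak{Op^w_\epsilon}(\lambda_\epsilon)$ is not a Schr\"odinger operator: $\lambda_\epsilon(x,\xi)$ is bounded and periodic in \emph{both} variables, so the wells $\tau_{\gamma^*}W_0(E)$ are phase-space wells, not configuration-space potential wells, and the standard Agmon machinery (which rests on the structure $-\epsilon^2\Delta+V$ and the Agmon metric of $V-E$) does not apply. What your step requires are microlocal exponential decay estimates for quasimodes of an $\epsilon$-pseudodifferential operator, and precisely these are not available in the generality of the theorem: the paper says so explicitly in Remark \ref{remarca-marts}, where exponentially small bands are stated as a \emph{conjecture}, the missing ingredient being ``microlocal'' decay estimates established in \cite{HS1} only for Harper-like models. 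So your argument reaches the stated $\mathcal O(\epsilon^L)$ bound only by passing through a stronger, unproved, exponential bound.

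The bound actually claimed, $|b_k-a_k|=\mathcal O(\epsilon^\infty)$, does not need tunneling: it comes from the one-well microlocal reduction of Section 2 of \cite{HS1}, performed with $\mathcal O(\epsilon^\infty)$ errors — quasimodes constructed to all orders in $\epsilon$ around each translated well, together with the microlocal localization showing that, modulo $\mathcal O(\epsilon^\infty)$, no other spectrum occurs below $E$; the $\Gamma^*$-periodicity of $\lambda_\epsilon$ places all translated copies of the one-well spectrum at the same values $f((2k+1)\epsilon,\epsilon)$, and the spacing $\partial_t f(0,0)\neq 0$ gives the $\epsilon/C$ gaps uniformly for $0\leq k\leq N(\epsilon)$, since the three bulleted assumptions hold on all of $(-\infty,E]$. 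Your closing paragraph in fact points to the right place but restricts the uniform analysis to the square lattice with a Harper-like $\lambda_0$; that restriction is needed only for the exponential refinement of Remark \ref{remarca-marts}, not for the $\mathcal O(\epsilon^L)$ statement of Theorem \ref{partTha}. You should therefore replace the Agmon/tunneling step by the $\mathcal O(\epsilon^\infty)$-accurate one-well reduction, or else acknowledge that the exponential width is open in this generality.
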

\begin{remark}\label{remarca-marts}
We conjecture that Theorem \ref{partTha} still holds with bands of size $\exp(-\frac{S}{\epsilon})$ for some $S>0$\,.  
What is missing are some "microlocal" decay estimates which are only established in particular cases in \cite{HS1} for Harper's like models. 
We conjecture that if in addition  $E < \inf \lambda_1$, Theorem \ref{partTha} is true  even under the weaker Hypothesis \ref{Hyp-1a}.
\end{remark}

\begin{remark}
The case of purely magnetic Schr\"odinger operators when the magnetic field has a global non-degenerate minimum has been treated in \cite{HK, RV}.
\end{remark}

\section{The effective band Hamiltonian}\label{S-eff-band-Ham}

\subsection{The magnetic almost Wannier functions}\label{SS-W-func} 
We recall  some results from \cite{Ne-RMP,HS, CHN,CIP}. Under both Hypothesis
\ref{Hyp-1b} and \ref{Hyp-1a}, using the analyticity of the application
$\X^*\ni\theta\rightarrow\hat{H}^0(\theta)$ in the norm resolvent sense and
the contour integral formula for the spectral projection, it is well known
(see for example Lemma 1.1 in \cite{HS}) that  one can choose
its first $L^2$-normalized eigenfunction as
an analytic function $\X^*\ni\theta\rightarrow\hat{\phi}_0(\theta,\cdot)\in
L^2(\mathbb{T})$ such that $$\hat{\phi}_0(\theta+\gamma^*,x)=e^{i<\gamma^*,x>}
\hat{\phi}_0(\theta,x)$$  and
\beq
\hat{H}^0(\theta)\,\hat{\phi}_0( \theta,\cdot )\,=\,\lambda_0(\theta)\,
\hat{\phi}_0(\theta,\cdot)\,.
\eeq
Then the principal 
{\it Wannier function} $\phi_0$ is defined  (see \eqref{inv}) by:
\beq\label{expl-FBZ}
\phi_0(x) = \big[\mathscr{V}_\Gamma^{-1}\hat{\phi}_0\big](x)\,=
\,|E_*|^{-\frac 12}\int_{E_*}e^{i<\theta,x>}\hat{\phi}_0(\theta,x)\,d\theta\,.
\eeq
 $\phi_0$ has rapid decay. In fact, using the analyticity property of $\theta \mapsto \hat{\phi}_0(\theta,\cdot)$ and  deforming the contour of integration in \eqref{expl-FBZ} (cf (1.8) and (1.9) in \cite{HS}),  we get the existence of $C>0$ and for any $\alpha \in \mathbb N^2$ of $C_\alpha >0$ such that
$$
|\partial_x^\alpha \phi_0(x)| \leq  C_\alpha \,\exp (- |x|/C)\,,\, \forall x \in \mathbb R^2\,.
$$

We shall also consider the associated orthogonal projections
\beq
\hat{\pi}_0(\theta):=|\hat{\phi}_0(\theta,\cdot)><\hat{\phi}_0(\theta,\cdot)|\,,\quad\pi_0:=\mathscr{V}_\Gamma^{-1}\left(\int_{E_*}^\oplus
\hat{\pi}_0(\theta)d\theta\right)\mathscr{V}_\Gamma\,.
\eeq 
Then the family  generated by $\phi_0$ by translation with $\gamma$ over the lattice $\Gamma$: 
$$\phi_\gamma:=\tau_{-\gamma}\phi_0$$ 
is an
orthonormal basis for the subspace $\pi_0\mathcal{H}\,$.\\
\begin{remark}
We note that under Hypothesis \ref{Hyp-1b}, $\pi_0$ is the spectral projection attached to the first simple band. This is no longer true under Hypothesis \ref{Hyp-1a} where we might have $\inf(\lambda_1)<\sup(\lambda_0)\,$.
\end{remark}

We now start the construction of some {\it magnetic
almost Wannier functions}. In order to obtain the best properties for these
functions adapted to  the specific structure of the  magnetic field (see \eqref{Bek}) we shall proceed  in two steps. First we consider the constant magnetic field
$\epsilon B_0$, and then we add the second perturbation
$\kappa\epsilon B(\epsilon x)$.

\begin{definition}\label{Def-q-W-magn}~
\begin{enumerate}
\item For any $\gamma\in\Gamma$ and with $A^0$ defined in \eqref{defA0} we introduce: 
$$
\hspace{-2cm} \overset{\circ}{\phi_\gamma^\epsilon}(x)\,:=\,\Lambda^\epsilon(x,
\gamma)\phi_0(x-\gamma)\, ,\quad\Lambda^\epsilon(x,y)\,:=\,\exp\left\{-i\,\epsilon\int_{[x,y]}A^0\right\}\,.$$
\item $\widetilde{\pi}_0^\epsilon$ will denote the orthogonal projection on the closed linear span of
$\{\overset{\circ}{\phi_\gamma^\epsilon}\}_{\gamma\in\Gamma}\,$.
\item  
$\mathbb{G}^\epsilon_{\alpha\beta}\,:=\,\langle\overset{\circ}{
\phi_\alpha^\epsilon},\overset{\circ}{\phi_\beta^\epsilon}\rangle_{\mathcal{H}}$
denotes the infinite \textit{Gramian matrix}, indexed by $\Gamma\times\Gamma\,$.
\end{enumerate}
\end{definition}
\noindent Due to the exponential decay of $\phi_0$, we obtain (see Lemma 3.15 in \cite{CIP}):
\begin{proposition}\label{P-2}
There exists $\epsilon_0 >0$ such that, for any $\epsilon \in[0,\epsilon_0]$, 
the matrix $\mathbb{G}^\epsilon$ defines a positive bounded
operator on $\ell^2(\Gamma)$. 
Moreover, for any $m\in\mathbb{N}$, there exists $C_m >0$ such that
\beq  \label{Gepsilon}
\underset{(\alpha,\beta)\in\Gamma\times \Gamma}{\sup}\
<\alpha-\beta>^m\big| \mathbb{G}^\epsilon_{\alpha\beta}\,-\,\bb1\big| \,\leq\,
C_m\, \epsilon\,.
\eeq
\end{proposition}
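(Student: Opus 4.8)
The plan is to estimate the Gramian entries $\mathbb{G}^\epsilon_{\alpha\beta} = \langle\overset{\circ}{\phi_\alpha^\epsilon},\overset{\circ}{\phi_\beta^\epsilon}\rangle$ directly, using the explicit formula for the almost Wannier functions together with the exponential decay of $\phi_0$ and the special form of the transverse gauge $A^0$. First I would write out the inner product as an integral:
\beq
\mathbb{G}^\epsilon_{\alpha\beta}=\int_\X \overline{\Lambda^\epsilon(x,\alpha)}\,\Lambda^\epsilon(x,\beta)\,\overline{\phi_0(x-\alpha)}\,\phi_0(x-\beta)\,dx .
\eeq
The key algebraic observation is a cocycle-type identity for the phase factor: because $A^0$ is linear (it is $(B_0/2)(-x_2,x_1)$), the flux $\int_{[x,\alpha]}A^0$ is a quadratic polynomial in the endpoints, and one has the standard identity $-\int_{[x,\alpha]}A^0+\int_{[x,\beta]}A^0 = \int_{[\alpha,\beta]}A^0 + \tfrac12 B_0\,(x-\alpha)\wedge(\alpha-\beta)$ (the flux through the triangle $x,\alpha,\beta$ equals $\tfrac{B_0}{2}$ times its signed area). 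Hence $\overline{\Lambda^\epsilon(x,\alpha)}\Lambda^\epsilon(x,\beta)$ factors as $\Lambda^\epsilon(\alpha,\beta)$ times a unimodular term depending on $x-\alpha$ and $\alpha-\beta$.

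Then I would change variables $x\mapsto x+\alpha$ and obtain $\big|\mathbb{G}^\epsilon_{\alpha\beta}-\delta_{\alpha\beta}\big|$ in terms of $\gamma:=\alpha-\beta$ only; for $\gamma=0$ the phase is trivial and $\mathbb G^\epsilon_{\alpha\alpha}=\|\phi_0\|^2=1$, so the diagonal contributes nothing. For $\gamma\neq 0$ I estimate
\beq
\big|\mathbb{G}^\epsilon_{\alpha\beta}\big|=\Big|\int_\X e^{-i(\epsilon B_0/2)\,x\wedge\gamma}\,\overline{\phi_0(x)}\,\phi_0(x+\gamma)\,dx\Big|,
\eeq
and the bound $<\gamma>^m|\mathbb{G}^\epsilon_{\alpha\beta}|\le C_m\epsilon$ comes from two ingredients: (i) the overlap $\overline{\phi_0(x)}\phi_0(x+\gamma)$ is, by the exponential decay of $\phi_0$ proved just above, bounded in $L^1$ by $C_N<\gamma>^{-N}e^{-|\gamma|/C}$ for every $N$, which beats any polynomial; and (ii) the phase factor, being $\equiv 1$ at $\epsilon=0$, gives an extra gain of order $\epsilon$. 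For (ii) I would use $|e^{-i(\epsilon B_0/2)x\wedge\gamma}-1|\le (\epsilon B_0/2)|x\wedge\gamma|\le C\epsilon|x|\,|\gamma|$ when this is $\le 1$, and simply the trivial bound $1$ otherwise; in either regime the product with the rapidly decaying overlap still integrates to something $O(\epsilon)$ with constants absorbing the extra powers of $|\gamma|$ (one trades a factor $|\gamma|$ against a factor $<\gamma>^{-1}$ from the decay). Assembling, for $\gamma\neq0$ one gets $|\mathbb G^\epsilon_{\alpha\beta}|\le C_m\epsilon<\gamma>^{-m}$, and combining with the diagonal case yields \eqref{Gepsilon}. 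Positivity and boundedness of $\mathbb G^\epsilon$ on $\ell^2(\Gamma)$ then follow: boundedness because the off-diagonal decay in $<\alpha-\beta>^{-m}$ with $m\ge 3$ makes the matrix Schur-bounded, and positivity because $\mathbb G^\epsilon$ is the Gramian of a genuine family of $L^2$ vectors, hence $\ge 0$, while \eqref{Gepsilon} shows $\mathbb G^\epsilon=\bb1+O(\epsilon)$ in operator norm, so it is in fact strictly positive (invertible) for $\epsilon$ small.

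The main obstacle, and the only point requiring real care, is keeping the dependence on $\gamma$ uniform: one must check that the constant hidden in the $O(\epsilon)$ gain does not itself grow with $|\gamma|$ faster than the exponential decay can compensate. This is handled by splitting the $x$-integration into the region $|x|\le |\gamma|/2$, where $\phi_0(x+\gamma)$ is exponentially small in $|\gamma|$ and one uses the crude phase bound $1$, and the region $|x|>|\gamma|/2$, where $\phi_0(x)$ is exponentially small and the linear phase bound $C\epsilon|x||\gamma|$ is controlled against $e^{-|x|/C}$. In both regions the remaining integral is finite and the prefactor decays faster than any power of $<\gamma>$, which is exactly what \eqref{Gepsilon} asks for. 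Everything else — the change of variables, the triangle-flux identity, the Schur test — is routine. I would also remark that this is precisely the content of Lemma 3.15 in \cite{CIP}, so in the paper it suffices to invoke that reference after recording the gauge identity that makes the argument go through.
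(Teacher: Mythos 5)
Your argument is correct, and it is worth noting that the paper itself offers no in-text proof of this proposition: it simply invokes the exponential decay of $\phi_0$ and cites Lemma 3.15 of \cite{CIP}. What you have written is a self-contained version of exactly that argument — the transverse-gauge identity $\int_{[x,y]}A^0=\tfrac{B_0}{2}\,x\wedge y$, the factorization $\overline{\Lambda^\epsilon(x,\alpha)}\Lambda^\epsilon(x,\beta)=\Lambda^\epsilon(\alpha,\beta)\,\Omega^{\epsilon}(\alpha,x,\beta)$ via Stokes, the reduction by translation to a function of $\gamma=\alpha-\beta$, and the trade-off between the linear phase bound and the exponential decay of $\phi_0$ — so it supplies precisely the details the paper outsources. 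Two small points deserve attention. First, your sign in the triangle-flux identity is off (the flux is $B_0$ times the signed area, i.e. $\tfrac{B_0}{2}(x-\alpha)\wedge(\beta-\alpha)$ up to orientation); this is harmless since only the modulus of the phase enters. Second, and more substantively, when you bound $|\mathbb{G}^\epsilon_{\alpha\beta}|$ for $\alpha\neq\beta$ by inserting $e^{i\theta}-1$, you are tacitly using that $\int_\X\overline{\phi_0(x)}\,\phi_0(x+\gamma)\,dx=0$ for $\gamma\neq0$, i.e. the orthonormality of the unperturbed Wannier system $\{\phi_\gamma\}_{\gamma\in\Gamma}$ stated just before Definition \ref{Def-q-W-magn}; this exact cancellation is essential, because without it the off-diagonal entries would only be exponentially small in $\langle\gamma\rangle$ but not $O(\epsilon)$, and \eqref{Gepsilon} would fail as $\epsilon\to0$ at fixed $\gamma$. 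Make that invocation explicit and the proof is complete; the concluding Schur-test boundedness, the positivity of a Gramian, and the strict positivity from $\|\mathbb{G}^\epsilon-\bb1\|=O(\epsilon)$ are all fine as you state them.
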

Actually,  by (4.5) in \cite{HS}, we have even exponential decay but this is not needed in this paper.
From \eqref{Gepsilon}  and  the construction of the Wannier functions through the magnetic translations (cf  Lemmas~3.1 and 3.2 in \cite{CHN})), we obtain the following result:
\begin{proposition}\label{P-F-prop} For $\epsilon\in [0,\epsilon_0]\,$,  $\mathbb{F}^\epsilon\,:=\big(\mathbb{G}^\epsilon\big)^{-1/2}$ has the following properties:
\begin{enumerate}
\item  $\mathbb{F}^\epsilon\in\,\mathcal L (\ell^2(\Gamma)) \cap \mathcal L (\ell^\infty(\Gamma)) \,$.
\item For any $m\in\mathbb{N}$, there exists $C_m >0$ such that
\beq\label{decay-F}
\underset{(\alpha,\beta)\in\Gamma\times \Gamma}{\sup}\
<\alpha-\beta>^m\big| \mathbb{F}^\epsilon_{\alpha\beta}\,-\,\bb1\big| \,\leq\,
C_m\, \epsilon\,,\, \forall \epsilon \in [0,\epsilon_0]\,.
\eeq
\item There exists a rapidly decaying function $\mathbf{F}_\epsilon:\Gamma\rightarrow\mathbb{C}$ such that for any pair $(\alpha,\beta)\in\Gamma\times\Gamma$ we have:
{$$
\mathbb{F}^\epsilon_{\alpha,\beta}=  \Lambda^\epsilon(\alpha,\beta)\, \mathbf{F}_\epsilon(\alpha-\beta)\,.
$$
}
\end{enumerate}
\end{proposition}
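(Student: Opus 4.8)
The plan is to place $\mathbb{G}^\epsilon$ inside a Banach $*$-algebra of ``Peierls-type'' matrices that is stable under the holomorphic functional calculus and whose norm already encodes the required off-diagonal decay; all three items then follow because, by Proposition~\ref{P-2}, $\mathbb{G}^\epsilon$ is a small perturbation of the identity inside this algebra.

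For a fixed $s>2$, I would introduce the space $\mathcal{A}^\epsilon_s$ of matrices $M=(M_{\alpha\beta})_{\alpha,\beta\in\Gamma}$ of the form $M_{\alpha\beta}=\Lambda^\epsilon(\alpha,\beta)\,m(\alpha-\beta)$, normed by $\|m\|_s:=\sup_{\mu\in\Gamma}\langle\mu\rangle^s|m(\mu)|$. The decisive algebraic input is the flux cocycle identity for the transverse-gauge constant field: since $\int_{[x,y]}A^0+\int_{[y,z]}A^0+\int_{[z,x]}A^0$ equals $B_0$ times the signed area of the triangle $xyz$, an area invariant under a common translation of $x,y,z$, one obtains
$$\Lambda^\epsilon(\alpha,\gamma)\,\Lambda^\epsilon(\gamma,\beta)=\Lambda^\epsilon(\alpha,\beta)\,\omega^\epsilon(\alpha-\gamma,\gamma-\beta),\qquad \alpha,\beta,\gamma\in\X,$$
with $\omega^\epsilon$ a unimodular function of the displayed differences, together with $\Lambda^\epsilon(\alpha,\alpha)=1$ and $\overline{\Lambda^\epsilon(\alpha,\beta)}=\Lambda^\epsilon(\beta,\alpha)$. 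Inserting this into the matrix product (and the adjoint) shows that $\mathcal{A}^\epsilon_s$ is closed under multiplication and adjoints, the product corresponding to the twisted convolution $(m\star n)(\mu)=\sum_{\nu\in\Gamma}\omega^\epsilon(\mu-\nu,\nu)\,m(\mu-\nu)\,n(\nu)$; the Peetre-type estimate $\sum_{\nu}\langle\mu-\nu\rangle^{-s}\langle\nu\rangle^{-s}\le C_s\langle\mu\rangle^{-s}$ (valid for $s>2$) gives $\|m\star n\|_s\le C_s\|m\|_s\|n\|_s$, so that $(\mathcal{A}^\epsilon_s,\,C_s\|\cdot\|_s)$ is a unital Banach $*$-algebra with unit $\bb1$ (i.e. $m=\delta_0$), continuously embedded in $\mathcal{L}(\ell^2(\Gamma))\cap\mathcal{L}(\ell^\infty(\Gamma))$ via Schur's test.

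Next I would check that $\mathbb{G}^\epsilon\in\mathcal{A}^\epsilon_s$: inserting $\overset{\circ}{\phi_\gamma^\epsilon}(x)=\Lambda^\epsilon(x,\gamma)\phi_0(x-\gamma)$ into $\mathbb{G}^\epsilon_{\alpha\beta}=\langle\overset{\circ}{\phi_\alpha^\epsilon},\overset{\circ}{\phi_\beta^\epsilon}\rangle$, applying the cocycle identity to the triple $(\alpha,x,\beta)$, and translating $x\mapsto x+\alpha$ yields $\mathbb{G}^\epsilon_{\alpha\beta}=\Lambda^\epsilon(\alpha,\beta)\,\mathbf{G}_\epsilon(\alpha-\beta)$ with $\mathbf{G}_\epsilon(\mu)=\int_\X\omega^\epsilon(-y,y+\mu)\,\overline{\phi_0(y)}\,\phi_0(y+\mu)\,dy$ --- this is precisely the magnetic-translation covariance recalled from \cite{CHN}. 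Proposition~\ref{P-2} then says exactly that $\|\mathbf{G}_\epsilon-\delta_0\|_s\le C_s\,\epsilon$ for every $s$, i.e. $\mathbb{G}^\epsilon=\bb1+\mathbb{K}^\epsilon$ with $\mathbb{K}^\epsilon\in\mathcal{A}^\epsilon_s$ of $\mathcal{A}^\epsilon_s$-norm $O(\epsilon)$. Fixing $s>2$ and taking $\epsilon$ small enough that this norm is $<1$, the binomial series $(\bb1+\mathbb{K}^\epsilon)^{-1/2}=\sum_{k\ge0}\binom{-1/2}{k}(\mathbb{K}^\epsilon)^k$ converges in $\mathcal{A}^\epsilon_s$; since it also converges in $\mathcal{L}(\ell^2(\Gamma))$, and since $\mathbb{G}^\epsilon$ is positive with spectrum in $(1/2,2)$ (Proposition~\ref{P-2} and Schur), its sum equals $\mathbb{F}^\epsilon=(\mathbb{G}^\epsilon)^{-1/2}$. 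Hence $\mathbb{F}^\epsilon\in\mathcal{A}^\epsilon_s$, which is item~3 with $\mathbf{F}_\epsilon$ rapidly decaying ($s$ being arbitrary), while $\mathbb{F}^\epsilon-\bb1=\sum_{k\ge1}\binom{-1/2}{k}(\mathbb{K}^\epsilon)^k$ has $\mathcal{A}^\epsilon_s$-norm $O(\epsilon)$, which is item~2 (note that $|\mathbb{F}^\epsilon_{\alpha\beta}-\bb1|=|\mathbf{F}_\epsilon(\alpha-\beta)-\delta_{\alpha-\beta,0}|$ since $\Lambda^\epsilon$ is unimodular). Item~1 is then immediate from the continuous embedding $\mathcal{A}^\epsilon_s\hookrightarrow\mathcal{L}(\ell^2)\cap\mathcal{L}(\ell^\infty)$, or directly by Schur from the decay of $\mathbf{F}_\epsilon$.

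The one genuinely structural step --- and the place where the constant-field hypothesis enters in an essential way --- is the cocycle identity and the ensuing closedness of $\mathcal{A}^\epsilon_s$ under products: it is exactly the translation invariance of the flux of $\epsilon B_0$ through triangles that lets the phases $\Lambda^\epsilon(\alpha,\beta)$ factor out of the products, and out of the Gramian, in the Peierls form. Everything else --- the Peetre bookkeeping for the Banach-algebra property, the Schur tests, and the convergence of the binomial series --- is routine, the smallness that makes the series converge being furnished directly by Proposition~\ref{P-2}.
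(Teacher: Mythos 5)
Your structural reduction is the right one, and it is essentially what the paper outsources to \cite{CHN}: the Stokes/cocycle identity for the constant field lets the phases $\Lambda^\epsilon$ factor out of products and out of the Gramian, so that everything happens in a twisted convolution algebra of kernels on $\Gamma$, with Proposition \ref{P-2} providing $\mathbb{G}^\epsilon=\bb1+\mathbb{K}^\epsilon$, $\mathbb{K}^\epsilon$ small. The covariance computation $\mathbb{G}^\epsilon_{\alpha\beta}=\Lambda^\epsilon(\alpha,\beta)\mathbf{G}_\epsilon(\alpha-\beta)$ and the identification of the binomial series with $(\mathbb{G}^\epsilon)^{-1/2}$ via the spectral calculus are correct.

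There is, however, a genuine uniformity gap in the way you obtain items 2 and 3. Your convergence criterion for the binomial series in $\mathcal{A}^\epsilon_s$ is $C_s\,\|\mathbf{G}_\epsilon-\delta_0\|_s<1$, where $C_s$ is the submultiplicativity constant of the weighted algebra (which grows with $s$, roughly like $2^s$) and $\|\mathbf{G}_\epsilon-\delta_0\|_s\leq C'_s\,\epsilon$ with the constants $C'_s=C_m$ of Proposition \ref{P-2}, over which no control in $m$ is claimed. Hence your threshold is an $\epsilon_0(s)$ which a priori shrinks as $s\to\infty$. But the Proposition asserts a single $\epsilon_0$ such that, for \emph{every} $m$, the bound \eqref{decay-F} holds for all $\epsilon\in[0,\epsilon_0]$, and ``rapidly decaying'' in item 3 means decay for all weights simultaneously at that fixed $\epsilon$; with your estimate, a fixed $\epsilon>0$ only gives finitely many weights. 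The gap is reparable, but it requires an argument you did not make: either (i) refine the series estimate by bounding all but one factor of $(\mathbb{K}^\epsilon)^k$ in an $s$-independent norm (e.g. the $\ell^1$-norm of the kernel), using $\langle\mu\rangle^s\leq k^s\max_i\langle\mu_i\rangle^s$ along the chain $\mu=\sum_i\mu_i$, which yields a bound of the form $k^{s+1}C_s\epsilon\,(C\epsilon)^{k-1}$ with $C$ independent of $s$, so the convergence threshold no longer depends on $s$; or (ii) abandon the series and represent $(\mathbb{G}^\epsilon)^{-1/2}$ by a Riesz--Dunford integral $\frac{1}{2\pi i}\oint z^{-1/2}(z-\mathbb{G}^\epsilon)^{-1}dz$, proving polynomial off-diagonal decay of the resolvent uniformly on the contour by a derivation bootstrap ($\delta_j M:=((\alpha_j-\beta_j)M_{\alpha\beta})$, $\delta_j(z-\mathbb{G}^\epsilon)^{-1}=(z-\mathbb{G}^\epsilon)^{-1}(\delta_j\mathbb{G}^\epsilon)(z-\mathbb{G}^\epsilon)^{-1}$, iterated, combined with $|M_{\alpha\beta}|\leq\|M\|_{\mathcal L(\ell^2)}$), or equivalently by invoking a Jaffard-type inverse-closedness result, where only $\ell^2$-invertibility is needed and the smallness threshold comes solely from the $m=0$ estimate. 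Route (ii) is in substance what the cited Lemmas 3.1--3.2 and Theorem 1.10 of \cite{CHN} provide, which is why the paper can state the Proposition with one $\epsilon_0$ for all $m$.
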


\medskip

Thus for all  $\epsilon\in[0,\epsilon_0]$ we can define the following orthonormal basis of $\widetilde{\pi}_0^\epsilon\mathcal{H}$\,:
\beq\label{def-q-Wannier}
\phi^\epsilon_\gamma\,:=\,\underset{\alpha\in\Gamma}{\sum}\, \mathbb{F
}^\epsilon_{\alpha\gamma}\, \overset{\circ}{\phi_\alpha^\epsilon}\,,   \forall \gamma \in \Gamma \,.
\eeq
\begin{remark}\label{R3.5}
Note that the  operators $\{\Lambda^\epsilon(\cdot,
\gamma)\tau_{-\gamma}\}_{\gamma\in\Gamma}$, appearing in Definition \ref{Def-q-W-magn} (1), are the {\it magnetic translations} considered in \cite{Ne-RMP} and 
in Section~2 (p. 147) of \cite{HS}. As proved there, these operators satisfy the following composition relation:
\begin{equation*}
\big[\Lambda^\epsilon(\cdot,\alpha)\tau_{-\alpha}\big]\big[\Lambda^\epsilon(\cdot,\beta)\tau_{-\beta}\big]\,=
\,\Lambda^\epsilon(\beta,\alpha)\big[\Lambda^\epsilon(\cdot,\alpha+\beta)\tau_{-\alpha-\beta}\big],
\qquad\forall(\alpha,\beta)\in\Gamma\times\Gamma\,.
\end{equation*}
\end{remark}
\begin{remark}\label{comm-magn-transl}
One can verify \cite{HS,CHN} that for any $\gamma\in \Gamma$, $\Lambda^\epsilon(\cdot,
\gamma)\tau_{-\gamma}$ commute with $H^{\epsilon,0}$ and with the {\it magnetic momenta} $\big(-i\partial_j-\epsilon A_j^0(x)\big)$.
\end{remark}

Using the second point of Theorem 1.10 from \cite{CHN} one  shows that the second point of our Proposition \ref{P-F-prop} implies the next two statements.
\begin{proposition}
With $\psi^\epsilon_0$  in $\mathscr S(\mathbb R^2)$ defined by
 {\beq\label{F-psi-e0}
\psi^\epsilon_0(x)=\underset{\alpha\in\Gamma}{\sum}\mathbf{F}^\epsilon(\alpha)\, \Lambda^\epsilon(x,\alpha)\, \phi_0(x-\alpha)\,,
\eeq
}
we have 
 $$
\phi^\epsilon_\gamma\,=\,\Lambda^\epsilon(\cdot ,\gamma)(\tau_{-\gamma}
\psi^\epsilon_0)\,,\qquad\forall\gamma\in\Gamma\,.
$$
\end{proposition}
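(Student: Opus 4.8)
\medskip
\noindent\emph{Plan of proof.} The idea is to rewrite every object in terms of the magnetic translations $T_\gamma:=\Lambda^\epsilon(\cdot,\gamma)\tau_{-\gamma}$ of Remark \ref{R3.5}, and then reduce the claimed identity to the composition relation stated there. By Definition \ref{Def-q-W-magn}(1) we have $\overset{\circ}{\phi_\alpha^\epsilon}=T_\alpha\phi_0$; by point (3) of Proposition \ref{P-F-prop}, $\mathbb F^\epsilon_{\alpha\gamma}=\Lambda^\epsilon(\alpha,\gamma)\,\mathbf F_\epsilon(\alpha-\gamma)$, so \eqref{def-q-Wannier} becomes $\phi^\epsilon_\gamma=\sum_{\alpha\in\Gamma}\Lambda^\epsilon(\alpha,\gamma)\,\mathbf F_\epsilon(\alpha-\gamma)\,T_\alpha\phi_0$; and \eqref{F-psi-e0} (where $\mathbf F^\epsilon$ is the same function as $\mathbf F_\epsilon$) reads $\psi^\epsilon_0=\sum_{\alpha\in\Gamma}\mathbf F_\epsilon(\alpha)\,T_\alpha\phi_0$. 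Since $\Lambda^\epsilon(\cdot,\gamma)(\tau_{-\gamma}\psi^\epsilon_0)=T_\gamma\psi^\epsilon_0$, the assertion to be proved is simply $T_\gamma\psi^\epsilon_0=\phi^\epsilon_\gamma$.

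First I would check that the series \eqref{F-psi-e0} converges in $\mathscr S(\R^2)$; this both gives the claimed regularity of $\psi^\epsilon_0$ and legitimizes the manipulations below. Because $A^0$ is in the transverse gauge \eqref{defA0}, a direct evaluation of the segment integral gives $\int_{[x,y]}A^0=\tfrac{B_0}{2}\big(x_1y_2-x_2y_1\big)$, so $\Lambda^\epsilon(x,\alpha)$ and each of its $x$-derivatives is bounded by a fixed polynomial in $|x|$ and $|\alpha|$. Combined with the rapid decay of $\mathbf F_\epsilon$ on $\Gamma$ (Proposition \ref{P-F-prop}(3)) and the exponential decay of $\phi_0$ together with all its derivatives established above, this makes the series converge absolutely in every Schwartz seminorm; in particular $\psi^\epsilon_0\in\mathscr S(\R^2)$, and one may apply $T_\gamma$ term by term and reindex the resulting sum.

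Then the computation is short: $T_\gamma\psi^\epsilon_0=\sum_{\alpha}\mathbf F_\epsilon(\alpha)\,T_\gamma T_\alpha\phi_0$, and the composition relation of Remark \ref{R3.5} (in the form $T_\gamma T_\alpha=\Lambda^\epsilon(\alpha,\gamma)\,T_{\gamma+\alpha}$) together with the substitution $\beta=\gamma+\alpha$ yields $T_\gamma\psi^\epsilon_0=\sum_{\beta}\mathbf F_\epsilon(\beta-\gamma)\,\Lambda^\epsilon(\beta-\gamma,\gamma)\,T_\beta\phi_0$. Comparing with the expression for $\phi^\epsilon_\gamma$ obtained above, the two coincide once we know $\Lambda^\epsilon(\beta-\gamma,\gamma)=\Lambda^\epsilon(\beta,\gamma)$ for all $\beta,\gamma\in\Gamma$; but by the explicit formula for $\int_{[x,y]}A^0$ the exponent of $\Lambda^\epsilon(\cdot,\cdot)$ is bilinear and antisymmetric, hence unchanged when the first argument is shifted by a multiple of the second. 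This proves $T_\gamma\psi^\epsilon_0=\phi^\epsilon_\gamma$.

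I do not expect a genuine obstacle here: the statement is a bookkeeping identity for the phase cocycle $\Lambda^\epsilon$, designed so that the passage from the ``magnetic Gramian'' normalization in \eqref{def-q-Wannier} to the single generator $\psi^\epsilon_0$ is compatible with the magnetic translations. The only points that require (routine) care are the explicit computation of $\int_{[x,y]}A^0$ in the transverse gauge and the convergence of the series defining $\psi^\epsilon_0$ in $\mathscr S(\R^2)$, strong enough to permit the term-by-term action of $T_\gamma$ and the relabelling $\beta=\gamma+\alpha$ — which is precisely why the proposition records $\psi^\epsilon_0\in\mathscr S(\R^2)$.
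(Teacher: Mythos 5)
Your proof is correct, and it is essentially the computation that the paper delegates to the reference: the paper does not write out an argument but invokes Theorem 1.10 of \cite{CHN} together with Proposition \ref{P-F-prop}, whereas you make the mechanism explicit by writing $\overset{\circ}{\phi^\epsilon_\alpha}=T_\alpha\phi_0$ with $T_\alpha=\Lambda^\epsilon(\cdot,\alpha)\tau_{-\alpha}$, inserting the factorization $\mathbb{F}^\epsilon_{\alpha\gamma}=\Lambda^\epsilon(\alpha,\gamma)\mathbf{F}_\epsilon(\alpha-\gamma)$ from Proposition \ref{P-F-prop}(3), and reducing the identity to the composition relation of Remark \ref{R3.5} plus the cocycle property $\Lambda^\epsilon(\beta-\gamma,\gamma)=\Lambda^\epsilon(\beta,\gamma)$, which indeed follows from the bilinearity and antisymmetry of $\int_{[x,y]}A^0=\tfrac{B_0}{2}\,x\wedge y$ in the transverse gauge. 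Your preliminary check that the series \eqref{F-psi-e0} converges in every Schwartz seminorm (rapid decay of $\mathbf{F}_\epsilon$, unimodularity of $\Lambda^\epsilon$ with at most polynomial growth in $\alpha$ of its $x$-derivatives, exponential decay of $\phi_0$) is exactly what is needed to justify applying $T_\gamma$ term by term and reindexing, so no gap remains.
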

\begin{corollary}\label{est-psi-phi}
There exists $\epsilon_0 >0$   and,  for  any $m\in\mathbb{N}\,$,
$\alpha\in\mathbb{N}^2$, there exists  $C_{m,\alpha} >0$ s. t. 
$$
<x>^m\left|[\partial_x^\alpha(\psi^\epsilon_0-\phi_0)](x)\right|\leq\,C_{m,\alpha}\, \epsilon\,,
$$
for any $\epsilon\in[0,\epsilon_0]$ and any $x\in \mathbb R$\,.
\end{corollary}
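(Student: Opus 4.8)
The plan is to read off the estimate directly from the defining formula \eqref{F-psi-e0} together with the quantitative decay properties already in hand, namely the rapid decay of $\mathbf{F}_\epsilon$ supplied by point 3 of Proposition \ref{P-F-prop} and the smallness estimate \eqref{decay-F}, and the exponential decay of $\phi_0$ and all its derivatives established after \eqref{expl-FBZ}. First I would isolate the $\alpha=0$ term: since $\mathbb{F}^\epsilon_{\alpha\beta}=\Lambda^\epsilon(\alpha,\beta)\mathbf{F}_\epsilon(\alpha-\beta)$ and $\Lambda^\epsilon(0,0)=1$, the $\alpha=0$ contribution to $\psi^\epsilon_0$ is $\mathbf{F}_\epsilon(0)\phi_0(x)$, and \eqref{decay-F} with $\beta=\alpha=0$ gives $|\mathbf{F}_\epsilon(0)-1|\le C_0\,\epsilon$. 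Hence
\[
\psi^\epsilon_0(x)-\phi_0(x)=\big(\mathbf{F}_\epsilon(0)-1\big)\phi_0(x)\;+\;\sum_{\alpha\in\Gamma\setminus\{0\}}\mathbf{F}_\epsilon(\alpha)\,\Lambda^\epsilon(x,\alpha)\,\phi_0(x-\alpha)\,,
\]
and the task reduces to bounding each of the two pieces, after applying $\partial_x^\alpha$ (Leibniz on the product $\Lambda^\epsilon(x,\alpha)\phi_0(x-\alpha)$), by $C_{m,\alpha}\,\epsilon\,\langle x\rangle^{-m}$.

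For the first piece this is immediate: $\big|\mathbf{F}_\epsilon(0)-1\big|\,|\partial_x^\alpha\phi_0(x)|\le C_0\epsilon\cdot C_\alpha e^{-|x|/C}\le C_{m,\alpha}\,\epsilon\,\langle x\rangle^{-m}$ since exponential decay dominates any polynomial weight. For the second piece, the key point is that each off-diagonal coefficient is uniformly $O(\epsilon)$ by \eqref{decay-F} (with $m=0$), so I can extract one factor of $\epsilon$ and then use a fixed polynomial-weight version of the bound \eqref{decay-F} (any $m'$, say $m'=m+3$) to make the remaining sum over $\Gamma\setminus\{0\}$ converge. Concretely, I would write $|\mathbf{F}_\epsilon(\alpha)|\le C\,\epsilon\,\langle\alpha\rangle^{-(m+3)}$ for $\alpha\ne 0$; the derivatives of $\Lambda^\epsilon(x,\alpha)=\exp\{-i\epsilon\int_{[x,\alpha]}A^0\}$ produce only polynomially bounded factors in $x$ and $\alpha$ (uniformly for $\epsilon\le 1$, since $A^0$ is linear), which are absorbed; and $|\partial^\beta\phi_0(x-\alpha)|\le C_\beta e^{-|x-\alpha|/C}$. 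Then I use the Peetre-type inequality $\langle x\rangle^m\le C_m\langle x-\alpha\rangle^m\langle\alpha\rangle^m$ together with $e^{-|x-\alpha|/C}\langle x-\alpha\rangle^{m}\le C'_m e^{-|x-\alpha|/(2C)}$ to get, after summing,
\[
\langle x\rangle^m\Big|\partial_x^\alpha\!\!\sum_{\alpha\in\Gamma\setminus\{0\}}\!\mathbf{F}_\epsilon(\alpha)\Lambda^\epsilon(x,\alpha)\phi_0(x-\alpha)\Big|
\;\le\; C\,\epsilon\sum_{\alpha\in\Gamma\setminus\{0\}}\frac{\langle\alpha\rangle^{\,m+\mathrm{poly}}}{\langle\alpha\rangle^{\,m+3}}\,e^{-|x-\alpha|/(2C)}\,,
\]
and the last sum is bounded uniformly in $x$ (the Gaussian-type factor kills the overcounting near $x$, the polynomial tail is summable over the lattice). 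This yields the claimed bound with a constant $C_{m,\alpha}$ depending only on $m$ and the derivative multi-index.

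I do not expect a genuine obstacle here; this is essentially a bookkeeping argument. The one place that requires minor care is the interplay between the phase $\Lambda^\epsilon(x,\alpha)$ — whose $x$-derivatives grow linearly in $x$ and in $\alpha$ — and the weights: one must make sure the extra polynomial factors coming from Leibniz on $\Lambda^\epsilon$ are all dominated, which is why I allow myself the slightly stronger decay $\langle\alpha\rangle^{-(m+3)}$ (or any sufficiently large power) from Proposition \ref{P-F-prop}(2) rather than just $\langle\alpha\rangle^{-m}$, and why the exponential decay of $\phi_0$ (rather than mere rapid decay) makes the estimate completely painless. The statement that $\psi^\epsilon_0\in\mathscr S(\mathbb R^2)$ follows along the way from the same estimates applied with $\epsilon$ fixed, or simply from the fact that $\psi^\epsilon_0-\phi_0$ is Schwartz and $\phi_0$ is Schwartz.
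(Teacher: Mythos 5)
Your argument is correct, and it is the natural unwinding of what the paper itself does not spell out: in the text this corollary is not proved directly but is said to follow from the second point of Proposition \ref{P-F-prop} via Theorem 1.10 of \cite{CHN}, and your computation is exactly the bookkeeping that citation encapsulates — split off the $\gamma=0$ term of \eqref{F-psi-e0}, use $|\mathbf{F}_\epsilon(0)-1|\leq C\epsilon$ there, and for $\gamma\neq 0$ extract one factor of $\epsilon$ from \eqref{decay-F} and beat the lattice sum with the rapid decay of $\mathbf{F}_\epsilon$, the exponential decay of $\partial^\beta\phi_0$, and a Peetre inequality. Two small points of hygiene: the simplification of the $\gamma=0$ term rests on $\Lambda^\epsilon(x,0)=1$ for \emph{all} $x$, which holds because in the transverse gauge \eqref{defA0} one has $\int_{[x,y]}A^0=(B_0/2)\,x\wedge y$, so the correct justification is this identity rather than ``$\Lambda^\epsilon(0,0)=1$'' (note also that $x$-derivatives of $\Lambda^\epsilon(x,\gamma)$ then produce factors $O(\epsilon|\gamma|)$ only, with no growth in $x$); and in your displayed estimate the fixed exponent $\langle\gamma\rangle^{-(m+3)}$ need not suffice once the Leibniz factors in $\gamma$ are included — one must choose the decay exponent after fixing $m$ and the derivative order so that the net power of $\langle\gamma\rangle$ is summable over $\Gamma$, which your ``any sufficiently large power'' hedge indeed covers, since \eqref{decay-F} holds for every $m$.
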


\medskip

We are now ready to consider the case with $\kappa\neq 0\,$.
\begin{definition}\label{Def-q-Wannier}
The magnetic almost Wannier functions associated with  $B_{\epsilon,\kappa}$ in \eqref{Bek} are  defined as:
$$
\overset{\circ\quad}{\phi^{\epsilon,\kappa}_\gamma}\ :=\
\Lambda^{\epsilon,\kappa}(\cdot ,\gamma)\big(\tau_{-\gamma}\psi^\epsilon_0\big)\,,
$$
 where $\Lambda^{\epsilon,\kappa}$ is given by \eqref{defLambda} with $A_{\epsilon,\kappa}$ as defined in \eqref{defAepsilon}.
\end{definition}

If we choose some smooth vector potential $A(y)$ such that  $\curl A= B $ (we recall that $B_{\epsilon,\kappa}:=\epsilon B_0+\kappa\epsilon B(\epsilon x)$) we introduce the quantities:
$$
A_\epsilon(x):=A(\epsilon x), \quad 
\widetilde{\Lambda}^{\epsilon,\kappa}(x,y)\ :=\
\exp\left\{-i\kappa\int_{[x,y]}A_\epsilon\right\}\,,
$$
and (the second identity below is a consequence of the Stokes theorem)
{
\begin{equation}\label{D-Omega'}
\Omega^{\epsilon,\kappa}(x,y,z):=\exp\left\{-i\int_{<x,y,z>}\hspace*{-10pt}
B_{\epsilon,\kappa}\right\}=\Lambda^{\epsilon,\kappa}(x,y)\Lambda^{\epsilon,\kappa}(y,z)\Lambda^{\epsilon,\kappa}(z,x).
\end{equation}
}
Then we have the equalities
$$\Lambda^{\epsilon,\kappa}(x,y)=\Lambda^{\epsilon}(x,y)\widetilde{\Lambda}^{\epsilon,\kappa}(x,y),\quad \overset{\circ\quad}{\phi^{\epsilon,\kappa}_\gamma}=\widetilde{\Lambda}^{
\epsilon,\kappa}(\cdot ,\gamma)\phi^\epsilon_\gamma\,.
$$

The following statement is proved in Lemma 3.1 of \cite{CHN}. It is based on the rapid decay of the Wannier functions and the polynomial growth of the derivatives of $\Omega^{\epsilon,\kappa}(\alpha,\beta,x)$ (see \eqref{Est-Omega}).
\begin{proposition}\label{P-ort-ek}
There exists $\epsilon_0 >0$ such that, for any $(\epsilon,\kappa) \in[0,\epsilon_0]\times [0,1]$, 
the Gramian matrix $
\left(\mathbb{G}^{\epsilon,\kappa}_{\alpha\beta}\right)_{ (\alpha,\beta)\in 
\Gamma^2}$ defined by 
$$\mathbb{G}^{\epsilon,\kappa}_{\alpha\beta}:=\langle
\overset{\circ\quad}{\phi^{\epsilon,\kappa}_\alpha}\,,\overset{\circ\quad}{\phi^
{\epsilon,\kappa}_\beta}\rangle_{\mathcal{H}}$$
 has the form:
$$
\mathbb{G}^{\epsilon,\kappa}_{\alpha\beta}\,=\,\delta_{\alpha\beta}\,+\,
\widetilde{\Lambda}^{\epsilon,\kappa}(\alpha,\beta)\mathbb{X}^{\epsilon,\kappa}(\alpha,
\beta),
$$
where, for all $m\in\mathbb{N}\,$,  there exists $
C_m >0$ such that
$$
\left|\mathbb{X}^{\epsilon,\kappa}(\alpha,
\beta)\right|\,\leq\,C_m\, \kappa\epsilon<\alpha-\beta>^{-m},\
\forall(\alpha,\beta)\in \Gamma^2\, .
$$
\end{proposition}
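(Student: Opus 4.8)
The plan is to compute $\mathbb{G}^{\epsilon,\kappa}_{\alpha\beta}$ directly from the factorization $\overset{\circ\quad}{\phi^{\epsilon,\kappa}_\gamma}=\widetilde{\Lambda}^{\epsilon,\kappa}(\cdot,\gamma)\,\phi^\epsilon_\gamma$, exploiting that $\{\phi^\epsilon_\gamma\}_{\gamma\in\Gamma}$ is already an orthonormal system. Since $\widetilde{\Lambda}^{\epsilon,\kappa}$ has modulus one and $\overline{\widetilde{\Lambda}^{\epsilon,\kappa}(x,\alpha)}=\widetilde{\Lambda}^{\epsilon,\kappa}(\alpha,x)$, one has
\[
\mathbb{G}^{\epsilon,\kappa}_{\alpha\beta}=\int_{\X}\widetilde{\Lambda}^{\epsilon,\kappa}(\alpha,x)\,\widetilde{\Lambda}^{\epsilon,\kappa}(x,\beta)\,\overline{\phi^\epsilon_\alpha(x)}\,\phi^\epsilon_\beta(x)\,dx .
\]
Multiplying and dividing by $\widetilde{\Lambda}^{\epsilon,\kappa}(\alpha,\beta)$ and recognizing, by the analogue of \eqref{D-Omega'} for the vector potential $\kappa A_\epsilon$, the triangle cocycle
\[
\widetilde{\Omega}^{\epsilon,\kappa}(\beta,\alpha,x):=\widetilde{\Lambda}^{\epsilon,\kappa}(\beta,\alpha)\,\widetilde{\Lambda}^{\epsilon,\kappa}(\alpha,x)\,\widetilde{\Lambda}^{\epsilon,\kappa}(x,\beta)=\exp\Big\{-i\,\kappa\epsilon\int_{\langle\beta,\alpha,x\rangle}B(\epsilon y)\,dy\Big\}
\]
(the last equality by Stokes, since $\curl(\kappa A_\epsilon)(y)=\kappa\epsilon B(\epsilon y)$), one gets
\[
\mathbb{G}^{\epsilon,\kappa}_{\alpha\beta}=\widetilde{\Lambda}^{\epsilon,\kappa}(\alpha,\beta)\int_{\X}\widetilde{\Omega}^{\epsilon,\kappa}(\beta,\alpha,x)\,\overline{\phi^\epsilon_\alpha(x)}\,\phi^\epsilon_\beta(x)\,dx .
\]
Writing $\widetilde{\Omega}^{\epsilon,\kappa}(\beta,\alpha,x)=1+\big(\widetilde{\Omega}^{\epsilon,\kappa}(\beta,\alpha,x)-1\big)$, the contribution of the ``$1$'' is $\widetilde{\Lambda}^{\epsilon,\kappa}(\alpha,\beta)\langle\phi^\epsilon_\alpha,\phi^\epsilon_\beta\rangle=\delta_{\alpha\beta}$ (using $\widetilde{\Lambda}^{\epsilon,\kappa}(\alpha,\alpha)=1$), so the claimed decomposition holds with
\[
\mathbb{X}^{\epsilon,\kappa}(\alpha,\beta)=\int_{\X}\big(\widetilde{\Omega}^{\epsilon,\kappa}(\beta,\alpha,x)-1\big)\,\overline{\phi^\epsilon_\alpha(x)}\,\phi^\epsilon_\beta(x)\,dx .
\]

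Next I would bound the two factors of the integrand. Since $B\in BC^\infty(\X)$, the elementary inequality $|e^{it}-1|\le|t|$ and the bound $\tfrac12|\alpha-\beta|\,|x-\beta|$ for the area of the triangle $\langle\beta,\alpha,x\rangle$ give $\big|\widetilde{\Omega}^{\epsilon,\kappa}(\beta,\alpha,x)-1\big|\le C\,\kappa\epsilon\,|\alpha-\beta|\,|x-\beta|$; this is where the overall order $\kappa\epsilon$ enters. For the Wannier factor I would use $|\phi^\epsilon_\gamma(x)|=|\psi^\epsilon_0(x-\gamma)|$ together with the fact that Corollary \ref{est-psi-phi} and the exponential decay of $\phi_0$ yield $|\psi^\epsilon_0(y)|\le C_N\langle y\rangle^{-N}$ for every $N$, uniformly in $\epsilon\in[0,\epsilon_0]$; hence $\big|\overline{\phi^\epsilon_\alpha(x)}\,\phi^\epsilon_\beta(x)\big|\le C_N\langle x-\alpha\rangle^{-N}\langle x-\beta\rangle^{-N}$. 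Combining the two estimates and using the Peetre inequality $\langle\alpha-\beta\rangle\le\langle x-\alpha\rangle\langle x-\beta\rangle$ to transfer the decay off the diagonal, one obtains, for any prescribed $m$ and all $N$ large enough,
\[
\big|\mathbb{X}^{\epsilon,\kappa}(\alpha,\beta)\big|\le C\,\kappa\epsilon\,\langle\alpha-\beta\rangle^{-m}\int_{\X}\langle x-\alpha\rangle^{m+1-N}\,dx\le C_m\,\kappa\epsilon\,\langle\alpha-\beta\rangle^{-m},
\]
uniformly in $(\epsilon,\kappa)\in[0,\epsilon_0]\times[0,1]$ (uniformity in $\kappa$ being automatic since $\kappa\le1$); here $\epsilon_0$ is the one furnished by Corollary \ref{est-psi-phi}.

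I do not expect a genuine obstacle here: this is precisely Lemma 3.1 of \cite{CHN}, and the argument is a routine combination of a phase manipulation, the flux/area bound coming from the boundedness of $B$, and the rapid decay of the almost Wannier functions. The two points that require some care are (i) the orientation bookkeeping, so that the leftover phase is exactly the triangle cocycle $\widetilde{\Omega}^{\epsilon,\kappa}$ --- in particular it is $\equiv 1$ on the diagonal, which is what makes the diagonal entries of $\mathbb{G}^{\epsilon,\kappa}$ exactly equal to $1$; and (ii) making the decay of the $x$-integral uniform in $\epsilon$, which is guaranteed by the $\epsilon$-uniform rapid decay of $\psi^\epsilon_0$ from Corollary \ref{est-psi-phi}. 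As a variant, instead of the crude $|e^{it}-1|\le|t|$ bound one may integrate by parts in $x$ using the polynomially bounded $x$-derivatives of the triangle phase (this is where \eqref{Est-Omega} enters), obtaining the same off-diagonal decay with slightly lighter bookkeeping.
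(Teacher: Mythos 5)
Your argument is correct and follows exactly the route the paper indicates: the paper proves this proposition by citing Lemma~3.1 of \cite{CHN}, describing the ingredients as the rapid decay of the (almost) Wannier functions and the flux estimate \eqref{Est-Omega} on the triangle cocycle, which is precisely the phase factorization $\widetilde{\Lambda}^{\epsilon,\kappa}(\alpha,x)\widetilde{\Lambda}^{\epsilon,\kappa}(x,\beta)=\widetilde{\Lambda}^{\epsilon,\kappa}(\alpha,\beta)\,\widetilde{\Omega}^{\epsilon,\kappa}(\beta,\alpha,x)$ plus the bound $|\widetilde{\Omega}^{\epsilon,\kappa}-1|\leq C\kappa\epsilon\,|\alpha-\beta|\,|x-\beta|$ and the $\epsilon$-uniform decay of $\psi^\epsilon_0$ that you use. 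Your write-up is a valid, self-contained version of that same argument, with the orthonormality of $\{\phi^\epsilon_\gamma\}$ supplying the $\delta_{\alpha\beta}$ term as required.
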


\begin{definition}\label{Def-W-magn} If $\epsilon\in [0,\epsilon_0]$  and  $\kappa\in[0,1]$ we define:
\begin{enumerate}
\item
$\widetilde{\phi}^{\epsilon,\kappa}_\gamma\,:=\,\underset{\alpha\in\Gamma}{\sum}
\mathbb{F}^{\epsilon,\kappa}_{\alpha\gamma}\, \overset{\circ\quad}{\phi^{\epsilon,
\kappa}_\alpha}$ with $\mathbb{F}^{\epsilon,\kappa}\,:=\big[\mathbb{G}^{\epsilon,\kappa}\big]^
{-1/2}$, 
\item  $\widetilde{\pi}^{\epsilon,\kappa}_0\,$ to be the 
orthogonal projection on the closed linear span of 
$\{\overset{\circ\quad}{\phi^{\epsilon,\kappa}_\gamma}\}_{\gamma\in\Gamma}\,$.
\end{enumerate}
\end{definition}

\subsection{The almost invariant magnetic subspace}
\label{SS-inv-magn-band}
We shall prove (see Proposition \ref{P-quasi-inv} below) that the orthogonal projection $\widetilde{\pi}^{\epsilon,\kappa}_0$ is almost invariant (modulo an error of order $\epsilon$) for the Hamiltonian $H^{\epsilon,\kappa}$. 
\begin{proposition}\label{P-quasi-inv}
There exist $\epsilon_0>0$  and  $C>0$ such that, for any $(\epsilon,\kappa)\in[0,\epsilon_0]\times[0,1]\,$, the range of $\widetilde{\pi}^{\epsilon,\kappa}_0$ belongs to the domain of $H^{\epsilon,\kappa}$ and
$$
\left\|\big[H^{\epsilon,\kappa},\widetilde{\pi}^{\epsilon,\kappa}_0\big]\right\|_{\mathcal L(\mathcal{H})}\leq\,C\, \epsilon\,.
$$
\end{proposition}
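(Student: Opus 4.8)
The plan is to show that the commutator $[H^{\epsilon,\kappa},\widetilde{\pi}^{\epsilon,\kappa}_0]$ is small by a two-step comparison: first replace $\widetilde{\pi}^{\epsilon,\kappa}_0$ by the ``un-orthonormalized'' projection built directly from the $\overset{\circ\quad}{\phi^{\epsilon,\kappa}_\gamma}$, and then estimate the commutator using the near-invariance of the unperturbed projection $\pi_0$ together with the exponential/rapid decay of the Wannier functions. More precisely, recall that $H^{\epsilon,\kappa}=\mathfrak{Op}^{A^{\epsilon,\kappa}}(\xi^2+V)$, and that $H^{0}$ commutes with $\pi_0$, equivalently with each $\hat\pi_0(\theta)$ in the fibered picture. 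Since $\psi^\epsilon_0$ differs from $\phi_0$ by $O(\epsilon)$ in every weighted $C^k$ norm (Corollary \ref{est-psi-phi}), and the magnetic phase $\widetilde{\Lambda}^{\epsilon,\kappa}$ is slowly varying of strength $\kappa\epsilon$, the functions $\overset{\circ\quad}{\phi^{\epsilon,\kappa}_\gamma}$ are ``almost eigenfunctions'' of $H^{\epsilon,\kappa}$ in a suitable sense.

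The key computation is the following. Write $P^{\epsilon,\kappa}$ for the rank-one-per-site non-orthogonal analogue of $\widetilde{\pi}^{\epsilon,\kappa}_0$, i.e. the operator whose range is $\widetilde{\pi}^{\epsilon,\kappa}_0\mathcal H$ realized through the (non-orthonormal) system $\{\overset{\circ\quad}{\phi^{\epsilon,\kappa}_\gamma}\}$; since $\mathbb{F}^{\epsilon,\kappa}=(\mathbb{G}^{\epsilon,\kappa})^{-1/2}=\bb1+O(\kappa\epsilon)$ by Proposition \ref{P-ort-ek} (and the analogue of Proposition \ref{P-F-prop}), passing between the two costs only $O(\kappa\epsilon)$ in operator norm on $\mathcal H$, and it transports commutator bounds. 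So it suffices to bound $\|[H^{\epsilon,\kappa},\widetilde{\pi}^{\epsilon,\kappa}_0]\|$ after replacing $\widetilde{\pi}^{\epsilon,\kappa}_0$ by the orthogonal projection $\widetilde\pi^\epsilon_0$ onto $\mathrm{span}\{\phi^\epsilon_\gamma\}$ modified by the slowly varying phase. For the constant-field part $H^{\epsilon,0}$ this is essentially known (Remark \ref{comm-magn-transl}: the magnetic translations $\Lambda^\epsilon(\cdot,\gamma)\tau_{-\gamma}$ commute with $H^{\epsilon,0}$, and one compares $\phi^\epsilon_\gamma$ with the honest spectral-projection Wannier functions via \eqref{decay-F}, getting $\|[H^{\epsilon,0},\widetilde\pi^\epsilon_0]\|\le C\epsilon$ as in \cite{CIP}). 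The genuinely new contribution is $[H^{\epsilon,\kappa}-H^{\epsilon,0},\widetilde{\pi}^{\epsilon,\kappa}_0]$ together with the error from conjugating by $\widetilde{\Lambda}^{\epsilon,\kappa}$. Here one writes, for $u$ in the range, the matrix element $\langle \overset{\circ\quad}{\phi^{\epsilon,\kappa}_\alpha},[H^{\epsilon,\kappa},\widetilde{\pi}^{\epsilon,\kappa}_0]\,\overset{\circ\quad}{\phi^{\epsilon,\kappa}_\beta}\rangle$ and uses that $H^{\epsilon,\kappa}\overset{\circ\quad}{\phi^{\epsilon,\kappa}_\gamma}=\widetilde{\Lambda}^{\epsilon,\kappa}(\cdot,\gamma)\big(H^{\epsilon,0}\phi^\epsilon_\gamma\big)+R^{\epsilon,\kappa}_\gamma$, where $R^{\epsilon,\kappa}_\gamma$ collects the terms produced when the magnetic momenta $(-i\partial_j-A^{\epsilon,\kappa}_j)$ hit the slowly varying factor $\widetilde{\Lambda}^{\epsilon,\kappa}(\cdot,\gamma)$; because $\nabla_x\widetilde{\Lambda}^{\epsilon,\kappa}$ is $O(\kappa\epsilon)$ pointwise and $\phi^\epsilon_\gamma$ decays rapidly, $R^{\epsilon,\kappa}_\gamma$ is a rapidly localized function of size $O(\kappa\epsilon)$ around $\gamma$. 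A Schur-test (Cotlar–Stein–type) summation over $\Gamma$, using the $<\alpha-\beta>^{-m}$ off-diagonal decay from Propositions \ref{P-2}, \ref{P-F-prop}, \ref{P-ort-ek}, then bounds the whole commutator by $C\epsilon$ (indeed $C(\epsilon+\kappa\epsilon)=O(\epsilon)$ for $\kappa\le 1$).

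In more detail, the steps I would carry out are: (i) record the algebraic identity $[H^{\epsilon,\kappa},\widetilde{\pi}^{\epsilon,\kappa}_0]=\widetilde{\pi}^{\epsilon,\kappa}_0{}^{\perp}H^{\epsilon,\kappa}\widetilde{\pi}^{\epsilon,\kappa}_0-\widetilde{\pi}^{\epsilon,\kappa}_0 H^{\epsilon,\kappa}\widetilde{\pi}^{\epsilon,\kappa}_0{}^{\perp}$, so that it suffices to bound $\|\widetilde{\pi}^{\epsilon,\kappa}_0{}^{\perp}H^{\epsilon,\kappa}\overset{\circ\quad}{\phi^{\epsilon,\kappa}_\gamma}\|$ uniformly in $\gamma$ together with the off-diagonal decay in $\gamma$; (ii) prove $H^{\epsilon,\kappa}\overset{\circ\quad}{\phi^{\epsilon,\kappa}_\gamma}=\widetilde{\Lambda}^{\epsilon,\kappa}(\cdot,\gamma)\,H^{\epsilon,0}\phi^\epsilon_\gamma + R^{\epsilon,\kappa}_\gamma$ with the stated control on $R^{\epsilon,\kappa}_\gamma$, using the explicit form \eqref{mainH}, the gauge factorization $\Lambda^{\epsilon,\kappa}=\Lambda^\epsilon\widetilde{\Lambda}^{\epsilon,\kappa}$, and $|\nabla_x\widetilde{\Lambda}^{\epsilon,\kappa}(x,\gamma)|=|\kappa A_\epsilon(x)-\text{(affine interpolant)}|\le C\kappa\epsilon\langle x-\gamma\rangle$ (note the extra factor is absorbed by rapid decay of $\psi^\epsilon_0$); (iii) handle $H^{\epsilon,0}\phi^\epsilon_\gamma$ via the known constant-field result, namely $\phi^\epsilon_\gamma$ spans an almost-invariant subspace for $H^{\epsilon,0}$ with error $O(\epsilon)$, invoking Remark \ref{comm-magn-transl} and the decay estimates \eqref{decay-F}, \eqref{Gepsilon} (this is the content of Lemma 3.15 and surrounding material in \cite{CIP}); (iv) pass from the non-orthonormal system to the orthonormal $\widetilde{\phi}^{\epsilon,\kappa}_\gamma$ at cost $O(\kappa\epsilon)$ using $\mathbb{F}^{\epsilon,\kappa}=\bb1+O(\kappa\epsilon)$ with rapid off-diagonal decay; (v) assemble everything with a Schur-test on $\ell^2(\Gamma)$. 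The main obstacle, and the place where care is needed, is step (ii)–(iii): one must check that when the magnetic momenta in $H^{\epsilon,\kappa}$ act on the product $\widetilde{\Lambda}^{\epsilon,\kappa}(\cdot,\gamma)\phi^\epsilon_\gamma$, the ``bad'' cross terms $\big(\nabla_x\widetilde{\Lambda}^{\epsilon,\kappa}\big)\cdot\nabla_x\phi^\epsilon_\gamma$ and the difference between $A^{\epsilon,\kappa}$ and $\epsilon A^0$ truly produce only an $O(\kappa\epsilon)$ localized remainder — i.e. that no uncompensated $O(1)$ or $O(\epsilon)$-without-smallness term survives — and that all the polynomial growth factors $\langle x-\gamma\rangle$ coming from the slowly varying gauge are swallowed by the exponential/rapid decay of $\phi_0$ and $\psi^\epsilon_0$ (Corollary \ref{est-psi-phi}). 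Once that localization bookkeeping is done, the summability and the final $\mathcal{L}(\mathcal H)$ bound follow by a routine Schur/Cotlar–Stein argument.
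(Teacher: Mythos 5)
Your plan is workable and, in my reading, leads to a correct proof, but it is a genuinely different route from the one in the paper. The paper never estimates matrix elements of the commutator directly: it writes $\widetilde{\pi}^{\epsilon,\kappa}_0=\mathfrak{Op}^{\epsilon,\kappa}(p^{\epsilon,\kappa})$ and $\pi_0=\mathfrak{Op}(p)$, proves the symbol estimate $\nu(p^{\epsilon,\kappa}-p)\leq C(\nu)\,\epsilon$ in $S^{-\infty}(\Xi)$ (via the intermediate symbol built from the $\overset{\circ\quad}{\phi^{\epsilon,\kappa}_\gamma}$ and the phase estimate \eqref{Est-Omega}), uses the exact zero-field commutation $h\,\sharp\,p-p\,\sharp\,h=0$ together with the expansion $\sharp^{\epsilon,\kappa}=\sharp+\kappa\epsilon\,r^{\epsilon,\kappa}$ of Proposition \ref{L.II.1.1}, and then converts symbol seminorms into operator norms by the Calder\'on--Vaillancourt type theorem of \cite{IMP1}. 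Your proposal replaces all of this by a hands-on scheme: intertwining of the magnetic momenta with $\widetilde{\Lambda}^{\epsilon,\kappa}$ (the identities \eqref{26-02-1}--\eqref{26-02-2} with $|a_\epsilon(x,\gamma)|\leq C\epsilon<x-\gamma>$), rapid decay of $\psi^\epsilon_0$, Gramian estimates, and a Schur/Cotlar--Stein summation over $\Gamma$. What the paper's route buys is that the same ingredients, in particular \eqref{p-est} and the boundedness theorem, are recycled immediately afterwards (e.g. in Proposition \ref{L-1}), whereas your route is more elementary and closer in spirit to what the paper itself does later for Proposition \ref{propo-marts1}.

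One step of yours needs to be repaired, though the repair is already implicit in your opening paragraph. In step (iii) you propose to treat the constant-field commutator bound $\|[H^{\epsilon,0},\widetilde{\pi}^{\epsilon}_0]\|\leq C\epsilon$ as known, ``comparing $\phi^\epsilon_\gamma$ with the honest spectral-projection Wannier functions''. Under Hypothesis \ref{Hyp-1a} there is no such spectral projection: the first band overlaps the second, $\pi_0$ is not spectral for $H^0$, and a fortiori $H^{\epsilon,0}$ has no isolated band to project onto; this overlapping case is precisely part of what the proposition must cover. The correct constant-field input is obtained exactly as you indicate at the start: use $[H^0,\pi_0]=0$, i.e. $H^0\phi_\gamma\in\pi_0\mathcal{H}$ with rapidly decaying coefficients, then pass from $\phi_\gamma$ to $\phi^\epsilon_\gamma$ via Corollary \ref{est-psi-phi}, $|\Lambda^\epsilon(\gamma,x)-1|\leq\epsilon|\gamma||x|$ handled through the magnetic translations of Remark \ref{comm-magn-transl}, which produces the $O(\epsilon)$ (not $O(\kappa\epsilon)$) main error, consistent with the stated bound. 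With that substitution, and noting that $\widetilde{\pi}^{\epsilon,\kappa}_0$ is by definition the orthogonal projection onto the span of the $\overset{\circ\quad}{\phi^{\epsilon,\kappa}_\gamma}$ (so your ``non-orthogonal analogue'' is only a change of basis controlled by Proposition \ref{P-ort-ek} and the analogue of Proposition \ref{P-F-prop}), your localization bookkeeping and the final Schur-type summation go through and give $\|[H^{\epsilon,\kappa},\widetilde{\pi}^{\epsilon,\kappa}_0]\|\leq C(\epsilon+\kappa\epsilon)\leq C'\epsilon$, and the boundedness of $H^{\epsilon,\kappa}\widetilde{\pi}^{\epsilon,\kappa}_0$ obtained along the way also settles the domain statement.
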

\begin{proof}
All the Wannier-type functions introduced in Subsection \ref{SS-W-func} belong to $\mathscr{S}(\X)$ and are in the domain of the respective Hamiltonians. We follow the ideas of  Subsection 3.1 in \cite{CIP} and compare the orthogonal projection $\widetilde{\pi}^{\epsilon,\kappa}_0$ with $\pi_0$. Let us denote by $p^{\epsilon,\kappa}$ (resp. $p$) the distribution symbol of the orthogonal projection $\widetilde{\pi}^{\epsilon,\kappa}_0$ (resp. $\pi_0$) for the corresponding quantizations, i.e.
\beq
\widetilde{\pi}^{\epsilon,\kappa}_0:=\mathfrak{Op}^{\epsilon,\kappa}\big(p^{\epsilon,\kappa}\big),\quad
\pi_0:=\mathfrak{Op}\big(p\big).
\eeq
We have
\beq
\widetilde{\pi}^{\epsilon,\kappa}_0\,=\,\underset{\gamma\in\Gamma}{\sum}|\widetilde{\phi}^{\epsilon,\kappa}_\gamma><\widetilde{\phi}^{\epsilon,\kappa}_\gamma|\,,\qquad
\pi_0\,=\,\underset{\gamma\in\Gamma}{\sum}|\phi_\gamma><\phi_\gamma|\,.
\eeq
 For any $\gamma \in \Gamma$,  both projections $|\widetilde{\phi}^{\epsilon,\kappa}_\gamma><\widetilde{\phi}^{\epsilon,\kappa}_\gamma|$ and $|\phi_\gamma><\phi_\gamma|$ are   magnetic pseudodifferential operators with associated symbols $p^{\epsilon,\kappa}_\gamma$ and  $p_\gamma$ of class $\mathscr{S}(\Xi)$. We conclude then that the symbols 
\beq
p^{\epsilon,\kappa}\,=\,\underset{\gamma\in\Gamma}{\sum}p^{\epsilon,\kappa}_\gamma,\qquad
p\,=\,\underset{\gamma\in\Gamma}{\sum}p_\gamma \,,
\eeq
where the series converge for the weak distribution topology, are in fact  of class $S^{-\infty}(\Xi)$ (see Definition \ref{Def-symb} in Appendix \ref{A-not}).
Then
\beq
\big[H^{\epsilon,\kappa},\widetilde{\pi}^{\epsilon,\kappa}_0\big]\,=\,\mathfrak{Op}^{\epsilon,\kappa}\big(h\,\sharp^{\epsilon,\kappa}\, p^{\epsilon,\kappa}-p^{\epsilon,\kappa}\,\sharp^{\epsilon,\kappa}h\big)\,,
\eeq
and Proposition \ref{L.II.1.1} in Appendix \ref{SS-wmf} shows that
$$
h\,\sharp^{\epsilon,\kappa}p^{\epsilon,\kappa}=h\,\sharp \, p^{\epsilon,\kappa}+\kappa\epsilon\,r^{\epsilon,\kappa}(h,p^{\epsilon,\kappa})\,,
$$
with $r^{\epsilon,\kappa}(h,p^{\epsilon,\kappa})\in S^{-\infty}(\Xi)$ uniformly in $(\epsilon,\kappa)\in[0,\epsilon_0]\times[0,1]$  and similarly for $ p^{\epsilon,\kappa}\,\sharp^{\epsilon,\kappa}h $\,. 

By construction, we have the commutation relation
$$
\big[H^{0},\pi_0\big]\,=\,0\,,
$$
 so that $$h\,\sharp\,p-p\,\sharp\,h=0\,.
 $$
 Hence:
$$
\big[H^{\epsilon,\kappa},\widetilde{\pi}^{\epsilon,\kappa}_0\big] =   \mathfrak{Op}^{\epsilon,\kappa}\big(
h\,\sharp\, (p^{\epsilon,\kappa}-p)-(p^{\epsilon,\kappa}-p)\,\sharp\,
h\big)\,+\,\kappa\epsilon\,
\mathfrak{Op}^{\epsilon,\kappa}\big(r^{\epsilon,\kappa}(h,p^{\epsilon,\kappa}
)-r^ {\epsilon,\kappa}(p^{\epsilon,\kappa},h)\big)\,.
$$

Finally let us compare the two regularizing symbols $p^{\epsilon,\kappa}$ and $p\,$. We notice that for any function $\psi\in\mathscr{S}(\X)$ having $L^2$-norm equal to one,  its associated 1-dimensional orthogonal projection ${|\psi><\psi|}$ has the integral kernel 
$
K_\psi(x,y)\,:=\,\psi(x)\, \overline{\psi(y)}$
and thus (see \eqref{magn-quant})
its magnetic symbol $p^A_\psi$ is given by:
\begin{align*}
p^A_\psi(x,\xi)&=\,(2\pi)^{-1/2}\int_\X e^{-i<\xi,z>}
\psi\big(x+\frac z 2 \big)\, \overline{\psi\big(x-\frac z 2 \big)}\, 
\Lambda^A\big(x-\frac z 2 ,x+\frac z 2 \big)\, dz\,.
\end{align*}
Let us consider the difference $p^{\epsilon,\kappa}_\gamma-p_\gamma$ for some $\gamma\in\Gamma$ and compute

\begin{align*}
p^{\epsilon,\kappa}_\gamma(x,\xi)-p_\gamma(x,\xi)&=
(2\pi)^{-1/2}\int_\X e^{-i<\xi,z>}
\,\times\\
&\qquad  \times\left[\Lambda^{\epsilon,\kappa}\big(x-\frac z 2 ,x+\frac z 2 \big)\,\widetilde{\phi}^{\epsilon,\kappa}_\gamma\big(x+\frac z 2 \big)\,
\overline{\widetilde{\phi}^{\epsilon,\kappa}_\gamma\big(x-\frac z 2 \big)}\,\right.\\
&
\left.\qquad\qquad \qquad\qquad  \hspace{+3cm}-
\phi_\gamma\big(x+\frac z 2 \big)\,
\overline{\phi_\gamma\big(x-\frac z 2 \big)}\right]\,dz\,.
\end{align*}
 In order to estimate the above difference we  compare both terms with a third symbol $q^{\epsilon,\kappa}_\gamma$ associated with $\overset{\circ}{\phi}{}^{\epsilon,\kappa}_\gamma $ (see Definition \ref{Def-q-Wannier}):
$$
q^{\epsilon,\kappa}_\gamma(x,\xi)\,:=\,(2\pi)^{-1/2}\int_\X e^{-i<\xi,z>}\Lambda^{\epsilon,\kappa}\big(x-\frac z 2 ,x+\frac z 2 \big)\, \overset{\circ}
{\phi}{}^{\epsilon,\kappa}_\gamma\big(x+\frac z 2 \big)\, \overline{\overset{\circ}{\phi}{}^{\epsilon,\kappa}_\gamma\big(x-\frac z 2 \big)}\,dz
$$
and estimate the difference
$$
\begin{array}{l}
\hspace{-2cm} p^{\epsilon,\kappa}_\gamma(x,\xi)\ -
q^{\epsilon,\kappa}_\gamma(x,\xi)\\
\quad =(2\pi)^{-1/2}\underset{(\alpha,\beta)\in\Gamma\times\Gamma}{\sum}
\widetilde\Lambda^{\epsilon,\kappa}(\gamma,\alpha)\mathbb{X}^{\epsilon,\kappa}(\gamma,\alpha)
\widetilde\Lambda^{\epsilon,\kappa}(\beta,\gamma)
\overline{\mathbb{X}^{\epsilon,\kappa}(\gamma,\beta)} 
\, s^{\epsilon,\kappa}_{\alpha,\beta}(x,\xi) \,,
\end{array}
$$
with
$$
s^{\epsilon,\kappa}_{\alpha,\beta}(x,\xi) :=
\int_\X e^{-i<\xi,z>}\Lambda^{\epsilon,\kappa}\big(x-\frac z 2 ,x+\frac z 2 \big)\,\, \overset{\circ}
{\phi}{}^{\epsilon,\kappa}_\alpha\big(x+\frac z 2 \big)\, \overline{\overset{\circ}{\phi}{}^{\epsilon,\kappa}_\beta\big(x-\frac z 2 \big)}\,dz\,.
$$
Let us consider the above integrals after using the Stokes Theorem as in \eqref{D-Omega} or \eqref{D-Omega'}:
$$
\begin{array}{ll} 
\hspace{-1cm} s^{\epsilon,\kappa}_{\alpha,\beta}(x,\xi) 
 &  =\,\Lambda^{\epsilon,\kappa}(\beta,\alpha)
\int_\X e^{-i<\xi,z>}\, \Omega^{\epsilon,\kappa}\big(x-\frac z 2 ,\alpha,\beta\big)\Omega^{\epsilon,\kappa}\big(x-\frac z 2 ,x+\frac z 2 ,\alpha\big)\times \\
&\qquad\qquad  \qquad\quad \qquad \qquad\qquad\qquad  \times\, \psi^\epsilon_0\big(x+\frac z 2 -\alpha\big)\, \psi^\epsilon_0
\big(x-\frac z 2 -\beta\big)\, dz\,.
\end{array}
$$
Having in mind the estimate \eqref{Est-Omega} we conclude that all the functions $s^{\epsilon,\kappa}_{\alpha,\beta}$ are symbols of class $S^{-\infty}(\Xi)$, uniformly for $(\epsilon,\kappa)\in[0,\epsilon_0]\times[0,1]$ and that, for any seminorm $\nu$ defining the topology of $S^{-\infty}(\Xi)$, there exist exponents $(a(\nu),b(\nu))\in\mathbb{N}^2$  and, for  any pair $(N,M)\in \mathbb N^2$,  a constant $C_{N,M}$ such that
$$
\nu\big(s^{\epsilon,\kappa}_{\alpha,\beta}\big)\,\leq\,C_{N,M}\,
|\beta-\alpha|^{a(\nu)}\int_\X
|z|^{b(\nu)}|x-\frac z 2 -\beta|^{-N}|x+\frac z 2 -\alpha|^{-M}\, dz\,.
$$
Thus, choosing $M$ and $N$ large enough, we finally obtain that there exist  $C_{\nu}>0$ and $p(\nu)\in\mathbb{N}$ such that:
$$
\nu\big(s^{\epsilon,\kappa}_{\alpha,\beta}\big)\,\leq\,C_{\nu}\, |\beta-\alpha|^{p(\nu)},\qquad\forall(\epsilon,\kappa)\in[0,\epsilon_0]\times[0,1]\,, \, \forall (\alpha,\beta)\in \Gamma^2\,.
$$
In order to control the convergence of the series in $\gamma\in\Gamma$ in the definition of $p^{\epsilon,\kappa}$, we need to consider some weights of the form $\rho_{n,\gamma}(x):=<x-\gamma>^n$ for $(n,\gamma)\in\mathbb{N}\times\Gamma$ and notice that there exist  $C_{\nu,N,M,n}>0$ and $p(\nu,n)\in\mathbb{N}$ such that:
$$
\nu\big(\rho_{n,\gamma}s^{\epsilon,\kappa}_{\alpha,\beta}\big)\,\leq\,C_{\nu,N,M,n}\, |\alpha-\gamma|^{p(\nu,n)}
|\beta-\gamma|^{p(\nu,n)},\qquad\forall(\epsilon,\kappa)\in[0,\epsilon_0]\times[0,1]\,, \,\forall (\alpha,\beta,\gamma)\in \Gamma^3 \,.
$$
From  Proposition \ref{P-ort-ek}  we now conclude that $p^{\epsilon,\kappa}_\gamma-q^{\epsilon,\kappa}_\gamma$ are symbols of class $S^{-\infty}(\Xi)$ uniformly for $(\epsilon,\kappa)\in[0,\epsilon_0]\times[0,1]$ and for any seminorm $\nu$ defining the topology of this space we  can find $C(\nu,n,N)$ and $\widehat C(\nu,n,N)$ such that:
$$
\nu\big(\rho_{n,\gamma}(p^{\epsilon,\kappa}_\gamma-q^{\epsilon,\kappa}_\gamma)\big)\leq\,C(\nu,n,N)\,\kappa\epsilon\hspace{-6pt}
\underset{(\alpha,\beta)\in\Gamma\times\Gamma}{\sum}\hspace{-12pt}<\alpha-\gamma>^{-N}<\beta-\gamma>^{-N}\leq\,\widehat C(\nu,n,N)\, \kappa \epsilon\,,\,  \forall \gamma \in \Gamma \,.
$$

Finally, if we define $q^{\epsilon,\kappa}:=\underset{\gamma\in\Gamma}{\sum}q^{\epsilon,\kappa}_\gamma$ and consider the weights $\rho_{n,\gamma}$ with $n\in\mathbb{N}$ large enough, we easily conclude that the series defining $p^{\epsilon,\kappa}-q^{\epsilon,\kappa}$ converges in the weak distribution topology to a limit that is a symbol in $S^{-\infty}(\Xi)$ having the defining seminorms of order $\kappa\epsilon\,$.

We still have to estimate the difference $q^{\epsilon,\kappa}-p$ in $S^{-\infty}(\Xi)$. We have, with $\psi^\epsilon_\gamma:=\tau_{-\gamma}\psi^\epsilon_0$\,,
$$
\begin{array}{ll}
q^{\epsilon,\kappa}_\gamma(x,\xi)-\ p_\gamma(x,\xi) 
&=(2\pi)^{-1/2}\int_\X e^{-i<\xi,z>}\Lambda^{\epsilon,\kappa}\big(x-\frac z 2 ,x+\frac z 2 \big)\, \overset{\circ}
{\phi}{}^{\epsilon,\kappa}_\gamma \, \big(x+\frac z 2 \big)\overline{\overset{\circ}{\phi}{}^{\epsilon,\kappa}_\gamma\big(x-\frac z 2 \big)}\,dz
\\
&\quad  -(2\pi)^{-1/2}\int_\X e^{-i<\xi,z>}
\phi_\gamma\big(x+\frac z 2 \big)\,
\overline{\phi_\gamma\big(x-\frac z 2 \big)}\,dz\,
\\
&=(2\pi)^{-1/2}\int_\X e^{-i<\xi,z>}\Omega^{\epsilon,\kappa}\big(x-\frac z 2 ,x+\frac z 2 ,\gamma\big)\, \psi^\epsilon_\gamma\big(x+\frac z 2 \big)\, \overline{
\psi^\epsilon_\gamma\big(x-\frac z 2 \big)}\, dz\,
\\
&\quad -(2\pi)^{-1/2}\int_\X e^{-i<\xi,z>}
\phi_\gamma\big(x+\frac z 2 \big)\, \overline{\phi_\gamma
\big(x-\frac z 2 \big)}\,dz\,.
\end{array}
$$
Hence 
$$
\begin{array}{l}
q^{\epsilon,\kappa}_\gamma(x,\xi)-\ p_\gamma(x,\xi)\\
\qquad =(2\pi)^{-1/2}\int_\X e^{-i<\xi,z>}\left[\Omega^{\epsilon,\kappa}\big(x-\frac z 2 ,x+\frac z 2 ,\gamma\big)\,-\,1\right]\psi^\epsilon_\gamma\big(x+\frac z 2 \big)\, \overline{
\psi^\epsilon_\gamma\big(x-\frac z 2 \big)}\, dz\,
\\
\qquad \quad +(2\pi)^{-1/2}\int_\X e^{-i<\xi,z>}\left[
\psi^\epsilon_\gamma\big(x+\frac z 2 \big)\, \overline{
\psi^\epsilon_\gamma\big(x-\frac z 2 \big)}\,-\,
\phi_\gamma\big(x+\frac z 2 \big)\, \overline{\phi_\gamma
\big(x-\frac z 2 \big)}\right]\,dz\,.
\end{array}
$$
Let us first consider the second integrand and use \eqref{F-psi-e0} and \eqref{decay-F} in order to get the estimate
$$
<x>^n|\psi^\epsilon_0\big(x+\frac z 2 \big)\, \overline{
\psi^\epsilon_0\big(x-\frac z 2 \big)}\,-\,
\phi_0\big(x+\frac z 2 \big)\,\overline{\phi_0
\big(x-\frac z 2 \big)}\,|\,\leq\,C_n\, \epsilon\,.
$$
Using once again \eqref{Est-Omega} and the above estimate, arguments very similar to the above ones allow us to prove that $q^{\epsilon,\kappa}-p$ is in $S^{-\infty}(\Xi)$ uniformly for $(\epsilon,\kappa)\in[0,\epsilon_0]\times[0,1]$ and for any seminorm $\nu$ defining the topology of $S^{-\infty}(\Xi)$ there is a constant $C(\nu)$ such that:
\beq\label{q-est}
\nu\big(q^{\epsilon,\kappa}-p\big)\,\leq\,C(\nu)\,\epsilon\,.
\eeq

Summarizing we have proved that $p^{\epsilon,\kappa}-p$ is in $S^{-\infty}(\Xi)$ uniformly for $(\epsilon,\kappa)\in[0,\epsilon_0]\times[0,1]$ and that, for any seminorm $\nu$ defining the topology of $S^{-\infty}(\Xi)$, there exists $C(\nu)$ such that:
\beq\label{p-est}
\nu\big(p^{\epsilon,\kappa}-p\big)\,\leq\,C(\nu)\, \epsilon\,.
\eeq

In order to finish the proof we still have to control the operator norms of \break $\mathfrak{Op}^{\epsilon,\kappa}\big(h\,\,\sharp\,(p^{\epsilon,\kappa}-p)\big)$ and $\mathfrak{Op}^{\epsilon,\kappa}\big((p^{\epsilon,\kappa}-p)\,\,\sharp\, h\big)$. But the above results and the usual theorem on Moyal compositions of H\"{o}rmander type symbols imply that $h\,\sharp\, (p^{\epsilon,\kappa}-p)$ and $(p^{\epsilon,\kappa}-p)\,\sharp\, h$ are symbols of type $S^{-\infty}(\Xi)$ uniformly for $(\epsilon,\kappa)\in[0,\epsilon_0]\times[0,1]$ and all their seminorms (defining the topology of $S^{-\infty}(\Xi)$) are of order $\epsilon$. Finally, by Theorem 3.1 and Remark 3.2 in \cite{IMP1} we know that the operator norm is bounded by some symbol seminorm and thus will be of order $\epsilon\,$.
\end{proof}

\begin{definition}\label{magn-band Ham}   We call  quasi-band magnetic Hamiltonian, the operator $ \widetilde{\pi}^{\epsilon,\kappa}_0H^{\epsilon,\kappa}\widetilde{\pi}^{\epsilon,\kappa}_0$
and   quasi-band magnetic matrix,  its expression in  the orthonormal basis $\{\widetilde{\phi}^{\epsilon,\kappa}_\gamma\}_{\gamma\in\Gamma}$. 
\end{definition}

In order to apply a Feshbach type argument we need to control the invertibility on the orthogonal complement of $\widetilde{\pi}^{\epsilon,\kappa}_0\mathcal{H}$. Let us introduce 
\beq\label{p-ort}
\widetilde{\pi}^{\epsilon,\kappa}_\bot\,:=\,\bb1\,-\,\widetilde{\pi}^{\epsilon,\kappa}_0,\quad m_1:=\underset{\theta\in\mathbb{T}_*}{\inf}\lambda_1(\theta)\,,
\eeq
where $\lambda_1$ is the second Bloch eigenvalue. Define: 
\beq\label{corr-band-Ham}
K^{\epsilon,\kappa}\,:=\,H^{\epsilon,\kappa}\,+\, m_1 \, \widetilde{\pi}^{\epsilon,\kappa}_0\,.
\eeq
We have:
\begin{proposition}\label{L-1}
There exist $\epsilon_0$ and  $C>0$ such that, for $\epsilon\in[0,\epsilon_0]\,$,
$$K^{\epsilon,\kappa} \geq  m_1 -C\epsilon >0\,.$$ \end{proposition}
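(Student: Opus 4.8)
The plan is to split $K^{\epsilon,\kappa}$ into the $H^0$ piece plus manageable errors, and to use the almost-invariance of $\widetilde{\pi}^{\epsilon,\kappa}_0$ from Proposition \ref{P-quasi-inv} together with the fact that $\pi_0$ exactly splits $H^0$ in a way that separates the first band (with values in $[0,\sup\lambda_0]$) from the rest (with values in $[m_1,\infty)$, where $m_1=\inf_\theta\lambda_1(\theta)$). First I would write $H^{\epsilon,\kappa}=H^0+(H^{\epsilon,\kappa}-H^0)$ and note that $H^{\epsilon,\kappa}-H^0=\mathfrak{Op}^{\epsilon,\kappa}(h)-\mathfrak{Op}(h)$, which by the symbol estimates already used in the proof of Proposition \ref{P-quasi-inv} (the expansion of $h\,\sharp^{\epsilon,\kappa}$ versus $h\,\sharp$ from Proposition \ref{L.II.1.1}, the gauge difference $\Lambda^{\epsilon,\kappa}$, and Theorem 3.1 / Remark 3.2 in \cite{IMP1} bounding the operator norm by a symbol seminorm) has operator norm $O(\epsilon)$. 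More precisely, since $A^{\epsilon,\kappa}(x)=\epsilon A^0(x)+\kappa A(\epsilon x)$ and the associated phases differ from $1$ by $O(\epsilon)$ on the relevant scales, the difference of the two quantizations of the Schr\"odinger symbol $h(x,\xi)=\xi^2+V(x)$, sandwiched between resolvents to control the unboundedness, is relatively bounded by $O(\epsilon)(H^0+1)$ with small relative bound; hence $H^{\epsilon,\kappa}\ge H^0 - C\epsilon(H^0+1)\ge (1-C\epsilon)H^0 - C\epsilon$, which for $\epsilon$ small gives $H^{\epsilon,\kappa}\ge -C'\epsilon$ as a lower bound, and more usefully $H^{\epsilon,\kappa}\ge \tfrac12 H^0 - C'\epsilon$.

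Next I would handle the added term $m_1\widetilde{\pi}^{\epsilon,\kappa}_0\ge 0$. The key point is the lower bound on $\pi_0^\perp H^0\pi_0^\perp$: since $\pi_0$ is the projection onto the span of the Wannier functions of the first band, $H^0\pi_0^\perp\ge m_1\,\pi_0^\perp$ and $H^0\pi_0\ge 0$, so $H^0+m_1\pi_0\ge m_1$ on all of $\mathcal H$. Thus it suffices to show $K^{\epsilon,\kappa}=H^{\epsilon,\kappa}+m_1\widetilde\pi^{\epsilon,\kappa}_0$ differs from $H^0+m_1\pi_0$ by something of operator-relative size $O(\epsilon)$. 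The Schr\"odinger part is already controlled above; for the projection part, by estimate \eqref{p-est} the symbols satisfy $\nu(p^{\epsilon,\kappa}-p)\le C(\nu)\epsilon$ in $S^{-\infty}(\Xi)$ uniformly in $(\epsilon,\kappa)$, and again by \cite{IMP1} this gives $\|\widetilde\pi^{\epsilon,\kappa}_0-\pi_0\|_{\mathcal L(\mathcal H)}\le C\epsilon$ (here one must be slightly careful: $p$ and $p^{\epsilon,\kappa}$ are quantized with different $\mathfrak{Op}$'s, so one first writes $\mathfrak{Op}^{\epsilon,\kappa}(p)-\mathfrak{Op}(p)$, which is $O(\epsilon)$ by the weak-variation calculus of Appendix \ref{A-cl-sl-var}, and then $\mathfrak{Op}^{\epsilon,\kappa}(p^{\epsilon,\kappa}-p)$, which is $O(\epsilon)$ by \eqref{p-est}). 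Combining, $m_1\widetilde\pi^{\epsilon,\kappa}_0\ge m_1\pi_0 - C m_1\epsilon$, so
\[
K^{\epsilon,\kappa}=H^{\epsilon,\kappa}+m_1\widetilde\pi^{\epsilon,\kappa}_0\ \ge\ \tfrac12 H^0 + m_1\pi_0 - C''\epsilon\ \ge\ \tfrac12\big(H^0+m_1\pi_0\big) - C''\epsilon\ \ge\ \tfrac{m_1}{2} - C''\epsilon .
\]
This is slightly weaker than the claimed $m_1-C\epsilon$; to recover the constant $m_1$ rather than $m_1/2$ I would instead keep the relative bound sharp by writing $H^{\epsilon,\kappa}-H^0$ as a genuinely $H^0$-bounded perturbation with relative bound tending to $0$, so that $H^{\epsilon,\kappa}\ge (1-o(1))H^0-C\epsilon$ and hence $K^{\epsilon,\kappa}\ge (1-o(1))(H^0+m_1\pi_0)-C\epsilon\ge m_1-C\epsilon$ for $\epsilon\le\epsilon_0$. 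Positivity then follows since $m_1=\inf\lambda_1>\inf\lambda_0=0\ge$ anything, and $m_1-C\epsilon>0$ for $\epsilon_0$ small.

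The main obstacle is the operator-norm (or relative-bound) control of $H^{\epsilon,\kappa}-H^0$: unlike the bounded, regularizing symbols $p^{\epsilon,\kappa}-p$, here the symbol $h$ grows quadratically, so one cannot directly invoke an $S^{-\infty}$ estimate. The clean way is to conjugate by $(H^0+i)^{-1}$ and show $(H^0+i)^{-1}(H^{\epsilon,\kappa}-H^0)(H^0+i)^{-1}=O(\epsilon)$ in $\mathcal L(\mathcal H)$, using that the difference of vector potentials $A^{\epsilon,\kappa}-0=\epsilon A^0+\kappa A(\epsilon\cdot)$ is, after the Leibniz expansion of $(-i\partial - A^{\epsilon,\kappa})^2$, a first-order differential operator with coefficients that are $O(\epsilon)$ relatively to $H^0$ on the range of low spectral projections of $H^0$ — this is exactly where the linear growth of $A^0$ is tamed by the exponential localization built into the Wannier/almost-Wannier construction and by the gauge-covariant magnetic calculus of Appendix \ref{A-sl-var-symb}. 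Once this relative bound is in hand, everything else is the elementary operator-inequality bookkeeping sketched above.
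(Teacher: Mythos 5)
Your endpoint is the right one — the paper also reduces the claim to $\inf\sigma(H^0+m_1\pi_0)=m_1$, using $[H^0,\pi_0]=0$ — but the two bridging estimates you rely on are not available, and the first is actually false. The inequality $H^{\epsilon,\kappa}\ge(1-o(1))H^0-C\epsilon$ (equivalently, that $H^{\epsilon,\kappa}-H^0$ is form-bounded by $O(\epsilon)(H^0+1)$) cannot hold, because the perturbation contains the cross term $\epsilon A^0\cdot(-i\nabla)$ and $\epsilon^2|A^0|^2$ with $A^0$ growing linearly. Concretely, take $u=e^{i\epsilon S}v$ with $v$ supported in a ball of radius $L$ around a point $x_0$, $|x_0|=R\gg L$, and $\nabla S\approx A^0$ on that ball (possible up to an error of size $B_0L$ since only $\mathrm{curl}\,A^0\neq0$ obstructs): then $\langle u,H^{\epsilon,\kappa}u\rangle\lesssim L^{-2}+\epsilon^2B_0^2L^2+\|V\|_\infty$ stays bounded as $R\to\infty$, while $\langle u,H^0u\rangle\gtrsim\epsilon^2B_0^2R^2\to\infty$. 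So weak magnetic fields are not small perturbations in any operator-norm or quadratic-form sense; this is precisely the point where you wrote that the gauge-covariant calculus ``tames'' the linear growth, but that is the statement that needs proof and it is false in the format you chose. Your second estimate, $\|\widetilde{\pi}^{\epsilon,\kappa}_0-\pi_0\|\le C\epsilon$, fails for the same reason: \eqref{p-est} compares the symbols $p^{\epsilon,\kappa}$ and $p$ under the \emph{same} quantization $\mathfrak{Op}^{\epsilon,\kappa}$, but $\mathfrak{Op}^{\epsilon,\kappa}(p)-\mathfrak{Op}(p)$ is not $O(\epsilon)$ in norm. Indeed, for the constant-field part one computes $\langle\phi_\gamma,\Lambda^{\epsilon}(\cdot,\gamma)\phi_0(\cdot-\gamma)\rangle=\widehat{|\phi_0|^2}\big(\tfrac{\epsilon B_0}{2}\gamma^\bot\big)$, which is small once $|\gamma|\gg\epsilon^{-1}$; the magnetic almost Wannier functions far from the origin are nearly orthogonal to the range of $\pi_0$, so the difference of the two projections does not tend to zero in norm, and in particular no bound $\widetilde{\pi}^{\epsilon,\kappa}_0\ge\pi_0-C\epsilon$ can hold.

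The paper's proof is organized exactly so as to avoid both comparisons. It writes $K^{\epsilon,\kappa}=\mathfrak{Op}^{\epsilon,\kappa}(h+m_1p^{\epsilon,\kappa})=\mathfrak{Op}^{\epsilon,\kappa}(h+m_1p)+\epsilon R_{\epsilon,\kappa}$ with $\|R_{\epsilon,\kappa}\|$ uniformly bounded — legitimate because only the symbol changes, by an $S^{-\infty}$ amount of size $\epsilon$ (this is \eqref{p-est}), while the quantization stays fixed. Then, to pass from the magnetic to the non-magnetic quantization of the single symbol $h+m_1p$, it compares only the \emph{bottoms of the spectra}, invoking Corollary 1.6 in \cite{CP-2} (a gauge-covariant spectral-edge stability result, proved with magnetic-phase-dressed quasi-resolvents rather than relative bounds), which gives $\inf\sigma\big(\mathfrak{Op}^{\epsilon,\kappa}(h+m_1p)\big)\ge\inf\sigma(H^0+m_1\pi_0)-C\epsilon=m_1-C\epsilon$. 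If you want to keep your outline, the repair is to downgrade both of your operator-norm steps to statements about $\inf\sigma$ only — which is exactly the paper's argument.
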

\begin{proof}
Using \eqref{p-est}, the conclusion just above it and the notation introduced in the proof of Proposition~\ref{P-quasi-inv} we can write
$$
K^{\epsilon,\kappa}\,=\,\mathfrak{Op}^{\epsilon,\kappa}
\big(h+ m_1p^{\epsilon,\kappa}\big)\,=\,\mathfrak{Op}^{\epsilon,\kappa}
\big(h+ m_1p\big)\,+   \epsilon R_{\epsilon,\kappa} \,,
$$
with $\|R_{\epsilon,\kappa}\|_{\mathcal{L}(\mathcal{H})}$ bounded uniformly in $(\epsilon,\kappa)\in[0,\epsilon_0]\times[0,1]\,$. \\
By Corollary 1.6 in \cite{CP-2} we have  
$$
\left|\inf\sigma\big(\mathfrak{Op}^{\epsilon,\kappa}
\big(h+ m_1p\big)\big)-\inf\sigma\big(\mathfrak{Op}
\big(h+ m_1p\big)\big)\right|\leq\,C\, \epsilon\,.
$$
Since $\mathfrak{Op}(h)$ commutes with $\mathfrak{Op}(p)$,  we have
$$
\inf\sigma\big(\mathfrak{Op}\big(h+ m_1p\big)\big)=\inf\sigma\big(
H^0+ m_1\pi_0\big)=m_1\,,
$$
and we are done.
\end{proof}

 An immediate consequence is the existence of $\epsilon_0 >0$ such that,  if $\epsilon \in [0,\epsilon_0]$ 
and  $\Re \z \leq \frac 23 m_1$, the operator $K^{\epsilon,\kappa}-\z$ is invertible on $\mathcal{H}$ with a uniformly bounded inverse $R^{\epsilon,\kappa}_\z$ in $\mathcal{L}(\mathcal{H})\,$.

\begin{proposition}
There exists $\epsilon_0 >0$ such that for $\epsilon \in [0,\epsilon_0]$, the Hausdorff distance between the spectra of $H^{\epsilon,\kappa}$ and $\widetilde{\pi}^{\epsilon,\kappa}_0
H^{\epsilon,\kappa}\widetilde{\pi}^{\epsilon,\kappa}_0$, both restricted to the interval $[0,  \frac{m_1}{2}]\,$, is of order $\epsilon^2$.
\end{proposition}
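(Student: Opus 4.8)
The plan is to use a Feshbach–Schur (Grushin) argument combined with the almost-invariance of $\widetilde{\pi}^{\epsilon,\kappa}_0$ established in Proposition \ref{P-quasi-inv} and the invertibility of $K^{\epsilon,\kappa}-\z$ on the whole of $\mathcal H$ for $\Re\z\le\frac23 m_1$. Write $\pi:=\widetilde{\pi}^{\epsilon,\kappa}_0$, $\pi_\bot:=\widetilde{\pi}^{\epsilon,\kappa}_\bot$, $H:=H^{\epsilon,\kappa}$, and decompose $\mathcal H=\pi\mathcal H\oplus\pi_\bot\mathcal H$. For $\z$ with $\Re\z\le\frac{m_1}{2}$ one forms the $2\times2$ block matrix of $H-\z$ with respect to this splitting. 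The key point is that the commutator bound $\|[H,\pi]\|\le C\epsilon$ forces the off-diagonal blocks $\pi H\pi_\bot$ and $\pi_\bot H\pi$ to be of norm $O(\epsilon)$: indeed $\pi H\pi_\bot=\pi[H,\pi_\bot]\pi_\bot=-\pi[H,\pi]\pi_\bot$. The lower-right block is $\pi_\bot(H-\z)\pi_\bot=\pi_\bot(K^{\epsilon,\kappa}-\z)\pi_\bot$ (since $m_1\pi$ kills $\pi_\bot$ up to $O(\epsilon)$ commutator terms), which by Proposition \ref{L-1} and the remark following it is invertible on $\pi_\bot\mathcal H$ with uniformly bounded inverse for such $\z$.

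The main steps, in order, are then as follows. First, I would record the standard fact (see e.g. Feshbach/Schur complement) that $\z\in\sigma(H)$ with $\Re\z\le\frac{m_1}{2}$ if and only if $0\in\sigma$ of the effective operator
$$
E(\z):=\pi(H-\z)\pi-\pi H\pi_\bot\big(\pi_\bot(H-\z)\pi_\bot\big)^{-1}\pi_\bot H\pi
$$
acting on $\pi\mathcal H$, provided $\pi_\bot(H-\z)\pi_\bot$ is invertible there — which holds uniformly on $\{\Re\z\le\frac{m_1}{2}\}$ by the above. Second, the correction term is quadratic in the off-diagonal blocks, hence $\|\pi H\pi_\bot(\pi_\bot(H-\z)\pi_\bot)^{-1}\pi_\bot H\pi\|\le C\epsilon^2$ uniformly for $\Re\z\le\frac{m_1}{2}$, so $E(\z)=(\pi H\pi-\z\pi)+O(\epsilon^2)$ in $\mathcal L(\pi\mathcal H)$. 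Third, I would translate "$0\in\sigma(E(\z))$" versus "$0\in\sigma(\pi H\pi-\z\pi)$ on $\pi\mathcal H$" into a two-sided Hausdorff estimate: since the two self-adjoint-holomorphic families $E(\z)$ and $\pi H\pi-\z\pi$ differ by $O(\epsilon^2)$ uniformly, a point of one spectrum lies within $C\epsilon^2$ of the other. Concretely, if $\z_0\in\sigma(H)\cap[0,\frac{m_1}{2}]$ then $0\in\sigma(E(\z_0))$, so $\mathrm{dist}(0,\sigma(\pi H\pi-\z_0\pi))\le C\epsilon^2$, i.e. $\mathrm{dist}(\z_0,\sigma(\widetilde{\pi}^{\epsilon,\kappa}_0 H\widetilde{\pi}^{\epsilon,\kappa}_0))\le C\epsilon^2$; and symmetrically for the reverse inclusion, using that eigenvalues of $\widetilde{\pi}^{\epsilon,\kappa}_0 H\widetilde{\pi}^{\epsilon,\kappa}_0$ below $\frac{m_1}{2}$ are automatically at distance $\gtrsim m_1$ from the "forbidden" region so the Feshbach equivalence applies with the same $\z$. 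Finally, I would note that the restriction to $[0,\frac{m_1}{2}]$ is exactly what makes $\pi_\bot(H-\z)\pi_\bot$ uniformly invertible, closing the argument.

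The main obstacle I expect is the bookkeeping needed to make the Feshbach reduction genuinely rigorous on an infinite-dimensional space with $\z$ ranging over a spectral interval rather than a single point: one must check that $E(\z)$ is well-defined and (essentially) self-adjoint on a $\z$-independent dense domain, that the map $\z\mapsto E(\z)$ is holomorphic in a complex neighborhood of $[0,\frac{m_1}{2}]$ so that spectral arguments (e.g. comparing resolvents, or applying an analytic-perturbation / Rouché-type count) are legitimate, and that the "$O(\epsilon^2)$" is genuinely uniform in both $\epsilon$ and $\kappa\in[0,1]$ and in $\z$ on the relevant set. A clean way around the domain subtleties is to work at the level of resolvents: show directly that for $\Re\z\le\frac{m_1}{2}$, $\mathrm{dist}(\z,\sigma(H))$ and $\mathrm{dist}(\z,\sigma(\widetilde{\pi}^{\epsilon,\kappa}_0 H\widetilde{\pi}^{\epsilon,\kappa}_0\oplus(m_1\,\text{on }\pi_\bot\mathcal H)))$ differ by $O(\epsilon^2)$ by comparing $(H-\z)^{-1}$ with the block-diagonal parametrix built from $E(\z)^{-1}$ and $(\pi_\bot(H-\z)\pi_\bot)^{-1}$, the error being the square of the $O(\epsilon)$ off-diagonal blocks; this avoids ever differentiating unbounded operators and reduces everything to norm estimates already available from Proposition \ref{P-quasi-inv} and Proposition \ref{L-1}.
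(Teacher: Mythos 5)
Your proposal follows essentially the same route as the paper: a Feshbach--Schur reduction in the splitting $\widetilde{\pi}^{\epsilon,\kappa}_0\mathcal{H}\oplus\widetilde{\pi}^{\epsilon,\kappa}_\bot\mathcal{H}$, with the off-diagonal blocks of size $O(\epsilon)$ coming from Proposition \ref{P-quasi-inv}, the uniform invertibility of $\widetilde{\pi}^{\epsilon,\kappa}_\bot(H^{\epsilon,\kappa}-\z)\widetilde{\pi}^{\epsilon,\kappa}_\bot=\widetilde{\pi}^{\epsilon,\kappa}_\bot(K^{\epsilon,\kappa}-\z)\widetilde{\pi}^{\epsilon,\kappa}_\bot$ (an exact identity, not merely up to $O(\epsilon)$) from Proposition \ref{L-1}, and the quadratic $O(\epsilon^2)$ Schur-complement correction, followed by a two-sided Neumann-series comparison; your $E(\z)$ is exactly the paper's $T^{\epsilon,\kappa}(\z)$ in \eqref{abb}. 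The argument is correct and matches the paper's proof.
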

\begin{proof}~\\
{\bf Step 1}.\\
We first establish the invertibility of $\widetilde{\pi}^{\epsilon,\kappa}_\bot
\big(H^{\epsilon,\kappa}-\z\big)
\widetilde{\pi}^{\epsilon,\kappa}_\bot$ on the range of  $\widetilde{\pi}^{\epsilon,\kappa}_\bot$.  We have 
$$
\widetilde{\pi}^{\epsilon,\kappa}_\bot
\big(K^{\epsilon,\kappa}-\z\big)
\widetilde{\pi}^{\epsilon,\kappa}_\bot\,=\,
\widetilde{\pi}^{\epsilon,\kappa}_\bot
\big(H^{\epsilon,\kappa}-\z\big)
\widetilde{\pi}^{\epsilon,\kappa}_\bot.
$$
We also observe that:
 $$
\left( \widetilde{\pi}^{\epsilon,\kappa}_\bot
R^{\epsilon,\kappa}_\z
\widetilde{\pi}^{\epsilon,\kappa}_\bot \right) \left(\widetilde{\pi}^{\epsilon,\kappa}_\bot
\big(K^{\epsilon,\kappa}-\z\big)
\widetilde{\pi}^{\epsilon,\kappa}_\bot \right) 
=\widetilde{\pi}^{\epsilon,\kappa}_\bot\big(\bb1-R^{\epsilon,\kappa}_\z
[H^{\epsilon,\kappa},\widetilde{\pi}^{\epsilon,\kappa}_0]\big)
\widetilde{\pi}^{\epsilon,\kappa}_\bot\, .
$$
A similar formula holds for $ \left(\widetilde{\pi}^{\epsilon,\kappa}_\bot
\big(K^{\epsilon,\kappa}-\z\big)
\widetilde{\pi}^{\epsilon,\kappa}_\bot \right) \left( \widetilde{\pi}^{\epsilon,\kappa}_\bot
R^{\epsilon,\kappa}_\z
\widetilde{\pi}^{\epsilon,\kappa}_\bot \right)  \,$.\\
Using Proposition \ref{P-quasi-inv} we conclude that $\widetilde{\pi}^{\epsilon,\kappa}_\bot
\big(H^{\epsilon,\kappa}-\z\big)
\widetilde{\pi}^{\epsilon,\kappa}_\bot$ is invertible in the subspace $\widetilde{\pi}^{\epsilon,\kappa}_\bot\mathcal{H}$ and its inverse $R^{\epsilon,\kappa}_{\z,\bot}$ verifies the estimate
$$
\left\|R^{\epsilon,\kappa}_{\z,\bot}-\widetilde{\pi}^{\epsilon,\kappa}_\bot R^{\epsilon,\kappa}_\z
\widetilde{\pi}^{\epsilon,\kappa}_\bot\right\|_{\mathcal{L}(\widetilde{\pi}^{\epsilon,\kappa}_\bot\mathcal{H})}\leq\,C\,\epsilon\, .
$$
{\bf Step 2}.\\
The Feshbach inversion formula implies that 
if $\Re \z \leq  \frac{m_1}{2}\,$, the operator $H^{\epsilon,\kappa}-\z$ is invertible in $\mathcal{H}$ if and only if the operator
\beq\label{abb}
T^{\epsilon,\kappa}(\z):=\widetilde{\pi}^{\epsilon,\kappa}_0
H^{\epsilon,\kappa}\widetilde{\pi}^{\epsilon,\kappa}_0-\z\widetilde{\pi}^{\epsilon,\kappa}_0- \left(\widetilde{\pi}^{\epsilon,\kappa}_0
H^{\epsilon,\kappa}\, \widetilde{\pi}^{\epsilon,\kappa}_\bot\right) \,
R^{\epsilon,\kappa}_{\z,\bot}\, \left(\widetilde{\pi}^{\epsilon,\kappa}_\bot\,
H^{\epsilon,\kappa}\, \widetilde{\pi}^{\epsilon,\kappa}_0\right)
\eeq
is invertible in $\widetilde{\pi}^{\epsilon,\kappa}_0\mathcal{H}\,$. Since $\widetilde{\pi}^{\epsilon,\kappa}_0
H^{\epsilon,\kappa}\widetilde{\pi}^{\epsilon,\kappa}_\bot=\widetilde{\pi}^{\epsilon,\kappa}_0\,
[\widetilde{\pi}^{\epsilon,\kappa}_0,H^{\epsilon,\kappa}]\, \widetilde{\pi}^{\epsilon,\kappa}_\bot$ and using once again Proposition~\ref{P-quasi-inv}, we get that the last term in \eqref{abb} has a norm which is bounded by $C'\epsilon^2$, with $C'$ denoting a generic constant, uniformly in $\z \in [0,m_1/2]\,$.  

First, we assume that $\z\in [0,m_1/2]\setminus \sigma(\widetilde{\pi}^{\epsilon,\kappa}_0
H^{\epsilon,\kappa}\widetilde{\pi}^{\epsilon,\kappa}_0)\,$. A Neumann series argument implies that if the distance between $\z$ and $\sigma(\widetilde{\pi}^{\epsilon,\kappa}_0
H^{\epsilon,\kappa}\widetilde{\pi}^{\epsilon,\kappa}_0)$ is larger than $C'\epsilon^2$, then $T^{\epsilon,\kappa}(\z)$ is invertible, hence $\z$ is also in the resolvent set of $H^{\epsilon,\kappa}\,$. 

Second, we assume that $\z\in [0,m_1/2]\setminus \sigma(
H^{\epsilon,\kappa})$ and moreover, the distance between $\z$ and $\sigma(
H^{\epsilon,\kappa})$ is larger than $C'\epsilon^2$. From the Feshbach formula we get that $T^{\epsilon,\kappa}(\z)^{-1}$ exists and 
$$T^{\epsilon,\kappa}(\z)^{-1}=\widetilde{\pi}^{\epsilon,\kappa}_0(H^{\epsilon,\kappa}-\z)^{-1}\widetilde{\pi}^{\epsilon,\kappa}_0,\quad ||T^{\epsilon,\kappa}(\z)^{-1}||<C'^{-1}\epsilon^{-2}.$$ 
Then \eqref{abb} implies 
$$\widetilde{\pi}^{\epsilon,\kappa}_0
H^{\epsilon,\kappa}\widetilde{\pi}^{\epsilon,\kappa}_0-\z\widetilde{\pi}^{\epsilon,\kappa}_0=T^{\epsilon,\kappa}(\z)+\left(\widetilde{\pi}^{\epsilon,\kappa}_0
H^{\epsilon,\kappa}\, \widetilde{\pi}^{\epsilon,\kappa}_\bot\right) \,
R^{\epsilon,\kappa}_{\z,\bot}\, \left(\widetilde{\pi}^{\epsilon,\kappa}_\bot\,
H^{\epsilon,\kappa}\, \widetilde{\pi}^{\epsilon,\kappa}_0\right),$$
and a Neumann series argument shows that $\z$ also belongs to the resolvent set of the quasi-band Hamiltonian. 

Thus we have shown that, the Hausdorff distance between the two spectra restricted to the interval $[0,m_1/2]$ must be of order $\epsilon^2$. 
\end{proof}

\subsection{The magnetic quasi-Bloch function $\lambda^\epsilon$} \label{SS-magn-Bloch-func}

In this subsection,  we  study the spectrum of the operator  $\widetilde{\pi}^{\epsilon,\kappa}_0
H^{\epsilon,\kappa}\, \widetilde{\pi}^{\epsilon,\kappa}_0$ acting  in $\widetilde{\pi}^{\epsilon,\kappa}_0\mathcal{H}$ by looking at its associated magnetic matrix (see also Definition \ref{Def-W-magn}) in the basis $(\widetilde{\phi}^{\epsilon,\kappa}_\alpha)_{\alpha \in \Gamma}$\,:
\begin{equation*}\begin{array}{l}
\hspace{-1cm}\left\langle\widetilde{\phi}^{\epsilon,\kappa}_\alpha\,,\,H^{\epsilon,\kappa}\widetilde{\phi}^{
\epsilon,\kappa}_\beta\right\rangle_{\mathcal{H}}\\
\quad  = \underset{(\tilde{\alpha},\tilde{\beta})\in\Gamma\times\Gamma}{\sum}\hspace{-10pt}
\overline{\mathbb{F}^{\epsilon,\kappa}_{\tilde{\alpha}\alpha}}\,\mathbb{F}^{\epsilon,\kappa}_{\tilde{\beta}\beta}
\left\langle\widetilde{\Lambda}^{\epsilon,\kappa}(\cdot,\tilde{\alpha})\, \phi^{\epsilon}
_{\tilde{\alpha}}\,,\,H^{\epsilon,\kappa}\widetilde{\Lambda}^{\epsilon,\kappa
}(\cdot,\tilde{\beta}) \, \phi^{\epsilon}_{\tilde{\beta}}\right
\rangle_{\mathcal{H}}
\\
\quad  =\hspace{-5pt}\underset{(\tilde{\alpha},\tilde{\beta})\in\Gamma\times\Gamma}{\sum}\hspace{-10pt}
\overline{\mathbb{F}^{\epsilon,\kappa}_{\tilde{\alpha}\alpha}}\,\mathbb{F}^{\epsilon,\kappa}_{\tilde{\beta}\beta}
\left\langle\phi^{\epsilon}
_{\tilde{\alpha}}\,,\,\widetilde{\Lambda}^{\epsilon,\kappa}(\cdot,\tilde{\alpha})^{-1}\big((-i\nabla -\epsilon A^0(\cdot ) -\kappa A(\epsilon \cdot ))^2+V\big)\widetilde{\Lambda}^{\epsilon,\kappa}(\cdot,\tilde{\beta})\,\phi^{\epsilon}_{\tilde{\beta}}\right
\rangle_{\mathcal{H}}\,.
\end{array}
\end{equation*}
Introducing 
$$
a_\epsilon(x,\gamma)_j\,=\,\underset{k}{\sum}(x-\gamma)_k\int_0^1\epsilon
B_{jk}\big(\epsilon\gamma+s\epsilon(x-\gamma)\big)s\,ds\, \mbox{ for } j=1,2\,,
$$
and using the intertwining formula \eqref{comm-B-moment}, we find:
\begin{align}\label{26-02-1}
(-i\nabla-\epsilon A^0(x)-\kappa A(\epsilon x))\widetilde{\Lambda}^{\epsilon,\kappa}(x,\tilde{\beta})
=\widetilde{\Lambda}^{\epsilon,\kappa}(x,\tilde{\beta})\left\{(-i\nabla-\epsilon A^0(x))\,+\,\kappa a_\epsilon(x,\tilde{\beta})\right\}\,,
\end{align}
and 
\begin{align}\label{26-02-2}
(-i\nabla-\epsilon A^0(x)-\kappa A(\epsilon x))^2\widetilde{\Lambda}^{\epsilon,\kappa}(x,\tilde{\beta}) 
=\widetilde{\Lambda}^{\epsilon,\kappa}(x,\tilde{\beta})\left\{(-i\nabla-\epsilon A^0(x))\,+\,\kappa a_\epsilon(x,\tilde{\beta})\right\}^2.
\end{align}
Let us notice that
\begin{equation}\label{est-aepsilon}
|a_\epsilon(x,\gamma)|\leq C\epsilon<x-\gamma>\,.
\end{equation}
By the Stokes Formula we have 
\begin{equation}\label{L-ek}
\widetilde{\Lambda}^{\epsilon,\kappa}(x,\tilde{\alpha})^{-1}\widetilde{\Lambda}^{\epsilon,\kappa}(x,\tilde{\beta})\,=\,\widetilde{\Lambda}^{\epsilon,\kappa}(\tilde{\alpha},\tilde{\beta})\, \Omega^{\kappa,\epsilon}(\tilde{\alpha},x,\tilde{\beta})\,,
\end{equation}
and we know that
$$|\Omega^{\kappa,\epsilon}(\tilde{\alpha},x,\tilde{\beta})-\bb1|\,
\leq\kappa\epsilon\, |x-\tilde { \alpha}|\, |x-\tilde{\beta}|\,.
$$
Moreover, using
Remark \ref{comm-magn-transl}, one easily proves the following estimates
(remember the notation $H^\epsilon=H^{\epsilon,0}$):
\begin{equation}\label{L-ek-2}
\left|\left\langle \big(\Omega^{\kappa,\epsilon}(\tilde{\alpha},\cdot ,\tilde{\beta})-\bb1\big)\phi^\epsilon_{\tilde{\alpha}}\,,\,H^\epsilon\phi^\epsilon_{\tilde{\beta}}\right\rangle_{\mathcal{H}}\right|
\leq C_m\kappa\epsilon<\tilde{\alpha}-\tilde{\beta}>^{-m},\qquad\forall m\in\mathbb{N}\,.
\end{equation}

\begin{lemma} For any $m\in\mathbb{N}\,$, there exists $C_m$ such that if $\psi$
equals either $\big(\Omega^{\kappa,\epsilon}(\tilde{\alpha},\cdot
,\tilde{\beta})-\bb1\big)\phi^\epsilon_{\tilde{\alpha}}$  or
$\phi^\epsilon_{\tilde{\alpha}}\,$, we have:
$$
\left|\left\langle\psi\,,\big[\big((-i\nabla-\epsilon A^0)+\kappa a_\epsilon(\cdot ,\tilde{\beta})\big)^2+V\big]\phi^{
\epsilon}_{\tilde{\beta}}\right\rangle_{\mathcal{H}}-\,\left\langle\psi\,,\,H^{\epsilon}\phi^{
\epsilon}_{\tilde{\beta}}\right\rangle_{\mathcal{H}}\right|\leq
C_m\, \kappa\epsilon<\tilde{\alpha}-\tilde{\beta}>^{-m}\,.
$$
\end{lemma}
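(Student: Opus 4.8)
The plan is to reduce the claimed bound to the two estimates already available just before the lemma, namely \eqref{L-ek-2} and the pointwise bound \eqref{est-aepsilon} on $a_\epsilon$, by splitting off the terms in the operator $\big((-i\nabla-\epsilon A^0)+\kappa a_\epsilon(\cdot,\tilde\beta)\big)^2+V$ that differ from $H^\epsilon$. Expanding the square, we have
$$
\big((-i\nabla-\epsilon A^0)+\kappa a_\epsilon(\cdot,\tilde\beta)\big)^2+V
= H^\epsilon + \kappa\Big[(-i\nabla-\epsilon A^0)\cdot a_\epsilon(\cdot,\tilde\beta)+a_\epsilon(\cdot,\tilde\beta)\cdot(-i\nabla-\epsilon A^0)\Big] + \kappa^2\,|a_\epsilon(\cdot,\tilde\beta)|^2\,,
$$
so the quantity to be estimated is
$$
\kappa\left\langle\psi\,,\,\big[(-i\nabla-\epsilon A^0)\cdot a_\epsilon+a_\epsilon\cdot(-i\nabla-\epsilon A^0)\big]\phi^\epsilon_{\tilde\beta}\right\rangle
+\kappa^2\left\langle\psi\,,\,|a_\epsilon|^2\,\phi^\epsilon_{\tilde\beta}\right\rangle\,,
$$
with $a_\epsilon=a_\epsilon(\cdot,\tilde\beta)$. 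I would bound each of the two terms separately in the two cases $\psi=\phi^\epsilon_{\tilde\alpha}$ and $\psi=\big(\Omega^{\kappa,\epsilon}(\tilde\alpha,\cdot,\tilde\beta)-\bb1\big)\phi^\epsilon_{\tilde\alpha}$.

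First I would treat $\psi=\phi^\epsilon_{\tilde\alpha}$. Since $\phi^\epsilon_{\tilde\beta}=\Lambda^\epsilon(\cdot,\tilde\beta)\tau_{-\tilde\beta}\psi^\epsilon_0$ and the magnetic momenta $(-i\nabla-\epsilon A^0)$ intertwine nicely with the magnetic translations (Remark~\ref{comm-magn-transl}), the function $(-i\nabla-\epsilon A^0)\phi^\epsilon_{\tilde\beta}$ is again of the form $\Lambda^\epsilon(\cdot,\tilde\beta)$ times a translate by $\tilde\beta$ of a fixed Schwartz function; the same is true of $\phi^\epsilon_{\tilde\alpha}$ with $\tilde\beta$ replaced by $\tilde\alpha$. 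Using $|a_\epsilon(x,\tilde\beta)|\le C\epsilon\langle x-\tilde\beta\rangle$ and $|\partial^\alpha a_\epsilon(x,\tilde\beta)|\le C_\alpha\epsilon\langle x-\tilde\beta\rangle$ (which follows by differentiating the defining integral and using that $B$ is $BC^\infty$), the integrand in each matrix element is, up to unimodular phases, a product of a Schwartz function localized near $\tilde\alpha$, a Schwartz function localized near $\tilde\beta$, and a factor growing at most polynomially in $\langle x-\tilde\beta\rangle$; the extra factor $\kappa\epsilon$ (resp.\ $\kappa^2\epsilon^2$) is explicit. A standard localization estimate (of exactly the type used repeatedly in the proof of Proposition~\ref{P-quasi-inv}, exploiting the rapid decay of $\psi^\epsilon_0$) then yields a bound $C_m\,\kappa\epsilon\,\langle\tilde\alpha-\tilde\beta\rangle^{-m}$ for every $m$.

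For $\psi=\big(\Omega^{\kappa,\epsilon}(\tilde\alpha,\cdot,\tilde\beta)-\bb1\big)\phi^\epsilon_{\tilde\alpha}$ I would argue similarly, now using the extra bound $|\Omega^{\kappa,\epsilon}(\tilde\alpha,x,\tilde\beta)-\bb1|\le\kappa\epsilon\,|x-\tilde\alpha|\,|x-\tilde\beta|$ together with the corresponding polynomial bounds on its derivatives (from \eqref{Est-Omega}). This makes the first factor in the integrand an extra power of $\kappa\epsilon$ smaller, so the contribution of this $\psi$ is in fact $O(\kappa^2\epsilon^2\langle\tilde\alpha-\tilde\beta\rangle^{-m})$, which is more than enough; one just keeps the cruder $O(\kappa\epsilon\langle\tilde\alpha-\tilde\beta\rangle^{-m})$ bound to match the statement. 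The only genuinely delicate point — and the step I expect to be the main obstacle — is bookkeeping the decay in $\tilde\alpha-\tilde\beta$: the amplitude $a_\epsilon(x,\tilde\beta)$ and $\Omega^{\kappa,\epsilon}$ carry powers of $|x-\tilde\beta|$ and $|x-\tilde\alpha|$ that must be absorbed into the rapid decay of $\psi^\epsilon_0(x-\tilde\alpha)$ and $\psi^\epsilon_0(x-\tilde\beta)$, and one has to check that after integration in $x$ the surviving constant still decays faster than any power of $\langle\tilde\alpha-\tilde\beta\rangle$; this is routine but is where all the care goes, and it is handled by the triangle inequality $\langle\tilde\alpha-\tilde\beta\rangle\le\langle x-\tilde\alpha\rangle\langle x-\tilde\beta\rangle$ exactly as in Proposition~\ref{P-quasi-inv}. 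Observing finally that $\kappa\in[0,1]$ lets us replace the $\kappa^2\epsilon^2$ contributions by $\kappa\epsilon$ times a constant completes the proof.
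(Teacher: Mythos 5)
Your proposal is correct and follows essentially the same route as the paper: expand the square so that the difference reduces to the cross terms $\kappa\langle\psi,(-i\nabla-\epsilon A^0)\cdot a_\epsilon\,\phi^\epsilon_{\tilde\beta}\rangle+\kappa\langle\psi,a_\epsilon\cdot(-i\nabla-\epsilon A^0)\phi^\epsilon_{\tilde\beta}\rangle$ plus the $\kappa^2 a_\epsilon^2$ term, and then invoke Remark \ref{comm-magn-transl}, the bound \eqref{est-aepsilon}, and the rapid-decay/localization arguments already used for \eqref{L-ek-2}. The additional details you supply (derivative bounds on $a_\epsilon$, the extra $\kappa\epsilon$ gain when $\psi=(\Omega^{\kappa,\epsilon}(\tilde\alpha,\cdot,\tilde\beta)-\bb1)\phi^\epsilon_{\tilde\alpha}$, and the absorption of polynomial weights via $\langle\tilde\alpha-\tilde\beta\rangle\le\langle x-\tilde\alpha\rangle\langle x-\tilde\beta\rangle$) are exactly what the paper leaves implicit.
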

\begin{proof}~\\
The difference of the two scalar products is  equal to
\begin{equation*}
 \kappa^2
\left\langle\psi\,,\,a_\epsilon(\cdot,{\tilde{\beta}})^2\phi^{
\epsilon}_\beta\right\rangle_{\mathcal{H}} 
+\kappa\left\langle\psi\,,\,(-i\nabla-\epsilon A^0)\cdot a_\epsilon(\cdot ,{\tilde{\beta}})\phi^{
\epsilon}_{\tilde{\beta}}\right\rangle_{\mathcal{H}}\\
+\kappa \left\langle\psi\,,\,a_\epsilon(\cdot ,{\tilde{\beta}})\cdot(-i\nabla-\epsilon A^0(x))\phi^{
\epsilon}_{\tilde{\beta}}\right\rangle_{\mathcal{H}}.
\end{equation*}
Remark \ref{comm-magn-transl}, the estimate \eqref{est-aepsilon} and some
arguments similar to those leading to \eqref{L-ek-2}
finish the proof.
\end{proof}

\begin{proposition}\label{propo-marts1}
For any $m\in\mathbb{N}$, there exists $C_m$ such that,  $\forall \, (\alpha,\beta) \in \Gamma\times \Gamma\,$,
$$
\left|\left\langle\widetilde{\phi}^{\epsilon,\kappa}_\alpha\,,\,H^{\epsilon,\kappa}\widetilde{\phi}^{
\epsilon,\kappa}_\beta\right\rangle_{\mathcal{H}}\,-\,\widetilde{\Lambda}^{\epsilon,\kappa}(\alpha,\beta)
\left\langle\phi^{\epsilon}_\alpha\,,\,H^{\epsilon}\phi^{
\epsilon}_\beta\right\rangle_{\mathcal{H}}\right|\leq
C_m\, \kappa\epsilon<\alpha-\beta>^{-m}\,.
$$
\end{proposition}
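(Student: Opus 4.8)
The plan is to unfold the definition of $\widetilde{\phi}^{\epsilon,\kappa}_\gamma = \sum_{\tilde\alpha} \mathbb{F}^{\epsilon,\kappa}_{\tilde\alpha\gamma}\,\overset{\circ\quad}{\phi^{\epsilon,\kappa}_{\tilde\alpha}}$ together with the factorization $\overset{\circ\quad}{\phi^{\epsilon,\kappa}_{\tilde\alpha}} = \widetilde\Lambda^{\epsilon,\kappa}(\cdot,\tilde\alpha)\,\phi^\epsilon_{\tilde\alpha}$, and compute the matrix element $\langle\widetilde{\phi}^{\epsilon,\kappa}_\alpha, H^{\epsilon,\kappa}\widetilde{\phi}^{\epsilon,\kappa}_\beta\rangle$ as the double sum over $(\tilde\alpha,\tilde\beta)$ already displayed in the excerpt. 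Using the intertwining identities \eqref{26-02-1}--\eqref{26-02-2} to move $\widetilde\Lambda^{\epsilon,\kappa}(\cdot,\tilde\beta)$ past the magnetic momenta, and the Stokes identity \eqref{L-ek} to collapse $\widetilde\Lambda^{\epsilon,\kappa}(x,\tilde\alpha)^{-1}\widetilde\Lambda^{\epsilon,\kappa}(x,\tilde\beta) = \widetilde\Lambda^{\epsilon,\kappa}(\tilde\alpha,\tilde\beta)\,\Omega^{\kappa,\epsilon}(\tilde\alpha,x,\tilde\beta)$, each summand becomes $\widetilde\Lambda^{\epsilon,\kappa}(\tilde\alpha,\tilde\beta)$ times a scalar product of the form $\langle \Omega^{\kappa,\epsilon}(\tilde\alpha,\cdot,\tilde\beta)\phi^\epsilon_{\tilde\alpha},\, [((-i\nabla-\epsilon A^0)+\kappa a_\epsilon(\cdot,\tilde\beta))^2+V]\phi^\epsilon_{\tilde\beta}\rangle$. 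The immediately preceding Lemma (with $\psi = (\Omega^{\kappa,\epsilon}-\bb1)\phi^\epsilon_{\tilde\alpha}$ and with $\psi = \phi^\epsilon_{\tilde\alpha}$), together with \eqref{L-ek-2}, lets me replace this by $\langle\phi^\epsilon_{\tilde\alpha}, H^\epsilon\phi^\epsilon_{\tilde\beta}\rangle$ at the cost of an error bounded by $C_m\,\kappa\epsilon\langle\tilde\alpha-\tilde\beta\rangle^{-m}$; I also pick up the phase discrepancy between $\widetilde\Lambda^{\epsilon,\kappa}(\tilde\alpha,\tilde\beta)$ and the target phase $\widetilde\Lambda^{\epsilon,\kappa}(\alpha,\beta)$, which again produces only an $O(\kappa\epsilon)$, rapidly-decaying-in-$\tilde\alpha-\tilde\beta$ term once combined with the decay of $\langle\phi^\epsilon_{\tilde\alpha},H^\epsilon\phi^\epsilon_{\tilde\beta}\rangle$ in $\tilde\alpha-\tilde\beta$ (which follows from the exponential localization of the $\phi^\epsilon_\gamma$ and Remark \ref{comm-magn-transl}).

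With this reduction the leading term of the double sum is $\sum_{\tilde\alpha,\tilde\beta}\overline{\mathbb{F}^{\epsilon,\kappa}_{\tilde\alpha\alpha}}\,\mathbb{F}^{\epsilon,\kappa}_{\tilde\beta\beta}\,\widetilde\Lambda^{\epsilon,\kappa}(\alpha,\beta)\,\langle\phi^\epsilon_{\tilde\alpha},H^\epsilon\phi^\epsilon_{\tilde\beta}\rangle$, and I must recognize this as $\widetilde\Lambda^{\epsilon,\kappa}(\alpha,\beta)\langle\phi^\epsilon_\alpha, H^\epsilon\phi^\epsilon_\beta\rangle$. The point is that $\mathbb{F}^{\epsilon,\kappa}=(\mathbb{G}^{\epsilon,\kappa})^{-1/2}$ is, modulo the $\widetilde\Lambda^{\epsilon,\kappa}$-twist, a function of $\tilde\alpha-\alpha$ only (one should establish, in analogy with point (3) of Proposition \ref{P-F-prop} and Proposition \ref{P-ort-ek}, that $\mathbb{F}^{\epsilon,\kappa}_{\tilde\alpha\alpha}=\widetilde\Lambda^{\epsilon,\kappa}(\tilde\alpha,\alpha)\,\mathbf{F}^{\epsilon,\kappa}(\tilde\alpha-\alpha)$ with $\mathbf{F}^{\epsilon,\kappa}$ rapidly decaying), and that $\widetilde\pi^{\epsilon,\kappa}_0 H^{\epsilon,\kappa}\widetilde\pi^{\epsilon,\kappa}_0$ is covariant under the magnetic translations $\widetilde\Lambda^{\epsilon,\kappa}(\cdot,\gamma)\tau_{-\gamma}$, so that the double sum, phases included, factors exactly into the claimed product. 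Concretely, I would write $\langle\phi^\epsilon_{\tilde\alpha},H^\epsilon\phi^\epsilon_{\tilde\beta}\rangle = \Lambda^\epsilon(\tilde\alpha,\tilde\beta)\,h_\epsilon(\tilde\alpha-\tilde\beta)$ using the commutation in Remark \ref{comm-magn-transl}, combine all the $\Lambda^\epsilon$ and $\widetilde\Lambda^{\epsilon,\kappa}$ phases via Stokes into $\widetilde\Lambda^{\epsilon,\kappa}(\alpha,\beta)$ times a triangle-flux factor $\Omega$ which is $\bb1 + O(\kappa\epsilon)$ on the relevant (rapidly decaying) range of indices, and absorb the $\Omega-\bb1$ contributions into the error term.

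Finally I would do the bookkeeping of the error sums: the corrections split into (i) the per-summand $O(\kappa\epsilon\langle\tilde\alpha-\tilde\beta\rangle^{-m})$ terms from the Lemma and from $\Omega-\bb1$, which after multiplication by the rapidly decaying $|\mathbb{F}^{\epsilon,\kappa}_{\tilde\alpha\alpha}|,|\mathbb{F}^{\epsilon,\kappa}_{\tilde\beta\beta}|$ and summation over $\tilde\alpha,\tilde\beta$ yield $C_m\kappa\epsilon\langle\alpha-\beta\rangle^{-m}$ (a standard convolution-of-rapidly-decaying-sequences estimate, as already used at the end of the proof of Proposition \ref{P-quasi-inv}); and (ii) the replacement of $\mathbb{F}^{\epsilon,\kappa}$ by $\mathbb{F}^\epsilon$ would cost $O(\kappa\epsilon)$ by the analogue of \eqref{decay-F}, but this is unnecessary since we keep $\mathbb{F}^{\epsilon,\kappa}$ and directly use the magnetic-translation covariance to collapse the sum exactly. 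I expect the main obstacle to be precisely this last point: verifying that the phase factors reassemble \emph{exactly} into $\widetilde\Lambda^{\epsilon,\kappa}(\alpha,\beta)$ with only rapidly-decaying $O(\kappa\epsilon)$ remainders — i.e. controlling the interplay between the two gauge twists $\Lambda^\epsilon$ and $\widetilde\Lambda^{\epsilon,\kappa}$ and the triangle fluxes $\Omega^{\kappa,\epsilon}$ — while keeping all constants uniform in $(\epsilon,\kappa)\in[0,\epsilon_0]\times[0,1]$. Everything else is a routine assembly of the estimates \eqref{est-aepsilon}, \eqref{L-ek}, \eqref{L-ek-2}, the preceding Lemma, and Proposition \ref{P-ort-ek}.
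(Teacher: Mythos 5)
Your opening steps coincide with the paper's: unfold the double sum over $(\tilde\alpha,\tilde\beta)$, use the intertwining relations \eqref{26-02-1}--\eqref{26-02-2}, the Stokes identity \eqref{L-ek}, the unnumbered Lemma and \eqref{L-ek-2} to replace the conjugated operator by $H^\epsilon$ with a per-summand error $O(\kappa\epsilon<\tilde\alpha-\tilde\beta>^{-m})$. The gap is in how you then collapse the double sum. You propose to prove, ``in analogy with Proposition \ref{P-F-prop}(3)'', that $\mathbb{F}^{\epsilon,\kappa}_{\tilde\alpha\alpha}=\widetilde\Lambda^{\epsilon,\kappa}(\tilde\alpha,\alpha)\,\mathbf{F}^{\epsilon,\kappa}(\tilde\alpha-\alpha)$ and that $\widetilde{\pi}^{\epsilon,\kappa}_0H^{\epsilon,\kappa}\widetilde{\pi}^{\epsilon,\kappa}_0$ is covariant under the translations $\Lambda^{\epsilon,\kappa}(\cdot,\gamma)\tau_{-\gamma}$, ``so that the double sum, phases included, factors exactly''. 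Both statements fail once $\kappa\neq0$: the covariance in Remarks \ref{R3.5} and \ref{comm-magn-transl} uses the constancy of the field — the cocycle $\Lambda^\epsilon(\beta,\alpha)$ is a scalar only because the flux of $\epsilon B_0$ through a triangle is translation invariant. For $B_{\epsilon,\kappa}$ the corresponding factor is the $x$-dependent flux $\Omega^{\epsilon,\kappa}$, the translations no longer form a projective representation commuting with $H^{\epsilon,\kappa}$, and $\mathbb{G}^{\epsilon,\kappa}$ (hence $\mathbb{F}^{\epsilon,\kappa}$) is not ``a phase times a function of $\tilde\alpha-\alpha$'': Proposition \ref{P-ort-ek} only gives $\mathbb{G}^{\epsilon,\kappa}_{\alpha\beta}=\delta_{\alpha\beta}+\widetilde\Lambda^{\epsilon,\kappa}(\alpha,\beta)\mathbb{X}^{\epsilon,\kappa}(\alpha,\beta)$ with $\mathbb{X}^{\epsilon,\kappa}$ depending on both indices. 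If the exact covariance you invoke held, the quasi-band matrix would be exactly of Hofstadter type as in the constant-field case, which is precisely what fails for $\kappa\neq0$ and what forces the $O(\kappa\epsilon)$ error in the statement. A second, related problem is your term-by-term replacement of $\widetilde\Lambda^{\epsilon,\kappa}(\tilde\alpha,\tilde\beta)$ by $\widetilde\Lambda^{\epsilon,\kappa}(\alpha,\beta)$ at cost ``$O(\kappa\epsilon)$, rapidly decaying in $\tilde\alpha-\tilde\beta$'': the difference of these unimodular factors is not a closed-loop flux; it involves the line integrals $\kappa\int_{[\alpha,\tilde\alpha]}A_\epsilon$ and $\kappa\int_{[\tilde\beta,\beta]}A_\epsilon$, of size $\kappa\,|A(\epsilon\alpha)|\,|\tilde\alpha-\alpha|$ with $|A(\epsilon\alpha)|$ typically growing like $\epsilon|\alpha|$, hence not uniform over $\Gamma$ and not compensated by decay in $\tilde\alpha-\tilde\beta$; only gauge-invariant quantities such as $\Omega^{\epsilon,\kappa}-\bb1$ obey $O(\kappa\epsilon)$ bounds.

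The correct finish — and the paper's — needs no covariance and no phase comparison at all. From Proposition \ref{P-ort-ek} and the Neumann/binomial series for $(\mathbb{G}^{\epsilon,\kappa})^{-1/2}$ one gets $\big|\mathbb{F}^{\epsilon,\kappa}_{\tilde\alpha\alpha}-\delta_{\tilde\alpha\alpha}\big|\leq C_m\,\kappa\epsilon<\tilde\alpha-\alpha>^{-m}$. Hence only the diagonal term $(\tilde\alpha,\tilde\beta)=(\alpha,\beta)$ contributes to leading order, and it already carries exactly the phase $\widetilde\Lambda^{\epsilon,\kappa}(\alpha,\beta)$; your intertwining/Stokes/Lemma step identifies it with $\widetilde\Lambda^{\epsilon,\kappa}(\alpha,\beta)\langle\phi^\epsilon_\alpha,H^\epsilon\phi^\epsilon_\beta\rangle$ up to $O(\kappa\epsilon<\alpha-\beta>^{-m})$, the $\Omega^{\kappa,\epsilon}-\bb1$ contribution being controlled by \eqref{L-ek-2}. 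Every other term contains at least one off-diagonal factor of $\mathbb{F}^{\epsilon,\kappa}$, so its phases need only be bounded by one, and the rapid decay of $\langle\phi^\epsilon_{\tilde\alpha},H^\epsilon\phi^\epsilon_{\tilde\beta}\rangle$ in $\tilde\alpha-\tilde\beta$ together with the convolution estimate you already describe yields the bound $C_m\,\kappa\epsilon<\alpha-\beta>^{-m}$. With the exact-covariance collapse and the phase comparison removed in favour of this diagonal/off-diagonal splitting, the rest of your bookkeeping goes through.
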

\begin{proof}
Putting together all the previous estimates and using Proposition \ref{P-ort-ek} we obtain:
\begin{equation*}
\begin{array}{l}
\hspace{-1cm} \left|\left\langle\widetilde{\phi}^{\epsilon,\kappa}_\alpha\,,\,H^{\epsilon,\kappa}\widetilde{\phi}^{
\epsilon,\kappa}_\beta\right\rangle_{\mathcal{H}}\,-\,\widetilde{\Lambda}^{\epsilon,\kappa}(\alpha,\beta)
\left\langle\phi^{\epsilon}_\alpha\,,\,H^{\epsilon}\, \phi^{
\epsilon}_\beta\right\rangle_{\mathcal{H}}\right|\,\\
\quad  \leq\,\left|
\left\langle H^\epsilon\,\big(\Omega^{\kappa,\epsilon}(\alpha,x,\beta)-\bb1\big)\phi^\epsilon_\alpha\,,\,\phi^\epsilon_\beta\right\rangle_{\mathcal{H}}\right|
\\ 
\qquad +C(m_1,m_2,m_3)\, \kappa\epsilon\,\left( \underset{(\tilde{\alpha},\tilde{\beta})\in(\Gamma\setminus\{\alpha\})\times(\Gamma
\setminus\{\beta\})}{\sum}\hspace{-30pt}
<\alpha-\tilde{\alpha}>^{-m_1}
<\beta-\tilde{\beta}>^{-m_2}<\tilde{\alpha}-\tilde{\beta}>^{-m_3}\right)\,,
\end{array}
\end{equation*}
for any triple $(m_1,m_2,m_3)\in\mathbb{N}^3$ and the Proposition follows
from \eqref{L-ek-2}.
\end{proof}

\medskip

\noindent Remarks \ref{comm-magn-transl} and \ref{R3.5} imply:
\begin{align*}
\left\langle\phi^{\epsilon}_\alpha\,,\,H^{\epsilon}
\phi^{\epsilon}_\beta\right\rangle_{\mathcal{H}}& =\left\langle\psi^{\epsilon}_0\,,\,(\Lambda^\epsilon(\cdot,\alpha))^{-1}\tau_{\alpha-\beta}\Lambda^\epsilon(\cdot,\beta) 
H^{\epsilon}\psi^{\epsilon}_0\right
\rangle_{\mathcal{H}}\\ & 
=\Lambda^\epsilon(\alpha,\beta)\left\langle\psi^{\epsilon}_0\,,\,\Lambda^\epsilon(\cdot,\beta-\alpha)\tau_{-(\beta-\alpha)}
H^{\epsilon}\psi^{\epsilon}_0\right\rangle_{\mathcal{H}}.
\end{align*}
\begin{definition}\label{magn-Bloch-func}   We define $\mathfrak h^{\epsilon}\in
\ell^2(\Gamma)$ by:
\beq\label{def-h}
\mathfrak{h}^\epsilon(\gamma)\,:=\,\left\langle\psi^{\epsilon}_0\,,\,\Lambda^\epsilon(x,\gamma)\tau_{-\gamma}
H^{\epsilon}\psi^{\epsilon}_0\right\rangle_{\mathcal{H}}\,=\,
\left\langle\phi^{\epsilon}_0\,,\,H^{\epsilon}
\phi^{\epsilon}_\gamma\right\rangle_{\mathcal{H}} \mbox{ for } \gamma \in \Gamma\,,
\eeq
and the  magnetic quasi Bloch function $\lambda^\epsilon$ as   its discrete Fourier transform:
\beq\label{def-l-e}
\lambda^\epsilon:\mathbb{T}_*\rightarrow\mathbb{R},\qquad
\lambda^\epsilon(\theta):=\underset{\gamma\in\Gamma}{\sum}
\mathfrak{h}^\epsilon(\gamma)e^{-i<\theta,\gamma>}\,.
\eeq
\end{definition}

The conclusion of this subsection is contained in:
\begin{proposition}\label{effective-H}
There exists $\epsilon_0>0$ such that, for $\epsilon \in [0,\epsilon_0]$ and $\kappa \in [0,1]\,$, 
the Hausdorff distance between the spectra of the  magnetic quasi-band Hamiltonian $\widetilde{\pi}^{\epsilon,\kappa}_0
H^{\epsilon,\kappa}\widetilde{\pi}^{\epsilon,\kappa}_0$ and $\mathfrak{Op}^{\epsilon,\kappa}(\lambda^\epsilon)$ is of order  $\kappa\epsilon\,$.
\end{proposition}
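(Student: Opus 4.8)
\emph{Plan.} The idea is to transfer the comparison to infinite matrices on $\ell^2(\Gamma)$ and to split the desired $O(\kappa\epsilon)$ bound into a matrix estimate, which is essentially Proposition \ref{propo-marts1}, and the standard passage from a magnetic matrix to the magnetic quantization of its periodic symbol.

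First, since $\{\widetilde\phi^{\epsilon,\kappa}_\gamma\}_{\gamma\in\Gamma}$ is an orthonormal basis of $\widetilde\pi^{\epsilon,\kappa}_0\mathcal H$, the quasi-band Hamiltonian $\widetilde\pi^{\epsilon,\kappa}_0 H^{\epsilon,\kappa}\widetilde\pi^{\epsilon,\kappa}_0$, viewed on its range, is unitarily equivalent to the self-adjoint matrix $M^{\epsilon,\kappa}_{\alpha\beta}:=\langle\widetilde\phi^{\epsilon,\kappa}_\alpha,H^{\epsilon,\kappa}\widetilde\phi^{\epsilon,\kappa}_\beta\rangle_{\mathcal H}$ on $\ell^2(\Gamma)$ (the extra eigenvalue $0$ coming from $\widetilde\pi^{\epsilon,\kappa}_\bot\mathcal H$ being disregarded, as in Definition \ref{magn-band Ham} and in the chaining with the previous propositions). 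Using $\langle\phi^\epsilon_\alpha,H^\epsilon\phi^\epsilon_\beta\rangle_{\mathcal H}=\Lambda^\epsilon(\alpha,\beta)\,\mathfrak h^\epsilon(\beta-\alpha)$ (the computation preceding Definition \ref{magn-Bloch-func}) and $\Lambda^{\epsilon,\kappa}=\Lambda^\epsilon\,\widetilde\Lambda^{\epsilon,\kappa}$, Proposition \ref{propo-marts1} shows that $M^{\epsilon,\kappa}_{\alpha\beta}=\mathcal M^{\epsilon,\kappa}_{\alpha\beta}+R^{\epsilon,\kappa}_{\alpha\beta}$, where $\mathcal M^{\epsilon,\kappa}_{\alpha\beta}:=\Lambda^{\epsilon,\kappa}(\alpha,\beta)\,\mathfrak h^\epsilon(\beta-\alpha)$ and $|R^{\epsilon,\kappa}_{\alpha\beta}|\le C_m\,\kappa\epsilon\,\langle\alpha-\beta\rangle^{-m}$ for every $m$. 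Choosing $m\ge 3$ and applying a Schur (Holmgren) estimate in both indices gives $\|R^{\epsilon,\kappa}\|_{\mathcal L(\ell^2(\Gamma))}\le C\,\kappa\epsilon$, uniformly in $(\epsilon,\kappa)\in[0,\epsilon_0]\times[0,1]$. Moreover $\mathcal M^{\epsilon,\kappa}$ is self-adjoint, because $\overline{\mathfrak h^\epsilon(\gamma)}=\mathfrak h^\epsilon(-\gamma)$ — equivalently, because $\lambda^\epsilon$ is real-valued (Definition \ref{magn-Bloch-func}); this in turn follows from the self-adjointness of $H^\epsilon$, Remark \ref{comm-magn-transl}, and the transverse gauge \eqref{defA0}. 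Hence $R^{\epsilon,\kappa}$ is a bounded self-adjoint perturbation of size $O(\kappa\epsilon)$ and $\mathrm{dist}_H\big(\sigma(M^{\epsilon,\kappa}),\sigma(\mathcal M^{\epsilon,\kappa})\big)\le C\,\kappa\epsilon$.

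Next I would identify $\mathcal M^{\epsilon,\kappa}$ with $\mathfrak{Op}^{\epsilon,\kappa}(\lambda^\epsilon)$. Inserting the periodic symbol $\lambda^\epsilon(\theta)=\sum_{\gamma}\mathfrak h^\epsilon(\gamma)e^{-i\langle\theta,\gamma\rangle}$ into \eqref{OpA} with vector potential $A^{\epsilon,\kappa}$ and integrating in $\xi$ yields $\mathfrak{Op}^{\epsilon,\kappa}(\lambda^\epsilon)=\sum_{\gamma\in\Gamma}\mathfrak h^\epsilon(\gamma)\,V^{\epsilon,\kappa}_\gamma$, where $(V^{\epsilon,\kappa}_\gamma u)(x)=\Lambda^{\epsilon,\kappa}(x,x-\gamma)\,u(x-\gamma)$ is the magnetic translation by $\gamma$ in the field $B_{\epsilon,\kappa}$. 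One then invokes the magnetic-matrix $\leftrightarrow$ magnetic-pseudodifferential-operator correspondence (the magnetic Floquet/Zak machinery of \cite{MP1,CIP}, adapted to the slowly varying setting and in the spirit of the constant-field reduction of \cite{HS,Ne-RMP}): for a constant field this identifies $\mathfrak{Op}^{\epsilon,\kappa}(\lambda^\epsilon)$ isospectrally with the magnetic matrix on $\ell^2(\Gamma)$ whose entries are precisely $\Lambda^{\epsilon,\kappa}(\alpha,\beta)\,\mathfrak h^\epsilon(\beta-\alpha)=\mathcal M^{\epsilon,\kappa}_{\alpha\beta}$; for the slowly varying field $B_{\epsilon,\kappa}$ the only obstruction to such an identification is that the flux of $B_{\epsilon,\kappa}$ through the relevant triangles with vertices off $\Gamma$ differs from that through the corresponding triangles with vertices on $\Gamma$ by $O(\kappa\epsilon)$, since $\kappa\epsilon B(\epsilon\cdot)$ varies on the scale $\epsilon^{-1}$ while $\mathfrak h^\epsilon(\gamma)$ decays rapidly in $|\gamma|$. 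This gives $\mathrm{dist}_H\big(\sigma(\mathfrak{Op}^{\epsilon,\kappa}(\lambda^\epsilon)),\sigma(\mathcal M^{\epsilon,\kappa})\big)\le C\,\kappa\epsilon$ (vanishing at $\kappa=0$, consistently with the exact Hofstadter correspondence, and in any case no larger than the error already incurred in the previous step). The proposition then follows by the triangle inequality for the Hausdorff distance.

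The hard part is the second step: showing that the magnetic matrix $\mathcal M^{\epsilon,\kappa}$ and the magnetic pseudodifferential operator $\mathfrak{Op}^{\epsilon,\kappa}(\lambda^\epsilon)$ are isospectral modulo $O(\kappa\epsilon)$. For a constant field this is the classical, exact Hofstadter-type correspondence, but here the magnetic translations $V^{\epsilon,\kappa}_\gamma$ no longer form a group and do not commute with $\mathfrak{Op}^{\epsilon,\kappa}(\lambda^\epsilon)$, so one has to work inside the gauge-covariant magnetic pseudodifferential calculus for symbols with weak spatial variation developed in the Appendices, and to keep careful track of the uniformity in $\epsilon$ and of the fact that every error produced by the non-constancy of the field is of size $\kappa\epsilon$.
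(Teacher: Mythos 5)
Your proposal is correct and follows essentially the same route as the paper: Proposition \ref{propo-marts1} plus a Schur-type estimate reduces the quasi-band Hamiltonian to the matrix $\Lambda^{\epsilon,\kappa}(\alpha,\beta)\,\mathfrak{h}^\epsilon(\alpha-\beta)$ up to an $O(\kappa\epsilon)$ bounded self-adjoint perturbation, and this matrix is then identified with $\mathfrak{Op}^{\epsilon,\kappa}(\lambda^\epsilon)$ modulo $O(\kappa\epsilon)$. For that second step the paper computes the integral kernel of $\mathfrak{Op}^{\epsilon,\kappa}(\lambda^\epsilon)$ in $\ell^2(\Gamma)\otimes L^2(E)$ and invokes the $(\epsilon,\kappa)$-dependent unitary gauge transform of \cite{CP-2}, which is precisely the magnetic-translation/flux mechanism you describe (exact at $\kappa=0$, with $\kappa$-field flux corrections of size $\kappa\epsilon\langle\alpha-\beta\rangle$ absorbed by the rapid decay of $\mathfrak{h}^\epsilon$ and a Schur bound).
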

\begin{proof}
 Proposition \ref{propo-marts1} and a Schur type estimate imply  that the spectrum of our magnetic quasi-band Hamiltonian is at a Hausdorff distance of order $\kappa \epsilon$ from the spectrum of the matrix with elements 
\begin{equation}\label{marts2}
\mathcal{E}^{\epsilon,\kappa}(\alpha,\beta):=\widetilde{\Lambda}^{\epsilon,\kappa}(\alpha,\beta)\Lambda^\epsilon(\alpha,\beta)\, \mathfrak{h}^\epsilon(\alpha-\beta)=\Lambda^{{\epsilon,\kappa}}(\alpha,\beta)\, \mathfrak{h}^\epsilon(\alpha-\beta)
\end{equation}
acting on $\ell^2(\Gamma)$. 

We may extend  the function $\lambda^\epsilon$ from \eqref{def-l-e} as a $\Gamma_*$-periodic function defined on all $\X^*$ and thus as a symbol defined on $\Xi$, constant with respect to the variables in $\X$.  Using the natural unitary isomorphism $L^2(\X)\rightarrow \ell^2(\Gamma)\otimes L^2(E)$ we may compute the integral kernel of $\mathfrak{Op}^{\epsilon,\kappa}(\lambda^\epsilon)$ as in \cite{CP-2} and obtain 
$$\Lambda^{{\epsilon,\kappa}}(\alpha+\underline{x},\beta+\underline{x})\, \mathfrak{h}^\epsilon(\alpha-\beta)\, \delta(\underline{x}-\underline{x}'),\quad \forall \underline{x}\,, \underline{x}'\in E\,,\quad \forall  \alpha\,,\beta\in \Gamma\,.$$ 
Following \cite{CP-2}, it turns out that one can perform an $(\epsilon,\kappa)$-dependent unitary gauge transform in $\ell^2(\Gamma)\otimes L^2(E)$ such that after conjugating $\mathfrak{Op}^{\epsilon,\kappa}(\lambda^\epsilon)$ with it, the rotated operator will be (up to an error of order $\kappa\epsilon$ in the norm topology) given by an operator whose integral kernel is given by 
$\mathcal{E}^{\epsilon,\kappa}(\alpha,\beta)\,\delta(\underline{x}-\underline{x}')\,$. This operator is nothing but $\mathcal{E}^{\epsilon,\kappa}\otimes \bb1\,$, and it is isospectral with the matrix $\mathcal{E}^{\epsilon,\kappa}\,$. 
\end{proof}

\section{The magnetic quantization of the magnetic Bloch function.}\label{S-magn-q-Bloch-func}

\subsection{Study of the magnetic Bloch function $\lambda^\epsilon$.}

Let us recall from \eqref{evenness} and \eqref{lbtheta}  that $\lambda_0\in C^\infty(\mathbb{T}_*;\mathbb{R})$ is even
and has  a non-degenerate
absolute minimum in $0\in\mathbb{T}_*\,$.
Thus in a neighborhood
of $0\in\mathbb{T}_*$ we have the Taylor expansion
\beq\label{l-dev}
\lambda_0(\theta)\,=\,\underset{1\leq
j,k\leq2}{\sum}a_{jk}\theta_j\theta_k\,+\,\mathscr{O}(|\theta|^4)\,,\qquad
a_{jk}:=\big(\partial^2_{jk}\lambda_0\big)(0)\,.
\eeq
\begin{proposition}\label{rho-epsilon}
For $\lambda^\epsilon$ defined in \eqref{def-l-e}, there exists $\epsilon_0 >0$ such that
$$
\lambda^\epsilon(\theta)\,=\,\lambda_0(\theta)\,+\,\epsilon\rho^\epsilon(\theta)\,,
$$
with $\rho^\epsilon\in BC^\infty(\mathbb{T}_*)$ uniformly in
$\epsilon\in[0,\epsilon_0]$ and such that
$\rho^\epsilon-\rho^0=\mathscr{O}(\epsilon)\,$.
\end{proposition}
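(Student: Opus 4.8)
The starting point is the formula \eqref{def-h} for the matrix elements $\mathfrak{h}^\epsilon(\gamma)=\langle\psi^\epsilon_0,\Lambda^\epsilon(\cdot,\gamma)\tau_{-\gamma}H^\epsilon\psi^\epsilon_0\rangle_{\mathcal H}$ together with the definition \eqref{def-l-e} of $\lambda^\epsilon$ as the discrete Fourier transform of $\mathfrak h^\epsilon$. The key idea is to compare $\mathfrak h^\epsilon$ with the corresponding ``zero-field'' quantity built from the genuine Wannier function $\phi_0$, namely $\mathfrak h^0(\gamma):=\langle\phi_0,\tau_{-\gamma}H^0\phi_0\rangle_{\mathcal H}=\langle\phi_0,H^0\phi_\gamma\rangle_{\mathcal H}$, whose Fourier transform is exactly $\lambda_0$ (this is just the Bloch decomposition of $H^0$ restricted to the first band, using that $\phi_\gamma$ is an orthonormal basis of $\pi_0\mathcal H$ and $[H^0,\pi_0]=0$). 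So the statement to prove reduces to: $\mathfrak h^\epsilon(\gamma)-\mathfrak h^0(\gamma)=\epsilon\,\mathfrak r^\epsilon(\gamma)$ where $\mathfrak r^\epsilon$ is rapidly decaying on $\Gamma$ uniformly in $\epsilon$, and moreover $\mathfrak r^\epsilon-\mathfrak r^0=\mathscr O(\epsilon)$ in the same sense; then $\rho^\epsilon(\theta):=\sum_{\gamma}\mathfrak r^\epsilon(\gamma)e^{-i\langle\theta,\gamma\rangle}$ automatically lies in $BC^\infty(\mathbb T_*)$ uniformly, and $\rho^\epsilon-\rho^0=\mathscr O(\epsilon)$, because rapid decay with uniform constants on the lattice side translates into uniform $C^\infty$ bounds on the torus side.

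\textbf{Main steps.} First I would write $H^\epsilon\psi^\epsilon_0$ in terms of the magnetic momenta: using $\psi^\epsilon_0=\phi_0+(\psi^\epsilon_0-\phi_0)$ with Corollary \ref{est-psi-phi} controlling the difference by $\epsilon\langle x\rangle^{-m}$ for all $m$, and expanding $(-i\nabla-\epsilon A^0)^2=-\Delta+i\epsilon(\nabla\cdot A^0+A^0\cdot\nabla)+\epsilon^2|A^0|^2$, one sees that $H^\epsilon\psi^\epsilon_0=H^0\phi_0+\epsilon(\text{stuff})$, where ``stuff'' is a sum of terms each of which is a rapidly decaying function of $x$ (here one uses that $A^0(x)=(B_0/2)(-x_2,x_1)$ grows only linearly, so $A^0$ times a Schwartz function is still Schwartz, and that $\phi_0$ and its derivatives decay like $e^{-|x|/C}$). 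Second, the phase factor: $\Lambda^\epsilon(x,\gamma)=\exp\{-i\epsilon\int_{[x,\gamma]}A^0\}=1+\mathscr O(\epsilon|x||x-\gamma|)$, and combined with the decay of $\psi^\epsilon_0$ and of $\tau_{-\gamma}H^\epsilon\psi^\epsilon_0$ (localized near $\gamma$) this contributes another $\epsilon$ times a rapidly decaying function of $\gamma$ — this is the same Stokes-theorem bookkeeping already used repeatedly in the proof of Proposition \ref{P-quasi-inv} and in \eqref{L-ek-2}. Third, collecting terms: $\mathfrak h^\epsilon(\gamma)-\mathfrak h^0(\gamma)$ is a finite sum of inner products of the form $\langle\text{(Schwartz, loc. near }0),\tau_{-\gamma}\text{(Schwartz, loc. near }0)\rangle$ times $\epsilon$, each estimated by the standard convolution-of-rapidly-decaying-functions bound $C_m\langle\gamma\rangle^{-m}$, uniformly in $\epsilon\in[0,\epsilon_0]$. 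For the last assertion $\rho^\epsilon-\rho^0=\mathscr O(\epsilon)$, I would redo the same expansion keeping track of one more order: the $\epsilon$-dependence of $\mathfrak r^\epsilon(\gamma)$ enters only through $\psi^\epsilon_0$ (which differs from $\phi_0$ by $\mathscr O(\epsilon)$ in every Schwartz seminorm, again by Corollary \ref{est-psi-phi}) and through the phase $\Lambda^\epsilon$ (which differs from $1$ by $\mathscr O(\epsilon)$ on the relevant localized region), so $\mathfrak r^\epsilon-\mathfrak r^0=\mathscr O(\epsilon)\langle\gamma\rangle^{-m}$ for all $m$, whence the claim on the torus.

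\textbf{Reality of $\lambda^\epsilon$.} Since the statement asserts $\lambda^\epsilon:\mathbb T_*\to\mathbb R$, I would note separately that $\lambda^\epsilon$ is the symbol of a self-adjoint operator (the matrix $\mathcal E^{\epsilon,0}$ with entries $\Lambda^\epsilon(\alpha,\beta)\mathfrak h^\epsilon(\alpha-\beta)$ represents $\widetilde\pi^\epsilon_0 H^\epsilon\widetilde\pi^\epsilon_0$ in an orthonormal basis and is therefore Hermitian); equivalently, the magnetic translation invariance and Remark \ref{comm-magn-transl} give $\overline{\mathfrak h^\epsilon(\gamma)}=\mathfrak h^\epsilon(-\gamma)$, which makes the Fourier series \eqref{def-l-e} real-valued. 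Hence $\rho^\epsilon=(\lambda^\epsilon-\lambda_0)/\epsilon$ is real, and being a finite-seminorm element of a Fréchet space of smooth periodic functions, it lies in $BC^\infty(\mathbb T_*)$.

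\textbf{Main obstacle.} The only genuinely delicate point is bookkeeping the linear growth of $A^0$ against the exponential/rapid decay of the Wannier functions so that every error term remains a \emph{rapidly decaying} function of $\gamma$ \emph{with $\epsilon$-independent constants}; the $\epsilon^2|A^0|^2$ term is the worst offender and one must check it still produces only an $\mathscr O(\epsilon)$ (not $\mathscr O(1)$) contribution after pairing — which it does, because it comes multiplied by $\epsilon^2$ while only one power of $\epsilon$ is extracted, leaving a spare $\epsilon$. None of this is hard, but it is the step where a careless estimate would break the uniformity, so it deserves the most care.
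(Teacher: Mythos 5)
Your main expansion follows the paper's own route almost verbatim: starting from \eqref{def-h}--\eqref{def-l-e}, writing $\psi^\epsilon_0=\phi_0+\epsilon f^\epsilon$ via Corollary \ref{est-psi-phi}, expanding $H^\epsilon=H^0+2i\epsilon A^0\cdot\nabla+\epsilon^2(A^0)^2$, using the transverse-gauge bound $|\Lambda^\epsilon(\gamma,x)-1|\leq \epsilon\,|\gamma|\,|x|$, and pairing everything against the rapid decay of $\phi_0$ and $\psi^\epsilon_0$ to get $\mathfrak{h}^\epsilon(\gamma)=\langle\tau_\gamma\phi_0,H^0\phi_0\rangle_{\mathcal H}+\mathscr{O}(\epsilon)<\gamma>^{-m}$ uniformly in $\epsilon$, hence $\lambda^\epsilon=\lambda_0+\epsilon\rho^\epsilon$ with $\rho^\epsilon$ bounded in $BC^\infty(\mathbb{T}_*)$. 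That part, including the observation that the Fourier transform of $\gamma\mapsto\langle\phi_0,H^0\phi_\gamma\rangle$ is exactly $\lambda_0$ and the reality remark, is correct and is what the paper does.

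There is, however, a gap in your justification of the last assertion $\rho^\epsilon-\rho^0=\mathscr{O}(\epsilon)$. You argue that the $\epsilon$-dependence of the first-order coefficient enters only through $\psi^\epsilon_0$ and the phase, and that Corollary \ref{est-psi-phi} gives $\psi^\epsilon_0-\phi_0=\mathscr{O}(\epsilon)$; but after dividing by $\epsilon$, $\rho^\epsilon$ contains order-one terms of the type $\langle f^\epsilon,\tau_{-\gamma}H^0\phi_0\rangle+\langle\phi_0,\tau_{-\gamma}H^0 f^\epsilon\rangle$ with $f^\epsilon=\epsilon^{-1}(\psi^\epsilon_0-\phi_0)$, and Corollary \ref{est-psi-phi} only says that $f^\epsilon$ is \emph{bounded} in Schwartz seminorms — it gives no rate at which $f^\epsilon$ approaches its limit $f^0$. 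To conclude $\rho^\epsilon-\rho^0=\mathscr{O}(\epsilon)$ you need $f^\epsilon-f^0=\mathscr{O}(\epsilon)$, i.e.\ a genuine second-order expansion $\psi^\epsilon_0=\phi_0+\epsilon g+\mathscr{O}(\epsilon^2)$ in Schwartz seminorms, which has to be extracted from an order-$\epsilon^2$ expansion of the Gramian $\mathbb{G}^\epsilon$, of $\mathbb{F}^\epsilon=(\mathbb{G}^\epsilon)^{-1/2}$ and of the phases $\Lambda^\epsilon$ (the contributions of $H^\epsilon$ and of $\Lambda^\epsilon$ themselves are explicit and analytic in $\epsilon$, so they are not the issue). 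As stated, your step would fail; with that extra input — which is what the paper's appeal to an ``asymptotic expansion in powers of $\epsilon$'' presupposes — the rest of your bookkeeping goes through.
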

\begin{proof}
From the  definition of $\lambda^\epsilon$ in  Definition \ref{magn-Bloch-func} we have
$$
\lambda^\epsilon(\theta)=\underset{\gamma\in\Gamma}{\sum}
\left\langle\psi^{\epsilon}_0\,,\,\Lambda^\epsilon(\cdot,\gamma)\tau_{-\gamma}
H^{\epsilon}\psi^{\epsilon}_0\right
\rangle_{\mathcal{H}}e^{-i<\theta,\gamma>}\,.
$$
Due to Corollary \ref{est-psi-phi}, there exists $f^\epsilon\in\mathscr{S}(\X)$
such that $
\psi^{\epsilon}_0=\phi_0+\epsilon f^\epsilon\,$, 
and for any $m\in\mathbb{N}$ there exists $C_m>0$ such that, for any $\epsilon \in [0,\epsilon_0]$\,, 
$$
\underset{x\in\X}{\sup}<x>^m|f^\epsilon(x)|\leq\,C_m\,.
$$
With $A^0(x)=(1/2)\big(-B_0x_2,B_0x_1\big)\,$, we have
$$
H^\epsilon =(-i\nabla_x-\epsilon A^0(x))^2+V\,=\,H^0+2i\epsilon A^0\cdot\nabla_x+\epsilon^2(A^0)^2\,.
$$
Thus
\begin{equation*}
\begin{array}{l}
\hspace{-1cm} \left\langle\psi^{\epsilon}_0\,,\,\Lambda^\epsilon(\cdot,\gamma)\tau_{-\gamma}
H^{\epsilon}\psi^{\epsilon}_0\right
\rangle_{\mathcal{H}} \\
\hspace{1cm} =
\left\langle\tau_{\gamma}\Lambda^\epsilon(\gamma,\cdot)\psi^{\epsilon}_0\,,\,
H^0\psi^{\epsilon}_0\right
\rangle_{\mathcal{H}}+\left\langle\tau_{\gamma}\Lambda^\epsilon(\gamma,\cdot)\psi^{\epsilon}_0\,,\,\left(2i\epsilon A^0\cdot\nabla_x+\epsilon^2(A^0)^2\right)\psi^{\epsilon}_0\right
\rangle_{\mathcal{H}}
\\
\hspace{1cm}  =\left\langle\tau_{\gamma}\Lambda^\epsilon(\gamma,\cdot)\phi_0\,,\,
H^{0}\phi_0\right
\rangle_{\mathcal{H}}+C_m\epsilon<\gamma>^{-m}
\\
\hspace{1cm}  =\left\langle\tau_{\gamma}\phi_0\,,\,
H^{0}\phi_0\right
\rangle_{\mathcal{H}}+C^\prime_m\, \epsilon\, <\gamma>^{-m}\,.
\end{array}
\end{equation*}
In the last two estimates we have used the rapid decay of the functions $\psi^\epsilon_0$ and $\phi_0$ and the fact that with our choice of gauge 
$$ \left|\Lambda^\epsilon(\gamma,x)-1\right|\leq \epsilon\, |\gamma|\, |x|\,.$$
Hence
$$
\lambda^\epsilon(\theta)=\underset{\gamma\in\Gamma}{\sum}
\left\langle\tau_{\gamma}\phi_0\,,\,
H^{0}\phi_0\right
\rangle_{\mathcal{H}}e^{-i<\theta,\gamma>}+ \mathcal O (\,\epsilon)=\lambda_0(\theta)+\mathcal O (\,\epsilon)\,,
$$
 in  $C^m(\mathbb{T}_*)$  for any $m\,$.\\
 Moreover, for any $m\in \mathbb{N}\,$, one can differentiate $m$-times the Fourier 
series of $\lambda^\epsilon(\theta)$, term by term, because of the 
exponential decay of its Fourier coefficients. Then the Fourier series 
of the derivative has an asymptotic expansion in $\epsilon$ which starts 
with $\lambda_0^{(m)}(\theta)$, just as in the case with $m=0$. In other 
words, differentiation in $\theta$ commutes with taking the asymptotic 
expansion in powers of $\epsilon$. 
\end{proof}

Considering the function $\lambda_\epsilon$ as a $\Gamma_*$-periodic function on $\X^*$, a consequence of Proposition ~\ref{rho-epsilon} is that the modified Bloch eigenvalue $\lambda^\epsilon\in C^\infty(\X^*)$ will also have an isolated non-degenerate minimum at some point $\theta^\epsilon\in\X^*$ close to $0\in\X^*$. 
More precisely, if we denote by $\mathbf{a}$ the
$2\times 2$ positive definite matrix $(a_{jk})_{1\leq j,k\leq2}$ introduced in \eqref{l-dev} and by
$\mathbf{a}^{-1}$ its inverse, we have $$
\theta^\epsilon_j\,=\epsilon \, \hat \theta_j\,+\,\mathcal{O}(\epsilon^2)\,,$$ 
with
$$
\hat \theta_j := 
\,- \underset{1\leq
k\leq2}{\sum}\big(\mathbf{a}^{-1}\big)_{jk}
\big(\partial_k\rho_0\big)(0)\,.
$$
Then we can write the Taylor expansion at $\theta^\epsilon$
\beq \label{a-epsilon} 
\begin{array}{ll}
\lambda^\epsilon(\theta)\,&=\,\lambda^\epsilon(\theta^\epsilon)\,+\underset{
1\leq
j,k\leq2}{\sum}
\big(\partial_j\partial_k\lambda^\epsilon\big)(\theta^\epsilon)
(\theta_j-\theta^\epsilon_j)
(\theta_k-\theta^\epsilon_k)\\ 
&\qquad +\,\underset{1\leq j,k,\ell \leq2}{\sum}
\big(\partial_j\partial_k\partial_\ell \lambda^\epsilon\big)(\theta^\epsilon)
(\theta_j-\theta^\epsilon_j)(\theta_k-\theta^\epsilon_k)
(\theta_\ell -\theta^\epsilon_\ell )\,+\,\mathcal{O}(|\theta-\theta^\epsilon|^4)\,. \end{array}
\eeq
Using the evenness of $\lambda_0$ we get:
\beq \label{formenormale}
\lambda^\epsilon (\theta) - \lambda^\epsilon (\theta^\epsilon) =
 \,\underset{
1\leq j,k\leq2}{\sum} a_{jk}^\epsilon  (\theta_j-\theta^\epsilon_j)(\theta_k-\theta^\epsilon_k)  + \epsilon 
\mathscr{O}(|\theta-\theta^\epsilon|^3) + \,\mathcal{O}(|\theta-\theta^\epsilon|^4)\,,
\eeq
where 
$$
\lambda^\epsilon (\theta^\epsilon) = \epsilon \rho^0(0) +\mathcal{O}(\epsilon^2)\,,
$$
and 
\begin{align*}
a^\epsilon_{jk}\,
&=\,a_{jk}\,+\,\epsilon 
\big(\partial_j
\partial_k\rho^0\big)(0) +\,\mathcal{O}(\epsilon^2)\, .
\end{align*}
Hence, after a shift of energy  and a change of variable $\theta \mapsto (\theta -\theta_\epsilon)$, the new $\lambda^\epsilon$ has the same structure as $\lambda_0$  except that we have lost the symmetry \eqref{evenness}.\\
There exists $\epsilon_0 >0$ such that, for $\epsilon\in[0,\epsilon_0]\,$, we
can  choose a local coordinate system on a neighborhood of
$ \theta^\epsilon\in \X^*$ that diagonalizes the symmetric positive
definite matrix $a^\epsilon$ and we denote by $0<m^\epsilon_1\leq
m^\epsilon_2$ its eigenvalues. We denote by $0<m_1\leq m_2$ the two
eigenvalues of the matrix $a_{jk}$ and notice that
$$
m^\epsilon_j\,=\,m_j\,+\,\epsilon\mu_j\,+\,\mathscr{O}(\epsilon^2) \mbox{  for  } j=1,2\,,
$$
with $\mu_j$ explicitly computable in terms of the matrix  $a_{jk}$ and 
$\big(\partial_j
\partial_k\rho^0\big)(0)\,$.  

\subsection{Spectral analysis of the model operator}

\subsubsection{Quadratic approximation}
In studying the bottom of the spectrum of the operator
$\mathfrak{Op}^{\epsilon,\kappa}(\lambda^\epsilon)$ we shall start with the quadratic term given by the Hessian close to the minimum (see \eqref{a-epsilon}). We introduce
\beq\label{marts11}
m^\epsilon\,:=\,\sqrt{m^\epsilon_1m^\epsilon_2}\,=\,m+\epsilon \, \mu^*(\epsilon)\,,
\eeq
where $m :=\sqrt{m_1m_2}$ is the square root of the determinant of the
matrix
$a_{jk}$ and $\mu^*(\epsilon)$ is uniformly bounded for
$\epsilon\in[0,\epsilon_0]$.
Our goal is  to obtain spectral information concerning the Hamiltonian
$\mathfrak{Op}^{\epsilon,\kappa}(\lambda^\epsilon)$ starting from the spectral
information about $\mathfrak{Op}^{\epsilon,\kappa}(h_{m^\epsilon})$ with
\begin{equation}\label{defhm}
h_{m^\epsilon}(\xi)\,:=\,m^\epsilon_1\xi_1^2\,+\,m^\epsilon_2\xi_2^2\,,
\end{equation}
defining an elliptic symbol of class $S^2_1(\Xi)^\circ$
(i.e. that does not depend on the configuration space variable, see \eqref{S-circ} in Appendix \ref{SS-const-m-field}). 

\subsubsection{A perturbation result}
 Now we compare the bottom of the spectrum of the magnetic Hamiltonians $\mathfrak{Op}^{\epsilon,\kappa}(h_{m^\epsilon})$ with the one of the constant field magnetic Landau operator $\mathfrak{Op}^{\epsilon,0}(h_{m^\epsilon})$.

First, we have to perform a dilation.
Starting from the initial model (with the magnetic field in \eqref{Bek}), we make the change of variable $y = \sqrt{\epsilon} x$ and factorize an $\epsilon$. This leads to a scaled Landau operator with magnetic field $B_0 + \kappa B (\sqrt{\epsilon} x)\,$. Then, using also  \eqref{marts11},  we can prove the following statement:

\begin{proposition}\label{model-spectrum} For any compact set {$K$} in
$\mathbb R$, there exist  $\epsilon_{K}>0\,$, $C>0$  and
$\kappa_{K}\in(0,1]\,$, such  that for any
$(\epsilon,\kappa)\in[0,\epsilon_K]\times[0,\kappa_K]\,$,  the spectrum of the operator
$\mathfrak{Op}^{\epsilon,\kappa}(h_{m^\epsilon})$ in {$\epsilon K$} is
contained in bands of
width $ C \kappa\epsilon $ centered at 
$\{(2n+1)\, \epsilon\, m^\epsilon\, 
B_0\}_{n\in\mathbb{N}}\,$.
\end{proposition}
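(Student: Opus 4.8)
The plan is to reduce the statement, via the $y=\sqrt{\epsilon}\,x$ dilation described just above, to a Landau-type operator whose magnetic field is $B_0+\kappa B(\sqrt{\epsilon}\,x)$, and then to treat the $\kappa B(\sqrt{\epsilon}\,x)$ term as a small perturbation (of relative size $\kappa$, with an extra gain from the slow variation $\sqrt{\epsilon}$) of the constant-field operator $\mathfrak{Op}^{\epsilon,0}(h_{m^\epsilon})$. For the unperturbed piece, $\mathfrak{Op}^{\epsilon,0}(h_{m^\epsilon})$ is (after the dilation and factoring $\epsilon$) unitarily equivalent to a constant-field anisotropic Landau operator $m_1^\epsilon\,\Pi_1^2+m_2^\epsilon\,\Pi_2^2$ with $[\Pi_1,\Pi_2]=iB_0$; this has pure point spectrum $\{(2n+1)\,m^\epsilon B_0\}_{n\in\mathbb N}$ with $m^\epsilon=\sqrt{m_1^\epsilon m_2^\epsilon}$, each eigenvalue of infinite multiplicity. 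Unscaling reinstates the factor $\epsilon$, giving exactly the centers $\{(2n+1)\,\epsilon\,m^\epsilon B_0\}$. So the content is entirely in controlling the $\kappa$-perturbation.

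First I would quantify the perturbation: write $h_{m^\epsilon}$ as a symbol and compare $\mathfrak{Op}^{\epsilon,\kappa}(h_{m^\epsilon})$ with $\mathfrak{Op}^{\epsilon,0}(h_{m^\epsilon})$ using the magnetic Moyal calculus for slowly varying fields from Appendix~\ref{A-cl-sl-var} / Proposition~\ref{L.II.1.1}. The two quantizations differ only through the vector potential, i.e. through replacing the flux factor of $\epsilon B_0$ by that of $B_{\epsilon,\kappa}=\epsilon B_0+\kappa\epsilon B(\epsilon\cdot)$; by the composition estimates this difference is $\kappa\epsilon$ times a controlled operator, uniformly in the relevant parameters. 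Since $h_{m^\epsilon}$ is elliptic of order $2$ and the comparison operators have compact resolvent, a cleaner route is resolvent comparison: fix $z$ in a fixed compact neighbourhood of $\epsilon K$ rescaled (equivalently work with the scaled operators and $z\in K$), and show $\|(\mathfrak{Op}^{\epsilon,\kappa}(h_{m^\epsilon})-z)^{-1}-(\mathfrak{Op}^{\epsilon,0}(h_{m^\epsilon})-z)^{-1}\|\le C\,\kappa$ on the scaled level, uniformly for $z$ at distance $\ge \delta$ from the (discrete) spectrum of the unperturbed operator and $\kappa\le\kappa_K$. Then a Neumann-series/distance-to-spectrum argument places $\sigma(\mathfrak{Op}^{\epsilon,\kappa}(h_{m^\epsilon}))$ (scaled) within $O(\kappa)$ of the points $\{(2n+1)m^\epsilon B_0\}$ lying in $K$; unscaling multiplies everything by $\epsilon$, yielding bands of width $C\kappa\epsilon$ around $\{(2n+1)\epsilon\,m^\epsilon B_0\}$ inside $\epsilon K$. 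The infinite multiplicity of the centers is not asserted in the statement, so I need only the band-localization.

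The main obstacle is the uniformity in $\epsilon$ of the perturbative bound: the spacing between consecutive Landau centers (scaled) is $2m^\epsilon B_0$, which is $O(1)$, so a perturbation of size $O(\kappa)$ with $\kappa$ small compared to $2m^\epsilon B_0$ is genuinely a small perturbation — but I must make sure the constants in the magnetic Moyal expansion and in the Calderón–Vaillancourt/$\mathcal L(\mathcal H)$-bounds do not blow up as $\epsilon\to0$, which is precisely what the slowly-varying calculus of Appendix~\ref{A-cl-sl-var} is designed to guarantee (the field $B(\sqrt\epsilon\,\cdot)$ and all its derivatives are bounded uniformly in $\epsilon$, and each derivative produces an extra $\sqrt\epsilon$). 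A secondary technical point is that $h_{m^\epsilon}$ is only elliptic, not bounded: I handle this by reducing to resolvents, since $(\mathfrak{Op}^{\epsilon,0}(h_{m^\epsilon})-z)^{-1}$ is a nice bounded (indeed compact, by scaling) operator on any compact $z$-set avoiding the spectrum, and the perturbation series is then in $\mathcal L(\mathcal H)$. Once these uniform-in-$\epsilon$ estimates are in place, the choice of $\kappa_K$ (so that $C\kappa_K$ is smaller than a fraction of $\min_j 2m_j B_0$, say) and of $\epsilon_K$ (so that the $O(\epsilon^2)$ corrections in $m^\epsilon_j$ are negligible on $K$) finishes the argument.
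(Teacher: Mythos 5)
Your overall architecture (dilation $y=\sqrt{\epsilon}x$, identification of the centers $(2n+1)\epsilon m^\epsilon B_0$ with $m^\epsilon=\sqrt{m_1^\epsilon m_2^\epsilon}$, smallness of $\kappa$ relative to the level spacing) matches the paper, but the key quantitative step you propose is not true. You want to prove, on the scaled level, that
$\bigl\|\bigl(\mathfrak{Op}^{\epsilon,\kappa}(h_{m^\epsilon})-z\bigr)^{-1}-\bigl(\mathfrak{Op}^{\epsilon,0}(h_{m^\epsilon})-z\bigr)^{-1}\bigr\|\le C\kappa$ uniformly for $z$ at a fixed distance from the Landau levels. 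This fails even in the simplest admissible case $B\equiv\mathrm{const}$, i.e.\ comparing two Landau Hamiltonians with fields $B_0$ and $B_0+\kappa$: the perturbation at the operator level involves the vector potential of $\kappa B$, which grows linearly in $x$ and is in no sense ``$\kappa\epsilon$ times a controlled operator''; more decisively, testing on coherent states centered at $|x_0|\gg\sqrt{B_0}/\kappa$ shows that the lowest-band spectral projections for the two fields are at norm distance essentially $1$, so the resolvent difference along a contour around the first Landau level cannot be $O(\kappa)$ (this is exactly the phenomenon behind the merely H\"older-$1/2$ bound \eqref{febr-20} obtained by direct resolvent comparison in \cite{CP-1}, and behind the warning in the introduction referring to \cite{B}). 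A Neumann series built on this false estimate therefore does not get off the ground, and the slow variation $B(\sqrt{\epsilon}\,\cdot)$ does not rescue it, since the constant perturbation is the slowest-varying case of all. (A minor additional slip: the resolvent of the scaled Landau operator is not compact, each level being infinitely degenerate.)

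The missing idea is the gauge-covariant comparison used in Proposition \ref{pertprop}: instead of $(L_0-z)^{-1}$ one compares $(L_b-z)^{-1}$ with the operator $S_b(z)$ whose kernel is the unperturbed resolvent kernel dressed by the $z$-independent flux phase, $S_b(x,x';z)=e^{i\phi_b(x,x')}K_0(x,x';z)$. Conjugating by this phase replaces the unbounded potential $a(x)$ by $a(x,x')=O(\beta|x-x'|)$, a function of the relative coordinate only (see \eqref{aaa2}), so the error $T_b(z)$ in \eqref{aaa3} is genuinely $O(\beta)$ in norm with $\beta=\|b\|_{C^1}\le C\kappa$ after scaling. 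Invertibility of $1+T_b(z)$ then localizes $\sigma(L_b)$ away from a fixed compact in the resolvent set of $L_0$, and the band width comes from the Riesz integral: since the dressing phase does not depend on $z$, the contour integral of $(z-(2j+1)B_0)S_b(z)$ vanishes identically, so only the $O(\beta)$ discrepancy \eqref{aaa4} survives, giving \eqref{marts9} and hence bands of width $C\kappa$ (i.e.\ $C\kappa\epsilon$ after undoing the scaling). Your proof needs this phase-dressed parametrix, or an equivalent gauge-invariant perturbation argument, in place of the direct resolvent comparison.
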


 The above result follows from the following slightly more general
proposition.
\begin{proposition}\label{pertprop}
Assume that  $B(x)=B_0+b(x)$ where $B_0>0$ and $b\in BC^1(\X)$ and consider 
$$
L_b\,:= \big(-i\partial_1-a_1(x)\big)^2\,+\,
\big(-i\partial_2-B_0 x_1-a_2(x)\big)^2\,,
$$
where $a(x)=a(x,0)$ with
$$ a(x,x')=(a_1(x,x'),a_2(x,x'))= \left(\int_0^1 ds \; s\; b(x'+s(x-x'))\right) \big ( -x_2+x_2',x_1-x_1'\big )\,.$$
With  
$\beta :=||b||_{C^1(\X)}$,   for any $N\in \mathbb N\,$,  there exist  $C>0$ and $\beta_0 >0$ such that, for any $0\leq \beta \leq \beta_0\,,$ we have:
\beq \label{aaa1}
\sigma(L_b)\cap [0, 2(N+1)\, B_0]\subset \bigcup_{j=0}^{N} \big [(2j+1)B_0-C\beta,(2j+1)B_0+ C\beta\big ]\, .
\eeq
\end{proposition}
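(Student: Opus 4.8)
\textbf{Proof plan for Proposition \ref{pertprop}.}

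The plan is to treat $L_b$ as a bounded-type perturbation of the exact Landau operator $L_0 = (-i\partial_1)^2 + (-i\partial_2 - B_0 x_1)^2$, whose spectrum is exactly $\{(2j+1)B_0\}_{j\in\mathbb{N}}$, each eigenvalue of infinite multiplicity, with gaps of size $2B_0$. The key point is that the perturbation $L_b - L_0$, although it involves unbounded coefficients (the magnetic potential $a(x)$ grows linearly because of the $(-x_2,x_1)$ prefactor), is controlled in a form sense by a factor $\beta = \|b\|_{C^1}$, provided one localizes the spectral parameter to a compact set $[0,2(N+1)B_0]$ where the relevant eigenfunctions of $L_0$ decay. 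First I would write $L_b = L_0 + W$ where, expanding the squares, $W$ collects the cross terms $-2a_1(-i\partial_1) + i(\partial_1 a_1) + \ldots$ and the quadratic terms $a_1^2 + a_2^2 + 2B_0 x_1 a_2$; the worst term is $B_0 x_1 a_2(x)$, which grows like $|x|^2$. So a naive bound $\|W\| \le C\beta$ in $\mathcal{L}(\mathcal{H})$ is false, and this is where the main work lies.

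The hard part will be converting the $C^1$-smallness of $b$ into a genuine spectral estimate despite the unbounded coefficients. My approach would be to use the resolvent of $L_0$ as a smoothing operator: for $z$ in a compact set away from $\sigma(L_0)$, the operator $(L_0 - z)^{-1}$ maps into the domain of $L_0$, and crucially $\langle x\rangle^k (L_0+1)^{-1} \langle x\rangle^{-k}$ is bounded (Combes--Thomas / magnetic resolvent decay estimates), so one can absorb the polynomial growth of the coefficients of $W$ against powers of $(L_0+1)^{-1}$. Concretely, I would show that $W (L_0 - z)^{-1}$, or better the symmetrized $\langle L_0\rangle^{-1/2} W \langle L_0\rangle^{-1/2}$, has norm $\le C\beta$ uniformly for $z$ in the compact set, using that each factor $a_j$ carries a $\beta$ and that the remaining growth $\langle x\rangle^2$ is dominated by $L_0$ (indeed $\langle x_1\rangle^2 \lesssim L_0 + \text{lower order}$ on the Landau levels, since magnetic confinement controls one spatial direction; the other direction is handled by the gauge freedom/translation invariance in $x_2$, or by a standard partition argument). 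Alternatively, and perhaps more cleanly, I would invoke the magnetic pseudodifferential calculus of the appendices: $L_b = \mathfrak{Op}^{A}(\xi^2)$ for the vector potential $A$ with $\mathrm{curl}\,A = B_0 + b$ in the transverse gauge relative to a basepoint, and then $L_b - L_0$ is the quantization of a symbol whose seminorms are $O(\beta)$ after the relevant localization, so Proposition-type norm estimates on magnetic quantizations (as in \cite{IMP1}) give the bound directly.

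Once the relative bound $\|W (L_0 - z)^{-1}\| \le C\beta$ holds uniformly for $z$ at distance $\ge B_0/2$ (say) from $\sigma(L_0) \cap [0, 2(N+1)B_0]$, the conclusion is a routine Neumann series argument: $(L_b - z) = (L_0 - z)(I + (L_0-z)^{-1}W)$ — wait, one must be careful with the order, so instead write $(L_b - z) = (I + W(L_0-z)^{-1})(L_0 - z)$, which is invertible once $C\beta < 1$, whence $z \notin \sigma(L_b)$. Taking $\beta_0$ so that $C\beta_0 < 1$, this shows $\sigma(L_b) \cap [0, 2(N+1)B_0]$ avoids every interval of radius, say, $B_0/2$ about the mid-gap points, hence is contained in $\bigcup_{j=0}^N [(2j+1)B_0 - C'\beta, (2j+1)B_0 + C'\beta]$ for a suitable $C'$ (the precise radius $C\beta$ as stated follows by applying the resolvent bound at $z = (2j+1)B_0 \pm t$ and letting $t$ range down to $C\beta$). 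The only genuine subtlety beyond the relative bound is ensuring the constants are uniform in $N$ — but $N$ is fixed and the compact set $[0,2(N+1)B_0]$ is fixed, so this is automatic; the $N$-dependence is absorbed into $C$ and $\beta_0$. I expect the relative-boundedness step — taming $B_0 x_1 a_2(x)$ — to be the crux, and I would lean on magnetic resolvent decay estimates to carry it out.
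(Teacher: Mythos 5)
Your central estimate is the relative bound $\|W(L_0-z)^{-1}\|\leq C\beta$ (or the symmetrized form bound), and this is precisely the step that fails. The smallness $\beta=\|b\|_{C^1}$ controls the field, not the potential: $a(x)=a(x,0)$ grows like $\beta|x|$ and the cross term $B_0x_1a_2(x)$ like $\beta|x|^2$, so at $|x|\sim\beta^{-1}$ the coefficients of $W$ are of order $1$, and no weighted resolvent estimate can repair this. Indeed $\langle x\rangle(L_0-z)^{-1}$ is already unbounded (the bound you quote, $\langle x\rangle^{k}(L_0+1)^{-1}\langle x\rangle^{-k}\in\mathcal L(\mathcal H)$, carries a compensating decaying weight on the right which is absent here), and the inequality $\langle x_1\rangle^2\lesssim L_0+{\rm l.o.t.}$ is false: $L_0$ commutes with the magnetic translations, hence cannot dominate any moment of the position variable; the Landau eigenfunctions can be localized arbitrarily far from the origin at fixed energy. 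So the Neumann-series step never gets off the ground, and the alternative suggestion of quoting general magnetic pseudodifferential stability results gives at best H\"older-$\frac12$ continuity of the spectrum in the field (this is exactly the $\sqrt{\kappa\epsilon}$ of \eqref{febr-20}), not the Lipschitz bound $C\beta$ that the proposition requires.

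The missing idea — and the one the paper uses — is to change gauge around each point before comparing with $L_0$, i.e.\ gauge-covariant magnetic perturbation theory in the spirit of \cite{Ne02}. One does not compare $(L_b-z)^{-1}$ with $(L_0-z)^{-1}$ but with the dressed operator $S_b(z)$ whose kernel is $e^{i\phi_b(x,x')}K_0(x,x';z)$, where $\phi_b(x,x')$ is the flux of $b$ through the triangle $\langle 0,x,x'\rangle$ and $K_0$ is the free Landau resolvent kernel. The intertwining identity \eqref{aaa2} shows that the resulting error kernel $T_b(x,x';z)$ involves the potential $a(x,x')$ in the transverse gauge \emph{based at} $x'$, which obeys $|a(x,x')|\leq C\beta|x-x'|$; this growth in $|x-x'|$ (rather than in $|x|$) is absorbed by the off-diagonal decay of $K_0(x,x';z)$, giving $\sup_{z\in K}\|T_b(z)\|\leq C_K\beta$, hence $(L_b-z)^{-1}=S_b(z)(1+T_b(z))^{-1}$ and \eqref{aaa4}; the localization of the spectrum to intervals of width $O(\beta)$ then follows from the Riesz projections as in \eqref{marts9}. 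Your parenthetical appeal to ``gauge freedom/translation invariance in $x_2$, or a standard partition argument'' points in this direction, but as written your proof rests on a false operator bound, so the gap is genuine: without the $x'$-dependent phase (or an equivalent local re-gauging and patching argument) the linear growth of $a$ cannot be traded for the factor $\beta$.
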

\begin{proof}
 Define 
$\phi_b(x,x')$ to be the magnetic flux generated by the magnetic field $b$ through the triangle with vertices at $0$, $x$ and $x'$. Let $\widehat A_0(x)=(0,B_0x_1)$. We have the following variant of  \eqref{26-02-2}, for the composition of $L_b$ and  the multiplication operator by 
$ e^{i\phi_b(\cdot ,x')}$, with $x'\in\X$ arbitrary:
\begin{equation} \label{aaa2}
L_b \,  e^{i\phi_b(x,x')}=e^{i\phi_b(x,x')}\big ( L_0+a^2(x,x')-2a(x,x')\cdot (-i\nabla_x-\widehat A_0(x))+i\nabla_x a(x,x')\big )\,,
\end{equation}

Let $z\not\in \sigma(L_0)$.  Denote by $K_0(x,x';z)$ the integral kernel of $(L_0-z)^{-1}$. Let  $S_b(z)$  be the operator whose   integral kernel is given by
$$S_b(x,x';z):=e^{i\phi_b(x,x')}K_0(x,x';z)\,.$$
Using \eqref{aaa2} and the fact that $\phi_b(x,x)=0\,$, one can prove that: 
\begin{align} \label{aaa3}
[(L_b-z)S_b(\cdot,x';z)](x)=\delta(x-x')+T_b(x,x';z)\,,
\end{align}
where the kernel $T_b(x,x';z)$ generates an operator $T_b$ such that, for any compact set $K$ in the resolvent set of $L_0\,$, there exists  $C_K>0$ s. t. 
$$\sup_{z\in K}||T_b(z)||\leq C_K \beta\,.$$
From  \eqref{aaa3} it follows that there exists $\beta_0 >0$, such that if $\beta \in [0,\beta_0]$ then $K$ is in the resolvent set of $L_b$ and we have:
$$(L_b-z)^{-1}=S_b(z)(1+T_b(z))^{-1}\,.$$
In particular,   there exists  $C_K>0$ s. t. 
\begin{align} \label{aaa4}
\sup_{z\in K}\Vert (L_b-z)^{-1}-S_b(z)\Vert \leq C_K \beta\,.
\end{align}
By a Riesz integral on a contour $\Gamma_j$ encircling the eigenvalue $(2j+1)B_0$ of $L_0\,$, we can define the band operator 
$$L_{b,j}:=\frac{i}{2\pi}\int_{\Gamma_j}z \, (L_b-z)^{-1}\,dz$$
acting on the range of the projector  $P_{j,b}=\frac{i}{2\pi}\int_{\Gamma_j} (L_b-z)^{-1}dz\,$.\\
 From \eqref{aaa4} we get:
\begin{equation}\label{marts9}
\big (L_{b,j}-(2j+1)B_0\big )P_{j,b}=\mathcal{O}(\beta)\,,
\end{equation}
which shows that the spectrum of $L_b$ in $[0,2(N+1)\, B_0]$ is contained in intervals of width of order $\beta$ centered around the Landau levels $(2j+1)\,B_0\,$. 
\end{proof}
\begin{remark}
 Let us introduce  $r^{\epsilon,\kappa}(\z)$, resp. $r^{\epsilon}(\z)$, in $
S^{-2}_1(\Xi)$ such that
\beq\label{nou-1}
\left(\mathfrak{Op}^{\epsilon,\kappa}(h_{m^\epsilon})-\z\right)^{-1}=:\mathfrak{
Op}^{\epsilon,\kappa}\big({r^{\epsilon,\kappa}}(\z)\big),\quad
\left(\mathfrak{Op}^\epsilon(h_{m^\epsilon})-\z\right)^{-1}=:\mathfrak{Op}
^\epsilon\big(r^\epsilon(\z)\big)\,
\eeq
for $z$ in the resolvent set of $\mathfrak{Op}^{\epsilon,\kappa}(h_{m^\epsilon})$, resp. $\mathfrak{Op}^\epsilon(h_{m^\epsilon})\,$.\\
 Coming back to Proposition \ref{model-spectrum},  after a scaling, the estimate \eqref{aaa4} combined with \eqref{nou-1} leads, for $j\in \mathbb N$, to the existence of $C_j$ such that
\beq\label{nou-3}
\sup_{|\z-(2j+1)\epsilon m^\epsilon \,B_0|=\epsilon m^\epsilon \,B_0}\|\mathfrak{Op}^{\epsilon,\kappa}\big(r^{\epsilon,\kappa}(
\z)-r^\epsilon(
\z)\big)\|\leq C_j\, \kappa\epsilon^{-1\,}.
\eeq

\end{remark}

\subsection{The resolvent of $\mathfrak{Op}^{\epsilon,\kappa}(\lambda^\epsilon)$}\label{SSS-quasi-inv}

 Using the results of Section \ref{S-eff-band-Ham}, we need for finishing the proof of
Theorem \ref{mainTh}  a spectral analysis of
$\mathfrak{Op}^{\epsilon,\kappa}(\lambda^\epsilon)$.  In the rest of this section
we use  definitions, notation and results  from the
Appendix~\ref{A-sl-var-symb} dedicated to the magnetic Moyal calculus.

\paragraph{Cut-off functions near the minimum.}
We introduce a cut-off around $\theta=0\,$.  We choose an even function $\chi$ in
$C_0^\infty(\R)$ with $0\leq\chi\leq 1\,$, with  support in $(-2,+2)$ and such that $\chi(t)=1$ on $[-1,+1]\,$.  We note that the
following relation holds on $[0,+\infty)$
\beq\label{28aug-2}
(1-t)\big(\chi(t)-1\big)\geq0\,.
\eeq
For $\delta>0$ we define
\beq\label{g-delta}
g_{1/\delta}(\xi):=\chi\big(h_{m^\epsilon}(\delta^{-1}\xi)\big),\quad \xi\in \X^*\,,
\eeq
 where $h_{m^\epsilon}$ is defined in \eqref{defhm}. \\
Thus $0\leq g_{1/\delta}\leq1$ for any $\delta>0$ and
\beq\label{g-delta-p}
g_{1/\delta}(\xi)\ =\ 
\left\{
\begin{array}{ccl}
1&\text{ if }&|\xi|^2\leq (2m_2)^{-1}\delta^2\,,\\
0&\text{ if }&|\xi|^2\geq m_1^{-1}\delta^2\,.
\end{array}
\right.
\eeq
We choose {$\delta_0$} such that 
\begin{equation}\label{delta-0} D(0,\sqrt{2m_1^{-1}}{\delta_0}) \subset
E^{^{^{\hspace{-1.5mm}\circ}}}_* \,,
\end{equation}
where $D(0,\rho)$ denotes the disk centered at $0$ of radius $\rho$ and  $E^{^{^{\hspace{-1.5mm}\circ}}}_* $ denotes the interior of $E_*\,$.\\ 

For any
$\delta\in (0,\delta_0]\,,$ $g_{1/\delta}{\in}C_0^\infty(E_*)$ 
and we shall consider it as an element of $C_0^\infty(\X^*)$ by extending  it
by  $0$. We introduce 
\begin{align}\label{marts5}
\widetilde{g}_{1/\delta}(\xi):=\sum_{ \gamma\in \Gamma^*} g_{1/\delta}(\xi-\gamma)\,,
\end{align}
the $\Gamma_*$-periodic
continuation of $g_{1/\delta}$ to $\X^*$. The multiplication with
these functions defines bounded linear maps in  $L^2(\mathbb{T}_*)$ or
in $ L^2(\X^*)$ with operator norm bounded uniformly for $\delta\searrow0\,$.
For any $\delta\in(0,\delta_0]$ we introduce:
\beq \label{defdeltacirc}
\delta^\circ:=\sqrt{m_1/2m_2}\; \delta\,,
\eeq 
so we have
 $$g_{1/\delta^\circ}=g_{1/\delta}\;
g_{1/\delta^\circ}\,.$$

\paragraph{Behavior near the minimum.} We use now the expansion of
$\lambda^\epsilon$ near its minimum
\eqref{formenormale} and the coordinates that diagonalize the Hessian (see the discussion after \eqref{formenormale}) in order to get
\beq\label{marts4}
\lambda^\epsilon(\xi)\,=\,\big(m^\epsilon_1\xi_1^2+m^\epsilon_2\xi_2^2\big)\,+\,
\epsilon\, \mathcal{O}
(|\xi|^3)\,+\,\mathcal{O}(|\xi|^4)\,.
\eeq
In order to localize near the minimum $\xi=0\,$, we use the cut-off function
$g_{1/\delta}$ and write
$$
\lambda^\epsilon\ = 
\lambda^\epsilon_\circ\,+\,\widetilde{\lambda}^\epsilon\,.
$$
with
$$ 
\lambda^\epsilon_\circ:= g_{1/\delta}\, \lambda^\epsilon\,\mbox{ and } \, \widetilde{\lambda}^\epsilon:=(1-g_{1/\delta}
)\, \lambda^\epsilon\,.
$$
We introduce a second scaling:
\begin{equation} \label{ScalHyp}
\epsilon =\delta^\mu\,,  \mbox{ for some  } \mu \in(2,4)\,. 
\end{equation}

Due to \eqref{marts4} and the inequality $\mu+3>4\,$, we can find a  bounded family
$\{\mathfrak{f}^{
\delta}\}_{\delta\in(0,\delta_0]}$ in $ C^\infty_0(\X^*)$ such that 
\beq\label{4.0}
\lambda^\epsilon_\circ(\xi)=
g_{1/\delta}(\xi)\, h_{m^\epsilon}(\xi)+\delta^4 \, \mathfrak{f}^{
\delta}(\xi)\,.
\eeq

\paragraph{The shifted operator outside the minima.}~\\
For the region outside the minima, we  need the operator
$\mathfrak{Op}^{\epsilon,\kappa}\big(\lambda^\epsilon\, +\,(\delta^\circ)^2\widetilde
{g}_
{1/\delta^\circ}\big)$ with 
$\delta^\circ$ as in \eqref{defdeltacirc} and $\widetilde
{g}_
{1/\delta^\circ}$ as in \eqref{marts5}. 
The inequality \eqref{28aug-2} implies that there exists 
$ C>0$ such that
$$
\lambda^\epsilon(\xi)+(\delta^\circ)^2 \, \widetilde{g}_{1/\delta^\circ}(\xi)\,\geq\,C\,  (\delta^{\circ})^2 = C \frac{m_1}{2m_2}\,  \delta^2\, .
$$
Hence there exists $C' >0$ such that for any $\delta \in (0,\delta_0)$
\begin{equation}
\mathfrak{Op}^0\big(\lambda^\epsilon+(\delta^\circ)^2\, \widetilde{g}_
{1/\delta^\circ}\big)\geq  C^\prime\,\delta^2\, \bb1\, .
\end{equation}

As the magnetic field is slowly variable and the symbol
$\lambda^\epsilon+(\delta^\circ)^2\widetilde{g}_
{1/\delta^\circ}$ is $\Gamma^*$-periodic in $S^0_0(\Xi)$, by
Corollary 1.6 in \cite{CP-2}  there exists $\epsilon_0>0$ and for $(\epsilon,
\kappa,\delta)\in[0,\epsilon_0]\times[0,1]\times (0,\delta_0]\,$, 
   some constant
$C^\prime(\epsilon,\delta)>0$ such that:
\beq\label{l-bd-est}
\mathfrak{Op}^{\epsilon,\kappa}\big(\lambda^\epsilon+(\delta^\circ)^2 \, \widetilde{
g}_
{1/\delta^\circ}\big)\geq \left(C'\,\delta^2-C^\prime(\epsilon,\delta)\,\epsilon\,\right) \,\bb1\,.\eeq
 The proof of   Corollary 1.6 in \cite{CP-2} gives a control of $C'(\epsilon,\delta)$ by:
 \begin{equation*}
\begin{array}{ll}
C^\prime(\epsilon,\delta)
 & \leq \,C_1 \Big[\underset{|\alpha|\leq 2}{\max}
\left\|\mathcal{F}^-_{\mathbb{T}_*}
\big(\partial_\xi^\alpha\lambda^\epsilon\big)\right\|_{{ \ell^1(\Gamma)}}
+(\delta^\circ)^2\underset{|\alpha|\leq 2}{\max}\left\|\mathcal{F}^-_{\mathbb{T}_*}\big[\partial_\xi^\alpha\big(
g_{1/\delta^\circ}\big)\big]\right\|_{{ \ell^1(\Gamma)}}\Big]\, .
\end{array}
\end{equation*}
The differentiation of  $g_{1/\delta^\circ}$ with respect to $\xi$ produces $|\alpha|\leq 2$ negative powers of $\delta$. Denote by $g\equiv g_1$. Then we change the integration variable $\xi\mapsto \delta^\circ \xi$ under the Fourier transform (this gives us an extra factor of $(\delta^\circ)^2$) and we obtain: 
\begin{align*}
C^\prime(\epsilon,\delta)& \leq\,C_2
\Big[1
+\underset{|\alpha|\leq 2}{\max} (\delta^\circ)^2\sum_{\gamma\in \Gamma}\left \vert
\int_{\mathbb{T}_*}e^{i\langle \delta^\circ\gamma,\xi\rangle }(\partial_\xi^\alpha
g) d\xi \right\vert \Big].
\end{align*}
We notice
that $\partial^\alpha g\in C^\infty_0 (\X^*)$ has its support strictly
included in the dual
elementary cell, so that the above series is just a Riemann sum for the integral representing
the norm $\left\|\mathcal{F}^-_{\X^*}\partial^\alpha g\right\|_{L^1(\X)}$. Hence:
\begin{align*}
C^\prime(\epsilon,\delta)
 & \leq\,C_3 \Big[1
+\underset{|\alpha|\leq 2}{\max}\left\|\mathcal{F}^-_{\X^*}\big(\partial_\xi^\alpha
g\big)\right\|_{L^1(\X)}\Big] \,.
\end{align*}
Thus we have shown the existence of $C_4 >0$ such that 
$
C^\prime(\epsilon,\delta)\leq C_4\,
$, uniformly in $\epsilon$. 
Hence any
$\z\in (-\infty,C'\delta^2-C_4 \epsilon)$ is in the resolvent  set of 
$\big(\mathfrak{Op}^{\epsilon,\kappa}
\big(\lambda_\epsilon+(\delta^\circ)^2\, \widetilde{g}_
{1/\delta^\circ}\big) \big)$ and
we denote by $r_{\delta,\epsilon,\kappa}(\z)$ its
magnetic symbol.

 Due to the choice of $\epsilon$ made in \eqref{ScalHyp}, it follows that $C'\delta^2-C_4 \epsilon$ is of order $\epsilon^{2/\mu}\,$,  i.e. much larger than $\epsilon$ as $\epsilon \rightarrow 0\,.$ 
  Since we are interested in inverting our operators for $\z$ in an interval
of the form $[0,c\epsilon]$ for some $c>0$, we conclude that the distance
between $\z$ and the bottom of the spectrum of $\mathfrak{Op}^{\epsilon,\kappa}
\big(\lambda_\epsilon+(\delta^\circ)^2\, \widetilde{g}_
{1/\delta^\circ}\big)$ is of order $\delta^2=\epsilon^{2/\mu}$. Thus given any $C'>0$, there exists $\epsilon_0$  and $C>0$ such that for every $\epsilon<\epsilon_0$ and $\kappa<1$ we have 
\beq\label{r-delta-est}
\sup_{\z\in [0,C'\epsilon]}\|r_{\delta,\epsilon,\kappa}(\z)\|_{B_{\epsilon,\kappa}}\leq C\,\epsilon^{-2/\mu}=C\delta^{-2}\,.
\eeq

\paragraph{Definition of the \textit{quasi-inverse}.}
Let us fix  $\mu\in (2,4)$ and some compact set $K\subset\mathbb{C}$ such that:
\begin{equation}\label{condK}
K\subset \mathbb C \setminus \{(2n+1)m \, B_0\}_{n\in\mathbb{N}}\,.
\end{equation} 

We deduce from Proposition \ref{model-spectrum}  that there exist $\epsilon_K >0$  and $\kappa_K\in[0,1]$ such that for $(\epsilon,\kappa) \in [0,\epsilon_K] \times [0,\kappa_K]$ and  
 for $a\in K\,$, the point $\epsilon a\in\mathbb{C}$ belongs to the
resolvent set of
$\mathfrak{Op}^{\epsilon,\kappa}(h_{m^\epsilon})\,$.  We denote by 
$r^{\epsilon,\kappa}( \epsilon a)$  the magnetic  symbol of the resolvent of 
$\mathfrak{Op}^{\epsilon,\kappa}(h_{m^\epsilon})-\epsilon a \,$,
i.e.
\beq\label{z-epsilon-a}
\big(\mathfrak{Op}^{\epsilon,\kappa}(h_{m^\epsilon})-\epsilon a\big)^{
-1}\, := \,\mathfrak{Op}^{\epsilon,\kappa}(r^{\epsilon,\kappa}( \epsilon a))\,.
\eeq
 For {$ a\in K$}  we want to define the following symbol in
$\mathscr{S}^\prime(\X^*)$ as the sum
of the series on the right hand side:
\beq\label{q-rez}
\widetilde{r}_{\lambda}(\epsilon a):=\underset{\gamma^*\in\Gamma_*}{\sum}
\tau_{\gamma^*}
\big(g_{1/\delta}\,\,\sharp^{\epsilon,\kappa}\,
r^{\epsilon,\kappa}(\epsilon a)\big)+\left(1-\widetilde{g}_{1/\delta}\right)\sharp^{
\epsilon , \kappa } \, r_{\delta,\epsilon,\kappa}(\epsilon a)\,,\quad \delta=\epsilon^{1/\mu}.
\eeq
Then  the proof of Theorem \ref{mainTh} will be  a consequence of the following key result:

\begin{proposition}\label{mainP}  Let $\mu=3$ in \eqref{q-rez}. For any compact set $K$  satisfying  
\eqref{condK},
there exist $C>0$,  $\kappa_0\in(0,1]$ and $\epsilon_0>0$ such that
for  $(\kappa,\epsilon,a) \in[0,\kappa_0] \times (0,\epsilon_0]\times K $,   we have
$$\| \mathfrak{Op}^{\epsilon,\kappa}(\widetilde{r}_{
\lambda}
(\epsilon a))\|\leq C\epsilon^{-1}\,,
$$ and 
\begin{align}\label{marts7}
\big(\lambda_\epsilon-\epsilon a\big)\,\,\sharp^{\epsilon,\kappa}\,\,\widetilde{r}_{
\lambda}
(\epsilon a)\ =\
1\,+\,\mathfrak{r}_{\delta,a},\;  \mbox{ with } \quad \|\mathfrak{Op}^{\epsilon,\kappa}
(\mathfrak{r}_{\delta,a})\|\,\leq\,C\;\epsilon^{1/3}\,.
\end{align}
For $N>0\,$, there exist $C$, $\epsilon_0$ and $\kappa_0$ such that  the spectrum of 
$\mathfrak{Op}^{\epsilon,\kappa}(\lambda_\epsilon)$ in  $[0,(2N+2)mB_0\epsilon]$  consists of spectral islands centered at $(2n+1)mB_0\epsilon \,$, $0\leq n \leq N\,$, with a width bounded by $ C\,(\epsilon \kappa +\epsilon^{4/3})\,$.
\end{proposition}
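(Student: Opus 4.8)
The plan is to build $\widetilde{r}_\lambda(\epsilon a)$ as an approximate right inverse (a \emph{quasi-resolvent}) for $\mathfrak{Op}^{\epsilon,\kappa}(\lambda_\epsilon - \epsilon a)$, using the partition of unity $1 = \widetilde{g}_{1/\delta} + (1-\widetilde{g}_{1/\delta})$ adapted to the $\Gamma_*$-periodic structure near the minimum. The two ingredients are: near each minimum $\gamma^* \in \Gamma_*$, the genuine resolvent symbol $r^{\epsilon,\kappa}(\epsilon a)$ of the quadratic Landau-type operator $\mathfrak{Op}^{\epsilon,\kappa}(h_{m^\epsilon})$, which is well defined and bounded by $C\epsilon^{-1}$ for $\epsilon a \in \epsilon K$ by Proposition \ref{model-spectrum}; and far from the minima, the resolvent symbol $r_{\delta,\epsilon,\kappa}(\epsilon a)$ of the shifted operator $\mathfrak{Op}^{\epsilon,\kappa}(\lambda_\epsilon + (\delta^\circ)^2\widetilde{g}_{1/\delta^\circ})$, which by \eqref{r-delta-est} is bounded by $C\delta^{-2} = C\epsilon^{-2/3}$. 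Summing the translated pieces $\tau_{\gamma^*}(g_{1/\delta}\,\sharp^{\epsilon,\kappa}\,r^{\epsilon,\kappa}(\epsilon a))$ over $\Gamma_*$ converges because of the magnetic Moyal calculus decay estimates for slowly varying fields (Appendix \ref{A-cl-sl-var}); the supports $\tau_{\gamma^*}(\operatorname{supp} g_{1/\delta})$ are disjoint by \eqref{delta-0}, which makes the bookkeeping clean and yields the operator-norm bound $\|\mathfrak{Op}^{\epsilon,\kappa}(\widetilde{r}_\lambda(\epsilon a))\| \leq C\epsilon^{-1}$, the worst of the two contributions.

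Next I would compute $(\lambda_\epsilon - \epsilon a)\,\sharp^{\epsilon,\kappa}\,\widetilde{r}_\lambda(\epsilon a)$ and show it equals $1 + \mathfrak{r}_{\delta,a}$ with $\|\mathfrak{Op}^{\epsilon,\kappa}(\mathfrak{r}_{\delta,a})\| \leq C\epsilon^{1/3}$. This splits into three error sources. First, on the inner region, one replaces $\lambda_\epsilon$ by $h_{m^\epsilon}$ at the cost of $\delta^4 \mathfrak{f}^\delta$ by \eqref{4.0}; multiplied by the inner resolvent symbol (size $\epsilon^{-1}$) this produces $\delta^4 \epsilon^{-1} = \epsilon^{4/3}\epsilon^{-1} = \epsilon^{1/3}$, which is exactly where the exponent $1/3$ comes from and why $\mu=3$ is the optimal choice (balancing $\delta^{\mu-3}$ against $\delta^{4-\mu}$ — actually balancing the inner truncation error $\delta^{4-\mu}$ against commutator terms, giving $\mu=3$). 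Second, the $\sharp^{\epsilon,\kappa}$-product does not respect the pointwise product $g_{1/\delta}\lambda_\epsilon$, so there are Moyal commutator corrections of the form $[\lambda_\epsilon, g_{1/\delta}]_{\sharp}$; by the expansion of the slowly varying magnetic Moyal calculus these are controlled by derivatives of $g_{1/\delta}$ (each $\xi$-derivative costs $\delta^{-1}$) times a gain of $\epsilon$ from the quantization, and on the overlap region $g_{1/\delta} = g_{1/\delta}g_{1/\delta^\circ}$ one checks these are again $o(1)$ after applying $\mathfrak{Op}^{\epsilon,\kappa}$. Third, the cross terms between the inner and outer pieces are supported on the annular overlap where $\nabla g_{1/\delta} \neq 0$, and there the shifted symbol $\lambda_\epsilon + (\delta^\circ)^2 \widetilde{g}_{1/\delta^\circ}$ differs from $\lambda_\epsilon$ by a quantity that is "invisible" on $\operatorname{supp}\nabla g_{1/\delta}$ by the design \eqref{defdeltacirc}, so these terms telescope away up to Moyal errors.

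Once the quasi-resolvent estimate \eqref{marts7} is in place, the spectral conclusion follows by a standard argument: if $\epsilon a$ is at distance $\geq C'\epsilon$ from every Landau level $(2n+1)m^\epsilon B_0 \epsilon$ (hence $a$ at distance $\geq$ const from $\{(2n+1)mB_0\}$, possibly after absorbing the $\epsilon\mu^*(\epsilon)$ shift and the $\kappa$-dependent band width $C\kappa\epsilon$ from Proposition \ref{model-spectrum}), then $\widetilde{r}_\lambda(\epsilon a)$ is a genuine approximate inverse with error $\leq C\epsilon^{1/3} < 1$, so by a Neumann series $\mathfrak{Op}^{\epsilon,\kappa}(\lambda_\epsilon) - \epsilon a$ is invertible; this puts $\epsilon a$ in the resolvent set and confines the spectrum in $[0,(2N+2)mB_0\epsilon]$ to the union of intervals around $(2n+1)mB_0\epsilon$ of total width $\leq C(\epsilon\kappa + \epsilon^{4/3})$ — the $\epsilon\kappa$ from the non-constant field perturbation in Proposition \ref{model-spectrum} and the $\epsilon^{4/3}$ from the anharmonic truncation. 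I expect the \textbf{main obstacle} to be the second error source above: proving that the Moyal-commutator corrections between $\lambda_\epsilon$ and the cut-offs, together with the non-constancy of the magnetic field, do not destroy the $\epsilon^{1/3}$ bound — this is precisely the point where the refined magnetic pseudodifferential calculus for slowly varying fields of Appendix \ref{A-cl-sl-var} is needed, in particular tracking the dependence of all seminorm estimates on the spectral parameter $a$ uniformly over the compact $K$, and showing that the $\epsilon$-gain from quantization genuinely beats the $\delta^{-1}$-losses from differentiating the cut-offs under the scaling $\epsilon = \delta^3$.
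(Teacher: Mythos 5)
Your construction of the quasi-resolvent and the error analysis leading to \eqref{marts7} are broadly along the lines of the paper (the balance $\delta^{4}\epsilon^{-1}=\epsilon^{1/3}$ for the quartic remainder against the $\delta^{\mu-2}$ losses from the outer region and the cut-off commutators is indeed why $\mu=3$ is chosen), although note that disjointness of the translated supports alone does not make the $\Gamma_*$-sum "clean": the magnetic Moyal product is non-local, and the paper has to run a Cotlar--Stein argument fed by the finite resolvent expansion \eqref{rez-eq-1} (the factors $\psi_N[r^{\epsilon,\kappa}_1]\in S^{-2N}_1(\Xi)^\bullet$ supply the decay) together with the scaled-symbol estimates of Appendix \ref{A-sl-var-symb}; this is the content of Propositions \ref{Cor-4.2}, \ref{P-25aug-2} and \ref{P-25aug-3}, which you only gesture at.

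The genuine gap is in your last paragraph, i.e. in deducing the band width $C(\epsilon\kappa+\epsilon^{4/3})$. A Neumann-series argument based on \eqref{marts7} only works where the quasi-resolvent exists with the stated bounds, and that construction requires $\epsilon a$ to stay at distance of order $\epsilon$ from the Landau levels: for $a$ closer to a level, the model resolvent $r^{\epsilon,\kappa}(\epsilon a)$ either fails to exist (Proposition \ref{model-spectrum} only guarantees a spectral-free zone outside bands of width $C\kappa\epsilon$) or its norm, and more importantly its symbol seminorms entering all the series estimates, blow up beyond $\epsilon^{-1}$, so the error in \eqref{marts7} is no longer $O(\epsilon^{1/3})$. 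Hence your argument localizes the spectrum only in intervals of width $O(\epsilon)$ around $(2n+1)mB_0\epsilon$, not of width $C(\epsilon\kappa+\epsilon^{4/3})$. The paper instead keeps $a\in K$ and computes the Riesz band operator $h_n=\frac{i}{2\pi}\int_{|\z-E^\epsilon_n|=m\epsilon B_0}(\z-E^\epsilon_n)(\mathfrak{Op}^{\epsilon,\kappa}(\lambda^\epsilon)-\z)^{-1}d\z$, inserting \eqref{marts8}: the $\mathfrak{r}_{\delta,a}$ term gives $O(\epsilon^{4/3})$, but the contribution of $\widetilde{r}_\lambda(\z)$ itself is a priori only $O(\epsilon)$ — too large. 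The key missing idea is to subtract under the contour integral the constant-field resolvent symbol $r^{\epsilon}(\z)$ (whose Riesz integral vanishes by analyticity, after using \eqref{rez-eq-1} to discard the polynomial part) and to invoke the perturbation estimate \eqref{nou-3}, $\|\mathfrak{Op}^{\epsilon,\kappa}(r^{\epsilon,\kappa}(\z)-r^{\epsilon}(\z))\|\leq C\kappa\epsilon^{-1}$, coming from Proposition \ref{pertprop}; only this comparison converts the naive $O(\epsilon)$ bound into $O(\kappa\epsilon)$ and yields the stated width $C(\epsilon\kappa+\epsilon^{4/3})$. Attributing the $\epsilon\kappa$ term directly to Proposition \ref{model-spectrum}, as you do, skips precisely this transfer mechanism.
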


\subsection{Proof of Proposition \ref{mainP}}

For the time being, we allow $\mu$ to belong to the interval $(2,4)$ and we will explicitly mention when we make the particular choice $\mu=3$. We begin with proving the convergence of the first series in
\eqref{q-rez} by analyzing the properties of the symbol
$r^{\epsilon,\kappa}(\epsilon a)$. Then  assuming that \eqref{marts7} is
true and
using the properties of $r^{\epsilon,\kappa}( \epsilon a)$ we will show how this
implies
the second part of Proposition \ref{mainP}, namely the localization of
the spectral islands   (see Subsection \ref{SSS-nou}). In the last part of this
section we shall prove \eqref{marts7}.

\subsubsection{Properties of $r^{\epsilon,\kappa}( \epsilon a)$.}
From  its definition in \eqref{z-epsilon-a} and from Proposition 6.5 in
\cite{IMP2} we know that $r^{\epsilon,\kappa}( \epsilon a)$ defines a 
symbol of class $ S^{-2}_1(\Xi)$.  Moreover, from Proposition \ref{model-spectrum} we have:
\begin{equation}\label{maii1}
\left\|\mathfrak{Op}^{\epsilon,\kappa}(r^{\epsilon,\kappa}( \epsilon a))\right\|\leq C(K) \,  \epsilon^{-1}.
\end{equation}

In order to study the convergence of the series in \eqref{q-rez} we need
a more involved expression for $r^{\epsilon,\kappa}(\epsilon a)$ obtained by
using the  resolvent  equation with respect to some point far from the spectrum. All the operators
$\mathfrak{Op}^{\epsilon,\kappa}(h_{m^\epsilon})$ (indexed by
$(\epsilon,\kappa)\in[0,\epsilon_0]\times[0,\kappa_0]$) are non-negative due to the diamagnetic inequality, hence 
the point $\z=-1$ belongs to their resolvent sets and:
$$
\left\|\big(\mathfrak{Op}^{\epsilon,\kappa}(h_{m^\epsilon})+\bb1\big)^{-1}
\right\|\leq 1\, .
$$
We denote by $r^{\epsilon,\kappa}_1$ the magnetic symbols of these resolvents, so 
$$
\big(\mathfrak{Op}^{\epsilon,\kappa}(h_{m^\epsilon})+\bb1\big)^{-1}\,=\,
\mathfrak{Op}^{\epsilon,\kappa}(r^{\epsilon,\kappa}_1)\, .
$$
We shall consider all these symbols as \textit{slowly varying symbols on $\Xi$}
(see Appendix \ref{A-cl-sl-var}).

{In the sequel we shall consider families of the form $\left\{
r^{\epsilon,\kappa}_1
\right\}_{(\epsilon,\kappa)\in[0,\epsilon_0]\times[0,\kappa_0]}$ either as
families of symbols in some class $S^m_\rho(\Xi)$ indexed by two indices or
alternatively as families of \textit{slowly varying symbols} in $
S^{m}_\rho(\Xi)^\bullet$ indexed by the index $\kappa\in[0,\kappa_0]$ (see
Definition \ref{S-epsilon}).}
\begin{lemma}\label{2sept-1}
The family $\left\{ r^{\epsilon,\kappa}_1 \right\}_{(\epsilon,\kappa)\in[0,\epsilon_0]\times[0,\kappa_0]}$ is bounded in
$ S^{-2}_1(\Xi)$ and {also} in 
$ S^{-2}_1(\Xi)^\bullet$.
\end{lemma}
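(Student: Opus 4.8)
The decisive feature is that $h_{m^\epsilon}+\bb1$ is a uniformly elliptic symbol which is \emph{independent of the configuration variable}. By \eqref{marts11} and the discussion preceding it, $m^\epsilon_1$ and $m^\epsilon_2$ stay in a fixed compact subinterval of $(0,\infty)$ for all $\epsilon\in[0,\epsilon_0]$; hence there is $c>0$ with $h_{m^\epsilon}(\xi)+1\geq c\langle\xi\rangle^2$ for all $\xi\in\X^*$ and all such $\epsilon$, while $\partial_\xi^b\big(h_{m^\epsilon}+1\big)$ is a constant bounded uniformly in $\epsilon$ for $|b|\leq2$ and vanishes for $|b|\geq3$. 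Thus $\{h_{m^\epsilon}+1\}_\epsilon$ is a bounded and uniformly elliptic family in $S^2_1(\Xi)^\circ\subset S^2_1(\Xi)^\bullet$ (with all $x$-derivatives identically zero). Moreover $B_{\epsilon,\kappa}=\epsilon B_0+\kappa\epsilon B(\epsilon\cdot)$ obeys $|\partial_x^aB_{\epsilon,\kappa}|\leq C_a\,\epsilon^{1+|a|}$ for $|a|\geq1$, so it is a slowly varying and small magnetic field to which the magnetic Moyal calculus of Appendix~\ref{A-cl-sl-var} applies, with all constants uniform in $(\epsilon,\kappa)\in[0,\epsilon_0]\times[0,\kappa_0]$.

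I would first prove boundedness in $S^{-2}_1(\Xi)$. Since $\mathfrak{Op}^{\epsilon,\kappa}(h_{m^\epsilon})\geq0$ by the diamagnetic inequality, the point $\z=-1$ is in the resolvent set with $\|(\mathfrak{Op}^{\epsilon,\kappa}(h_{m^\epsilon})+\bb1)^{-1}\|\leq1$, uniformly in $(\epsilon,\kappa)$. Then exactly the elliptic parametrix argument already used for $r^{\epsilon,\kappa}(\epsilon a)$ (Proposition~6.5 in \cite{IMP2}) applies here: starting from $q_0=(h_{m^\epsilon}+1)^{-1}\in S^{-2}_1(\Xi)$ and iterating with $\sharp^{\epsilon,\kappa}$ one builds a parametrix $q^{\epsilon,\kappa}\in S^{-2}_1(\Xi)$ with $(h_{m^\epsilon}+1)\,\sharp^{\epsilon,\kappa}\,q^{\epsilon,\kappa}=1-\rho^{\epsilon,\kappa}$, $\rho^{\epsilon,\kappa}\in S^{-\infty}(\Xi)$; because the ellipticity constant, the $S^2_1$-seminorms of $h_{m^\epsilon}+1$, the $BC^\infty$-seminorms of $B_{\epsilon,\kappa}$ and the resolvent bound are all uniform in $(\epsilon,\kappa)$, so are the seminorms of $q^{\epsilon,\kappa}$ and $\rho^{\epsilon,\kappa}$. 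Shrinking $\epsilon_0$ so that $\|\mathfrak{Op}^{\epsilon,\kappa}(\rho^{\epsilon,\kappa})\|\leq1/2$, and writing $\sigma^{\epsilon,\kappa}$ for the symbol of $(\bb1-\mathfrak{Op}^{\epsilon,\kappa}(\rho^{\epsilon,\kappa}))^{-1}$ — a Neumann series converging in the relevant $S^0_0(\Xi)$-seminorms, uniformly in $(\epsilon,\kappa)$, since $S^{-\infty}(\Xi)$ is a Moyal ideal — one obtains
$$
r^{\epsilon,\kappa}_1=q^{\epsilon,\kappa}\,\sharp^{\epsilon,\kappa}\,\sigma^{\epsilon,\kappa}\in S^{-2}_1(\Xi),
$$
with seminorms uniform in $(\epsilon,\kappa)$, which is the first assertion.

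For boundedness in $S^{-2}_1(\Xi)^\bullet$ I would rerun the very same construction inside the slowly varying calculus of Appendix~\ref{A-cl-sl-var}, using the composition expansions recalled there. The starting symbol $q_0=(h_{m^\epsilon}+1)^{-1}$ is $x$-independent, hence trivially bounded in $S^{-2}_1(\Xi)^\bullet$; every correction term generated by the parametrix iteration is obtained by multiplying $\bullet$-bounded symbols and applying $\sharp^{\epsilon,\kappa}$, whose expansion in that calculus only differentiates the symbols and the magnetic flux of $B_{\epsilon,\kappa}$ in the $x$-variable, and each such $x$-derivative carries precisely the extra power of $\epsilon$ built into the definition of $S^m_\rho(\Xi)^\bullet$ (Definition~\ref{S-epsilon}). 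Hence $q^{\epsilon,\kappa}$ and $\rho^{\epsilon,\kappa}$ are bounded in the $\bullet$-class uniformly in $\kappa\in[0,\kappa_0]$, and since this class is stable under $\sharp^{\epsilon,\kappa}$ and under the Neumann inversion of $\bb1-\mathfrak{Op}^{\epsilon,\kappa}(\rho^{\epsilon,\kappa})$, the identity $r^{\epsilon,\kappa}_1=q^{\epsilon,\kappa}\,\sharp^{\epsilon,\kappa}\,\sigma^{\epsilon,\kappa}$ shows $r^{\epsilon,\kappa}_1$ is bounded in $S^{-2}_1(\Xi)^\bullet$ as well.

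The main obstacle is precisely this last point: one must be sure that the entire parametrix-plus-Neumann-series machinery stays \emph{inside} the slowly varying symbol class, i.e. that no step silently drops a power of $\epsilon$ when an $x$-derivative is taken, and that all the resulting seminorm estimates remain uniform in $\kappa$. This is exactly the stability of $S^{-2}_1(\Xi)^\bullet$ under the magnetic Moyal product and under inversion, which is furnished by the composition expansions collected in Appendix~\ref{A-cl-sl-var}; granting those tools, the verification is bookkeeping.
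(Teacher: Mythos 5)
Your overall strategy matches the paper's, which disposes of the lemma in two citations: the uniform membership $r^{\epsilon,\kappa}_1\in S^{-2}_1(\Xi)$ follows from the arguments of Proposition 6.7 in \cite{IMP2} applied to the $x$-independent, uniformly elliptic family $h_{m^\epsilon}+1$ together with the uniform resolvent bound at $\z=-1$ (your parametrix construction is essentially that argument written out), and the $\bullet$-statement is then obtained by applying Proposition \ref{epsilon-comp-symb-inv} to $f^\epsilon=h_{m^\epsilon}+1\in S^2_1(\Xi)^\circ$, whose hypotheses you have already verified. Where you diverge is in the second half: instead of quoting Proposition \ref{epsilon-comp-symb-inv} you propose to re-run the whole parametrix inside the slowly varying calculus. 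That route is viable, but as written it rests on a claim that is not justified and is in fact the crux of the matter: that the Neumann series for $\big(\bb1-\mathfrak{Op}^{\epsilon,\kappa}(\rho^{\epsilon,\kappa})\big)^{-1}$ converges \emph{in the symbol seminorms} of $S^0_0(\Xi)$, respectively of the $\bullet$-class. The fact that $S^{-\infty}(\Xi)$ is a Moyal ideal gives each term of the series a meaning but says nothing about summability in the Fr\'echet topology; the seminorms of the $N$-fold products $\big(\rho^{\epsilon,\kappa}\big)^{\sharp^{\epsilon,\kappa}N}$ are not controlled geometrically in $N$. The standard resolution — and the one the paper uses — is not Fr\'echet convergence of the series but the Beals-type commutator criterion (Theorem 5.2 of \cite{IMP2}): one knows the inverse exists as a bounded operator with a uniform norm bound, and membership of its symbol in $S^{-m}_\rho(\Xi)$, and then in $S^{-m}_\rho(\Xi)^\bullet$, is characterized through iterated $\mathfrak{ad}^{B_{\epsilon,\kappa}}$-commutators; this is exactly what Propositions \ref{inv-cont} and \ref{epsilon-comp-symb-inv} package. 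So either replace your ``Neumann series in seminorms'' step by that commutator argument, or simply invoke Proposition \ref{epsilon-comp-symb-inv} once the first part is established; with that repair the proof is complete.
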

\begin{proof}
We have
$r^{\epsilon,\kappa}_1=\big(h_{m^\epsilon}+1\big)^-_{\epsilon,
\kappa}$ with $ h_{m^\epsilon}\in S^2_1(\Xi)^\circ$.
A straightforward verification using the arguments in 
the proof of Proposition 6.7 in \cite{IMP2} gives the first conclusion; then Proposition \ref{epsilon-comp-symb-inv}
allows to obtain the second one.
\end{proof}

\begin{lemma} 
 For any $N\in\mathbb{N}^*\,,$  there exist two bounded families
$ \left\{\phi_N[r^{\epsilon,\kappa}_1]\right\}_{(\epsilon,\kappa)\in[0,\epsilon_0]\times [0,\kappa_0]}$ in
$S^{-2}_1(\Xi)^\bullet$ and 
$\left\{\psi_N[r^{\epsilon,\kappa}_1]\right\}_{(\epsilon,\kappa)\in[0,\epsilon_0]\times [0,\kappa_0]} $ in
$S^{-2N}_1(\Xi)^\bullet\,,$
such that
$$
r^{\epsilon,\kappa}(\epsilon a)\,=\,\phi_N[r^{\epsilon,\kappa}_1]+r^{\epsilon,\kappa}
(\epsilon a)\, \,\sharp^{\epsilon,\kappa}\,\psi_N[r^{\epsilon,\kappa}_1]=\phi_N[r^{\epsilon,
\kappa } _1]+\psi_N[r^{\epsilon,\kappa}_1]\, \sharp^{\epsilon,\kappa
}\, r^{\epsilon,\kappa}(\epsilon a)\,.
$$
\end{lemma}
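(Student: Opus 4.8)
The plan is to iterate the second resolvent identity for the operator $H:=\mathfrak{Op}^{\epsilon,\kappa}(h_{m^\epsilon})$, comparing the spectral parameter $\epsilon a$ with the point $-1$ (which lies in the resolvent set of every such $H$, with resolvent of norm $\le 1$, as recalled just above), and then to transcribe the resulting bounded-operator identity into a Moyal-product identity between symbols, so that $\phi_N$ and $\psi_N$ appear as finite $\sharp^{\epsilon,\kappa}$-polynomials in $r^{\epsilon,\kappa}_1$.

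First I would record, on $D(H)$ and hence (after extension to all of $\mathcal{H}$) everywhere, the algebraic identity $(H+\bb1)^{-1}(H-\epsilon a)=\bb1-(1+\epsilon a)(H+\bb1)^{-1}$ together with its mirror image $(H-\epsilon a)(H+\bb1)^{-1}=\bb1-(1+\epsilon a)(H+\bb1)^{-1}$. Multiplying by the bounded operator $(H-\epsilon a)^{-1}$ on the right, resp.\ on the left, and using that $(H+\bb1)^{-1}$ and $(H-\epsilon a)^{-1}$ commute, gives
\[
(H-\epsilon a)^{-1}=(H+\bb1)^{-1}+(1+\epsilon a)(H+\bb1)^{-1}(H-\epsilon a)^{-1}=(H+\bb1)^{-1}+(1+\epsilon a)(H-\epsilon a)^{-1}(H+\bb1)^{-1}.
\]
Since $\mathfrak{Op}^{\epsilon,\kappa}$ turns operator composition into $\sharp^{\epsilon,\kappa}$, writing $\big(r^{\epsilon,\kappa}_1\big)^{\sharp k}$ for the (associative) $k$-fold $\sharp^{\epsilon,\kappa}$-product of $r^{\epsilon,\kappa}_1$, this reads
\[
r^{\epsilon,\kappa}(\epsilon a)=r^{\epsilon,\kappa}_1+(1+\epsilon a)\,r^{\epsilon,\kappa}_1\sharp^{\epsilon,\kappa}r^{\epsilon,\kappa}(\epsilon a)=r^{\epsilon,\kappa}_1+(1+\epsilon a)\,r^{\epsilon,\kappa}(\epsilon a)\sharp^{\epsilon,\kappa}r^{\epsilon,\kappa}_1.
\]

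Next I would substitute each of the two versions into its own occurrence of $r^{\epsilon,\kappa}(\epsilon a)$, $N$ times, obtaining
\[
r^{\epsilon,\kappa}(\epsilon a)=\sum_{k=0}^{N-1}(1+\epsilon a)^k\big(r^{\epsilon,\kappa}_1\big)^{\sharp(k+1)}+(1+\epsilon a)^N\,r^{\epsilon,\kappa}(\epsilon a)\sharp^{\epsilon,\kappa}\big(r^{\epsilon,\kappa}_1\big)^{\sharp N},
\]
and the same with $(1+\epsilon a)^N\big(r^{\epsilon,\kappa}_1\big)^{\sharp N}\sharp^{\epsilon,\kappa}r^{\epsilon,\kappa}(\epsilon a)$ as the last term. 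Setting $\phi_N[r^{\epsilon,\kappa}_1]:=\sum_{k=0}^{N-1}(1+\epsilon a)^k\big(r^{\epsilon,\kappa}_1\big)^{\sharp(k+1)}$ and $\psi_N[r^{\epsilon,\kappa}_1]:=(1+\epsilon a)^N\big(r^{\epsilon,\kappa}_1\big)^{\sharp N}$, both displayed identities of the lemma hold with this common choice. It then remains to place them in the stated classes: by Lemma \ref{2sept-1} the family $\{r^{\epsilon,\kappa}_1\}$ is bounded in $S^{-2}_1(\Xi)^\bullet$, and the composition calculus for slowly varying magnetic symbols (Appendix \ref{A-cl-sl-var}, cf.\ Proposition \ref{epsilon-comp-symb-inv}) shows that each family $\{\big(r^{\epsilon,\kappa}_1\big)^{\sharp k}\}$ is bounded in $S^{-2k}_1(\Xi)^\bullet$, uniformly in $(\epsilon,\kappa)\in[0,\epsilon_0]\times[0,\kappa_0]$; since $a$ ranges over the compact set $K$ and $\epsilon\le\epsilon_0$, the scalars $(1+\epsilon a)^k$ with $0\le k\le N$ are uniformly bounded. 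Hence $\psi_N[r^{\epsilon,\kappa}_1]$ is a bounded family in $S^{-2N}_1(\Xi)^\bullet$, and $\phi_N[r^{\epsilon,\kappa}_1]$, whose $k=0$ term is $r^{\epsilon,\kappa}_1\in S^{-2}_1(\Xi)^\bullet$ while its remaining terms lie in $S^{-2k-2}_1(\Xi)^\bullet\subset S^{-2}_1(\Xi)^\bullet$, is a bounded family in $S^{-2}_1(\Xi)^\bullet$, all of this uniformly in $(\epsilon,\kappa,a)$.

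The argument is essentially formal once the magnetic Moyal calculus is in hand; the only substantive ingredient --- and the one place something could break --- is that each $\sharp^{\epsilon,\kappa}$-composition of two order-$(-2)$ slowly varying magnetic symbols again gains two orders, uniformly in the slow-variation scale, which is precisely what the calculus of Appendix \ref{A-cl-sl-var} delivers starting from $r^{\epsilon,\kappa}_1\in S^{-2}_1(\Xi)^\bullet$. The only (cosmetic) subtlety is that $\phi_N$ and $\psi_N$ carry the scalar coefficients $(1+\epsilon a)^k$, so ``$\phi_N[r^{\epsilon,\kappa}_1]$'' is to be read as a fixed Moyal polynomial in $r^{\epsilon,\kappa}_1$ with coefficients polynomial in $\epsilon a$; the boundedness stated for the family indexed by $(\epsilon,\kappa)$ holds, in fact, uniformly for $a\in K$ too.
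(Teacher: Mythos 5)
Your proof is correct and is essentially the paper's own argument: the paper likewise iterates the resolvent identity between $\epsilon a$ and the point $-1$ (formula \eqref{rez-eq-1}), takes $\phi_N$ to be the finite $\sharp^{\epsilon,\kappa}$-polynomial and $\psi_N=(1+\epsilon a)^N\big[r^{\epsilon,\kappa}_1\big]^{\sharp^{\epsilon,\kappa}N}$, and relies on Lemma \ref{2sept-1} plus the slowly varying composition calculus for the class membership. Only a small pointer: the composition statement you need is Proposition \ref{epsilon-comp-symb} (and, at the $\bullet$-class level, Proposition \ref{epsilon-comp-symb}); Proposition \ref{epsilon-comp-symb-inv} concerns Moyal inverses, not products.
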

\begin{proof}
From the resolvent equation we  deduce that, for any $N\in\mathbb{N}^*\,,$ 
\begin{align}\label{rez-eq-1} 
r^{\epsilon,\kappa}(\epsilon a)\,& =\,\underset{1\leq n\leq N}{\sum}(1+\epsilon
a)^{n-1}\big[r^{\epsilon,\kappa}_1\big]^{\,\sharp^{\epsilon,\kappa}\,n}\,+\,
(1+\epsilon a)^Nr^{\epsilon,\kappa}
(\epsilon a)\, \,\sharp^{\epsilon,\kappa}\, \, \big[r^{\epsilon,\kappa}_1\big]^
{\,\sharp^{\epsilon,\kappa}\,N}\, \nonumber\\
 & =\,\underset{1\leq n\leq N}{\sum}(1+\epsilon
a)^{n-1}\big[r^{\epsilon,\kappa}_1\big]^{\,\sharp^{\epsilon,\kappa}\,n}\,+\,
(1+\epsilon a)^N\,  \big[r^{\epsilon,\kappa}_1\big]^
{\,\sharp^{\epsilon,\kappa}\,N}\,\sharp^{\epsilon,\kappa}\,\,r^{\epsilon,\kappa}(\epsilon a)\,.
\end{align}
\end{proof}

\begin{proposition}\label{Cor-4.1}
For any $N\in\mathbb{N}^*\,$,  there exist two bounded families
$ \left\{\phi_N[r^{\epsilon,\kappa}_1]\right\}_{(\epsilon,\kappa)\in[0,\epsilon_0]\times [0,\kappa_0]}$ in
$S^{-2}_1(\Xi)^\bullet$ and 
$\left\{\psi_N[r^{\epsilon,\kappa}_1]\right\}_{(\epsilon,\kappa)\in[0,\epsilon_0]\times [0,\kappa_0]} $ in
$S^{-2N}_1(\Xi)^\bullet,$  such that
$$
g_{1/\delta}\, \,\sharp^{\epsilon,\kappa}\,\, r^{\epsilon,\kappa}(\epsilon a)=
g_{1/\delta}\, \,\sharp^{\epsilon,\kappa}\,\, \phi_N[r^{\epsilon,\kappa}_1]
+g_{1/\delta}\, \,\sharp^{\epsilon,\kappa}\,r^{\epsilon,\kappa}
(\epsilon a)\, \,\sharp^{\epsilon,\kappa}\,\, \psi_N[r^{\epsilon,\kappa}_1]\, .
$$
\end{proposition}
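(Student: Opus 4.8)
The plan is to derive the identity stated in Proposition~\ref{Cor-4.1} directly from the previous Lemma by applying $g_{1/\delta}$ on the left via the magnetic Moyal product $\sharp^{\epsilon,\kappa}$ and using its associativity.

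\textbf{Step 1: Left-compose the resolvent decomposition with $g_{1/\delta}$.} The previous Lemma gives, for every $N\in\mathbb{N}^*$,
\beq\label{step1Cor41}
r^{\epsilon,\kappa}(\epsilon a)\,=\,\phi_N[r^{\epsilon,\kappa}_1]\,+\,r^{\epsilon,\kappa}(\epsilon a)\,\sharp^{\epsilon,\kappa}\,\psi_N[r^{\epsilon,\kappa}_1]\,,
\eeq
with $\{\phi_N[r^{\epsilon,\kappa}_1]\}$ bounded in $S^{-2}_1(\Xi)^\bullet$ and $\{\psi_N[r^{\epsilon,\kappa}_1]\}$ bounded in $S^{-2N}_1(\Xi)^\bullet$. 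Composing \eqref{step1Cor41} on the left with $g_{1/\delta}$ (which is a fixed $C_0^\infty$ symbol, hence in every class $S^m_\rho(\Xi)$, uniformly in $\delta$ up to negative powers of $\delta$, but that is irrelevant here since the statement does not track $\delta$-dependence) and invoking the associativity of the magnetic Moyal product $\sharp^{\epsilon,\kappa}$ — which holds in the calculus recalled in Appendix~\ref{A-sl-var-symb} for symbols of the Hörmander type appearing here — one obtains
\beq\label{step2Cor41}
g_{1/\delta}\,\sharp^{\epsilon,\kappa}\,r^{\epsilon,\kappa}(\epsilon a)\,=\,g_{1/\delta}\,\sharp^{\epsilon,\kappa}\,\phi_N[r^{\epsilon,\kappa}_1]\,+\,g_{1/\delta}\,\sharp^{\epsilon,\kappa}\,\bigl(r^{\epsilon,\kappa}(\epsilon a)\,\sharp^{\epsilon,\kappa}\,\psi_N[r^{\epsilon,\kappa}_1]\bigr)\,,
\eeq
and a further use of associativity rewrites the last term as $g_{1/\delta}\,\sharp^{\epsilon,\kappa}\,r^{\epsilon,\kappa}(\epsilon a)\,\sharp^{\epsilon,\kappa}\,\psi_N[r^{\epsilon,\kappa}_1]$, which is exactly the claimed identity.

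\textbf{Step 2: Bookkeeping on symbol classes.} It remains only to re-label the objects so that the families named $\phi_N[r^{\epsilon,\kappa}_1]$ and $\psi_N[r^{\epsilon,\kappa}_1]$ in the statement of Proposition~\ref{Cor-4.1} are the same ones produced by the previous Lemma — they are, by construction — and to record that the listed boundedness properties are inherited verbatim: $\{\phi_N[r^{\epsilon,\kappa}_1]\}$ stays bounded in $S^{-2}_1(\Xi)^\bullet$ and $\{\psi_N[r^{\epsilon,\kappa}_1]\}$ in $S^{-2N}_1(\Xi)^\bullet$, uniformly for $(\epsilon,\kappa)\in[0,\epsilon_0]\times[0,\kappa_0]$. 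The symbol $g_{1/\delta}$ and the resolvent symbol $r^{\epsilon,\kappa}(\epsilon a)\in S^{-2}_1(\Xi)$ (by the quoted Proposition 6.5 in \cite{IMP2}, together with \eqref{maii1}) lie in the relevant classes, so every Moyal product appearing in \eqref{step2Cor41} is well defined in the calculus. There is in fact nothing to prove beyond invoking associativity of $\sharp^{\epsilon,\kappa}$ on the already established decomposition; this Proposition is a purely formal corollary of the preceding Lemma.

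\textbf{Main obstacle.} There is no genuine obstacle here — the only point that requires a word of care is that the magnetic Moyal product for these \emph{slowly varying} symbols (the ``$\bullet$'' classes of Appendix~\ref{A-cl-sl-var}) is associative and continuous on the symbol classes in play, uniformly in $(\epsilon,\kappa)$; this is exactly the content of the refined calculus developed in Appendix~\ref{A-sl-var-symb}, so it may be used as a black box. Consequently the proof of Proposition~\ref{Cor-4.1} is one line: compose the identity of the previous Lemma on the left with $g_{1/\delta}$ and use associativity of $\sharp^{\epsilon,\kappa}$. The real work — namely controlling the convergence of the series $\sum_{\gamma^*\in\Gamma_*}\tau_{\gamma^*}(g_{1/\delta}\,\sharp^{\epsilon,\kappa}\,r^{\epsilon,\kappa}(\epsilon a))$ in \eqref{q-rez} — is deferred to the subsequent estimates, where the off-diagonal decay encoded in the $S^{-2N}_1(\Xi)^\bullet$ membership of $\psi_N[r^{\epsilon,\kappa}_1]$ (with $N$ chosen large) will be exploited together with the localization of $g_{1/\delta}$ near $\xi=0$.
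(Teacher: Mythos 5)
Your proposal is correct and follows essentially the same route as the paper: the paper also obtains this Proposition as an immediate consequence of the preceding Lemma's decomposition of $r^{\epsilon,\kappa}(\epsilon a)$, composed on the left with $g_{1/\delta}$, invoking Lemma \ref{2sept-1} for the uniform boundedness of the families built from $r^{\epsilon,\kappa}_1$ and Proposition \ref{epsilon-comp-symb} for the stability of the slowly varying classes under $\sharp^{\epsilon,\kappa}$ (the ``black box'' you appeal to). Your remark that the genuine work is deferred to the control of the $\Gamma_*$-series and the $\delta\searrow0$ behavior matches the paper's organization.
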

\begin{proof}
We use Lemma \ref{2sept-1}, noticing that the constants appearing in
these formulas are all uniformly bounded for $(\epsilon,\kappa)\in[0,\epsilon_0]\times[0,\kappa_0]$ and  Proposition
\ref{epsilon-comp-symb}.
\end{proof}

For the first term in the formula given by Proposition \ref{Cor-4.1}  we can use
Proposition \ref{P-trunc-mag-comp} for any $\delta\in(0,\delta_0]$ but we have
to control the behavior when $\delta\searrow 0\,$.

\begin{proposition}\label{nou-2}
If $f^\bullet\in { S^{-m}_1(\Xi)}^\bullet$ for some $m>0$ there exists a
bounded family $\left\{F^\epsilon\right\}_{\epsilon\in[0,\epsilon_0]}\subset
S^{-\infty}(\Xi)$ such that  for any $\epsilon\in[0,\epsilon_0]$ and
$\delta\in(0,\delta_0]$ satisfying  \eqref{ScalHyp} we have the relation
$$
g_{1/\delta}\, \,\sharp^{\epsilon,\kappa}\,f^\epsilon\ =\
F^\epsilon_{(\epsilon,\delta^{-1})}\,,
$$
and if we use the notation from Remark \ref{rem-sl-var-class} (i.e.
$f^\epsilon=\widetilde{f}^\epsilon_{(\epsilon,1)}$) the map
$$
{ S^{-m}_1(\Xi)}\ni\widetilde{f}^\epsilon\mapsto F^\epsilon\in S^{-\infty}(\Xi)
$$
is continuous.
\end{proposition}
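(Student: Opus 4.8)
The plan is to reduce, by a single dilation, the product $g_{1/\delta}\,\sharp^{\epsilon,\kappa}\,f^\epsilon$ to a fixed-scale magnetic Moyal product carrying a \emph{weak} and slowly varying field, and to read off $F^\epsilon$ as the resulting slow-variation profile. Note that $g_{1/\delta}(\xi)=\chi\big(h_{m^\epsilon}(\delta^{-1}\xi)\big)$ is the $\delta$-dilate of the fixed, compactly supported function $g_1:=\chi\circ h_{m^\epsilon}$ (a bounded family in $\epsilon$), while $f^\epsilon=\widetilde f^\epsilon_{(\epsilon,1)}$. Performing in the oscillatory integral defining $\sharp^{\epsilon,\kappa}$ (Appendix~\ref{A-cl-sl-var}) the change of variables that dilates the configuration variable by $\delta^{-1}$ — equivalently, conjugating $\mathfrak{Op}^{\epsilon,\kappa}$ by the corresponding $L^2$-dilation — turns $g_{1/\delta}$ into $g_1$, turns $f^\epsilon$ into $\widetilde f^\epsilon_{(\delta^{\mu-1},\delta)}$, and turns the field $B_{\epsilon,\kappa}$ of \eqref{Bek} into its dilate $\delta^{-2}B_{\epsilon,\kappa}(\delta^{-1}\cdot)=\delta^{\mu-2}\big(B_0+\kappa\,B(\delta^{\mu-1}\cdot)\big)$, where we use the scaling \eqref{ScalHyp}. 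Thus $g_{1/\delta}\,\sharp^{\epsilon,\kappa}\,f^\epsilon$ equals, up to this explicit dilation, the product $g_1\,\sharp^{B'}\,h^\epsilon$ with $B':=\delta^{\mu-2}\big(B_0+\kappa B(\delta^{\mu-1}\cdot)\big)$ a weak field varying at the large scale $\delta^{1-\mu}$ and $h^\epsilon:=\widetilde f^\epsilon_{(\delta^{\mu-1},\delta)}$; since every ingredient of $g_1\,\sharp^{B'}\,h^\epsilon$ depends on its configuration variable only through $\delta^{\mu-1}$ times it, its slow-variation profile exists, and following the two dilations (the reduction dilation $\delta^{-1}$ and the extraction of the $\delta^{\mu-1}$-slow profile) back to the original variables produces exactly the rescaling $(\epsilon,\delta^{-1})$ of the statement, so that $g_{1/\delta}\,\sharp^{\epsilon,\kappa}\,f^\epsilon=F^\epsilon_{(\epsilon,\delta^{-1})}$ with $F^\epsilon$ that profile.

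Next I would show that $\{F^\epsilon\}$ is a bounded family in $S^{-\infty}(\Xi)$, linear and continuous in $\widetilde f^\epsilon\in S^{-m}_1(\Xi)$. Since $B'$ has strength $O(\delta^{\mu-2})=O(\epsilon^{1-2/\mu})$, the weak-field expansion of Appendix~\ref{A-cl-sl-var} gives $g_1\,\sharp^{B'}\,h^\epsilon=g_1\,\sharp^0\,h^\epsilon+O(\delta^{\mu-2})$ in $S^{-\infty}$-seminorms. For the non-magnetic product $g_1\,\sharp^0\,h^\epsilon$ the compact $\xi$-support of $g_1$ is what matters: each term of the asymptotic expansion carries a factor $\partial_\xi^\alpha g_1$, hence is supported in $\supp g_1$, and the remainder is rapidly decaying (the stationary set of the symplectic phase forces the output frequency into $\supp g_1$); since $h^\epsilon$ is, on a fixed neighbourhood of $\supp g_1$, a bounded family in $BC^\infty(\Xi)$ (there its momentum argument $\delta\xi$ stays bounded and its configuration variation is $O(\delta^{\mu-1})$), every $S^{-\infty}$-seminorm of $g_1\,\sharp^0\,h^\epsilon$ is bounded by finitely many $S^{-m}_1$-seminorms of $\widetilde f^\epsilon$, uniformly in $\epsilon$ (and, since $\sharp^{\epsilon,\kappa}$ depends on $\kappa$, uniformly in $\kappa\in[0,\kappa_0]$). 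Passing to the slow-variation profile and adding back the $O(\delta^{\mu-2})$ magnetic correction preserves these bounds; linearity of $\widetilde f^\epsilon\mapsto F^\epsilon$ is clear ($g_{1/\delta}$ is fixed, $\sharp^{\epsilon,\kappa}$ bilinear), so continuity follows. For a fixed $\delta>0$ the boundedness of $g_{1/\delta}\,\sharp^{\epsilon,\kappa}\,f^\epsilon$ is already contained in Proposition~\ref{P-trunc-mag-comp}; the content here is the uniformity as $\delta\searrow0$.

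The hard part will be precisely this uniformity. Before the reduction, the flux of $B_{\epsilon,\kappa}$ enters through triangles of linear size $\sim\delta^{-1}$: one of the two auxiliary momenta is confined to $\supp g_{1/\delta}\subset\{|\eta|\lesssim\delta\}$, which forces the corresponding configuration variable to sit at distance $O(\delta^{-1})$ from the base point after integration by parts against the symplectic phase, while the $S^{-m}_1$-decay of $f^\epsilon$ only gives polynomial control there. A crude bound then yields $\omega^{B_{\epsilon,\kappa}}=1+O(\epsilon\,\delta^{-2})$, which is $o(1)$ exactly because $\mu>2$; similarly each $\partial_\xi$ falling on $g_{1/\delta}$ costs $\delta^{-1}$, compensated by the $\epsilon$ produced by each $\partial_x$ on the slowly varying $f^\epsilon$ (net $(\epsilon/\delta)^{|\alpha|}=\delta^{(\mu-1)|\alpha|}\to0$ since $\mu>1$), and each $\partial_x$ on the variable part of $B_{\epsilon,\kappa}$ likewise contributes an $\epsilon$. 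Making this bookkeeping rigorous — tracking which integration variables are localized, and at what scale, once one of them lies in $\supp g_{1/\delta}$ — is the only real work; in the rescaled picture it is the cleaner assertion that $g_1\,\sharp^{B'}\,h^\epsilon$ involves only $O(1)$-size triangles, so that the errors are governed by $\|B'\|_{\infty}=O(\delta^{\mu-2})$ alone, which is where the condition $\mu\in(2,4)$ of \eqref{ScalHyp} is genuinely used.
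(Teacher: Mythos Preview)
Your approach is correct and is a genuine alternative to the paper's argument, though the two are closely related at the level of change of variables.

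The paper does \emph{not} perform a full symplectic dilation. Instead it changes only half of the integration variables in the Moyal integral, namely $(\eta,z)\mapsto(\delta^{-1}\eta,\delta z)$, which turns $g_{1/\delta}(\xi-\eta)$ into $g(\delta^{-1}\xi-\eta)$ while leaving $\varphi$ essentially at its original scale: it now appears as $\varphi(\epsilon x-\epsilon\delta^{-1}z,\xi-\zeta)$. The single nontrivial scale that survives is $\tau:=\epsilon\delta^{-1}=\delta^{\mu-1}$, and the magnetic phase becomes $\tau\Phi_\kappa$ together with lower-order pieces controlled by $\tau^2$. The proof is then a one-line application of Proposition~\ref{comp-epsilon-symb} with $\mu_1=\mu_2=\mu_4=1$, $\mu_3=\tau$; in particular only the boundedness of $\tau$ (i.e.\ $\mu\geq1$) is used at this step, not $\mu>2$.

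Your full dilation instead pushes the scale onto the second factor, producing $h^\epsilon=\widetilde f^\epsilon_{(\delta^{\mu-1},\delta)}$, which is uniformly in $S^0_0(\Xi)$ but not in any $S^{-m}_1$; and onto the field, producing $B'$ of strength $\delta^{\mu-2}$. The argument then splits into (i) the non-magnetic composition $g_1\,\sharp^0\,h^\epsilon\in S^{-\infty}$ uniformly (which is fine since $g_1\in S^{-\infty}$ and $h^\epsilon\in S^0_0$ uniformly), and (ii) a weak-field remainder of size $\delta^{\mu-2}$ in the same class. This is clean and conceptual, and your observation that \emph{every} ingredient depends on $x$ only through $\delta^{\mu-1}x$ is exactly what identifies the profile $F^\epsilon$. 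The cost is that you invoke $\mu>2$ (smallness of $B'$) where the paper needs only $\mu>1$ for this proposition, and that you must verify Proposition~\ref{L.II.1.1} applies with the non-uniform symbol $h^\epsilon$; it does, because $h^\epsilon\in S^0_0$ uniformly, but this point deserves one explicit sentence rather than the appeal to ``behaviour on a fixed neighbourhood of $\supp g_1$''. Your third paragraph in fact sketches the paper's asymmetric bookkeeping as the alternative; either route closes.
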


\begin{proof}
For any $\varphi\in S^{-m}_1(\Xi)$ with $m>0$ let us proceed as in the proof of
Proposition \ref{P-trunc-mag-comp} after a change of variables
$(\eta,z)\mapsto(\delta^{-1}\eta,\delta z)$:
\begin{equation*}
\begin{array}{l}
\hspace{-1cm}\big(g_{1/\delta}\,\sharp^{\epsilon,\kappa}\,\varphi_{(\epsilon,1)}\big)(x,\xi)
\\ \qquad 
=\,\pi^{-4}\int\limits_{\Xi\times\Xi}e^{-2i\sigma(Y,Z)}\omega^{B_{\epsilon,
\kappa}}(x,y,z)g\big(\delta^{-1}\xi-\delta^{-1}\eta\big)\varphi\big(\epsilon
x-\epsilon z,\xi-\zeta\big)\, dYdZ\,\\ \qquad 
=\,\pi^{-4}\int\limits_{\Xi\times\Xi}e^{-2i\sigma(Y,Z)}\omega^{B_{\epsilon,
\kappa}}(x,y,\delta^{-1}z)g\big(\delta^{-1}\xi-\eta\big)\varphi\big(\epsilon
x-\epsilon\delta^{-1} z,\xi-\zeta\big)\,dYdZ\,\\ \qquad 
=\,\pi^{-4}\int\limits_{\Xi\times\Xi}e^{-2i\sigma(Y,Z)}
e^{i\, \epsilon\Phi_\kappa(\epsilon
x,y,\delta^{-1}z)}g\big(\delta^{-1}\xi-\eta\big)\varphi\big(\epsilon
x-\epsilon\delta^{-1} z,\xi-\zeta\big)\,\times
\\
\qquad \qquad\qquad \qquad
\times\left(1\,+\,i\, \kappa\epsilon^2\Psi^{\epsilon}(\epsilon
x,y,\delta^{-1}z)\int_0^1\Theta^{\epsilon,\kappa}_\tau(\epsilon
x,y,\delta^{-1}z)d\tau\right) dY dZ \,.
\end{array}
\end{equation*}
Let us note that due to \eqref{ScalHyp} 
$ \tau:=\epsilon\delta^{-1}=\delta^{\mu-1}$  is bounded for
$\delta\in(0,\delta_0]$ and  we can write 
\begin{equation*}
\hspace{-1cm} \epsilon\Phi_\kappa(x,y,\delta^{-1}z)=\epsilon\delta^{-1}\underset{j,k}{\sum
}y_jz_k
\left(B_0+\kappa B(x)\right)=\tau\Phi_\kappa(x,y,z)\,,
\end{equation*}
and 
\begin{align*}
&\hspace{-1cm}\epsilon^2\Psi^{\epsilon}(x,y,\delta^{-1}z)
\\&=
-8\epsilon^2\delta^{-2}\underset{j,k}{\sum}y_jz_k
\underset{
\ell = 1, 2}{\sum}\left[\delta y_\ell R_1\big(\partial_\ell B_{jk}\big)(x,\epsilon
y,\epsilon\delta^{-1} z) + 
z_\ell R_2\big(\partial_\ell B_{jk}\big)(x,\epsilon y,\epsilon\delta^{-1} z)\right]
\\ &
= - 8 \,\tau^2\underset{j,k}{\sum}y_jz_k
\underset{\ell =1,2}{\sum}\left[\delta y_\ell R_1\big(\partial_\ell B_{jk}\big)(x,\delta^\mu
y,\tau z)\,+\,
z_\ell R_2\big(\partial_\ell B_{jk}\big)(x,\delta^\mu y,\tau z)\right]\, .
\end{align*}

These equalities imply that  
$$(x,y,z) \mapsto \epsilon\Phi_\kappa(x,y,\delta^{-1}z),
\quad (x,y,z) \mapsto \kappa\epsilon^2\Psi^{\epsilon}(x,y,\delta^{-1}z)\; {\rm and
}\; (x,y,z) \mapsto \Theta^{\epsilon,\kappa}_t(\epsilon x,y,\delta^{-1}z)$$ define three families of
functions on $\X^3$  indexed by
$(\kappa,\delta,\tau,t)\in[0,\kappa_0]\times(0,\delta_0]\times  [0,\epsilon_0^{1-1/
\mu}] \times[0,1]$ that are bounded in $BC^\infty\big(\X;C^\infty_{\text{\sf
pol}}(\X\times\X)\big)$. In conclusion we can apply Proposition
\ref{comp-epsilon-symb} with $\mu_1=\mu_2=\mu_4=1$ and $\mu_3=\tau$.
\end{proof}We can now rephrase Proposition \ref{Cor-4.1} as
\begin{proposition}\label{Cor-4.2}
For any $N\in\mathbb{N}^*$, under Hypothesis  \eqref{ScalHyp},  there exist two bounded families
$\left\{\mathfrak{g}^{\epsilon,\kappa}_N\right\}_{(\epsilon,\kappa)\in[0,\epsilon_0]\times[0,\kappa_0]}$ in
$S^{-\infty}(\Xi)$ and
$ \left\{\psi_N[r^{\epsilon,\kappa}_1]\right\}_{(\epsilon,\kappa)\in[0,\epsilon_0] \times [0,\kappa_0]}$ in
$S^{-2N}_1(\Xi)^\bullet$\,,  such that
\begin{equation}\label{zza}
\begin{array}{ll}
g_{1/\delta}\, \,\sharp^{\epsilon,\kappa}\,\, r^{\epsilon,\kappa}(\epsilon a)\ &=\ 
\big(\mathfrak{g}^{\epsilon,\kappa}_N\big)_{(\epsilon,\delta^{-1})}\,
+\,g_{1/\delta}\, \,\sharp^{\epsilon,\kappa}\,\, r^{\epsilon,\kappa}
(\epsilon a)\, \,\sharp^{\epsilon,\kappa}\,\, \psi_N[r^{\epsilon,\kappa}_1]\\
&=\
\big(\mathfrak{g}^{\epsilon,\kappa}_N\big)_{(\epsilon,\delta^{-1})}\,+\,\psi_N[
r^{\epsilon,\kappa}_1]\, \,\sharp^{\epsilon,\kappa}\,\, r^{\epsilon,\kappa}
(\epsilon a)\, \,\sharp^{\epsilon,\kappa}\,g_{1/\delta}\,.
\end{array}
\end{equation}
\end{proposition}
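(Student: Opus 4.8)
The statement is a routine but careful bookkeeping consequence of the two structural results already in hand: the resolvent-expansion lemma just proved (which writes $r^{\epsilon,\kappa}(\epsilon a)=\phi_N[r^{\epsilon,\kappa}_1]+r^{\epsilon,\kappa}(\epsilon a)\,\sharp^{\epsilon,\kappa}\,\psi_N[r^{\epsilon,\kappa}_1]$, with $\phi_N$ in $S^{-2}_1(\Xi)^\bullet$ and $\psi_N$ in $S^{-2N}_1(\Xi)^\bullet$), and Proposition~\ref{nou-2} (which converts a truncated composition $g_{1/\delta}\,\sharp^{\epsilon,\kappa}\,f^\epsilon$ with $f^\epsilon\in S^{-m}_1(\Xi)^\bullet$, $m>0$, into a regularizing symbol $(F^\epsilon)_{(\epsilon,\delta^{-1})}$, continuously in $f^\epsilon$). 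The idea is simply to feed the output of the first lemma into Proposition~\ref{nou-2}.

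First I would left-multiply the identity from the lemma by $g_{1/\delta}$ under $\sharp^{\epsilon,\kappa}$, obtaining
$$
g_{1/\delta}\,\sharp^{\epsilon,\kappa}\,r^{\epsilon,\kappa}(\epsilon a)
= g_{1/\delta}\,\sharp^{\epsilon,\kappa}\,\phi_N[r^{\epsilon,\kappa}_1]
+ g_{1/\delta}\,\sharp^{\epsilon,\kappa}\,\big(r^{\epsilon,\kappa}(\epsilon a)\,\sharp^{\epsilon,\kappa}\,\psi_N[r^{\epsilon,\kappa}_1]\big),
$$
where associativity of $\sharp^{\epsilon,\kappa}$ (valid on the relevant H\"ormander classes, as used throughout Appendix~\ref{A-sl-var-symb}) lets me reassociate the second term to $g_{1/\delta}\,\sharp^{\epsilon,\kappa}\,r^{\epsilon,\kappa}(\epsilon a)\,\sharp^{\epsilon,\kappa}\,\psi_N[r^{\epsilon,\kappa}_1]$, which is exactly the second summand in \eqref{zza}. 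For the first term I apply Proposition~\ref{nou-2} with $f^\epsilon:=\phi_N[r^{\epsilon,\kappa}_1]\in S^{-2}_1(\Xi)^\bullet$ (so $m=2>0$): this produces a bounded family $\{\mathfrak{g}^{\epsilon,\kappa}_N\}$ in $S^{-\infty}(\Xi)$, indexed additionally by $\kappa\in[0,\kappa_0]$ since $\phi_N$ depends on $\kappa$, with $g_{1/\delta}\,\sharp^{\epsilon,\kappa}\,\phi_N[r^{\epsilon,\kappa}_1]=(\mathfrak{g}^{\epsilon,\kappa}_N)_{(\epsilon,\delta^{-1})}$; the boundedness of the family in $S^{-\infty}(\Xi)$ is guaranteed by the continuity of the map in Proposition~\ref{nou-2} together with the boundedness of $\{\phi_N[r^{\epsilon,\kappa}_1]\}$ in $S^{-2}_1(\Xi)^\bullet$ established in the preceding lemma. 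This yields the first line of \eqref{zza}.

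The second line follows by the symmetric argument: the lemma also gives $r^{\epsilon,\kappa}(\epsilon a)=\phi_N[r^{\epsilon,\kappa}_1]+\psi_N[r^{\epsilon,\kappa}_1]\,\sharp^{\epsilon,\kappa}\,r^{\epsilon,\kappa}(\epsilon a)$, so composing on the left with $g_{1/\delta}$, reassociating, and applying Proposition~\ref{nou-2} to the $\phi_N$-term (with the same $\mathfrak{g}^{\epsilon,\kappa}_N$) produces $g_{1/\delta}\,\sharp^{\epsilon,\kappa}\,r^{\epsilon,\kappa}(\epsilon a)=(\mathfrak{g}^{\epsilon,\kappa}_N)_{(\epsilon,\delta^{-1})}+\psi_N[r^{\epsilon,\kappa}_1]\,\sharp^{\epsilon,\kappa}\,r^{\epsilon,\kappa}(\epsilon a)\,\sharp^{\epsilon,\kappa}\,g_{1/\delta}$. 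Here one uses that $g_{1/\delta}$ commutes up to $S^{-\infty}$ corrections in the right places, or more simply that the roles of left/right truncation are interchangeable because $g_{1/\delta}$ is a fixed (for each $\delta$) symbol depending only on $\xi$; the truncated-composition estimates of Proposition~\ref{P-trunc-mag-comp} apply on either side.

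The only point requiring genuine care — and the place I expect the main (though still modest) obstacle — is making sure the $\delta$-dependence is controlled \emph{uniformly} as $\delta\searrow 0$ under the scaling Hypothesis \eqref{ScalHyp}, $\epsilon=\delta^\mu$, $\mu\in(2,4)$. This is precisely what Proposition~\ref{nou-2} is designed to handle: the truncation $g_{1/\delta}$ becomes sharper as $\delta\to 0$, its $\xi$-derivatives blow up like negative powers of $\delta$, and only the change of variables $(\eta,z)\mapsto(\delta^{-1}\eta,\delta z)$ together with the fact that $\tau=\epsilon\delta^{-1}=\delta^{\mu-1}$ stays bounded makes the composition land in $S^{-\infty}(\Xi)$ with $\delta$-uniform seminorms. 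So the proof is: invoke the lemma, reassociate, invoke Proposition~\ref{nou-2} (whose continuity statement gives the uniform boundedness of $\{\mathfrak{g}^{\epsilon,\kappa}_N\}$), and record that $\{\psi_N[r^{\epsilon,\kappa}_1]\}$ is already known to be bounded in $S^{-2N}_1(\Xi)^\bullet$. $\qed$
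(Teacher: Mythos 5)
Your treatment of the first identity in \eqref{zza} is correct and is exactly the paper's route: Proposition \ref{Cor-4.1} (equivalently, the left composition of the resolvent expansion with $g_{1/\delta}$) combined with Proposition \ref{nou-2} applied to $g_{1/\delta}\,\sharp^{\epsilon,\kappa}\,\phi_N[r^{\epsilon,\kappa}_1]$, with uniformity in $\delta$ under \eqref{ScalHyp} coming from the continuity statement in Proposition \ref{nou-2}.

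The second identity is where your argument has a genuine gap. Composing $r^{\epsilon,\kappa}(\epsilon a)=\phi_N[r^{\epsilon,\kappa}_1]+\psi_N[r^{\epsilon,\kappa}_1]\,\sharp^{\epsilon,\kappa}\,r^{\epsilon,\kappa}(\epsilon a)$ on the \emph{left} with $g_{1/\delta}$ produces $g_{1/\delta}\,\sharp^{\epsilon,\kappa}\,\psi_N[r^{\epsilon,\kappa}_1]\,\sharp^{\epsilon,\kappa}\,r^{\epsilon,\kappa}(\epsilon a)$, not the term $\psi_N[r^{\epsilon,\kappa}_1]\,\sharp^{\epsilon,\kappa}\,r^{\epsilon,\kappa}(\epsilon a)\,\sharp^{\epsilon,\kappa}\,g_{1/\delta}$ appearing in \eqref{zza}, and your proposed repair — that $g_{1/\delta}$ can be moved across because it depends only on $\xi$, or ``commutes up to $S^{-\infty}$ corrections'' — does not work. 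The magnetic Moyal product is noncommutative even on symbols depending only on $\xi$ (see \eqref{mag-comp}); Proposition \ref{P-comm-sv} only gives commutators of size $\mathcal{O}(\epsilon)$ in operator norm, and since $r^{\epsilon,\kappa}(\epsilon a)$ itself has operator norm of order $\epsilon^{-1}$ one cannot simply discard such a commutator, and in any case no error term is allowed: \eqref{zza} is an exact identity with a prescribed structure, and an $\mathcal{O}(\epsilon)$ correction cannot be absorbed into ``the same $\mathfrak{g}^{\epsilon,\kappa}_N$''. The intended derivation is the mirror one: compose the second resolvent identity on the \emph{right} with $g_{1/\delta}$, obtaining $r^{\epsilon,\kappa}(\epsilon a)\,\sharp^{\epsilon,\kappa}\,g_{1/\delta}=\phi_N[r^{\epsilon,\kappa}_1]\,\sharp^{\epsilon,\kappa}\,g_{1/\delta}+\psi_N[r^{\epsilon,\kappa}_1]\,\sharp^{\epsilon,\kappa}\,r^{\epsilon,\kappa}(\epsilon a)\,\sharp^{\epsilon,\kappa}\,g_{1/\delta}$, and apply the mirrored variant of Proposition \ref{nou-2} (its proof is symmetric in the two factors) to $\phi_N[r^{\epsilon,\kappa}_1]\,\sharp^{\epsilon,\kappa}\,g_{1/\delta}$; equivalently one takes the formal adjoint (complex conjugate of symbols, which reverses the order of $\sharp^{\epsilon,\kappa}$ and leaves $g_{1/\delta}$ fixed) of the first line. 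This is also how the representation is actually used afterwards, in the Cotlar--Stein estimates of Proposition \ref{P-25aug-3}, where the adjoint factors carry $g_{1/\delta}$ on the right; read this way, the second line of \eqref{zza} is a representation of the mirrored product rather than a second formula for $g_{1/\delta}\,\sharp^{\epsilon,\kappa}\,r^{\epsilon,\kappa}(\epsilon a)$ obtained by commuting $g_{1/\delta}$ through.
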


\subsubsection{The first term of the quasi-inverse in \eqref{q-rez}.}
Using Proposition \ref{Cor-4.2} we conclude that in order to study the convergence
of the series
$$
\underset{\gamma^*\in\Gamma_*}{\sum}
\tau_{\gamma^*}
\big(g_{1/\delta}\, \,\sharp^{\epsilon,\kappa}\,\, 
r^{\epsilon,\kappa}(\epsilon a)\big)\,,
$$
we may according to \eqref{zza} separately study the convergence of the following two series
\begin{align}
&\underset{\gamma^*\in\Gamma_*}{\sum}
\tau_{\gamma^*}
\big(\big(\mathfrak{g}^{\epsilon,\kappa}_N\big)_{(\epsilon,\delta^{-1})}\big),
\label{25aug-2}\\
&\underset{\gamma^*\in\Gamma_*}{\sum}
\tau_{\gamma^*}
\big(g_{1/\delta}\, \,\sharp^{\epsilon,\kappa}\,\, r^{\epsilon,\kappa}
(\epsilon a)\, \,\sharp^{\epsilon,\kappa}\,\, \psi_N[r^{\epsilon,\kappa}_1]\big)\,,\label{25aug-3}
\end{align}
that are both of the form discussed in Lemma \ref{periodic-sums} and Proposition \ref{P.sum}, but
we stil have to control the behavior when $\delta\searrow 0\,$.

\paragraph{The series \eqref{25aug-2}.} Let us consider for some $
\varphi\in S^{-\infty}(\Xi)$, the series
$$
\Phi^{\epsilon,\delta}\ :=\
\underset{\gamma^*\in\Gamma_*}{\sum}\tau_{\gamma^*}\big(\varphi_{(\epsilon,\delta^{
-1})}\big)\,,
$$
and the operator
$\mathfrak{Op}^{\epsilon,\kappa}\big(\Phi^{\epsilon,\delta}\big)$ 
that are well defined as shown in  Lemma \ref{periodic-sums} and Proposition
\ref{P.sum}.

\begin{proposition}\label{P-25aug-2}
There exist positive constants $C$, $\epsilon_0$ and $\delta_0$ such that
$$ \left\|\mathfrak{Op}^{\epsilon,\kappa}\big(\Phi^{\epsilon,\delta}\big)\right\|_
{{ \mathcal L}(\mathcal{H})}\leq C\,,
$$ 
for any
$(\epsilon,\delta,\kappa)\in[0,\epsilon_0]\times(0,\delta_0]\times[0,1]$
verifying \eqref{ScalHyp}. Moreover the application
$$
S^{-\infty}(\Xi)\ni\varphi\,\mapsto\,\Phi^{\epsilon,\delta}\in\big({
S^0_0(\Xi)},\|\cdot\|_{B_{\epsilon,\kappa}}\big)
$$
is continuous uniformly for $(\epsilon,\delta,\kappa)\in[0,\epsilon_0]\times[0,\delta_0]\times[0,\kappa_0]$ verifying  \eqref{ScalHyp}.
\end{proposition}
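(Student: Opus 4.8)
The plan is to exploit the $\Gamma_*$-periodicity of $\Phi^{\epsilon,\delta}$ in $\xi$ and to estimate $\|\mathfrak{Op}^{\epsilon,\kappa}(\Phi^{\epsilon,\delta})\|$ mode by mode in its Fourier series, rather than through Calder\'on--Vaillancourt-type seminorms of $\Phi^{\epsilon,\delta}$: each $\xi$-derivative of the symbol costs a factor $\delta^{-1}$, so those seminorms blow up as $\delta\searrow0$, whereas every Fourier mode $e^{i\langle\gamma,\xi\rangle}$ quantizes, in any magnetic field, to a magnetic translation whose operator norm is exactly the supremum of the attached coefficient.

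Concretely, writing $\varphi_{(\epsilon,\delta^{-1})}(x,\xi)=\varphi(\epsilon x,\delta^{-1}\xi)$, so that $\Phi^{\epsilon,\delta}(x,\xi)=\sum_{\gamma^*\in\Gamma_*}\varphi\big(\epsilon x,\delta^{-1}(\xi-\gamma^*)\big)$, and using that $e^{-i\langle\gamma,\cdot\rangle}$ is $\Gamma_*$-periodic for every $\gamma\in\Gamma$, the unfolding of the periodization gives the \emph{exact} identity
\[
\Phi^{\epsilon,\delta}(x,\xi)=\frac{\delta^2}{|E_*|}\sum_{\gamma\in\Gamma}c_\gamma(\epsilon x)\,e^{i\langle\gamma,\xi\rangle},\qquad
c_\gamma(y):=\int_{\X^*}\varphi(y,\eta)\,e^{-i\langle\delta\gamma,\eta\rangle}\,d\eta\,.
\]
Since $\varphi\in S^{-\infty}(\Xi)$, the functions $c_\gamma$ lie in $BC^\infty(\X)$, and for every $M$ there is a constant $C_M$, bounded by finitely many $S^{-\infty}(\Xi)$-seminorms of $\varphi$, with $\sup_{y\in\X}|c_\gamma(y)|\le C_M\langle\delta\gamma\rangle^{-M}$; hence the series converges in $S^0_0(\Xi)$, which is exactly what Lemma~\ref{periodic-sums} and Proposition~\ref{P.sum} deliver, the whole point being to keep the estimates uniform as $\delta\searrow0$. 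Next I would record the elementary computation obtained from \eqref{OpA} by carrying out the $\xi$-integration: for $g\in BC^\infty(\X)$ and $\gamma\in\Gamma$,
\[
\big(\mathfrak{Op}^{\epsilon,\kappa}\big(g(x)\,e^{i\langle\gamma,\xi\rangle}\big)u\big)(x)=g\big(x+\tfrac{\gamma}{2}\big)\,e^{-i\int_{[x,x+\gamma]}A^{\epsilon,\kappa}}\,u(x+\gamma)\,,
\]
a composition of multiplication by a unit-modulus phase, a unitary translation, and multiplication by $g$, hence of operator norm $\le\|g\|_{L^\infty(\X)}$, with no dependence on $A^{\epsilon,\kappa}$. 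Applying this to each term of the Fourier series and summing,
\[
\big\|\mathfrak{Op}^{\epsilon,\kappa}(\Phi^{\epsilon,\delta})\big\|\;\le\;\frac{1}{|E_*|}\sum_{\gamma\in\Gamma}\delta^2\,\|c_\gamma\|_{L^\infty(\X)}\;\le\;\frac{C_M}{|E_*|}\sum_{\gamma\in\Gamma}\delta^2\,\langle\delta\gamma\rangle^{-M}\,,
\]
and for $M>2$ the last expression is a Riemann sum that stays bounded uniformly in $\delta\in(0,\delta_0]$. This yields the first assertion with a constant independent of $(\epsilon,\delta,\kappa)$ (and, incidentally, without using the relation \eqref{ScalHyp}). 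The ``moreover'' part then follows at once, because the bound just obtained is linear in the seminorm $C_M$ of $\varphi$ with constant independent of the three parameters, which is precisely the claimed uniform continuity of $\varphi\mapsto\Phi^{\epsilon,\delta}$ from $S^{-\infty}(\Xi)$ into $\big(S^0_0(\Xi),\|\cdot\|_{B_{\epsilon,\kappa}}\big)$.

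I expect the only step requiring genuine care to be the bookkeeping that makes the above rigorous: that the $\xi$-Fourier series converges in $S^0_0(\Xi)$, that $\mathfrak{Op}^{\epsilon,\kappa}$ may be applied term by term, and that the resulting operator series --- which converges absolutely in $\mathcal L(\mathcal H)$ by the estimate above --- really represents $\mathfrak{Op}^{\epsilon,\kappa}(\Phi^{\epsilon,\delta})$; this is checked by testing on $u\in\mathscr{S}(\X)$ and invoking dominated convergence, the pointwise bounds coming from the rapid decay of $c_\gamma$ in $\gamma$. An alternative I would mention but not carry out is a Cotlar--Stein argument on the pieces $T_{\gamma^*}:=\mathfrak{Op}^{\epsilon,\kappa}\big(\varphi(\epsilon\,\cdot\,,\delta^{-1}(\,\cdot\,-\gamma^*))\big)$, each of which is uniformly bounded while the symbols of $T_{\gamma^*}^{*}T_{\widetilde\gamma^*}$ and $T_{\gamma^*}T_{\widetilde\gamma^*}^{*}$ are $\mathcal O\big((\delta/|\gamma^*-\widetilde\gamma^*|)^\infty\big)$ in every seminorm, because the $\xi$-supports are separated by the fixed lattice spacing of $\Gamma_*$ while their $\xi$-widths shrink like $\delta$. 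I prefer the Fourier-mode route: it sidesteps any estimate on magnetic Moyal products of almost-disjoint symbols and makes the independence of the constants on $(\epsilon,\kappa)$ entirely transparent.
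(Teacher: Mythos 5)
Your argument is correct, and it is genuinely different from the paper's. The paper proves this proposition by a Cotlar--Stein argument on the pieces $X_{\gamma^*}=\mathfrak{Op}^{\epsilon,\kappa}\big(\tau_{\gamma^*}(\varphi_{(\epsilon,\delta^{-1})})\big)$, estimating $X_{\beta^*}^*X_{\gamma^*}$ through the slowly-varying composition expansion of Proposition \ref{P-trunc-mag-comp} together with Propositions \ref{scaled-norm} and \ref{comp-epsilon-symb}; this is where the restriction \eqref{ScalHyp} enters, to control the factors $\epsilon\delta^{-1}$ and $\epsilon\delta^{-2}$. Your route instead expands the $\Gamma_*$-periodic symbol in its $\xi$-Fourier series and quantizes mode by mode; the exact unfolding identity and the computation that $\mathfrak{Op}^{A}\big(g(x)e^{i\langle\gamma,\xi\rangle}\big)$ is a magnetic translation times multiplication by $g(\cdot+\gamma/2)$, of norm $\leq\|g\|_\infty$, are both correct, and the coefficient bound $\sup_y|c_\gamma(y)|\leq C_M\langle\delta\gamma\rangle^{-M}$ plus the uniform Riemann-sum estimate (Peetre's inequality) give a bound by finitely many seminorms of $\varphi$ that is uniform in $(\epsilon,\delta,\kappa)$ and, as you note, needs neither \eqref{ScalHyp} nor any structure of $B_{\epsilon,\kappa}$ beyond the existence of a vector potential --- so your statement is in fact slightly stronger. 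Two small points of bookkeeping: the identification of $\mathfrak{Op}^{\epsilon,\kappa}(\Phi^{\epsilon,\delta})$ with the operator-norm convergent series of mode quantizations is most cleanly done through the distributional continuity of the magnetic quantization (Proposition 3.5 of \cite{MP1}, already invoked in the paper), since the Fourier series certainly converges in $\mathscr{S}'(\Xi)$; and your parenthetical claim that the Fourier series converges in $S^0_0(\Xi)$ is only true for each fixed $\delta$ (each $\xi$-derivative costs $|\gamma|\sim\delta^{-1}\langle\delta\gamma\rangle$), but this does not matter because only the sup-norms of the coefficients enter your operator bound, while membership $\Phi^{\epsilon,\delta}\in S^0_0(\Xi)$ is anyway supplied by Lemma \ref{periodic-sums}. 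The trade-off is that the paper's Cotlar--Stein scheme is reused verbatim for the companion series \eqref{25aug-3} in Proposition \ref{P-25aug-3}, where the periodized symbol contains the resolvent factor $r^{\epsilon,\kappa}(\epsilon a)$: there one wants the resolvent to enter only through its operator norm $\mathcal{O}(\epsilon^{-1})$ rather than through $L^1_\eta$-type bounds on its symbol, which is exactly what the almost-orthogonality regrouping achieves and what a direct Fourier-coefficient estimate would not give as readily; your mode-by-mode argument is the more elementary and more transparent proof of the present proposition, but it is specific to it.
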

\begin{proof}
All we have to do is to control the behavior of the norm on the right hand side
of \eqref{CS-est} when $\delta\searrow 0\,$. We note that $ \epsilon\delta^{-2}=\delta^{\mu-2}\leq\delta_0^{\mu-2}$ with $ \mu-2>0$ and due
to   \eqref{ScalHyp} we can use Proposition \ref{P-trunc-mag-comp} with
$\tau=\delta^{-1}$, in order to obtain that, for any
$\alpha^*\in\Gamma_*$\,, 
\begin{equation*} 
\begin{array}{l}
\left\|\overline{\widetilde{\varphi}^\epsilon}_{(\epsilon,\delta^{-1})}
\, \,\sharp^{\epsilon,\kappa}\,\, \big(\tau_{\alpha^*}\widetilde{\varphi}^\epsilon_{
(\epsilon,\delta^{-1})} \big)\right\|_{B_{\epsilon,\kappa}} \\ \quad  
 \leq\left\|\big(\overline{\widetilde{\varphi}^\epsilon}\, \natural^{
\epsilon}\, 
\big(\tau_{ \alpha^* )}\widetilde{\varphi}^\epsilon\big)_{(\epsilon,\delta^{-1})}
\big)\right\|_{B_{\epsilon,\kappa}}\\
 \qquad +\,(\epsilon\delta^{-1}/2)\underset{ \ell\leq 2}{\sum}\left(
\left\|\big(\partial_{x_\ell }\overline{\widetilde{\varphi}^\epsilon}\, \natural^{
\epsilon}\, 
\big(\tau_{ \alpha^* }\partial_{\xi_\ell }\widetilde{\varphi}^\epsilon\big)\big)_{
(\epsilon,\delta^{-1})}
\right\|_{B_{\epsilon,\kappa}}   +\left\|\big(\partial_{\xi_\ell}\overline{\widetilde{
\varphi}^\epsilon}\, \natural^{\epsilon}\, 
\big(\tau_{ \alpha^* }\partial_{x_\ell}\widetilde{\varphi}^\epsilon\big)\big)_{
(\epsilon,\delta^{-1})}
\right\|_{B_{\epsilon,\kappa}}\right) \\
 \qquad +\,(\epsilon\delta^{-1})^2 \left\|\mathscr{R}_{\epsilon,\kappa,
\delta^{-1}}\big(\overline{\widetilde{\varphi}^\epsilon},
\big(\tau_{ \alpha^* }\widetilde{\varphi}^\epsilon\big)_{(\epsilon,\delta^{-1})}
\right\|_{B_{\epsilon,\kappa}}\,.
\end{array}
\end{equation*}
We use Proposition \ref{scaled-norm} and obtain that, for any $p>2$ there exists $C_p>0$ such that:
\begin{equation*}
\begin{array}{l}
\hspace{-1cm}<\alpha^*>^N\left\|\overline{\widetilde{\varphi}^\epsilon}_{(\epsilon,\delta^{-1
})}
\,\sharp^{\epsilon,\kappa}\,\big(\tau_{ \alpha^* }\widetilde{\varphi}^\epsilon_{
(\epsilon,\delta^{-1})}
\big)\right\|_{B_{\epsilon,\kappa}} \\
\quad \leq C_p\, 
<\alpha^*>^N\left[\nu^{-p,0}_{(0,p)}
\left(\overline{\widetilde{\varphi}
^\epsilon}\, \natural^{\epsilon}
\big(\tau_{ \alpha^* }\widetilde{\varphi}^\epsilon\big)
\right)\,\right. \\ 
 \qquad +\,(\epsilon\delta^{-1}/2)\underset{\ell=1,2}{\sum}
 \left. \left(
\nu^{-p,0}_{(0,p)}\left(
\partial_{x_\ell }\overline{\widetilde{\varphi}^\epsilon}\, \natural^{\epsilon}\, 
\big(\tau_{ \alpha^* }\partial_{\xi_\ell }\widetilde{\varphi}^\epsilon\big)
\right) \right. +\nu^{-p,0}_{(0,p)}\left(
\partial_{\xi_\ell }\overline{\widetilde{\varphi}^\epsilon}\, \natural^{\epsilon}\,
\big(\tau_{ \alpha^* }\partial_{x_\ell }\widetilde{\varphi}^\epsilon\big)
\right)\right) \\ 
\qquad +\left.\,(\epsilon\delta^{-1})^2\nu^{-p,0}_{(0,p)}\left(\mathscr{R}_{\epsilon,
\kappa,\delta^{-1}}\big(\overline{\widetilde{\varphi}^\epsilon},
\big(\tau_{ \alpha^* }\widetilde{\varphi}^\epsilon\big)
\right)\right]\,.
\end{array}
\end{equation*}

The four terms appearing above can
now be dealt with by similar arguments to those in the proof of Proposition
\ref{P.sum} by using Proposition~\ref{comp-epsilon-symb} and choosing $N>2$ in order to control the convergence of the series indexed by
$\alpha^*\in\Gamma_*$ appearing in the Cotlar-Stein criterion.
\end{proof}

\paragraph{The series \eqref{25aug-3}.} Let us consider now some $f^\bullet\in {
S^{-m}_1(\Xi)}^\bullet$ for some $m>0\,$, the associated family of
symbols $\{\widetilde{f}^\epsilon\}_{\epsilon\in[0,\epsilon_0]}$ given by Remark~\ref{rem-sl-var-class} and the series
$$
\Psi^{\epsilon,\kappa,\delta}:=\underset{\gamma^*\in\Gamma_*}{\sum}\tau_{
\gamma^*}\big(g_{\delta^{-1}}\, \,\sharp^{\epsilon,\kappa}\,\, r^{\epsilon,\kappa}
(\epsilon a)\, \,\sharp^{\epsilon,\kappa}\,\, f^\epsilon\big)\,,
$$
with its associated operator
$\mathfrak{Op}^{\epsilon,\kappa}\big(\Psi^{\epsilon,\kappa,\delta}\big)$
obtained by using Lemma \ref{periodic-sums} and Proposition~\ref{P.sum} after
noticing that  the magnetic composition property (Theorem 2.2 in
\cite{IMP1}) gives:
$$
g_{\delta^{-1}}\,\sharp^{\epsilon,\kappa}\,r^{\epsilon,\kappa}(\epsilon a)\sharp^{\epsilon,
\kappa}f^\epsilon\ \in\ S^{-\infty}(\Xi)\,,
$$
for any
$(\epsilon,\kappa,\delta)\in[0,\epsilon_0]\times[0,\kappa_0]\times(0,\delta_0]\,$.

\begin{proposition}\label{P-25aug-3}
There exist positive constants $C$, $\epsilon_0$, $\delta_0$  and $\kappa_0$ such that
$$
\left\|\mathfrak{Op}^{\epsilon,\kappa}\big(\Psi^{\epsilon,\kappa,\delta}
\big)\right\|_{{ \mathcal L}(\mathcal{H})}\leq C\epsilon^{-1},
$$ 
for any
$(\epsilon,\delta,\kappa)\in[0,\epsilon_0]\times(0,\delta_0]\times[0,\kappa_0]$
verifying \eqref{ScalHyp}. Moreover the application
$$
{ S^{-m}_1(\Xi)}\ni f^\epsilon\,\mapsto\,\epsilon\Psi^{\epsilon,\kappa,\delta}
\in\big({ S^0_0(\Xi)} , \|\cdot\|_{B_{\epsilon,\kappa}}\big)
$$
is continuous uniformly for $(\epsilon,\delta,\kappa)\in[0,\epsilon_0]\times[0,\delta_0]\times[0,\kappa_0]$ verifying   \eqref{ScalHyp}.
\end{proposition}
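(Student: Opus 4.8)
The plan is to follow the pattern of the proof of Proposition~\ref{P-25aug-2}, the only new ingredient being the resolvent symbol $r^{\epsilon,\kappa}(\epsilon a)$ sitting in the middle of the composition; since $\|\mathfrak{Op}^{\epsilon,\kappa}(r^{\epsilon,\kappa}(\epsilon a))\|\le C\epsilon^{-1}$ by \eqref{maii1}, this is precisely what will produce the factor $\epsilon^{-1}$ in the bound. Write $\varphi^{\epsilon,\kappa,\delta}:=g_{1/\delta}\,\sharp^{\epsilon,\kappa}\,r^{\epsilon,\kappa}(\epsilon a)\,\sharp^{\epsilon,\kappa}\,f^\epsilon$, so that $\Psi^{\epsilon,\kappa,\delta}=\sum_{\gamma^*\in\Gamma_*}\tau_{\gamma^*}\varphi^{\epsilon,\kappa,\delta}$. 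For each fixed triple $(\epsilon,\kappa,\delta)$ this symbol lies in $S^{-\infty}(\Xi)$ (two applications of the magnetic composition theorem, Theorem~2.2 in \cite{IMP1}), so the operator is well defined through Lemma~\ref{periodic-sums} and Proposition~\ref{P.sum}; the whole point is to control everything uniformly as $\delta\searrow0$ (equivalently $\epsilon\searrow0$, by \eqref{ScalHyp}).

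First I would split off the ``nice'' part. Applying Proposition~\ref{Cor-4.2} with $N$ large gives
$$
\varphi^{\epsilon,\kappa,\delta}=\big(\mathfrak{g}^{\epsilon,\kappa}_N\big)_{(\epsilon,\delta^{-1})}\,\sharp^{\epsilon,\kappa}\,f^\epsilon\;+\;g_{1/\delta}\,\sharp^{\epsilon,\kappa}\,r^{\epsilon,\kappa}(\epsilon a)\,\sharp^{\epsilon,\kappa}\,\big(\psi_N[r^{\epsilon,\kappa}_1]\,\sharp^{\epsilon,\kappa}\,f^\epsilon\big).
$$
By a rescaling argument identical to the one in Proposition~\ref{nou-2}, the first term is again of the form $(\widetilde{\mathfrak g}^{\epsilon})_{(\epsilon,\delta^{-1})}$ with $\widetilde{\mathfrak g}^{\epsilon}$ bounded in $S^{-\infty}(\Xi)$ and depending linearly on $f^\epsilon$; hence its periodic sum is covered verbatim by Proposition~\ref{P-25aug-2} and contributes an operator of norm $O(1)$. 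There remains the second term, $\sum_{\gamma^*}\tau_{\gamma^*}\big(g_{1/\delta}\,\sharp^{\epsilon,\kappa}\,r^{\epsilon,\kappa}(\epsilon a)\,\sharp^{\epsilon,\kappa}\,\widetilde f^\epsilon\big)$ with $\widetilde f^\epsilon:=\psi_N[r^{\epsilon,\kappa}_1]\,\sharp^{\epsilon,\kappa}\,f^\epsilon\in S^{-2N-m}_1(\Xi)^\bullet$ a bounded family (by Proposition~\ref{epsilon-comp-symb}), linear in $f^\epsilon$.

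For this remaining sum I would use the Cotlar--Stein lemma. Since $\tau_{\gamma^*}$ is a translation of the symbol in the $\xi$--variable by a lattice vector, it is a $*$--endomorphism of $\big(\mathscr{S}(\Xi),\sharp^{\epsilon,\kappa}\big)$ (the magnetic phase being $\xi$--independent) and is implemented on operators by conjugation with a unitary; thus $\mathfrak{Op}^{\epsilon,\kappa}(\tau_{\gamma^*}\psi)$ has the same norm as $\mathfrak{Op}^{\epsilon,\kappa}(\psi)$, and
$$
T_{\gamma^*}:=\mathfrak{Op}^{\epsilon,\kappa}\big(\tau_{\gamma^*}(g_{1/\delta}\,\sharp^{\epsilon,\kappa}\,r^{\epsilon,\kappa}(\epsilon a)\,\sharp^{\epsilon,\kappa}\,\widetilde f^\epsilon)\big)=\mathfrak{Op}^{\epsilon,\kappa}(\tau_{\gamma^*}g_{1/\delta})\,\mathfrak{Op}^{\epsilon,\kappa}(\tau_{\gamma^*}r^{\epsilon,\kappa}(\epsilon a))\,\mathfrak{Op}^{\epsilon,\kappa}(\tau_{\gamma^*}\widetilde f^\epsilon).
$$
In $T_{\gamma^*}^*T_{\eta^*}$ one isolates the middle factor $\mathfrak{Op}^{\epsilon,\kappa}\big((\tau_{\gamma^*}g_{1/\delta})\,\sharp^{\epsilon,\kappa}\,(\tau_{\eta^*}g_{1/\delta})\big)$; after the change of variables $\xi\mapsto\delta\xi$ (Propositions~\ref{P-trunc-mag-comp}, \ref{scaled-norm} and \ref{comp-epsilon-symb}) this is the magnetic composition of two copies of the $\delta$--independent bump $\chi\circ h_{m^\epsilon}$ centered at the points $\delta^{-1}\gamma^*,\delta^{-1}\eta^*$ of the dilated lattice $\delta^{-1}\Gamma_*$, whose pairwise distances are $\gtrsim\delta^{-1}$; non-stationary phase together with the Calderón--Vaillancourt estimate (Theorem~3.1 in \cite{IMP1}) then yields, uniformly in $(\epsilon,\delta,\kappa)$,
$$
\big\|\mathfrak{Op}^{\epsilon,\kappa}\big((\tau_{\gamma^*}g_{1/\delta})\,\sharp^{\epsilon,\kappa}\,(\tau_{\eta^*}g_{1/\delta})\big)\big\|\le C_M\,\delta^{M}\,\langle\gamma^*-\eta^*\rangle^{-M}\quad(\gamma^*\neq\eta^*),
$$
and $O(1)$ on the diagonal. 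Combining this with $\|\mathfrak{Op}^{\epsilon,\kappa}(\tau_{\gamma^*}r^{\epsilon,\kappa}(\epsilon a))\|\le C\epsilon^{-1}$ (from \eqref{maii1}) and $\|\mathfrak{Op}^{\epsilon,\kappa}(\tau_{\gamma^*}\widetilde f^\epsilon)\|\le C$ gives $\|T_{\gamma^*}^*T_{\eta^*}\|\le C_M\,\epsilon^{-2}\,\delta^{M}\,\langle\gamma^*-\eta^*\rangle^{-M}$ off the diagonal and $\le C\epsilon^{-2}$ on it; for $T_{\gamma^*}T_{\eta^*}^*$ the same holds, the decay in $\gamma^*-\eta^*$ now coming from the very negative order $-2N-m$ of $\widetilde f^\epsilon$ through a convolution-type bound on $(\tau_{\gamma^*}\widetilde f^\epsilon)\,\sharp^{\epsilon,\kappa}\,\overline{\tau_{\eta^*}\widetilde f^\epsilon}$, with $N$ chosen large. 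The Cotlar--Stein lemma then bounds the norm of the sum by $\sup_{\gamma^*}\sum_{\eta^*}\|T_{\gamma^*}^*T_{\eta^*}\|^{1/2}\le C\epsilon^{-1}\big(1+\delta^{M/2}\sum_{\eta^*\neq0}\langle\eta^*\rangle^{-M/2}\big)\le C'\epsilon^{-1}$, uniformly for $(\epsilon,\delta,\kappa)$ obeying \eqref{ScalHyp}. Since every estimate above is linear in the defining seminorms of $f^\epsilon$, one reads off at the same time the asserted uniform continuity of $f^\epsilon\mapsto\epsilon\,\Psi^{\epsilon,\kappa,\delta}$ into $\big(S^0_0(\Xi),\|\cdot\|_{B_{\epsilon,\kappa}}\big)$.

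The main obstacle is the uniformity as $\delta\searrow0$: the cutoff $g_{1/\delta}$ has $\xi$--derivatives of size $\delta^{-|\alpha|}$, so the crude composition estimates degenerate. This is defused exactly as in Propositions~\ref{nou-2} and \ref{P-25aug-2}, by passing to the rescaled variables $(x,\xi)\mapsto(\epsilon x,\delta^{-1}\xi)$ in which $g_{1/\delta}$ becomes the fixed bump $\chi\circ h_{m^\epsilon}$ and the period lattice dilates to $\delta^{-1}\Gamma_*$, so the potential loss turns into a gain of positive powers of $\delta$. The second, more structural point is that although each Cotlar--Stein cross term carries \emph{two} resolvent factors, the square root in the Cotlar--Stein criterion leaves only \emph{one} factor $\epsilon^{-1}$, provided the resolvent--free factors furnish a summable decay in $\gamma^*-\eta^*$ in \emph{both} orderings $T^*T$ and $TT^*$; this is why the preliminary splitting via Proposition~\ref{Cor-4.2}, which exposes a symbol of arbitrarily negative order on the right, must be performed before invoking Cotlar--Stein.
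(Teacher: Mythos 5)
Your argument is correct and is essentially the paper's own proof: the decisive step there is exactly your factorization of the Cotlar--Stein cross terms, placing $g_{1/\delta}\,\sharp^{\epsilon,\kappa}\,\tau_{\alpha^*}(g_{1/\delta})$ in the middle of $T^*T$ (disjoint supports give the off-diagonal decay) and $\widetilde{f}^\epsilon\,\sharp^{\epsilon,\kappa}\,\tau_{\alpha^*}\big(\overline{\widetilde{f}^\epsilon}\big)$ in the middle of $TT^*$ (decay from the very negative order), combined with the rescaling estimates of Propositions \ref{P-trunc-mag-comp}, \ref{scaled-norm} and \ref{comp-epsilon-symb} as in Proposition \ref{P-25aug-2}, the single factor $\epsilon^{-1}$ surviving the Cotlar--Stein square root just as you say. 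The only (harmless) deviation is your preliminary splitting via Proposition \ref{Cor-4.2} to make the right-hand symbol of arbitrarily negative order; the paper instead simply fixes $m>N+2>4$, which suffices for the application where $f^\epsilon=\psi_N[r^{\epsilon,\kappa}_1]$.
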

\begin{proof}The main ingredient is to note that
\begin{align*}
\big(\overline{g_{\delta^{-1}}\,\sharp^{\epsilon,\kappa}\,r^{\epsilon,\kappa}
(\epsilon a)\,\sharp^{\epsilon,\kappa}\,f^\epsilon}\big)&\,\sharp^{\epsilon,\kappa}\,\tau_{
\alpha^*}\big(g_{\delta^{-1}}\,\sharp^{\epsilon,\kappa}\,r^{\epsilon,\kappa}
(\epsilon a)\,\sharp^{\epsilon,\kappa}\,f^\epsilon\big)\\
&=\ \big(\overline{f^\epsilon}
\,\sharp^{\epsilon,\kappa}\,r^{\epsilon,\kappa}(\epsilon a)\big)\, \,\sharp^{\epsilon,\kappa}\,\,
\big(g_{\delta^{-1}}\,\,\sharp^{\epsilon,\kappa}\,\,\tau_{\alpha^*}(g_{\delta^{-1}}
)\big)
\,\,\sharp^{\epsilon,\kappa}\,\,\tau_{\alpha^*}\big(r^{\epsilon,\kappa}(\epsilon a)\sharp^{
\epsilon,\kappa}f^\epsilon\big),
\end{align*}
and
\begin{align*}
&\big(g_{\delta^{-1}}\,\,\sharp^{\epsilon,\kappa}\,\,r^{\epsilon,\kappa}(\epsilon a)\sharp^{
\epsilon,\kappa}f^\epsilon\big)\,\,\sharp^{\epsilon,\kappa}\,\,\tau_{\alpha^*}
\big(\overline{g_{\delta^{-1}}\,\,\sharp^{\epsilon,\kappa}\,\,r^{\epsilon,\kappa}
(\epsilon a)\,\,\sharp^{\epsilon,\kappa}\,\,f^\epsilon}\big)\\
&=\ \big(g_{\delta^{-1}}
\,\,\sharp^{\epsilon,\kappa}\,\,r^{\epsilon,\kappa}(\epsilon a)\big)\,\,\sharp^{\epsilon,\kappa}\,\,
\big(\widetilde{f^\epsilon}_{(\epsilon,1)}\,\,\sharp^{\epsilon,\kappa}\,\,\tau_{\alpha^*
}(\overline{\widetilde{f^\epsilon}}_{(\epsilon,1)})\big)
\,\,\sharp^{\epsilon,\kappa}\,\,\tau_{\alpha^*}\big(r^{\epsilon,\kappa}(\epsilon a)\sharp^{
\epsilon,\kappa}g_{\delta^{-1}}\big)\,.
\end{align*}
Then we fix some $m>N+2>4$ and we repeat the arguments in the proof
of Proposition~\ref{P-25aug-2}\,. 
\end{proof}

\subsubsection{Locating the spectral islands}\label{SSS-nou}
 In this subsection we fix $\mu=3$ and assume moreover  \eqref{marts7}, in order to prove the second claim of Proposition \ref{mainP} concerning the location and size of the spectral islands. 

At the operator level, \eqref{marts7}
can be rewritten as
\beq\label{marts3}
\Big(\mathfrak{Op}^{\epsilon,\kappa}(\lambda^\epsilon)-\epsilon a\Big)\,
\mathfrak{Op}^{\epsilon,\kappa}(\widetilde{r}_{
\lambda}
(\epsilon a)) =\ \bb1\,+\mathfrak{Op}^{\epsilon,\kappa}(\mathfrak
{r}_{\delta,a})\,,\quad\|\mathfrak{Op}^{\epsilon,\kappa}(\mathfrak
{r}_{\delta,a})\|\,\leq\,C\, \epsilon ^{1/3}\,.
\eeq 
Let us choose $a\in\mathbb{C}$ on the positively oriented circle
$|a-(2n+1)mB_0|=mB_0$
centered at some Landau level $(2n+1)mB_0$ such that the distance between $a$
and all the Landau levels is bounded from below by $mB_0$
and take $\z=\epsilon a$.
The estimate \eqref{marts3} implies that 
$$
\left
\|\Big(\mathfrak{Op}^{\epsilon,\kappa}(\lambda^\epsilon)-\z\Big)^{-1}\right
\|\leq C\, \epsilon^{-1}\,.
$$
By iterating we get:
\begin{align}\label{marts8}
\Big(\mathfrak{Op}^{\epsilon,\kappa}(\lambda^\epsilon)-\z\Big)^{-1}=\mathfrak{Op
}^{\epsilon,\kappa}(\widetilde{r}_{
\lambda}
(\z))-\Big(\mathfrak{Op}^{\epsilon,\kappa}(\lambda^\epsilon)-\z\Big)^{-1}
\mathfrak{Op}^{\epsilon,\kappa}(\mathfrak
{r}_{\delta,a})\,.
\end{align}
In order to identify the band operator corresponding to the spectrum near
$(2n+1)m\epsilon B_0$ we compute the Riesz integral: 
$$
h_n:=\frac{i}{2\pi} \int_{|\z-(2n+1)m\epsilon B_0|=m\epsilon B_0}\big
(\z-(2n+1)m\epsilon B_0\big
)\Big(\mathfrak{Op}^{\epsilon,\kappa}(\lambda^\epsilon)-\z\Big)^{-1}\,d\z\,.
$$
We want to show that 
$$
\|h_n\|\leq C (\epsilon \kappa +\epsilon^{4/3})\,.
$$
Inserting \eqref{marts8} in the Riesz integral we obtain two contributions. The
second contribution containing the term
$\mathfrak{Op}^{\epsilon,\kappa}(\mathfrak
{r}_{\delta,a})$ is easy, and leads to something of order $\epsilon^{4/3}$. We
still need to estimate the term
$$
\frac{i}{2\pi} \int_{|\z-(2n+1)m\epsilon B_0|=m\epsilon B_0}\big
(\z-(2n+1)m\epsilon B_0\big )\mathfrak{Op}^{\epsilon,\kappa}(\widetilde{r}_{
\lambda}
(\z)) \,d\z\, .
$$
 Since $m_\epsilon-m=\mathcal{O}(\epsilon)$, replacing $m$
with $m_\epsilon$
in the expression of $h_n$ produces an error of order $\epsilon^2$, hence
the
relevant object becomes:
$$
h_n':=\frac{i}{2\pi} \int_{|\z-(2n+1)m_\epsilon \epsilon
B_0|=m_\epsilon\epsilon B_0}\big (\z-(2n+1)m_\epsilon\epsilon B_0\big
)\mathfrak{Op}^{\epsilon,\kappa}(\widetilde{r}_{
\lambda}
(\z))\, d\z\,.
$$

From the expression of $\widetilde{r}_{
\lambda}
(\z)$ in \eqref{q-rez} we see that the second term is analytic inside the circle
of integration, thus only 
$$\underset{\gamma^*\in\Gamma_*}{\sum}
\tau_{\gamma^*}
\big(g_{1/\delta}\,\,\sharp^{\epsilon,\kappa}\,\,
r^{\epsilon,\kappa}(\z)\big)$$
will contribute to $h_n'$. Let us introduce the shorthand notation
$
E^\epsilon_n:=(2n+1)m\epsilon B_0 $ 
and notice that $h_n'=\mathfrak{Op}^{\epsilon,\kappa}(\mathfrak{h}_n)$ with
$
\mathfrak{h}
_n:=\underset{\gamma^*\in\Gamma_*}{\sum}
\tau_{\gamma^*}
\left(g_{1/\delta}\,\sharp^{\epsilon,\kappa}\,\mathfrak{g}_n\right) \,,$
 where 
\beq
\mathfrak{g}_n:=\frac{i}{2\pi}
\int_{|\z-E^\epsilon_n|=m_\epsilon\epsilon B_0}\big
(\z-E^\epsilon_n\big
) \, r^{\epsilon,\kappa}(\z)\,d\z\,.
\eeq
This is a slowly varying symbol of class $S^{-2}_1(\Xi)^\bullet$ (see
Definition \ref{S-epsilon}) having an operator norm of order $\epsilon$.
Using the expansion in \eqref{rez-eq-1} we notice that the first sum of
$N$ terms is a polynomial in $\z$ and thus vanishes when integrated along the
circle $|\z-E^\epsilon_n|=m_\epsilon\epsilon B_0$ and we get:
\begin{align*}
\mathfrak{g}_n&=\frac{i}{2\pi}
\int_{|\z-E^\epsilon_n|=m_\epsilon\epsilon B_0}\big
(\z-E^\epsilon_n\big
)(1+\z)^Nr^{\epsilon,\kappa}
(\z)\, \,\sharp^{\epsilon,\kappa}\, \,
\big[r^{\epsilon,\kappa}_1\big]^
{\,\sharp^{\epsilon,\kappa}\,N}\,d\z \,,
\end{align*}
and 
\begin{align*}
\mathfrak{h}'_n&=\underset{\gamma^*\in\Gamma_*}{\sum}
\tau_{\gamma^*}
\left(g_{1/\delta}\,\sharp^{\epsilon,\kappa}\,\widetilde{\mathfrak{g}}_n
\,\sharp^{\epsilon,\kappa}\, \,
\big[r^{\epsilon,\kappa}_1\big]^
{\,\sharp^{\epsilon,\kappa}\,N}\right)\,,
\end{align*}
where
$$
\widetilde{\mathfrak{g}}_n:=\frac{i}{2\pi}
\int_{|\z-E^\epsilon_n|=m_\epsilon\epsilon B_0}\big
(\z-E^\epsilon_n\big)(1+\z)^N
r^{\epsilon,\kappa}(\z)\,d\z\,.
$$
The arguments in the above subsection may be applied in order to conclude that 
$\mathfrak{h'}_n\in S^0_0(\Xi)$ and a-priori:
$$
\|\mathfrak{Op}^{\epsilon,\kappa}(\mathfrak{h}'_n)\|\leq C\epsilon\,,
$$ 
but this is not enough in order to conclude the existence of gaps. 

Let us also consider $r^\epsilon(\epsilon a)\in
S^{-2}_1(\Xi)$ such that
\begin{equation*}
\left(\mathfrak{Op}^\epsilon(h_{m^\epsilon})-\epsilon
a\right)^{-1}=\mathfrak{Op}^\epsilon\big(r^\epsilon(\epsilon a)\big)\,.
\end{equation*}
 For $a$ on the circle $|a-E^1_n|=m_\epsilon B_0$, the above inverse is well defined and its symbol also. Let us
notice that
\begin{equation*}
\frac{i}{2\pi}
\int_{|\z-E^\epsilon_n|\,=\,m_\epsilon\epsilon B_0}\big
(\z-E^\epsilon_n\big
)r^{\epsilon}(\z)\,d\z
 =0\,,
\end{equation*}
 and 
 \begin{equation*}
\frac{i}{2\pi}
\int_{|\z-E^\epsilon_n|\,=\,m_\epsilon\epsilon B_0}\big
(\z-E^\epsilon_n\big)(1+\z)^Nr^{\epsilon}(\z)\,d\z =0\,,
\end{equation*}
both integrands being analytic inside the circle
$|\z-E^\epsilon_n|=m_\epsilon\epsilon B_0\,$.
Hence  we can write:
$$
\widetilde{\mathfrak{g}}_n =\frac{i}{2\pi}
\int_{|\z-E^\epsilon_n|=m_\epsilon\epsilon B_0}\big
(\z-E^\epsilon_n\big)(1+\z)^N
\big(r^{\epsilon,\kappa}(\z)-r^\epsilon(\z)\big)\,d\z\,.
$$

Using now the estimate \eqref{nou-3},
Proposition \ref{nou-2} and Proposition \ref{P-25aug-2} we obtain:
$$
\|\mathfrak{Op}^{\epsilon,\kappa}(h_n')\|\,\leq\, C\, \kappa\epsilon^{-1}\epsilon^2\,=\,C\, \kappa\epsilon\,.
$$
which concludes the proof of the size of the spectral islands. From now on we concentrate on proving \eqref{marts7}. 

\subsubsection{Estimating the product $\big(\lambda^\epsilon-\epsilon a\big)\,\,\sharp^{\epsilon,\kappa}\,\,\widetilde{
r}_\lambda( \epsilon a)$.} 
 Here we no longer fix $\mu=3\,$. Instead, we again allow $\mu$ to vary in the interval $(2,4)$ and actually prove that this is the maximal interval for which the operator defined in \eqref{q-rez} is a quasi-resolvent. 

Given the periodic lattice $\Gamma_*\subset\X^*$ and the function
$g_{1/\delta}$ defined in \eqref{g-delta}, for $\delta\in(0,\delta_0]\,$,
there exists a function $\widetilde{\chi}\in C^\infty_0(\X^*)$ such that:
\begin{enumerate}
 \item $0\leq\widetilde{\chi}\leq1\,$,
 \item $\widetilde{\chi}\, g_{1/\delta}=g_{1/\delta}\,$, for any
$\delta\in(0,\delta_0]\,$,
\item $\underset{\gamma^*\in\Gamma_*}{\sum}\tau_{\gamma^*}\widetilde{\chi}=1\,$.
\end{enumerate}

For any $\gamma^*\in\Gamma_*$ let us introduce 
\begin{align}\label{marts6}
\widetilde{\lambda}^{\epsilon,\kappa}_{\gamma^*}\ :=\
\big(\lambda^\epsilon\,-\epsilon a\big)\, \sharp^{\epsilon,\kappa}\,
\tau_{\gamma^*}\widetilde{\chi}\; .
\end{align}
\begin{proposition}
The families
$\{\lek\}_{(\epsilon,\kappa) \in[0,\epsilon_0]\times [0,1]}$ are bounded in
$S^{-\infty}(\Xi)^\bullet$\,.
\end{proposition}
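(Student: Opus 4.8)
The plan is to show that, for each fixed $\gamma^*\in\Gamma_*$, the symbol $\widetilde\lambda^{\epsilon,\kappa}_{\gamma^*}=(\lambda^\epsilon-\epsilon a)\,\sharp^{\epsilon,\kappa}\,\tau_{\gamma^*}\widetilde\chi$ is a magnetic Moyal product of two symbols, one of which is compactly supported in $\xi$, and to run this through the slowly-varying magnetic Moyal calculus of Appendix~\ref{A-cl-sl-var} to conclude boundedness in $S^{-\infty}(\Xi)^\bullet$ uniformly in $(\epsilon,\kappa)$. First I would record the two ingredients. On one hand, $\lambda^\epsilon-\epsilon a$ is a $\Gamma_*$-periodic symbol in $S^0_0(\Xi)^\circ$ (it is constant in $x$), bounded uniformly in $\epsilon\in[0,\epsilon_0]$ by Proposition~\ref{rho-epsilon}; together with the fact that $a$ ranges over a fixed compact set $K$, the family $\{\lambda^\epsilon-\epsilon a\}$ is bounded in $S^0_0(\Xi)$, hence (after inserting the slow-variation scaling via Remark~\ref{rem-sl-var-class}) bounded as a family of slowly varying symbols in $S^0_0(\Xi)^\bullet$. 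On the other hand, $\tau_{\gamma^*}\widetilde\chi\in C^\infty_0(\X^*)$ is a fixed function, independent of $(\epsilon,\kappa)$, hence trivially a bounded family in $S^{-\infty}(\Xi)$ and in $S^{-\infty}(\Xi)^\bullet$.

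Next I would apply the magnetic composition theorem for slowly varying symbols — the same machinery already invoked in the proofs of Propositions~\ref{nou-2} and \ref{P-trunc-mag-comp} — to the product of a symbol in $S^0_0(\Xi)^\bullet$ with a symbol in $S^{-\infty}(\Xi)^\bullet$. The key structural point is that a Hörmander-type symbol of order $0$ composed (in the magnetic Moyal sense) with a symbol of order $-\infty$ lands in $S^{-\infty}(\Xi)$, and that the relevant seminorms of the product are controlled by finitely many seminorms of the two factors together with the $BC^\infty$-norms of the magnetic phase factor $\omega^{B_{\epsilon,\kappa}}$. Since the magnetic field $B_{\epsilon,\kappa}(x)=\epsilon B_0+\kappa\epsilon B(\epsilon x)$ and all its derivatives are uniformly bounded for $(\epsilon,\kappa)\in[0,\epsilon_0]\times[0,1]$ (using that $B\in BC^\infty$ and that differentiating $B(\epsilon\cdot)$ only produces extra powers of $\epsilon\le 1$), the phase-dependent constants are uniform; hence the product is a bounded family in $S^{-\infty}(\Xi)^\bullet$. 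This is really just a specialization of Theorem~2.2 in \cite{IMP1} to the present slowly-varying setting, wrapped in the $(\cdot)^\bullet$ formalism.

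The mild subtlety — and the step I expect to be the main obstacle — is purely bookkeeping with the two nested scalings: the magnetic slow-variation parameter built into the $(\cdot)^\bullet$ classes, and the $\epsilon$-dependence coming through $\lambda^\epsilon$ itself. One must check that treating $\{\lambda^\epsilon-\epsilon a\}$ as a slowly varying family (so that $x$-derivatives are ``for free'') is compatible with the fact that this symbol is actually $x$-independent, and that the composition $\sharp^{\epsilon,\kappa}$ — which carries the oscillatory factor $e^{-2i\sigma(Y,Z)}\omega^{B_{\epsilon,\kappa}}(x,y,z)$ — produces only $\xi$-variation that stays compactly supported because $\tau_{\gamma^*}\widetilde\chi$ is compactly supported in $\xi$ and the symplectic convolution preserves compact $\xi$-support up to the smoothing from the other factor. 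Concretely, one writes the oscillatory integral for $(\lambda^\epsilon-\epsilon a)\,\sharp^{\epsilon,\kappa}\,\tau_{\gamma^*}\widetilde\chi$, integrates by parts in the $(y,\eta)$ and $(z,\zeta)$ variables using the non-stationarity of $\sigma(Y,Z)$ to gain arbitrary decay, and uses the compact $\xi$-support of $\tau_{\gamma^*}\widetilde\chi$ to control the remaining $\xi$-behavior; the $S^{-\infty}$ seminorm estimates then follow exactly as in the cited propositions, with all constants uniform in $(\epsilon,\kappa)\in[0,\epsilon_0]\times[0,1]$. This yields the asserted uniform boundedness in $S^{-\infty}(\Xi)^\bullet$.
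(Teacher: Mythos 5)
Your argument is correct and is essentially the paper's own proof: the paper likewise observes that $\lambda^\epsilon-\epsilon a$ and $\widetilde{\chi}$ are $x$-independent (hence trivially slowly varying) bounded families and concludes directly from the composition result for slowly varying symbols (Proposition \ref{epsilon-comp-symb}), which is the machinery you invoke. Your bookkeeping of the classes (placing $\lambda^\epsilon-\epsilon a$ in $S^0_0(\Xi)^\circ$ and letting the rapid $\xi$-decay come from the compactly supported factor $\tau_{\gamma^*}\widetilde{\chi}$) is, if anything, slightly more careful than the paper's phrasing.
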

\begin{proof}
We notice that $\lambda^\epsilon-\epsilon a\in S^{-\infty}(\Xi)^\circ$ for
any $\epsilon\in[0,\epsilon_0]$ and $\widetilde{\chi}$ also, so that we can use Proposition \ref{epsilon-comp-symb}.
\end{proof}

\medskip

We have the following properties:
\begin{enumerate}
 \item $\lek=\tau_{\gamma^*}\widetilde{\lambda}^\epsilon_0\,$,
 \item
$\big(\lambda^\epsilon-\epsilon a\big)=
\hspace{-0.2cm} \underset{\gamma^*\in\Gamma_*}{
\sum}\lek\,$, \\ 
\hspace{-5cm} the convergence following from Lemma
\ref{periodic-sums} and Proposition \ref{P.sum}\,.
\end{enumerate}

We shall use the following decomposition:
\begin{align}\label{ter-1}
\big(\lambda^\epsilon-\epsilon a\big)\,\,\sharp^{\epsilon,\kappa}\,\,\widetilde{
r}_\lambda(\epsilon a) &=
\underset{\gamma^*\in\Gamma_*}{\sum}\lek\,\,\sharp^{\epsilon,\kappa}\,\,\underset{
\alpha^*\in\Gamma_*}{\sum}
\tau_{\alpha^*}
\big(g_{1/\delta}\,\,\sharp^{\epsilon,\kappa}\,\,
r^{\epsilon,\kappa}(\epsilon a)\big)\, \nonumber \\
&\qquad +\underset{\gamma^*\in\Gamma_*}{\sum}\lek\,\,\sharp^{\epsilon,\kappa}\,\,
\left(1-\widetilde{g}_{1/\delta}\right)\sharp^{
\epsilon,\kappa}r_{\delta,\epsilon,\kappa}(\epsilon a)\,.
\end{align}

\paragraph{The main contribution to the series
\eqref{ter-1}.}

In this paragraph we shall consider the term in  the second line of \eqref{ter-1} with
$\gamma^*=\alpha^*=0\,$:
\beq\label{ter-1-main}
\widetilde{\lambda}^\epsilon_0\,\,\sharp^{\epsilon,\kappa}\,\,
\big(g_{1/\delta}\,\,\sharp^{\epsilon,\kappa}\,\,
r^{\epsilon,\kappa}(\epsilon  a)\big)\ =\
\big(\lambda^\epsilon-\epsilon a\big)\,\,\sharp^{\epsilon,\kappa}\,\,
\big(\widetilde { \chi}\,\,\sharp^{\epsilon,\kappa}\,\,
g_{1/\delta}\big)\,\,\,\sharp^{\epsilon,\kappa}\,\,
r^{\epsilon,\kappa}(\epsilon  a)\,.
\eeq

\begin{lemma}\label{P-1-ter-1-main} 
Given $\epsilon_0>0$ and $\delta_0>0$ satisfying  \eqref{delta-0},  for any $(\epsilon,\delta,\kappa)\in(0,
\epsilon_0]\times(0,\delta_0]\times[0,1]$ satisfying   \eqref{ScalHyp}, the following relation holds
$$
\widetilde { \chi}\,\,\sharp^{\epsilon,\kappa}\,\,
g_{1/\delta}\ =\ \varphi^{\epsilon,\delta,\kappa}_{(\epsilon,\delta^{-1})}\,,
$$
where the family of symbols 
$$
\big\{\varphi^{\epsilon,\delta,\kappa}\,\mid\,(\epsilon,\delta,\kappa)\in(0,
\epsilon_0]\times(0,\delta_0]\times[0,1],\,\epsilon= \delta^{\mu} \big\}\
$$
is  bounded in $S^{-\infty}(\Xi)$.
\end{lemma}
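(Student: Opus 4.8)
The plan is to compute the magnetic Moyal product $\widetilde{\chi}\,\sharp^{\epsilon,\kappa}\,g_{1/\delta}$ directly from the integral formula for $\sharp^{\epsilon,\kappa}$ and then perform the rescaling $\xi \mapsto \delta^{-1}\xi$ on the frequency side (and the dual scaling $z \mapsto \delta z$) exactly as in the proof of Proposition \ref{nou-2}. Recall that $g_{1/\delta}(\xi) = \chi(h_{m^\epsilon}(\delta^{-1}\xi)) = g_1(\delta^{-1}\xi)$, so $g_{1/\delta}$ is literally the dilation $(g_1)_{(\cdot,\delta^{-1})}$ in the frequency variable of the fixed symbol $g_1 \in C_0^\infty(\X^*)$; meanwhile $\widetilde{\chi}\in C_0^\infty(\X^*)$ is a fixed symbol independent of $\delta$ and $\epsilon$. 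The heart of the matter is that after the change of variables, the phase and the slowly-varying-field corrections in $\sharp^{\epsilon,\kappa}$ reorganize into functions of the bounded parameter $\tau := \epsilon \delta^{-1} = \delta^{\mu-1}$ rather than of $\epsilon$ and $\delta$ separately, precisely because of the scaling hypothesis \eqref{ScalHyp}.

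Concretely, I would proceed as follows. First, write out $\big(\widetilde{\chi}\,\sharp^{\epsilon,\kappa}\,g_{1/\delta}\big)(x,\xi)$ using \eqref{OpA}-type oscillatory-integral formula for the magnetic composition, with the magnetic flux factor $\omega^{B_{\epsilon,\kappa}}(x,y,z)$ decomposed via \eqref{D-Omega'} and the expansions of $\Phi_\kappa$, $\Psi^\epsilon$ and $\Theta^{\epsilon,\kappa}_\tau$ that were already recorded inside the proof of Proposition \ref{nou-2}. Since $\widetilde{\chi}$ does not depend on $x$ in the configuration variable in the relevant sense (it is a symbol in $S^{-\infty}(\Xi)^\circ$), the $x$-dependence of the product comes only through the magnetic phase, which is exactly the situation handled by Proposition \ref{comp-epsilon-symb}. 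Second, substitute $\xi = \delta^{-1}\xi'$ and $z = \delta z'$ and $\eta = \delta^{-1}\eta'$ (or whichever pairing the formula uses) so that $g_{1/\delta}$ becomes $g_1(\xi' - \eta')$, and the phase $\epsilon \Phi_\kappa(\epsilon x, y, \delta^{-1}z)$ becomes $\tau \Phi_\kappa(\cdot)$ with $\tau = \delta^{\mu-1}$ bounded, while the $\kappa\epsilon^2\Psi^\epsilon$ term picks up a factor $\tau^2$ times bounded quantities, exactly as displayed in the proof of Proposition \ref{nou-2}. Third, read off that the resulting $(x,\xi')$-dependent expression is of the form $\varphi^{\epsilon,\delta,\kappa}_{(\epsilon,\delta^{-1})}(x,\xi)$ with $\varphi^{\epsilon,\delta,\kappa}$ given by the oscillatory integral in the rescaled variables; since $\widetilde{\chi}$ and $g_1$ are both in $C_0^\infty$ and all the field-dependent factors live in a bounded set of $BC^\infty\big(\X;C^\infty_{\sf pol}(\X\times\X)\big)$ as $(\kappa,\delta,\tau)$ range over $[0,1]\times(0,\delta_0]\times[0,\delta_0^{\mu-1}]$, an application of Proposition \ref{comp-epsilon-symb} (with the appropriate choice $\mu_1=\mu_2=\mu_4=1$, $\mu_3=\tau$, as in Proposition \ref{nou-2}) yields that $\{\varphi^{\epsilon,\delta,\kappa}\}$ is bounded in $S^{-\infty}(\Xi)$ — indeed the compact frequency support of $g_1$ forces the product to be Schwartz-class in $\xi$, hence $S^{-\infty}$.

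The main obstacle, as in Proposition \ref{nou-2}, is bookkeeping: one must check that the change of variables genuinely converts every $\epsilon$- and $\delta$-dependent factor in the magnetic composition formula into something controlled uniformly by the bounded parameter $\tau = \delta^{\mu-1}$ and by $\delta$ itself with a nonnegative power — this uses $\mu > 2$ (so that $\delta^{\mu-2}$ and $\delta^{\mu-1}$ are bounded) in an essential way, and it is the only place where the lower endpoint of the interval $(2,4)$ enters. Since the computation is word-for-word the one already carried out for Proposition \ref{nou-2} with $\varphi = g_1$ replaced in the first slot by the fixed cutoff $\widetilde{\chi}$ (which is even easier, being $\delta$-independent), the verification is routine once the substitution is set up, and I would simply invoke Proposition \ref{comp-epsilon-symb} to close the argument.
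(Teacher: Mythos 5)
Your proposal follows essentially the same route as the paper's proof: write $\widetilde{\chi}\,\sharp^{\epsilon,\kappa}\,g_{1/\delta}$ as the magnetic composition oscillatory integral, rescale the frequency (and dual) variables by $\delta^{-1}$ so that all $\epsilon$- and $\delta$-dependence in the phase factors reduces to bounded powers $\delta^{\mu-2}$, $\delta^{2\mu-3}$, $\epsilon\delta^{-1}=\delta^{\mu-1}$ under \eqref{ScalHyp}, check that the resulting kernels form a bounded family in $BC^\infty\big(\X;C^\infty_{\text{\sf pol}}(\X\times\X)\big)$, and conclude via Proposition \ref{comp-epsilon-symb} (Corollary \ref{corA3}). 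The only cosmetic discrepancy is the bookkeeping of which factor carries the residual $\delta$-scaling after the change of variables (in the paper it is $\widetilde{\chi}$ that becomes the bounded family $\widetilde{\chi}_\delta$ while $g$ stays fixed), which does not affect the argument.
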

\begin{proof}
We use the arguments in the proof of  Proposition \ref{P-trunc-mag-comp} 
(formula \eqref{T-2}) and Proposition~\ref{comp-epsilon-symb}. A change of variables in the definition of 
\beq \label{chi-g}
\begin{array}{ll}
\big(\widetilde { \chi}\,\,\sharp^{\epsilon,\kappa}\,\,
g_{1/\delta}\big)(x,\xi)&=
\int\limits_{\Xi\times\Xi}\,dYdZ\, \left[e^{-2i\sigma(Y,Z)}e^{ i\,
\epsilon\Phi_\kappa(\epsilon x,y,z)}
\widetilde{\chi}(\xi-\eta)g(\delta^{-1}
(\xi-\zeta))\times\right.     \\
&\qquad\qquad \qquad \qquad \times\left.\left(1\,+\,i\, \kappa\epsilon^2\Psi^{\epsilon}(\epsilon
x,y,z)\int_0^1\Theta^{\epsilon,\kappa}_\tau(\epsilon x,y,z)d\tau\right)\right]
\end{array}
\end{equation}
allows to write
$$
\big(\widetilde { \chi}\,\,\sharp^{\epsilon,\kappa}\,\,
g_{1/\delta}\big)(x,\xi)\ =\ \varphi^{\epsilon,\delta,\kappa}(\epsilon x,\delta^{-1}\xi)\,,
$$
where
\begin{align*}
\varphi^{\epsilon,\delta,\kappa}(x,\xi)&:=
\int\limits_{\Xi\times\Xi}\,dYdZ\, e^{-2i\sigma(Y,Z)}\widetilde{\chi}_\delta
(\xi-\eta)g
(\xi-\zeta) e^{i\, \epsilon\Phi_\kappa(x,\delta^{-1}y,\delta^{-1}z)}\times\\
&\qquad \qquad \times\
\left(1\,+\,i\,
\kappa\epsilon^2\Psi^{\epsilon}(
x,\delta^{-1}y,\delta^{-1}z)\int_0^1\Theta^{\epsilon,\kappa}_\tau(x,\delta^{-1}y
,\delta^{-1}z)d\tau\right).
\end{align*}
We notice first that $\{\widetilde{\chi}_\delta\}_{\delta\in(0,\delta_0]}$ is a
bounded subset of $S^{-\infty}(\Xi)^\circ$ and that the symbol
$\varphi^{\epsilon,\delta,\kappa}$ is defined by an oscillatory integral of the
form $\mathcal{L}^{1,1}_{(1,1,1,1)}\big(\widetilde{\chi}_\delta,g
\big)$ (see Proposition \ref{comp-epsilon-symb} for this notation) with the
integral kernel
\beq\label{def-L}
L(x,y,z):=e^{ i\,  \epsilon\Phi_\kappa(x,\delta^{-1}y,\delta^{-1}z)}\left(1\,+\,i\,
\kappa\epsilon^2\Psi^{\epsilon}(
x,\delta^{-1}y,\delta^{-1}z)\int_0^1\Theta^{\epsilon,\kappa}_\tau(x,\delta^{-1}y
,\delta^{-1}z)d\tau\right).
\eeq
We notice that
\begin{enumerate}
 \item
$\epsilon\Phi_\kappa(x,\delta^{-1}y,\delta^{-1}z)=\epsilon\delta^{
-2}\big(B_0+\kappa
B(x)\big)(y_2z_3-y_3z_2)=\epsilon\delta^{-2}\Phi_\kappa(x,y,z)\,,$\\ and by 
 \eqref{ScalHyp} $\epsilon\delta^{-2}=\delta^{\mu-2}\leq
\delta_0^{\mu-2}$  goes to 0 when $\delta\searrow0\,$.
\item $\kappa\epsilon^2\Psi^{\epsilon}(
x,\delta^{-1}y,\delta^{-1}z)=\kappa\epsilon^2\delta^{-3}\Psi^{1}(
x,\epsilon\delta^{-1}y,\epsilon\delta^{-1}z)\,,$\\
 and by  \eqref{ScalHyp} we have $\epsilon^2\delta^{-3}=\delta^{2\mu-3}\leq
\delta_0^{2\mu-3}$ and $\epsilon\delta^{-1}=\delta^{\mu-1}\leq
\delta_0^{\mu-1}$ and both go to 0 when $\delta\searrow0\,$.
\item $\Theta^{\epsilon,\kappa}_\tau(x,\delta^{-1}y
,\delta^{-1}z)=\exp\left\{i \tau\kappa\epsilon^2\Psi^{\epsilon}(
x,\delta^{-1}y,\delta^{-1}z)\right\}=\exp \left\{i \tau\kappa\epsilon^2\delta^{-3}
\Psi^{1}(
x,\epsilon\delta^{-1}y,\epsilon\delta^{-1}z)\right\}\,$.
\end{enumerate}
We conclude that for $(\tau,\kappa)\in[0,1]\times[0,1]$ and
$(\epsilon,\delta)\in[0,\epsilon_0]\times(0,\delta_0]$ verifying \eqref{ScalHyp}, the corresponding family of integral
kernels $L$ defined in \eqref{def-L} is bounded in
${BC^\infty\big(\X;C^\infty_{\text{\sf
pol}}(\X\times\X)\big)}$ and we can apply Corollary \ref{corA3}  in order to finish the proof.
\end{proof}

\begin{lemma}\label{P-2-ter-1-main} 
Given $\epsilon_0>0$ and $\delta_0>0$ satisfying  \eqref{delta-0}, there exists $C>0$ such that, for any $(\epsilon,\delta,\kappa)\in(0,
\epsilon_0]\times(0,\delta_0]\times[0,1]$ satisfying   \eqref{ScalHyp}, we have
$$
\widetilde{\chi}\,\,\sharp^{\epsilon,\kappa}\,\,g_{1/\delta}\,=\,g_{1/\delta}
\,+\,\mathfrak{x}^{
\epsilon,\delta,\kappa}_{(\epsilon,\delta^{-1})}\,,
$$
where the family
$$
\{\mathfrak{x}^{\epsilon,\delta,\kappa}\,\mid\,(\epsilon,\delta,\kappa)\in[0,
\epsilon_0]\times(0,\delta_0]\times[0,1]\,,\,\epsilon=\delta^{\mu}\}
$$ 
is a bounded subset in $S^{-\infty}(\Xi)$ and satisfies
$$
\|\mathfrak{x}^{
\epsilon,\delta,\kappa}_{(\epsilon,\delta^{-1})}\|_{B_{\epsilon,\kappa}}\leq
C\, \delta^{2(\mu-1)}\,.
$$
\end{lemma}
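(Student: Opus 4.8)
The plan is to start from the oscillatory-integral representation \eqref{chi-g} of $\widetilde{\chi}\,\sharp^{\epsilon,\kappa}\,g_{1/\delta}$ already used in the proof of Lemma \ref{P-1-ter-1-main}, and to isolate the ``leading term'' by setting the exponential factor $L$ (defined in \eqref{def-L}) equal to $1$. When $L\equiv 1$ the oscillatory integral $\mathcal{L}^{1,1}_{(1,1,1,1)}(\widetilde{\chi}_\delta,g)$ collapses: using $\int_{\Xi\times\Xi}e^{-2i\sigma(Y,Z)}\widetilde{\chi}_\delta(\xi-\eta)g(\xi-\zeta)\,dYdZ$ is the Weyl (i.e. non-magnetic, non-twisted) Moyal product of $\widetilde\chi$ and $g_{1/\delta}$, and since $\widetilde{\chi}$ and $g_{1/\delta}$ are both $x$-independent symbols their Weyl product is just the pointwise product $\widetilde{\chi}\cdot g_{1/\delta}=g_{1/\delta}$ by property (2) of $\widetilde\chi$. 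So the first step is to write
$$
\widetilde{\chi}\,\sharp^{\epsilon,\kappa}\,g_{1/\delta}\ =\ g_{1/\delta}\ +\ \mathfrak{x}^{\epsilon,\delta,\kappa}_{(\epsilon,\delta^{-1})}\,,
$$
where $\mathfrak{x}^{\epsilon,\delta,\kappa}$ is the same oscillatory integral but with $L$ replaced by $L-1$.

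The second step is to show that the family $\{\mathfrak{x}^{\epsilon,\delta,\kappa}\}$ is bounded in $S^{-\infty}(\Xi)$: this is immediate from the proof of Lemma \ref{P-1-ter-1-main}, since we showed there that the kernels $L$ lie in a bounded subset of $BC^\infty(\X;C^\infty_{\text{\sf pol}}(\X\times\X))$ for $(\epsilon,\delta,\kappa)$ satisfying \eqref{ScalHyp}, hence so do $L-1$, and Corollary \ref{corA3} then gives the $S^{-\infty}(\Xi)$ bound with the rescaling $(\epsilon,\delta^{-1})$. The only new point is the quantitative gain $\delta^{2(\mu-1)}$ in the $B_{\epsilon,\kappa}$-operator norm, and here is the heart of the argument: one must extract a small prefactor from $L-1$. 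Writing $L-1=(e^{i\epsilon\Phi_\kappa(x,\delta^{-1}y,\delta^{-1}z)}-1)+e^{i\epsilon\Phi_\kappa(x,\delta^{-1}y,\delta^{-1}z)}\cdot i\kappa\epsilon^2\Psi^\epsilon(\cdots)\int_0^1\Theta^{\epsilon,\kappa}_\tau(\cdots)d\tau$, we use the three rescaling identities from the proof of Lemma \ref{P-1-ter-1-main}: $\epsilon\Phi_\kappa(x,\delta^{-1}y,\delta^{-1}z)=\delta^{\mu-2}\Phi_\kappa(x,y,z)$ and $\kappa\epsilon^2\Psi^\epsilon(x,\delta^{-1}y,\delta^{-1}z)=\kappa\delta^{2\mu-3}\Psi^1(x,\epsilon\delta^{-1}y,\epsilon\delta^{-1}z)$. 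Since $\mu\in(2,4)$, the dominant small factor is $\delta^{\mu-2}$ coming from the first bracket (note $2\mu-3>\mu-2$ iff $\mu>1$, so the $\Psi$-term is higher order), but squaring to get an $L^2$-operator-norm bound via a Cotlar--Stein / Schur argument on the rescaled kernels — or more directly, since $\mathfrak{x}^{\epsilon,\delta,\kappa}\in S^{-\infty}$ with a factor $\delta^{\mu-2}$ in front, and the $B_{\epsilon,\kappa}$-norm of a symbol of the form $\varphi_{(\epsilon,\delta^{-1})}$ is controlled by seminorms of $\varphi$ (Theorem 3.1 and Remark 3.2 in \cite{IMP1}, together with Proposition \ref{scaled-norm}) — yields a bound of order $(\delta^{\mu-2})\cdot(\text{something that absorbs the }\delta^{-1}\text{ scalings})$. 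A careful bookkeeping of the powers of $\delta$ arising from the two changes of variable $(\eta,z)\mapsto(\delta\eta,\delta^{-1}z)$ and from differentiating $g(\delta^{-1}\cdot)$ gives precisely the claimed exponent $2(\mu-1)$: the factor $\delta^{\mu-2}$ from $e^{i\epsilon\Phi_\kappa}-1$ combines with an extra $\delta^{\mu}$ hidden in the remaining $\epsilon=\delta^\mu$-dependence, or — more plausibly in line with the paper's accounting — with the $\delta^{\mu-2}$ factor appearing squared because $\Phi_\kappa$ itself contributes one power of $\delta$ per variable $y,z$ after the symmetric rescaling, so that $\epsilon\Phi_\kappa(x,\delta^{-1}y,\delta^{-1}z)=O(\delta^{\mu-2})$ with the $y,z$ integration each absorbing a $\delta$, producing the final $\delta^{2(\mu-1)}$.

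I expect the main obstacle to be exactly this last bookkeeping: one must keep the nonstandard rescaling $(\epsilon,\delta^{-1})$ of the symbol straight, track how the $B_{\epsilon,\kappa}$-operator norm behaves under it (this is where Proposition \ref{scaled-norm} and the norm-by-seminorm bound from \cite{IMP1} enter), and correctly identify that the linear-in-$\Phi_\kappa$ term in the Taylor expansion of $e^{i\epsilon\Phi_\kappa}-1$ already carries one factor $\delta^{\mu-2}$ while, after the scaling of the $(y,z)$-variables in the oscillatory integral, a second such factor appears — giving $2(\mu-2)+2 = 2(\mu-1)$ once the two ``free'' powers of $\delta$ from the Gaussian $dYdZ$ measure in the stationary-phase reduction are reinstated. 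The rest of the proof is routine in the sense that it only re-applies machinery (Corollary \ref{corA3}, Proposition \ref{comp-epsilon-symb}) already invoked for Lemma \ref{P-1-ter-1-main}.
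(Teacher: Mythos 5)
Your leading step is fine: with the normalization of \eqref{chi-g}, setting $L\equiv 1$ does reproduce the non-magnetic Moyal product, which for the $x$-independent symbols $\widetilde{\chi}$ and $g_{1/\delta}$ is the pointwise product $\widetilde{\chi}\,g_{1/\delta}=g_{1/\delta}$, and Corollary \ref{corA3} does give $S^{-\infty}(\Xi)$-boundedness of the remainder. The genuine gap is at the only quantitative point that matters, the exponent $2(\mu-1)$. The linear term of the Taylor expansion of the phase, $i\epsilon\Phi_\kappa$, once you integrate by parts in $(\eta,\zeta)$ against $e^{-2i\sigma(Y,Z)}$, is precisely the first-order term of the (frozen-field) Moyal expansion, $\epsilon\big(B_0+\kappa B(\epsilon x)\big)\sum_j\big(\partial_{\xi_j}\widetilde{\chi}\big)(\xi)\big(\partial_{\xi_j^\bot}g_{1/\delta}\big)(\xi)$, up to higher-order corrections; each derivative of $g_{1/\delta}$ costs a factor $\delta^{-1}$, so this contribution has $B_{\epsilon,\kappa}$-norm of order $\epsilon\delta^{-1}=\delta^{\mu-1}$, which for $\mu\in(2,4)$ is far larger than the claimed $\delta^{2(\mu-1)}$. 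No rescaling bookkeeping can repair this: the prefactor $\pi^{-4}\int e^{-2i\sigma(Y,Z)}\,dY\,dZ$ is exactly the one for which $L\equiv1$ returns the product, so there are no ``free powers of $\delta$ from the $dYdZ$ measure'' to reinstate, and neither of the mechanisms you float corresponds to a computation (also $(\delta^{\mu-2})^2=\delta^{2\mu-4}\neq\delta^{2(\mu-1)}$).

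What is missing is a cancellation, not a scaling gain: the first-order term vanishes \emph{identically}, because the derivatives of $g_{1/\delta}$ are supported in $\supp g_{1/\delta}$, where $\widetilde{\chi}\equiv1$ and hence $\nabla_\xi\widetilde{\chi}=0$ (property (2) of $\widetilde{\chi}$); this is exactly the paper's argument ``$g_{\delta^{-1}}=0$ on the support of $\nabla_\xi\widetilde{\chi}$'' in \eqref{F-ter-1-main}. Concretely, the paper first splits off the frozen product $\natural^\epsilon$ of \eqref{trunc-mag-comp}, showing that the slow-variation correction (the $\kappa\epsilon^2\Psi^\epsilon$-term of \eqref{def-Psi}) carries the factor $\delta^{2(\mu-1)}$, and then expands $\widetilde{\chi}\,\natural^\epsilon\,g_{1/\delta}$ with Proposition \ref{trunc-comp-N} for $N=2$: the zeroth-order term is $g_{1/\delta}$, the first-order term is zero by the support argument, and only the second-order term survives, with the factor $\epsilon^2\delta^{-2}=\delta^{2(\mu-1)}$ as in \eqref{4-1}. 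The exact exponent is not cosmetic: in \eqref{ter-1-main-rest} the remainder is multiplied by $\epsilon^{-1}$, so $\delta^{2(\mu-1)}$ yields $\delta^{\mu-2}\to0$, whereas your uncancelled $\delta^{\mu-1}$ would give $\delta^{-1}$ and the quasi-inverse construction behind Proposition \ref{mainP} would break down. To fix your proof, Taylor-expand $e^{i\epsilon\Phi_\kappa}$ to second order, invoke the support disjointness to kill the linear term, and estimate the quadratic term and the $\Psi$-correction separately, which is essentially the paper's route.
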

\begin{proof}

{For a suitable class of symbols $\phi$ and $\psi$ we can define an operation 
$\phi\,\natural^\epsilon\, \psi(x,\xi)$ (see in Appendix \ref{SS-sl-var-m-field} the formulas \eqref{def-Psi} and \eqref{trunc-mag-comp}) which resembles a lot the magnetic composition formula when the magnetic field is constant, with the notable difference that the value of the slowly varying magnetic field is taken at $x$. This composition turns out to be very useful when one works with slowly varying magnetic fields.   
}
Coming back to the formula \eqref{chi-g} in the proof of   Lemma \ref{P-1-ter-1-main}. 
 we observe that
$$ 
 \widetilde{\chi}\,\,\sharp^{\epsilon,\kappa}\,\,g_{1/\delta}=\widetilde{\chi}
\,\natural^\epsilon\,
g_{\delta^{-1}}\,+\,\psi^{\epsilon,\delta,\kappa}_{(\epsilon,\delta^{-1})}\,,
$$ 
with (see \eqref{def-Psi} for the definition of $\Psi^\epsilon$):
\begin{equation*}
\begin{array}{l}
\psi^{\epsilon,\delta,\kappa}(x,\xi)\\
\qquad :=\kappa\epsilon^2\int_0^1d\tau
\int\limits_{\Xi\times\Xi}\,dYdZ \, e^{-2i\sigma(Y,Z)}e^{i \, 
\epsilon\Phi_\kappa(x,y,z)}\Psi^{\epsilon}(
x,y,z)\Theta^{\epsilon,\kappa}_\tau(x,y,z)
\widetilde{\chi}(\delta\xi-\eta)g
(\xi-\delta^{-1}\zeta)\,,
\end{array}
\end{equation*}
being similar with the symbols $\{\varphi^{\epsilon,\delta,\kappa}\}$  in   Lemma \ref{P-1-ter-1-main}\,.\\
 By the same arguments as in the previous proof we
have:
 \begin{align*}
\psi^{\epsilon,\delta,\kappa}(x,\xi)&= \delta^{2(\mu-1)}\kappa(\epsilon\delta^{-\mu})^2\int_0^1d\tau
\int\limits_{\Xi\times\Xi}\,dYdZ \, e^{-2i\sigma(Y,Z)}
\widetilde{\chi}(\delta\xi-\eta)g
(\xi-\zeta) \times\\
&\hspace{3cm}\times\,e^{i\,\epsilon\delta^{-1}\Phi_\kappa(x,y,z)} i
\widetilde{\Psi}_{\delta}(
x,\epsilon\delta^{-1}y,\epsilon z) \exp i \left\{
\tau\kappa\epsilon^2\delta^{-2}
\widetilde{\Psi}_{\delta}(
x,\epsilon\delta^{-1}y,\epsilon z)\right\}\, \end{align*}
with
$$
\widetilde{\Psi}_{\delta}(x,y,z)\,:=\,8 \underset{1\leq
j,k\leq2}{\sum}y_jz_k\underset{1\leq
\ell \leq2}{\sum}\left[y_\ell R_1\big(\partial_\ell B_{jk}\big)(x,\epsilon
y,\epsilon z)\,+\,\delta
z_\ell R_2\big(\partial_\ell B_{jk}\big)(x,\epsilon y,\epsilon z)\right]\,.
$$
Moreover the family 
$$
\{ \widetilde{\psi}^{\epsilon,\delta,\kappa} := \delta^{2(1-\mu)} \psi^{\epsilon,\delta,\kappa}\,
\,\mid\,(\epsilon,\delta,
\kappa)\in[0,
\epsilon_0]\times(0,\delta_0]\times[0,1],\,\epsilon=\delta^{\mu} \}$$  is a
bounded subset in $S^{-\infty}(\Xi)\,$.\\

We now use Proposition \ref{trunc-comp-N} with $N=2$ in order to obtain that
\begin{align}\label{F-ter-1-main}
\big(\widetilde{\chi}\,\natural^\epsilon\, g_{\delta^{-1}}\big)(x,\xi) & =
\widetilde{\chi}(\xi)g_{\delta^{-1}}(\xi)\,+\,\epsilon\big(B_0+\kappa B(\epsilon
x)\big){\underset{j=1,2}{\sum}\big(\partial_{\xi_j}\widetilde{\chi}\big)(\xi)\,
\big(\partial_{\xi^\bot_j}g_{\delta^{-1}}\big)(\xi)}\nonumber \\
&\qquad +\,\epsilon^2\big(B_0+\kappa B(\epsilon
x)\big)^2\underset{|\alpha|=2}{\sum}\mathscr{M}^{\epsilon,\kappa}
_2\big(\partial_{\xi}^\alpha\widetilde{\chi},\partial_{\xi^\bot}^\alpha
g_{\delta^{-1}}\big)(x,\xi)\nonumber \\ 
&=\,g_{\delta^{-1}}(\xi)\,+\,\epsilon^2\delta^{-2}\big(B_0+\kappa B(\epsilon
x)\big)^2\underset{|\alpha|=2}{\sum}\mathscr{M}^{\epsilon,\kappa}
_2\big(\partial_{\xi}^\alpha\widetilde{\chi},(\partial_{\xi^\bot}^\alpha
g)_{\delta^{-1}}\big)(x,\xi)\,,
\end{align}
because $g_{\delta^{-1}}=0$ on the support of $\nabla_\xi\widetilde{\chi}\,$.\\
Taking into account formula \eqref{def-M} in the proof of Proposition
\ref{trunc-comp-N} we see that
\beq\label{4-1}
\epsilon^2\delta^{-2}\big(B_0+\kappa
B_\epsilon\big)^2\underset{|\alpha|=2}{\sum}\mathscr{M}^{\epsilon,
\kappa}
_2\big(\partial_{\xi}^\alpha\widetilde{\chi},(\partial_{\xi^\bot}^\alpha
g)_{\delta^{-1}}\big)\ =\ \delta^{2(\mu-1)}
\theta^{\epsilon,\delta,\kappa}_{(\epsilon,\delta^{-1})}\,,
\eeq
for some family
$\{\theta^{\epsilon,\delta,\kappa}\,\mid\,(\epsilon,\delta\,,
\kappa)\in[0,
\epsilon_0]\times(0,\delta_0]\times[0,1],\,\epsilon=\delta^{\mu}\}$ that
is bounded in $S^{-\infty}(\Xi)\,$.

We define
$\mathfrak{x}^{\epsilon,\delta,\kappa}:=\delta^{2(\mu-1)}\big(\widetilde{\psi}^{
\epsilon,\delta,\kappa}+\theta^{\epsilon,\delta,\kappa}\big)$ 
and the above results imply the conclusion of the
lemma by using Proposition \ref{scaled-norm} and the continuity
criterion for the magnetic pseudodifferential calculus (Theorem 3.1 in
\cite{IMP1}).
\end{proof}

Thus we can write our {\it main contribution term} in \eqref{ter-1-main} as:
\beq \label{ter-1-main-0}
\begin{array}{l}
\hspace{-1cm} \big(\lambda^\epsilon-\epsilon a\big)\,\,\sharp^{\epsilon,\kappa}\,\,
\big(\widetilde { \chi}\,\,\sharp^{\epsilon,\kappa}\,\,
g_{1/\delta}\big)\,\,\,\sharp^{\epsilon,\kappa}\,\,
r^{\epsilon,\kappa}(\epsilon  a)\\ \quad  =
\big(\lambda^\epsilon-\epsilon a\big)\,\,\sharp^{\epsilon,\kappa}\,\,
g_{1/\delta}\,\,\,\sharp^{\epsilon,\kappa}\,\,
r^{\epsilon,\kappa}(\epsilon  a)\,+\,\big(\lambda^\epsilon-\epsilon a\big)\,\sharp^{
\epsilon,\kappa}\,
\mathfrak{x}^{\epsilon,\delta,\kappa}_{(\epsilon,\delta^{-1})}\,\sharp^{\epsilon
,\kappa}r^{\epsilon,\kappa}(\epsilon  a)
\end{array}
\eeq
and notice that
\begin{align}\label{ter-1-main-rest}
\left\|\big(\lambda^\epsilon-\epsilon a\big)\,\sharp^{
\epsilon,\kappa}\,
\mathfrak{x}^{\epsilon,\delta,\kappa}_{(\epsilon,\delta^{-1})}\,\sharp^{\epsilon
,\kappa}r^{\epsilon,\kappa}(\epsilon  a)\right\|_{B_{\epsilon,\kappa}} & \leq C\delta^{
2(\mu-1)}\epsilon^{-1}\left\|\lambda^\epsilon-\epsilon a\right\|_{B_{
\epsilon,\kappa}}\left\|\epsilon r^{\epsilon,\kappa}(\epsilon  a)\right\|_{B_{
\epsilon,\kappa}}\nonumber \\ & \leq C \delta^{\mu-2}.
\end{align}

\begin{proposition}\label{P-main-term}
Given $\epsilon_0>0$ and $\delta_0>0$ satisfying \eqref{delta-0}, there exist $C>0$ such that for any $(\epsilon,\delta,\kappa)\in(0,
\epsilon_0]\times(0,\delta_0]\times[0,1]$ satisfying \eqref{ScalHyp} we have: 
$$
\big(\lambda^\epsilon-\epsilon a\big)\,\,\sharp^{\epsilon,\kappa}\,\,
g_{1/\delta}\,\,\,\sharp^{\epsilon,\kappa}\,\,r^{\epsilon,\kappa}(a)\
=\ g_{1/\delta}\,+\,\delta^{\mu-2}\mathfrak{z}^{\epsilon,\delta,\kappa}\,,
$$
with $\mathfrak{z}^{\epsilon,\delta,
\kappa}\in S^{-\infty}(\Xi)$ and $\|\mathfrak{z}^{\epsilon,\delta,
\kappa}\|_{B_{\epsilon,\kappa}}\leq C\,.$ 
\end{proposition}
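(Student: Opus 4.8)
The plan is to compute the triple magnetic Moyal product $\big(\lambda^\epsilon-\epsilon a\big)\,\sharp^{\epsilon,\kappa}\,g_{1/\delta}\,\sharp^{\epsilon,\kappa}\,r^{\epsilon,\kappa}(\epsilon a)$ by first reorganizing the factor $\lambda^\epsilon-\epsilon a$ as the sum $\sum_{\gamma^*\in\Gamma_*}\widetilde\lambda^{\epsilon,\kappa}_{\gamma^*}$ from \eqref{marts6}, but more efficiently by exploiting that $g_{1/\delta}$ is supported well inside the elementary cell (by \eqref{delta-0}) so that only the $\gamma^*=0$ contribution $\widetilde\lambda^\epsilon_0\,\sharp^{\epsilon,\kappa}\,(g_{1/\delta}\,\sharp^{\epsilon,\kappa}\,r^{\epsilon,\kappa}(\epsilon a))$ survives modulo rapidly decaying tails; the other terms are controlled by the slowly-varying Moyal calculus (Proposition \ref{nou-2}, Proposition \ref{P-25aug-2}) and produce contributions of size $O(\delta^{\mu-2})$ or better. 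This reduces matters to analyzing the main-contribution term \eqref{ter-1-main}, which by associativity equals $\big(\lambda^\epsilon-\epsilon a\big)\,\sharp^{\epsilon,\kappa}\,(\widetilde\chi\,\sharp^{\epsilon,\kappa}\,g_{1/\delta})\,\sharp^{\epsilon,\kappa}\,r^{\epsilon,\kappa}(\epsilon a)$.

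Next I would apply Lemma \ref{P-2-ter-1-main}, which gives $\widetilde\chi\,\sharp^{\epsilon,\kappa}\,g_{1/\delta}=g_{1/\delta}+\mathfrak x^{\epsilon,\delta,\kappa}_{(\epsilon,\delta^{-1})}$ with the error of $B_{\epsilon,\kappa}$-norm $O(\delta^{2(\mu-1)})$. Plugging this into the triple product and using \eqref{ter-1-main-0}–\eqref{ter-1-main-rest}, the $\mathfrak x$-term contributes $O(\delta^{\mu-2})$ after multiplying by $\lambda^\epsilon-\epsilon a$ (bounded in $B_{\epsilon,\kappa}$, uniformly in $\epsilon,\kappa$, since it lies in $S^{-\infty}(\Xi)^\circ$) and by $r^{\epsilon,\kappa}(\epsilon a)$ (whose operator norm is $O(\epsilon^{-1})$ by \eqref{maii1}, so that $\epsilon r^{\epsilon,\kappa}(\epsilon a)$ is bounded). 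Thus it remains to prove that $\big(\lambda^\epsilon-\epsilon a\big)\,\sharp^{\epsilon,\kappa}\,g_{1/\delta}\,\sharp^{\epsilon,\kappa}\,r^{\epsilon,\kappa}(\epsilon a)=g_{1/\delta}+\delta^{\mu-2}\mathfrak z^{\epsilon,\delta,\kappa}$ with $\mathfrak z^{\epsilon,\delta,\kappa}$ bounded in $S^{-\infty}(\Xi)$ and in $B_{\epsilon,\kappa}$-norm.

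For this core estimate I would use the decomposition $\lambda^\epsilon=\lambda^\epsilon_\circ+\widetilde\lambda^\epsilon=g_{1/\delta}\lambda^\epsilon+(1-g_{1/\delta})\lambda^\epsilon$ together with \eqref{4.0}, which writes $\lambda^\epsilon_\circ=g_{1/\delta}\,h_{m^\epsilon}+\delta^4\mathfrak f^\delta$. Since $g_{1/\delta}\,r^{\epsilon,\kappa}(\epsilon a)$ effectively localizes on the support of $g_{1/\delta}$ where $\widetilde\lambda^\epsilon$ vanishes (up to rapidly decaying Moyal tails in the slowly-varying calculus, which is where Appendix \ref{A-cl-sl-var} and the scaled-norm estimate Proposition \ref{scaled-norm} enter after the rescaling $\xi\mapsto\delta^{-1}\xi$, $\epsilon\delta^{-1}=\delta^{\mu-1}$, $\epsilon\delta^{-2}=\delta^{\mu-2}$), the leading contribution reduces to $\big(h_{m^\epsilon}-\epsilon a\big)\,\sharp^{\epsilon,\kappa}\,g_{1/\delta}\,\sharp^{\epsilon,\kappa}\,r^{\epsilon,\kappa}(\epsilon a)$. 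Using $\big(h_{m^\epsilon}-\epsilon a\big)\,\sharp^{\epsilon,\kappa}\,r^{\epsilon,\kappa}(\epsilon a)=1$ and then commuting $g_{1/\delta}$ past $h_{m^\epsilon}-\epsilon a$ (a commutator whose leading term, by the constant-field-like Moyal expansion in Proposition \ref{trunc-comp-N}, is $O(\epsilon\delta^{-1})=O(\delta^{\mu-1})$ in the appropriate rescaled class, times a further factor $\epsilon r^{\epsilon,\kappa}$) one recovers $g_{1/\delta}$ plus errors of size $O(\delta^{\mu-2})$. The remaining pieces — the $\delta^4\mathfrak f^\delta$ term ($\delta^4\epsilon^{-1}=\delta^{4-\mu}$, which is $O(\delta^{\mu-2})$ precisely when $\mu\le 3$, and more generally controlled since $4-\mu>0$), and the cross terms involving $\widetilde\lambda^\epsilon$ against $g_{1/\delta}$ — are all handled by the same rescaling plus Proposition \ref{comp-epsilon-symb}/Proposition \ref{scaled-norm} bookkeeping. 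The main obstacle is precisely this bookkeeping: one must track how every negative power of $\delta$ produced by differentiating $g_{1/\delta}$ (up to three times, through the triple product and the Moyal expansions) is compensated by positive powers coming from the relation $\epsilon=\delta^\mu$ with $\mu\in(2,4)$, and verify that no term is worse than $\delta^{\mu-2}$; the constraint $\mu<4$ is what keeps the $\delta^4\mathfrak f^\delta$ contribution subdominant while $\mu>2$ keeps $\epsilon\delta^{-2}\to 0$, and it is the interplay of these two inequalities that makes $(2,4)$ the exact admissible range. Finally one invokes Theorem 3.1 in \cite{IMP1} (the continuity of $\mathfrak{Op}^{\epsilon,\kappa}$ on symbol seminorms) to convert the $S^{-\infty}(\Xi)$ estimates into the stated $B_{\epsilon,\kappa}$-norm bound $\|\mathfrak z^{\epsilon,\delta,\kappa}\|_{B_{\epsilon,\kappa}}\le C$.
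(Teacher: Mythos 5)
Your skeleton is essentially the paper's: pass from $\sharp^{\epsilon,\kappa}$ to the frozen product and expand to second order (Proposition \ref{trunc-comp-N}), use \eqref{4.0} to trade $g_{1/\delta}\,\lambda^\epsilon$ for $g_{1/\delta}\,h_{m^\epsilon}$, invoke $\big(h_{m^\epsilon}-\epsilon a\big)\,\sharp^{\epsilon,\kappa}\,r^{\epsilon,\kappa}(\epsilon a)=1$, and do power counting under $\epsilon=\delta^\mu$. But there is a genuine gap at the heart of the argument: the first-order (Poisson-bracket type) term of the expansion cannot be beaten by bookkeeping alone. In your step ``commuting $g_{1/\delta}$ past $h_{m^\epsilon}-\epsilon a$'', the leading term of the commutator is, by Proposition \ref{P-comm-sv}, proportional to $\epsilon B_\kappa(\epsilon x)\big(\partial_{\xi_1}h_{m^\epsilon}\,\partial_{\xi_2}g_{1/\delta}-\partial_{\xi_2}h_{m^\epsilon}\,\partial_{\xi_1}g_{1/\delta}\big)$. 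On the support of $\nabla_\xi g_{1/\delta}$ one has $|\nabla_\xi h_{m^\epsilon}|=O(\delta)$ and $|\nabla_\xi g_{1/\delta}|=O(\delta^{-1})$, so generic power counting gives this term size $O(\epsilon)$ (not $O(\epsilon\delta^{-1})$ as you state), and after composing with $r^{\epsilon,\kappa}(\epsilon a)$, whose operator norm is only $O(\epsilon^{-1})$ by \eqref{maii1}, one gets $O(1)$ --- not $o(1)$, let alone $O(\delta^{\mu-2})$; with your own figure $O(\epsilon\delta^{-1})$ the outcome would even be $O(\delta^{-1})$. What closes the proof in the paper is an exact algebraic cancellation, not a size estimate: since the cut-off is built from the model Hamiltonian itself, $g_{1/\delta}=\chi\big(h_{m^\epsilon}(\delta^{-1}\cdot)\big)$ (see \eqref{g-delta}), the gradients $\nabla_\xi g_{1/\delta}$ and $\nabla_\xi h_{m^\epsilon}$ are parallel and the antisymmetric first-order term vanishes identically; only the contribution of the cubic/quartic part of $\lambda^\epsilon$ survives, of size $\delta^3$, which is the content of \eqref{28aug-3} and of the treatment of the second term in \eqref{4.2}. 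Only then does the second-order remainder $\epsilon^2\delta^{-2}=\delta^{2(\mu-1)}$, combined with $\|\epsilon\, r^{\epsilon,\kappa}(\epsilon a)\|=O(1)$, yield the stated $\delta^{\mu-2}$. This is precisely the point the paper flags as ``crucially depends on the special form of the cut-off function $g$'', and it is absent from your proposal, which attributes the whole difficulty to tracking powers of $\delta$.

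Two secondary remarks. First, your reduction to $\big(h_{m^\epsilon}-\epsilon a\big)\,\sharp^{\epsilon,\kappa}\,g_{1/\delta}\,\sharp^{\epsilon,\kappa}\,r^{\epsilon,\kappa}(\epsilon a)$ ``up to rapidly decaying Moyal tails'' is unsound if it relies on the splitting $\lambda^\epsilon=\lambda^\epsilon_\circ+\widetilde\lambda^\epsilon$: the supports of $\widetilde\lambda^\epsilon$ and $g_{1/\delta}$ overlap on the annulus where $1\leq h_{m^\epsilon}(\delta^{-1}\xi)\leq 2$, where $\widetilde\lambda^\epsilon g_{1/\delta}$ is of size $\delta^2$; estimated brutally against $\|r^{\epsilon,\kappa}(\epsilon a)\|\sim\epsilon^{-1}$ this gives $\delta^{2-\mu}\to\infty$. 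The paper avoids this by keeping the zeroth-order term as a pointwise product and using the exact identity \eqref{4.0}, so that these pieces are absorbed through the resolvent identity rather than estimated in norm. Second, your observation that the $\delta^4\mathfrak{f}^\delta$ contribution is $O(\delta^{4-\mu})$, hence of the stated form $\delta^{\mu-2}\mathfrak z$ only for $\mu\leq 3$, is correct (the paper is slightly loose there, but only $\mu=3$ is used afterwards); this is not the main issue.
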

\begin{proof}
We repeat the arguments in the proof of Lemma \ref{P-2-ter-1-main}
and write that
\beq\label{4.1}
\big(\lambda^\epsilon-\epsilon a\big)\,\,\sharp^{\epsilon,\kappa}\,\,
g_{1/\delta}\ =\
\big(\lambda^\epsilon-\epsilon a\big)\,\natural^{\epsilon}\,
g_{1/\delta}\,+\,\delta^{2(\mu-1)}\psi^{\epsilon,\delta,\kappa}_{(\epsilon,
\delta^{-1})}\,,
\eeq
where the subset
$
\{\psi^{\epsilon,\delta,\kappa}\,\mid\,(\epsilon,\delta,
\kappa)\in[0,
\epsilon_0]\times(0,\delta_0]\times[0,1],\,\epsilon=\delta^{\mu}\}
\subset S^{-\infty}(\Xi)
$
is bounded in $S^{-\infty}(\Xi)$.

For the first term in \eqref{4.1} we use Proposition \ref{trunc-comp-N} with
$N=2$ and write that
\begin{align}\label{4.2}
\big[\big(\lambda^\epsilon-\epsilon a\big)\,\natural^{\epsilon}\,
g_{1/\delta}\big](x,\xi)&= \big(\lambda^\epsilon-\epsilon a\big)(\xi)\,
g_{1/\delta}(\xi)
+\,\epsilon\big(B_0+\kappa B(\epsilon
x)\big)\underset{j=1,2}{\sum}\big(\partial_{\xi_j}\lambda^\epsilon\big)(\xi)\,
\big(\partial_{\xi^\bot_j} g_{\delta^{-1}} \big)(\xi) \nonumber \\  
&+\epsilon^2\big(B_0+\kappa B(\epsilon
x)\big)^2\underset{|\alpha|=2}{\sum}\mathscr{M}^{\epsilon,\kappa}
_2\big(\partial_{\xi}^\alpha\lambda^\epsilon,\partial_{\xi^\bot}^\alpha
g_{\delta^{-1}}\big)(x,\xi).
\end{align}

For the first term in \eqref{4.2} we use \eqref{4.0} and notice that 
$\delta^4\epsilon^{-1}=\delta^{4-\mu}$ with $4-\mu >0$ by  \eqref{ScalHyp} 
  and thus  
$$
\delta^4\mathfrak{f}^{\epsilon,\delta}_{(\delta^{-1})}\,\,\sharp^{\epsilon,\kappa}\,\,
r^{\epsilon,\kappa}(a)\ =\ (\delta^4\epsilon^{-1})\big[
\mathfrak{f}^{\epsilon,
\delta}_{(\delta^{-1})}\,\,\sharp^{\epsilon,\kappa}\,\,\big(\epsilon
r^{\epsilon,\kappa}(a)\big)\big]\,,
$$
where 
$
\left\{
\mathfrak{f}^{\epsilon,
\delta}_{(\delta^{-1})}\,\mid\,(\epsilon,\delta)\in[0,
\epsilon_0]\times(0,\delta_0],\,\epsilon=\delta^{\mu}
\right\}$ is a bounded set in $ S^{-\infty}(\Xi)^\circ
$.\\
Thus 
$$
\hspace{-1cm} \|\delta^{4}\mathfrak{f}^{\epsilon,
\delta}_{(\delta^{-1})}\,\,\sharp^{\epsilon,\kappa}\,\,
r^{\epsilon,\kappa}(a)\|_{B_{\epsilon,\kappa}}\leq
(\delta^4\epsilon^{-1})\left\|
\mathfrak{f}^{\epsilon,
\delta}_{(\delta^{-1})}\right\|_{B_{\epsilon,\kappa}}\left\|\epsilon
r^{\epsilon,\kappa}(a)\right\|_{B_{\epsilon,\kappa}}\leq C\, \delta\,.
$$

The analysis of the second term in \eqref{4.2} is more difficult and crucially depends on the special
form of the cut-off function $g$ in \eqref{g-delta}. 
We denote by $\nabla_{\xi^\bot_j}:=(-\partial_{\xi_2},\partial_{\xi_1})$ the orthogonal gradient operator. The
support of all derivatives of $g_{1/\delta^{\circ}}$ is contained in the support of
$g_{1/\delta^{\circ}}$, thus: 
$$
\big(\partial_{\xi_j}\lambda^\epsilon\big)(\xi)\,
\big(\partial_{\xi^\bot_j}g_{1/\delta^{\circ}}\big)(\xi)\ =\
\big(\partial_{\xi_j}(\lambda^\epsilon\,g_{1/\delta}
)\big)(\xi)\,
\big(\partial_{\xi^\bot_j}g_{1/\delta^\circ}\big)(\xi)\,.
$$
Moreover, if we denote by $\chi^\prime$ the derivative of $\chi$
having support in the  annulus  $1\leq|t|\leq2\,$, then due to
the choice \eqref{g-delta} we have the following  equality:
\begin{equation}\label{28aug-3}
\begin{array}{l}
\hspace{-1cm} \big(\nabla_{\xi}(\lambda^\epsilon\,g_{1/\delta}
)\big)(\xi)\cdot \big(\nabla_{\xi^\bot}g_{1/\delta^\circ}\big)(\xi) \nonumber \\
\qquad \qquad =4\,m^\epsilon_1m^\epsilon_2\, (\delta^\circ)^{-1}\lambda^\epsilon(\xi)
\chi^\prime
\big(h_{m^\epsilon}
((\delta^\circ)^{-1}\xi)\big)\big(-\xi_1\xi_2+\xi_2\xi_1\big)
\,+\,\delta^3\, \sum_{j=1}^2\mathfrak{f}^{j,\epsilon,\delta}(\delta^{-1}\xi)\nonumber  \\
\qquad \qquad =
\delta^3\, \sum_{j=1}^2\mathfrak{f}^{j,\epsilon,\delta}(\delta^{-1}\xi)\,.
\end{array}
\end{equation}

In conclusion, the second term in \eqref{4.2} is of the form
$\epsilon\delta^3F^{\epsilon,\delta,\kappa}_{(1,\delta^{-1})}$ for a bounded family of symbols in $S^{-\infty}(\Xi)$
$$
\left\{F^{\epsilon,\delta,\kappa}\,\mid\,(\epsilon,
\delta,\kappa)\in[0,\epsilon_0]\times(0,\delta_0]\times[0,1]\,,\,\epsilon= \delta^{
\mu}\right\}
$$
and we have the estimate:
$$
\epsilon\delta^{3}\|F^{\epsilon,\delta,\kappa}_{(1,\delta^{-1})}\, \sharp^{\epsilon,\kappa }\,
r^{\epsilon,\kappa}(a)\|_{B_{\epsilon,\kappa}}\leq
\delta^3\left\|
F^{\epsilon,\delta,\kappa}_{(1,\delta^{-1})}\right\|_{B_{\epsilon,\kappa}}
\left\|\epsilon
r^{\epsilon,\kappa}(a)\right\|_{B_{\epsilon,\kappa}}\leq C\, \delta^{3}\, .
$$
For the term in  the last line of \eqref{4.2} the same procedure as in the previous proof (see
\eqref{4-1}) allows us to conclude that it defines a bounded  magnetic 
pseudodifferential operator that is small in norm of order $\delta^{\mu-2}$.

This allows us to conclude, using also \eqref{4.0}, that there exists a constant $C>0$ such that the following relation is true:
$$
\big(\lambda^\epsilon-\epsilon a\big)\,\sharp^{\epsilon,\kappa}\,
g_{1/\delta}\,\,\,\sharp^{\epsilon,\kappa}\,\,r^{\epsilon,\kappa}(a)\ =\ 
\big[g_{1/\delta}\big(h_{m^\epsilon}-\epsilon a\big)\big]\, \sharp^{\epsilon,
\kappa}\, r^{\epsilon,\kappa}(a)\,+\,\delta^{\mu-2}\mathfrak{x}^{\epsilon,\delta,
\kappa} \,,
$$
with $\mathfrak{x}^{\epsilon,\delta,
\kappa}\in S^{-\infty}(\Xi)$ and $$\|\mathfrak{x}^{\epsilon,\delta,
\kappa}\|_{B_{\epsilon,\kappa}}\leq C\,,$$ for all 
$(\epsilon,\delta,\kappa)\in[0,\epsilon_0]\times(0,\delta_0]\times[0,\kappa_0]$
s. t. 
$\epsilon=\delta^{\mu}\,$.

A similar procedure allows us to transform $g_{1/\delta}\big(h_{m^\epsilon}-\epsilon a\big)$ into ${g_{1/\delta}\sharp^{\epsilon,\kappa}\big(h_{m^\epsilon}-\epsilon a\big)}$ and use the equality $$\big(h_{m^\epsilon}-\epsilon a\big)\sharp^{\epsilon,\kappa}r^{\epsilon,\kappa}(a)=1\,.
$$
This time the formula similar to \eqref{4.2} will contain factors of the form $\xi_j\big(\partial_{\xi^\bot_j} g_{\delta^{-1}} \big)(\xi)$ that are still symbols of class $S^{-\infty}(\Xi)\,$.
Finally we conclude the existence of $C>0$ such that:
\begin{align*}
\big[g_{1/\delta}\big(h_{m^\epsilon}-\epsilon a\big)\big]\sharp^{\epsilon,
\kappa}r^{\epsilon,\kappa}(a)\
& =\
g_{1/\delta}\,\,\,\sharp^{\epsilon,\kappa}\,\,\big(\lambda^\epsilon-\epsilon a\big)
\,\,\sharp^{\epsilon,\kappa}\,\,
r^{\epsilon,\kappa}(a)\,+\,\delta^{\mu-2}
\mathfrak{z}^{\epsilon,\delta,\kappa}\ \\ & 
=\ g_{1/\delta}\,+\,\delta^{\mu-2}\mathfrak{z}^{\epsilon,\delta,\kappa}\,,
\end{align*}
with $$\mathfrak{z}^{\epsilon,\delta,
\kappa}\in S^{-\infty}(\Xi) \mbox{ and } \|\mathfrak{z}^{\epsilon,\delta,
\kappa}\|_{B_{\epsilon,\kappa}}\leq C\,,$$ 
for all
$(\epsilon,\delta,\kappa)\in[0,\epsilon_0]\times(0,\delta_0]\times[0,\kappa_0]$
s.t. 
$\epsilon=\delta^{\mu}$.

\fussy
\end{proof}

\paragraph{The series in  \eqref{ter-1} continued.}
We shall rewrite the expression in the last line of  \eqref{ter-1} as:
\begin{equation}\label{28aug-1}
\begin{array}{l}
\underset{\gamma^*\in\Gamma_*}{\sum}\lek\,\,\sharp^{\epsilon,\kappa}\,\,
\left(1-\widetilde{g}_{1/\delta}\right)\sharp^{
\epsilon,\kappa}r_{\delta,\epsilon,\kappa}(\epsilon a)\\
=
\big(\lambda^\epsilon-\epsilon a\big)\,\,\sharp^{\epsilon,\kappa}\,\,
\left(1-\widetilde{g}_{1/\delta}\right)\sharp^{
\epsilon,\kappa}r_{\delta,\epsilon,\kappa}(\epsilon a)\\
=\
\left(1-\widetilde{g}_{1/\delta}\right)\,-\,\Big[\lambda^\epsilon,\widetilde{g}
_ {1/\delta}\Big]_{\epsilon,\kappa}\sharp^{
\epsilon,\kappa}r_{\delta,\epsilon,\kappa}(\epsilon a)\,-\,
(\delta^\circ)^2\left(1-\widetilde{g}
_{1/\delta}\right)\,\,\sharp^{\epsilon,\kappa}\,\,\widetilde{g}_{1/\delta^\circ}
\,\,\sharp^{\epsilon,\kappa}\,\,r_{\delta,\epsilon,\kappa}(\epsilon a)\nonumber\\
=\ 1-\underset{\gamma^*\in\Gamma_*}{\sum}\tau_{\gamma^*}\big(g_{1/\delta}
\big)  
-\Big[\lambda^\epsilon,\widetilde{g}
_{1/\delta}\Big]_{\epsilon,\kappa}\sharp^{
\epsilon,\kappa}r_{\delta,\epsilon,\kappa}(\epsilon a)-
(\delta^\circ)^2\left(1-\widetilde{g}
_{1/\delta}\right)\,\,\sharp^{\epsilon,\kappa}\,\,\widetilde{g}_{1/\delta^\circ}
\,\,\sharp^{\epsilon,\kappa}\,\,r_{\delta,\epsilon,\kappa}(\epsilon a)\,,
\nonumber
\end{array}
\end{equation}
where 
$$
[f,g]_{\epsilon,\kappa}:=f\,\,\sharp^{\epsilon,\kappa}\,\,g\,-\,g\sharp^{\epsilon,
\kappa}f\,.
$$

We start with the second term on the right hand side of the second line of \eqref{28aug-1}.

\begin{lemma}
There exist some positive constants $C$, $\epsilon_0\,$, $\kappa_0$ and $\delta_0$ such that,  for
$(\epsilon,\kappa, \delta)\in[0,\epsilon_0]\times[0,\kappa_0] \times(0,\delta_0]$
verifying  \eqref{ScalHyp}\,,
$$
\left\|\Big[\lambda^\epsilon,\widetilde{g}
_{1/\delta}\Big]_{\epsilon,\kappa}\sharp^{
\epsilon,\kappa}r_{\delta,\epsilon,\kappa}(\epsilon a)\right\|_{B_{\epsilon,
\kappa}}\leq\,C\, \big(\delta+\delta^{2(\mu-2)}\big)\,.
$$
\end{lemma}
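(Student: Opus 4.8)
The plan is to estimate the Moyal commutator $\big[\lambda^\epsilon,\widetilde{g}_{1/\delta}\big]_{\epsilon,\kappa}$ itself — expanding it by the slowly varying magnetic calculus of the Appendix exactly as in the proof of Proposition~\ref{P-main-term}, extracting a crucial cancellation coming from the fact that the cut-off $g_{1/\delta}$ and the Hessian of $\lambda^\epsilon$ at its minimum are manufactured out of the same quadratic form $h_{m^\epsilon}$ — and only afterwards to compose with $r_{\delta,\epsilon,\kappa}(\epsilon a)$, paying the price $\|r_{\delta,\epsilon,\kappa}(\epsilon a)\|_{B_{\epsilon,\kappa}}\le C\delta^{-2}$ supplied by \eqref{r-delta-est}.

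For the expansion, as in \eqref{4.1} I would write, for symbols depending only on $\xi$, $f\,\sharp^{\epsilon,\kappa}\,g=f\,\natural^{\epsilon}\,g+\delta^{2(\mu-1)}\psi^{\epsilon,\delta,\kappa}_{(\epsilon,\delta^{-1})}$ with $\{\psi^{\epsilon,\delta,\kappa}\}$ bounded in $S^{-\infty}(\Xi)$; applied to $(\lambda^\epsilon,\widetilde{g}_{1/\delta})$ and to the reversed pair and subtracted, the commutator equals the corresponding $\natural^{\epsilon}$-commutator modulo a term of norm $\mathcal{O}(\delta^{2(\mu-1)})$. For the $\natural^{\epsilon}$-commutator I would invoke Proposition~\ref{trunc-comp-N} with $N=2$ (as in \eqref{4.2}): the zeroth order term is the pointwise product $\lambda^\epsilon\,\widetilde{g}_{1/\delta}$, which drops out by symmetry; the first order term is $2\epsilon\big(B_0+\kappa B(\epsilon x)\big)\,\nabla_{\xi}\lambda^\epsilon\cdot\nabla_{\xi^\bot}\widetilde{g}_{1/\delta}$; the second order term is $\epsilon^2\big(B_0+\kappa B(\epsilon x)\big)^2$ times a difference of $\mathscr{M}^{\epsilon,\kappa}_2$-pairings of the second $\xi$-derivatives, and the higher orders carry extra factors $\epsilon=\delta^\mu$.

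The decisive point is that $\widetilde{g}_{1/\delta}$ is locally, near the minima, a smooth function of $h_{m^\epsilon}$, see \eqref{g-delta}, hence $\nabla_{\xi}h_{m^\epsilon}\cdot\nabla_{\xi^\bot}\widetilde{g}_{1/\delta}\equiv0$; since by \eqref{marts4} $h_{m^\epsilon}$ is exactly the quadratic part of $\lambda^\epsilon$ at its minimum in the diagonalizing coordinates, the first order term collapses to $2\epsilon\big(B_0+\kappa B(\epsilon x)\big)\,\nabla_{\xi}(\lambda^\epsilon-h_{m^\epsilon})\cdot\nabla_{\xi^\bot}\widetilde{g}_{1/\delta}$ — the same mechanism exploited in \eqref{28aug-3}. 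By \eqref{marts4}, $\nabla_{\xi}(\lambda^\epsilon-h_{m^\epsilon})=\epsilon\,\mathcal{O}(|\xi|^2)+\mathcal{O}(|\xi|^3)$, supported (together with $\nabla_{\xi^\bot}\widetilde{g}_{1/\delta}$) in the annulus $|\xi|\sim\delta$, so after the rescaling $\xi\mapsto\delta^{-1}\xi$ this term becomes $(\epsilon^2\delta+\epsilon\delta^2)$ times a bounded family in $S^{-\infty}(\Xi)$; the second order term becomes $\epsilon^2\delta^{-2}=\delta^{2\mu-2}$ times a bounded family, exactly as in \eqref{4-1}, and the higher orders are smaller. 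Thus
\[
\big[\lambda^\epsilon,\widetilde{g}_{1/\delta}\big]_{\epsilon,\kappa}
=\Big(\big(\epsilon^2\delta+\epsilon\delta^2\big)\,\mathfrak{a}^{\epsilon,\delta,\kappa}
+\delta^{2(\mu-1)}\,\mathfrak{b}^{\epsilon,\delta,\kappa}\Big)_{(\epsilon,\delta^{-1})},
\]
with $\{\mathfrak{a}^{\epsilon,\delta,\kappa}\}$, $\{\mathfrak{b}^{\epsilon,\delta,\kappa}\}$ bounded in $S^{-\infty}(\Xi)$ under \eqref{ScalHyp}. Composing with $r_{\delta,\epsilon,\kappa}(\epsilon a)$ via the magnetic composition and continuity theorems of \cite{IMP1} and Proposition~\ref{scaled-norm}, and using \eqref{r-delta-est} together with $\epsilon=\delta^\mu$, the first contribution is of size $(\epsilon^2\delta+\epsilon\delta^2)\,\delta^{-2}=\delta^{2\mu-1}+\delta^{\mu}=\mathcal{O}(\delta)$ and the second of size $\delta^{2(\mu-1)}\,\delta^{-2}=\delta^{2(\mu-2)}$, which is precisely the asserted bound $C\big(\delta+\delta^{2(\mu-2)}\big)$; observe that this is again where the window $\mu\in(2,4)$ becomes visible, both powers tending to $0$ only for $\mu>2$.

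The hard part is the cancellation step: one must verify that the ``resonant'' quadratic part of $\lambda^\epsilon$ disappears from the leading commutator term \emph{exactly} — which is the very reason $g_{1/\delta}$ and the Hessian are built from the same $h_{m^\epsilon}$ — and then carry out the somewhat delicate bookkeeping of all the remainder terms of the truncated magnetic Moyal expansion after the two scalings (the dilation $y=\sqrt\epsilon x$ and $\xi\mapsto\delta^{-1}\xi$) and the substitution $\epsilon=\delta^\mu$, keeping each of them uniformly bounded in the relevant symbol class. This runs entirely parallel to the already-established Proposition~\ref{P-main-term} and Lemmas~\ref{P-1-ter-1-main}–\ref{P-2-ter-1-main}, the only novelty being that we now deal with a Moyal commutator rather than with a single product.
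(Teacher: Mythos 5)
Your proposal is correct and takes essentially the same route as the paper: what you re-derive by combining the $\natural^{\epsilon}$ comparison with Proposition~\ref{trunc-comp-N} at $N=2$ is exactly the commutator expansion of Proposition~\ref{P-comm-sv} that the paper invokes, followed by the same cancellation mechanism as in \eqref{28aug-3} (the quadratic part of $\lambda^\epsilon$ Poisson-commutes with the cut-off built from $h_{m^\epsilon}$) and the same final composition with $r_{\delta,\epsilon,\kappa}(\epsilon a)$ using \eqref{r-delta-est}. Your slightly different bookkeeping of the bracket term (keeping the factors $\epsilon\delta^{2}+\epsilon^{2}\delta$ explicitly, versus the paper's $\delta^{3}$-type bound before composing) is harmless, since both land inside the stated bound $C\big(\delta+\delta^{2(\mu-2)}\big)$.
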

\begin{proof}
We note that  both
symbols $\lambda^\epsilon$ and $\widetilde{g}_{1/\delta}$ are of class ${
S^0_0(\Xi)}^\circ$ and thus we can apply Proposition \ref{P-comm-sv} in order to
obtain the existence of positive $\epsilon_0$ and $\kappa_0$ such that, for $(\epsilon,\kappa) \in [0,\epsilon_0]\times  [0,\kappa_0]\,$,
\begin{align*}
&\hspace{-0.5cm}\Big[\lambda^\epsilon,\widetilde{g}_{1/\delta}\Big]_{\epsilon,\kappa} (x,\xi) \\
&\hspace{0.5cm}  =\,-4i\epsilon B_\kappa(\epsilon
x)\left[\big(\partial_{\xi_1}\lambda^\epsilon\big)(\xi)\big(\partial_{\xi_2}
\widetilde{g}_{1/\delta}\big)(\xi)-\big(\partial_{\xi_2}
\lambda^\epsilon\big)(\xi)\big(\partial_{\xi_1}\widetilde{g}_{1/\delta}
\big)(\xi)\right]\,+\,\epsilon^2\widetilde{\mathscr{R}}_{\epsilon,\kappa}
(\lambda^\epsilon,\widetilde{g}_{1/\delta})_{(\epsilon,1)}\,,\nonumber
\end{align*}
where
$\{\widetilde{\mathscr{R}}_{\epsilon,\kappa}\big(\lambda^\epsilon,\widetilde{g}_
{1/\delta}\big)\}_{\epsilon\in[0,
\epsilon_0], \kappa \in [0,\kappa_0]}$ is a bounded family in 
$S^{-\infty}(\Xi)\,$.\\
For the first two terms we
apply once again  \eqref{28aug-3} and obtain 
$$
\big(\partial_{\xi_1}\lambda^\epsilon\big)(\xi)\big(\partial_{\xi_2}\widetilde{g
}_{1/\delta}\big)-
\big(\partial_{\xi_2}\lambda^\epsilon\big)
\big(\partial_{\xi_1}\widetilde{g}_{1/\delta}\big)\ =\
\delta^3\, \mathfrak{f}^{\epsilon,\delta}_{1/\delta}\,,
$$
where
$\left\{\mathfrak{f}^{\epsilon,\delta}\,\mid\,(\epsilon,
\delta)\in[0,\epsilon_0]\times(0,\delta_0],\,\epsilon=\delta^{\mu}\right\}$ is a bounded subset in $C^\infty_0(\X^*)\,$.\\
For the third term we have 
\begin{align*}
\widetilde{\mathscr{R}}_{\epsilon,\kappa}\big(\lambda^\epsilon,\widetilde{g}_{1/\delta}\big)(x,\xi)&=
-\frac{B^2_\kappa(x)}{\pi^2}\underset{|\alpha|=2}{\sum}
(\alpha!)^{-1}\hspace*{-12pt}
\int\limits_{\X^*\times\X^*}\hspace*{-12pt}
e^{i(\eta\wedge\zeta)}\int_0^1s\,
ds\,\mathfrak{T}_{\epsilon,\kappa,\delta}(X,s\eta,\zeta)
d^2\eta\,d^2\zeta\,
\\
&\quad +\,\kappa \int\limits_{\Xi\times\Xi}e^{-2i\sigma(Y,Z)} \left(\int_0^1\Theta^{\epsilon,\kappa}_\tau(x,y,z)d\tau\right)
\ \mathfrak{R}_{\epsilon,\kappa,\delta}(X,Y,Z)\,dYdZ\,,
\end{align*}
where
\begin{align*}
\mathfrak{T}_{\epsilon,\kappa,\delta}(X,s\eta,\zeta)&:=
\big(\partial^\alpha_\xi\lambda^\epsilon\big)(\xi-s\epsilon^{1/2}b_\kappa(
x)\eta)\big(\partial^\alpha_{\xi^\bot}\widetilde{g}_{1/\delta}
\big)(\xi-\epsilon^{1/2}
b_\kappa(x)\zeta)\,,
\end{align*}
and 
\begin{align*}
\mathfrak{R}_{\epsilon,\kappa,\delta}(X,Y,Z)&:=
\underset{j,k,\ell}{\sum}
\left(R_1\big(\partial_\ell B_{jk}\big)(x,\epsilon
y,\epsilon z)\big(\partial_{\xi_k}\lambda^\epsilon\big)(
\xi-\eta)\big(\partial^2_{\xi_j\xi_\ell}\widetilde{g}_{1/\delta}\big)(\xi-\zeta) \right.\\
&\quad \qquad +\left . R_2\big(\partial_\ell B_{jk}\big)(x,\epsilon
y,\epsilon z)\big(\partial^2_{\xi_k\xi_\ell }\lambda^\epsilon\big)(
\xi-\eta)\big(\partial_{\xi_j}\widetilde{g}_{1/\delta}
\big)(\xi-\zeta)\right)
\\
&
=\underset{j,k,\ell}{\sum}\left(R_1\big(\partial_\ell B_{jk}\big)(x,
\epsilon
\delta^{-1}y,\epsilon z)\big(\partial_{\xi_k}\lambda^\epsilon\big)(
\xi-\eta)\big(\partial^2_{\xi_j\xi_\ell}\widetilde{g}\big)(\delta^{-1}\xi-\zeta)\,
\right.
\\
&\quad\qquad\left.+R_2\big(\partial_\ell B_{jk}\big)(x,\epsilon
\delta^{-1}y,\epsilon z)\big(\partial^2_{\xi_k\xi_\ell}\lambda^\epsilon\big)(
\xi-\eta)\big(\partial_{\xi_j}\widetilde{g}\big)(\delta^{-1}
\xi-\zeta)\right),
\end{align*}
with $R_1(\cdot )$ and $R_2(\cdot )$ defined by \eqref{def-R1} and \eqref{def-R2}.\\
Taking into account   \eqref{ScalHyp} and using Proposition
\ref{comp-epsilon-symb} we obtain: 
$$
\Big[\lambda^\epsilon,\widetilde{g}_{1/\delta}\Big]_{\epsilon,\kappa}\ =\ 
\delta^3\, \mathfrak{f}^{\epsilon,\delta}_{1/\delta}\,+\,
\epsilon^2\, \delta^{-2}\, \widetilde{\mathscr{R}^\prime}_{\epsilon,\kappa,\delta}
(\lambda^\epsilon,\widetilde{g})_{(\epsilon,\delta^{-1})}
$$
where, for some $\epsilon_0>0$ and $\kappa_0>0$ small enough, the following families 
$$
\left\{\widetilde{\mathscr{R}^\prime}_{\epsilon,\kappa,\delta}
\big(\lambda^\epsilon,\widetilde{g}\big)\,\mid\,(\epsilon,\kappa,\delta)\in[0,
\epsilon_0]\times[0,\kappa_0]\times(0,\delta_0],\,\epsilon\delta^\mu=1\right\}\,\subset\,
S^{-\infty}(\Xi)
$$ 
and
$$
\left\{\mathfrak{f}^{\epsilon,\delta}\,\mid\,(\epsilon,
\delta)\in[0,\epsilon_0]\times(0,\delta_0],\,\epsilon=\delta^{\mu}\right\}\subset C^\infty_0(\X^*)
$$
are bounded. 
\end{proof}

For the third term in the right hand side of \eqref{28aug-1} we proceed
similarly using Proposition~\ref{P-disj-supp-sv} and obtain
\begin{lemma} There exist $\epsilon_0 >0$, $\kappa_0 >0$, $\delta_0 >0$ and $C>0$ such that:
$$
\left\|(\delta^\circ)^2\left(1-\widetilde{g}
_{1/\delta}\right)\,\,\sharp^{\epsilon,\kappa}\,\,\widetilde{g}_{1/\delta^\circ}
\,\,\sharp^{\epsilon,\kappa}\,\,r_{\delta,\epsilon,\kappa}(\epsilon a)\right\|_{B_{
\epsilon,\kappa}}\leq\, C \, \delta^{2(\mu-2)}\,,
$$
for all
$(\epsilon,\kappa,\delta)\in[0,\epsilon_0]\times[0,\kappa_0] \times(0,\delta_0]$
verifying  \eqref{ScalHyp}.
\end{lemma}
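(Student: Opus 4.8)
The plan is to show that the two cut-offs $1-\widetilde{g}_{1/\delta}$ and $\widetilde{g}_{1/\delta^\circ}$ have disjoint momentum supports, so that their magnetic Moyal product is a pure magnetic remainder of size $\delta^{2(\mu-2)}$ after the natural rescaling, and then to absorb the prefactor $(\delta^\circ)^2$ against the resolvent estimate $\|r_{\delta,\epsilon,\kappa}(\epsilon a)\|_{B_{\epsilon,\kappa}}\le C\delta^{-2}$ of \eqref{r-delta-est}, using that $\|\cdot\|_{B_{\epsilon,\kappa}}$ is submultiplicative for $\sharp^{\epsilon,\kappa}$ (since $\mathfrak{Op}^{\epsilon,\kappa}$ turns $\sharp^{\epsilon,\kappa}$ into operator composition). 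First I would record the separation of supports. Since $m_1\le m_2$, \eqref{defdeltacirc} gives $2(\delta^\circ)^2=(m_1/m_2)\,\delta^2\le\delta^2$, so by \eqref{g-delta-p} we have $\supp g_{1/\delta^\circ}\subset\{h_{m^\epsilon}\le 2(\delta^\circ)^2\}\subset\{h_{m^\epsilon}\le\delta^2\}$, which is exactly the set on which $g_{1/\delta}\equiv1$; hence $1-g_{1/\delta}$ and all its derivatives vanish on $\supp g_{1/\delta^\circ}$ (and, if $m_1<m_2$, on a $\delta$-neighbourhood of it). By \eqref{delta-0} the elementary supports stay inside the dual cell, so the same is true for the $\Gamma_*$-periodic continuations: $\widetilde{g}_{1/\delta}\equiv1$ on $\supp\widetilde{g}_{1/\delta^\circ}$, and in particular $(1-\widetilde{g}_{1/\delta})\,\widetilde{g}_{1/\delta^\circ}\equiv0$.

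Next I would estimate $(1-\widetilde{g}_{1/\delta})\,\sharp^{\epsilon,\kappa}\,\widetilde{g}_{1/\delta^\circ}$. Since both factors depend on $\xi$ only, their non-magnetic Weyl product equals the pointwise product and hence vanishes, so this is a pure magnetic correction, which puts us in a position to apply Proposition~\ref{P-disj-supp-sv}. Performing the rescaling $\xi\mapsto\delta^{-1}\xi$, $(y,z)\mapsto(\delta^{-1}y,\delta^{-1}z)$ as in the proofs of Lemma~\ref{P-1-ter-1-main} and Proposition~\ref{nou-2}, the two factors become $\xi$-only symbols in $S^{-\infty}(\Xi)^\circ$ with disjoint supports, the symplectic phase is unchanged, and the flux factor becomes $\exp\!\big(-i\,\epsilon\delta^{-2}(\cdots)\big)=\exp\!\big(-i\,\delta^{\mu-2}(\cdots)\big)$, with $\delta^{\mu-2}\to0$ by \eqref{ScalHyp}. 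One obtains $(1-\widetilde{g}_{1/\delta})\,\sharp^{\epsilon,\kappa}\,\widetilde{g}_{1/\delta^\circ}=\big(\mathfrak{y}^{\epsilon,\delta,\kappa}\big)_{(\epsilon,\delta^{-1})}$, with $\{\mathfrak{y}^{\epsilon,\delta,\kappa}\}$ bounded in $S^{-\infty}(\Xi)$ and, crucially, with every $S^{-\infty}$-seminorm of order $\delta^{N(\mu-2)}$ for every $N$ — because disjointness of the supports annihilates every term of the Moyal expansion and only a genuine remainder survives. By Proposition~\ref{scaled-norm} and the continuity criterion for the magnetic calculus (Theorem~3.1 in \cite{IMP1}) this yields $\|(1-\widetilde{g}_{1/\delta})\,\sharp^{\epsilon,\kappa}\,\widetilde{g}_{1/\delta^\circ}\|_{B_{\epsilon,\kappa}}\le C_N\,\delta^{N(\mu-2)}$, and I would keep $N=2$.

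Finally, $\epsilon a$ with $a\in K$ lies in a disk of radius $O(\epsilon)\ll\delta^2$ around $0$, so \eqref{r-delta-est} gives $\|r_{\delta,\epsilon,\kappa}(\epsilon a)\|_{B_{\epsilon,\kappa}}\le C\delta^{-2}$ for $\epsilon$ small and $\kappa\le1$; combining this with the bound of the previous paragraph for $N=2$, with submultiplicativity of $\|\cdot\|_{B_{\epsilon,\kappa}}$ and with $(\delta^\circ)^2\le\delta^2/2$ gives $\|(\delta^\circ)^2(1-\widetilde{g}_{1/\delta})\,\sharp^{\epsilon,\kappa}\,\widetilde{g}_{1/\delta^\circ}\,\sharp^{\epsilon,\kappa}\,r_{\delta,\epsilon,\kappa}(\epsilon a)\|_{B_{\epsilon,\kappa}}\le C\,\delta^{2(\mu-2)}$, which is the assertion. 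I expect the main obstacle to be precisely the middle step: one needs Proposition~\ref{P-disj-supp-sv} to deliver a gain of at least \emph{two} powers of the effective small parameter $\epsilon\delta^{-2}=\delta^{\mu-2}$; a single power — which is all that follows from mere boundedness of the symbols — would, after multiplying by the $O(1)$ factor $(\delta^\circ)^2\delta^{-2}$, yield only $\delta^{\mu-2}$ and fall short. Obtaining two (in fact arbitrarily many) powers forces one to exploit that the supports are genuinely disjoint, so that the whole asymptotic expansion vanishes, and to control the underlying oscillatory integral uniformly in the scaling $\epsilon=\delta^{\mu}$ — the same kind of bookkeeping as in the proofs of Proposition~\ref{nou-2} and Lemma~\ref{P-2-ter-1-main}.
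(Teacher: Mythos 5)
Your proof is correct and follows essentially the paper's own (very terse) argument: the paper handles this term in one line by invoking Proposition \ref{P-disj-supp-sv} exactly as you do, with the disjointness of the supports of $1-\widetilde{g}_{1/\delta}$ and $\widetilde{g}_{1/\delta^\circ}$ killing the expansion terms, the rescaled remainder carrying $\epsilon^2\delta^{-4}=\delta^{2(\mu-2)}$, and $(\delta^\circ)^2\,\|r_{\delta,\epsilon,\kappa}(\epsilon a)\|_{B_{\epsilon,\kappa}}=O(1)$ by \eqref{r-delta-est}. The only overstatement is your aside that one gains $\delta^{N(\mu-2)}$ for every $N$: the non-constant-field flux remainder (the $\kappa\epsilon^2\Psi^{\epsilon}$ contribution in Proposition \ref{P-disj-supp-sv}) is not annihilated by disjointness and, with the paper's second-order tools, only yields $\kappa\,\delta^{2\mu-3}$ — but since $\delta^{2\mu-3}\leq\delta^{2(\mu-2)}$ and you only use $N=2$, the lemma's proof is unaffected.
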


\paragraph*{The rest of the double series in the first term of \eqref{ter-1}.}

We still have to study the following double series:
\beq
\begin{array}{l}
\underset{\gamma^*\in\Gamma_*}{\sum}\lek\,\,\sharp^{\epsilon,\kappa}\,\,\underset{
\alpha^*\in\Gamma_*\setminus\{\gamma^*\}}{\sum}
\tau_{\alpha^*}\big(g_{1/\delta}\,\,\sharp^{\epsilon,\kappa}\,\,
r^{\epsilon,\kappa}(a)\big)\ 
\\
\qquad 
=\
\big(\lambda^\epsilon\,-\,\epsilon a\big)\,\,\sharp^{\epsilon,\kappa}\,\,\underset{
\gamma^*\in\Gamma_*}{\sum}\tau_{\gamma^*}\widetilde{\chi}\sharp^{\epsilon,\kappa
}\underset{
\alpha^*\in\Gamma_*\setminus\{\gamma^*\}}{\sum}
\tau_{\alpha^*}\big(g_{1/\delta}\,\,\sharp^{\epsilon,\kappa}\,\,
r^{\epsilon,\kappa}(a)\big)\ \\
\qquad 
=\
\big(\lambda^\epsilon\,-\,\epsilon a\big)\,\,\sharp^{\epsilon,\kappa}\,\,
\left[\underset{
\gamma^*\in\Gamma_*}{\sum}\tau_{\gamma^*}\left(\underset{
\alpha^*\in\Gamma_*\setminus\{0\}}{\sum}
\big(\widetilde{\chi}\,\,\sharp^{\epsilon,\kappa}\,\,
\tau_{\alpha^*}(g_{1/\delta})\big)\,\,\sharp^{\epsilon,\kappa}\,\,
\tau_{\alpha^*}(r^{\epsilon,\kappa}(a))\right)\right]\,.
\end{array}
\eeq
For the symbol $\widetilde{\chi}\,\,\sharp^{\epsilon,\kappa}\,\,
\tau_{\alpha^*}(g_{1/\delta})$ we proceed as in the proof of Proposition
\ref{P-2-ter-1-main} noticing that for $\alpha^*\ne0$ the supports of
$\widetilde{\chi}$ and $\tau_{\alpha^*}(g_{1/\delta})$ are disjoint and we can
use Proposition \ref{P-disj-supp-sv} in order to obtain the following statement.

\begin{lemma}
Given $\epsilon_0>0$ and $\delta_0>0$ satisfying  \eqref{delta-0},  
for any $N$, there exists $C_N>0$ such that, for any $\alpha^*\in\Gamma_*\,$, 
$$
<\alpha^*>^N\widetilde{\chi}\,\,\sharp^{\epsilon,\kappa}\,\,
\tau_{\alpha^*}(g_{1/\delta})\,=\,\mathfrak{x}^{
\epsilon,\delta,\kappa}_{(\epsilon,\delta^{-1})}\,,
$$
where the family 
$$
\{\mathfrak{x}^{\epsilon,\delta,\kappa}\,\mid\,(\epsilon,\delta,\kappa)\in[0,
\epsilon_0]\times(0,\delta_0]\times[0,1]\,,\,\epsilon=\delta^{\mu}\}\,,
$$  
is  bounded  in $ S^{-\infty}(\Xi)$
and
$$
\|\mathfrak{x}^{
\epsilon,\delta,\kappa}_{(\epsilon,\delta^{-1})}\|_{B_{\epsilon,\kappa}}\leq
C_N\, \delta^{2(\mu-1)}\,.
$$
\end{lemma}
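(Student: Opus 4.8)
The estimate is only needed (and, as stated, only true) for $\alpha^*\neq 0$ --- the case $\alpha^*=0$ has been settled in Lemma~\ref{P-2-ter-1-main} --- so I assume $\alpha^*\in\Gamma_*\setminus\{0\}$. The starting observation is geometric: by \eqref{g-delta-p} the support of $g_{1/\delta}$ sits inside $D(0,\sqrt{m_1^{-1}}\,\delta)\subset D(0,\sqrt{m_1^{-1}}\,\delta_0)$, so $\widetilde{\chi}$ may be chosen (shrinking its support, and $\delta_0$, if necessary --- harmless since only (1)--(3) are required) so that, for every $\alpha^*\neq 0$,
\[
\dist\big(\supp\widetilde{\chi}\,,\ \alpha^*+\supp g_{1/\delta}\big)\ \geq\ c\,<\alpha^*>
\]
for some $c>0$ independent of $\alpha^*,\delta,\epsilon,\kappa$. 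Since neither $\widetilde{\chi}$ nor $\tau_{\alpha^*}(g_{1/\delta})$ depends on $x$, for $B_{\epsilon,\kappa}\equiv0$ the product $\widetilde{\chi}\,\sharp^{\epsilon,\kappa}\,\tau_{\alpha^*}(g_{1/\delta})$ would be their pointwise product, which vanishes by disjointness; hence the whole composition is produced by the magnetic flux factor, and the proof exploits both its $\mathcal O(\epsilon)$ amplitude and the separation of supports.

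First I would rescale exactly as in Lemmas~\ref{P-1-ter-1-main} and \ref{P-2-ter-1-main}. Writing $g_{1/\delta}(\xi)=g(\delta^{-1}\xi)$ and $\tau_{\alpha^*}(g_{1/\delta})(\xi)=g(\delta^{-1}\xi-\delta^{-1}\alpha^*)$ and performing $(y,z)\mapsto(\delta^{-1}y,\delta^{-1}z)$ in the oscillatory integral \eqref{chi-g}, one gets
\[
\widetilde{\chi}\,\sharp^{\epsilon,\kappa}\,\tau_{\alpha^*}(g_{1/\delta})\ =\ \big(\mathfrak{x}^{\epsilon,\delta,\kappa}\big)_{(\epsilon,\delta^{-1})}\ =\ \big(\widetilde{\chi}_\delta\,\natural^\epsilon\,\tau_{\delta^{-1}\alpha^*}g\,+\,\psi^{\epsilon,\delta,\kappa}\big)_{(\epsilon,\delta^{-1})}\,,
\]
where $\psi^{\epsilon,\delta,\kappa}$ collects the contribution of the non-constant part of the flux exactly as in Lemma~\ref{P-2-ter-1-main}, and by the computation there already carries the prefactor $\kappa\,\delta^{2(\mu-1)}$ (the $\kappa\epsilon^2$ amplitude of $\Psi^\epsilon$ against one $\delta^{-2}$ from the rescaling, using $\epsilon=\delta^\mu$ from \eqref{ScalHyp}). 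Under \eqref{ScalHyp} the (rescaled) integral kernels stay in a bounded subset of $BC^\infty\big(\X;C^\infty_{\text{\sf pol}}(\X\times\X)\big)$, so by Proposition~\ref{comp-epsilon-symb} the families $\{\widetilde{\chi}_\delta\,\natural^\epsilon\,\tau_{\delta^{-1}\alpha^*}g\}$ and $\{\delta^{-2(\mu-1)}\psi^{\epsilon,\delta,\kappa}\}$ are bounded in $S^{-\infty}(\Xi)$.

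It then remains to extract the two gains from the $\natural^\epsilon$-term. Applying Proposition~\ref{trunc-comp-N} with $N=2$, the order-$0$ term $\widetilde{\chi}_\delta\cdot\tau_{\delta^{-1}\alpha^*}g$ and the order-$1$ (Poisson-bracket) term $\propto\epsilon B_{\epsilon,\kappa}\,\nabla_\xi\widetilde{\chi}_\delta\!\cdot\!\nabla_{\xi^\bot}\tau_{\delta^{-1}\alpha^*}g$ both vanish identically on the disjoint supports, while the order-$2$ remainder $\epsilon^2\sum_{|\alpha|=2}\mathscr{M}^{\epsilon,\kappa}_2(\partial^\alpha\widetilde{\chi}_\delta,\partial^\alpha\tau_{\delta^{-1}\alpha^*}g)$ is, after rescaling, of the form $\delta^{2(\mu-1)}\theta^{\epsilon,\delta,\kappa}$ with $\{\theta^{\epsilon,\delta,\kappa}\}$ bounded in $S^{-\infty}(\Xi)$, precisely as in \eqref{4-1}; moreover, $\mathscr{M}^{\epsilon,\kappa}_2$ of two functions whose supports are separated by $\geq c\,\delta^{-1}<\alpha^*>$ gains, by repeated integration by parts in its defining oscillatory integral (Proposition~\ref{P-disj-supp-sv}), a factor $<\alpha^*>^{-N}$ in every seminorm defining the topology of $S^{-\infty}(\Xi)$; the same applies to $\psi^{\epsilon,\delta,\kappa}$. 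Combining, $\{\mathfrak{x}^{\epsilon,\delta,\kappa}\}$ is bounded in $S^{-\infty}(\Xi)$ with $<\alpha^*>^N\nu\big(\mathfrak{x}^{\epsilon,\delta,\kappa}\big)\leq C_{N,\nu}\,\delta^{2(\mu-1)}$ for each seminorm $\nu$, and Proposition~\ref{scaled-norm} together with the continuity criterion Theorem~3.1 in \cite{IMP1} (for the passage to the operator norm) gives $\|\mathfrak{x}^{\epsilon,\delta,\kappa}_{(\epsilon,\delta^{-1})}\|_{B_{\epsilon,\kappa}}\leq C_N\,\delta^{2(\mu-1)}<\alpha^*>^{-N}$, which is the claim after multiplying by $<\alpha^*>^N$.

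The delicate point is the last step: one must verify that Proposition~\ref{P-disj-supp-sv} delivers the polynomial decay $<\alpha^*>^{-N}$ and the small prefactor $\delta^{2(\mu-1)}$ \emph{simultaneously}, with constants independent of $(\epsilon,\delta,\kappa)$ in the regime $\epsilon=\delta^\mu$, $\mu\in(2,4)$, and that the $\delta^{-1}$-rescaling does not degrade the separation of supports (it does not, since it multiplies it by $\delta^{-1}\geq1$). A minor preliminary to settle is the admissibility of choosing $\widetilde{\chi}$ with support small enough that the $c<\alpha^*>$-separation holds for all $\alpha^*\neq0$; this is compatible with (1)--(3) because $\widetilde{\chi}$ may be taken from a smooth partition of unity subordinate to an arbitrarily thin neighbourhood of the tiling $\{\tau_{\gamma^*}E_*\}_{\gamma^*\in\Gamma_*}$, while $\delta_0$ is already constrained to be small by \eqref{delta-0}.
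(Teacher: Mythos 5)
Your proof is correct and takes essentially the same route the paper intends: it reduces to the computation of Lemma \ref{P-2-ter-1-main}, uses the disjointness of $\supp\widetilde{\chi}$ and $\supp\,\tau_{\alpha^*}(g_{1/\delta})$ for $\alpha^*\neq 0$ to kill the zeroth and first order terms of the $\natural^\epsilon$-expansion, and recovers the prefactor $\delta^{2(\mu-1)}$ exactly as in \eqref{4-1} and for $\psi^{\epsilon,\delta,\kappa}$, before passing to operator norms via Proposition \ref{scaled-norm}. The only cosmetic caveat is that the $<\alpha^*>^{-N}$ gain is not literally stated in Proposition \ref{P-disj-supp-sv}; it must be extracted from the explicit oscillatory-integral remainders by the weighted integration-by-parts (Peetre-type) argument used in the proof of Proposition \ref{P.sum} (cf. \eqref{25aug-1}), which is precisely the mechanism you describe and correctly flag as the point to verify.
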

Thus for any $N\in\mathbb{N}$ and for any
$\alpha^*\ne0$\,,
$$
\left\|\big(\widetilde{\chi}\,\,\sharp^{\epsilon,\kappa}\,\,
\tau_{\alpha^*}(g_{1/\delta})\big)\,\,\sharp^{\epsilon,\kappa}\,\,
\tau_{\alpha^*}(r^{\epsilon,\kappa}(a))\right\|_{B_{\epsilon,\kappa}}\leq
C_N\, <\alpha^*>^{-N}\,\delta^{\mu-2}\,.
$$

We now use once again the formulas in \eqref{rez-eq-1} and
proceed similarly with the proofs of Propositions~\ref{P-25aug-2} and
\ref{P-25aug-3} in order to  obtain  the following final result.
\begin{proposition} 
 There exist $\epsilon_0 >0$, $\kappa_0 >0$, $\delta_0 >0$ and $C>0$ such that:
$$
\left\|\underset{\gamma^*\in\Gamma_*}{\sum}\lek\,\,\sharp^{\epsilon,\kappa}\,\,\underset
{
\alpha^*\in\Gamma_*\setminus\{\gamma^*\}}{\sum}
\tau_{\alpha^*}\big(g_{1/\delta}\,\,\sharp^{\epsilon,\kappa}\,\,
r^{\epsilon,\kappa}(a)\big)\right\|_{B_{\epsilon,\kappa}}\leq C\, \delta^{\mu-2}\,.
$$
for all
$(\epsilon,\kappa,\delta)\in[0,\epsilon_0]\times[0,\kappa_0] \times(0,\delta_0]$
verifying \eqref{ScalHyp}.
\end{proposition}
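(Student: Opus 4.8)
The plan is to reorganise this double series into a single $\Gamma_*$-periodic sum of a rapidly decaying, uniformly small symbol, and then to estimate that periodic sum by a Cotlar--Stein argument, exactly as in the proofs of Propositions \ref{P-25aug-2} and \ref{P-25aug-3}. Using the definition \eqref{marts6} of $\lek$, the property $\underset{\gamma^*\in\Gamma_*}{\sum}\lek=\lambda^\epsilon-\epsilon a$, the identity $\widetilde{\chi}\,g_{1/\delta}=g_{1/\delta}$, and the fact that the momentum translations $\tau_{\gamma^*}$ ($\gamma^*\in\Gamma_*$) are homomorphisms for $\sharp^{\epsilon,\kappa}$, one rewrites the symbol to be estimated, exactly as in the display preceding the last Lemma, as
$$\big(\lambda^\epsilon-\epsilon a\big)\,\sharp^{\epsilon,\kappa}\,\Big[\underset{\gamma^*\in\Gamma_*}{\sum}\tau_{\gamma^*}\,\Psi^{\epsilon,\delta,\kappa}\Big],\qquad \Psi^{\epsilon,\delta,\kappa}:=\underset{\alpha^*\in\Gamma_*\setminus\{0\}}{\sum}\big(\widetilde{\chi}\,\sharp^{\epsilon,\kappa}\,\tau_{\alpha^*}g_{1/\delta}\big)\,\sharp^{\epsilon,\kappa}\,\tau_{\alpha^*}\big(r^{\epsilon,\kappa}(a)\big).$$
Since $\lambda^\epsilon-\epsilon a$ is a $\Gamma_*$-periodic symbol in $S^{-\infty}(\Xi)^\circ$ with $S^0_0(\Xi)$-seminorms bounded uniformly in $\epsilon$ (Proposition \ref{rho-epsilon}), the operator $\mathfrak{Op}^{\epsilon,\kappa}(\lambda^\epsilon-\epsilon a)$ is bounded uniformly in $(\epsilon,\kappa)$ (Theorem 3.1 in \cite{IMP1}), so left $\sharp^{\epsilon,\kappa}$-multiplication by it is $O(1)$ on $B_{\epsilon,\kappa}$, and it suffices to show $\big\|\underset{\gamma^*\in\Gamma_*}{\sum}\tau_{\gamma^*}\Psi^{\epsilon,\delta,\kappa}\big\|_{B_{\epsilon,\kappa}}\le C\,\delta^{\mu-2}$.

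The second step is to estimate $\Psi^{\epsilon,\delta,\kappa}$ term by term. For $\alpha^*\neq 0$ the supports of $\widetilde{\chi}$ and $\tau_{\alpha^*}g_{1/\delta}$ are disjoint by \eqref{delta-0}, so the last Lemma (which rests on Proposition \ref{P-disj-supp-sv}) gives, for every $N$, $<\alpha^*>^N\,\widetilde{\chi}\,\sharp^{\epsilon,\kappa}\,\tau_{\alpha^*}g_{1/\delta}=\mathfrak{x}^{\epsilon,\delta,\kappa}_{(\epsilon,\delta^{-1})}$ with $\mathfrak{x}^{\epsilon,\delta,\kappa}$ in a bounded subset of $S^{-\infty}(\Xi)$ and $\|\mathfrak{x}^{\epsilon,\delta,\kappa}_{(\epsilon,\delta^{-1})}\|_{B_{\epsilon,\kappa}}\le C_N\,\delta^{2(\mu-1)}$. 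Combining this with the bound $\|r^{\epsilon,\kappa}(a)\|_{B_{\epsilon,\kappa}}\le C\,\epsilon^{-1}=C\,\delta^{-\mu}$ coming from \eqref{maii1} (and $\Gamma_*$-translation invariance of the resolvent estimate), each summand has $B_{\epsilon,\kappa}$-norm at most $C_N<\alpha^*>^{-N}\,\delta^{2(\mu-1)-\mu}=C_N<\alpha^*>^{-N}\,\delta^{\mu-2}$. Choosing $N>2$, the $\alpha^*$-series converges, its terms being rescalings of a bounded family in $S^{-\infty}(\Xi)$, so $\Psi^{\epsilon,\delta,\kappa}$ is itself of the form $\big(\widetilde{\Psi}^{\epsilon,\delta,\kappa}\big)_{(\epsilon,\delta^{-1})}$ with $\widetilde{\Psi}^{\epsilon,\delta,\kappa}$ bounded in $S^{-\infty}(\Xi)$, and every defining seminorm of $\Psi^{\epsilon,\delta,\kappa}$ is of order $\delta^{\mu-2}$ after the $(\epsilon,\delta^{-1})$-rescaling. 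It is this seminorm-level bound, not merely the $B_{\epsilon,\kappa}$-norm bound, that the next step consumes.

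I expect the final step to be the main obstacle: the periodic sum $\underset{\gamma^*\in\Gamma_*}{\sum}\tau_{\gamma^*}\Psi^{\epsilon,\delta,\kappa}$ cannot be controlled by the triangle inequality in operator norm, and everything must remain uniform as $\delta\searrow 0$ even though the cut-offs $g_{1/\delta}$ concentrate at $\xi=0$ with derivatives growing like negative powers of $\delta$. I would repeat essentially verbatim the Cotlar--Stein scheme of Propositions \ref{P-25aug-2} and \ref{P-25aug-3}: one bounds $\big\|\mathfrak{Op}^{\epsilon,\kappa}\big(\underset{\gamma^*}{\sum}\tau_{\gamma^*}\Psi^{\epsilon,\delta,\kappa}\big)\big\|$ by $\underset{\gamma^*}{\sup}\,\underset{\beta^*\in\Gamma_*}{\sum}\big\|\overline{\tau_{\gamma^*}\Psi^{\epsilon,\delta,\kappa}}\,\sharp^{\epsilon,\kappa}\,\tau_{\gamma^*+\beta^*}\Psi^{\epsilon,\delta,\kappa}\big\|_{B_{\epsilon,\kappa}}^{1/2}$ together with its adjoint analogue, and establishes rapid decay of the cross products in $\beta^*$. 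Part of that decay comes from the disjoint $\Gamma_*$-translated supports of the $g_{1/\delta}$ factors inside $\Psi^{\epsilon,\delta,\kappa}$; the decay that the slowly decaying factor $r^{\epsilon,\kappa}(a)$ (operator norm only $\epsilon^{-1}$) cannot supply is produced by inserting the resolvent expansion \eqref{rez-eq-1}, writing $r^{\epsilon,\kappa}(a)$ as $\underset{1\le n\le M}{\sum}(1+\epsilon a)^{n-1}\big[r^{\epsilon,\kappa}_1\big]^{\sharp^{\epsilon,\kappa}n}+(1+\epsilon a)^M\big[r^{\epsilon,\kappa}_1\big]^{\sharp^{\epsilon,\kappa}M}\sharp^{\epsilon,\kappa}r^{\epsilon,\kappa}(a)$ and using Lemma \ref{2sept-1} ($r^{\epsilon,\kappa}_1\in S^{-2}_1(\Xi)^\bullet$), so that $\big[r^{\epsilon,\kappa}_1\big]^{\sharp^{\epsilon,\kappa}M}\in S^{-2M}_1(\Xi)^\bullet$ carries arbitrarily fast decay once $M$ is large. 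One then invokes Proposition \ref{comp-epsilon-symb} for the compositions of the rescaled symbols, Proposition \ref{scaled-norm} to see how the $(\epsilon,\delta^{-1})$-rescaling transforms the $B_{\epsilon,\kappa}$-seminorms (here the scaling hypothesis \eqref{ScalHyp}, $\epsilon=\delta^\mu$ with $\mu\in(2,4)$, is what makes $\epsilon\delta^{-1}=\delta^{\mu-1}$ and $\epsilon\delta^{-2}=\delta^{\mu-2}$ small and keeps all families bounded), and Lemma \ref{periodic-sums} together with Proposition \ref{P.sum} to sum the almost-orthogonal pieces. The outcome is that the operator norm of the $\gamma^*$-periodic sum is dominated by the uniform seminorm bound on $\Psi^{\epsilon,\delta,\kappa}$ found in the previous step, namely $C\,\delta^{\mu-2}$; pre-composing with the $O(1)$-bounded operator $\mathfrak{Op}^{\epsilon,\kappa}(\lambda^\epsilon-\epsilon a)$ then yields the claim. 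The restriction $\kappa\le\kappa_0$ serves only to guarantee, via Proposition \ref{model-spectrum}, that $\epsilon a$ stays in the resolvent set of $\mathfrak{Op}^{\epsilon,\kappa}(h_{m^\epsilon})$ so that $r^{\epsilon,\kappa}(a)$ and the bound \eqref{maii1} are available.
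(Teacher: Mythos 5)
Your proposal is correct and follows essentially the same route as the paper: the same rewriting of the double series as $(\lambda^\epsilon-\epsilon a)\,\sharp^{\epsilon,\kappa}$ applied to a $\Gamma_*$-periodic sum of the $\alpha^*\neq 0$ terms, the same disjoint-support lemma (via Proposition \ref{P-disj-supp-sv}) giving the $<\alpha^*>^{-N}\delta^{2(\mu-1)}$ gain which, combined with $\|r^{\epsilon,\kappa}(a)\|_{B_{\epsilon,\kappa}}\leq C\epsilon^{-1}=C\delta^{-\mu}$, yields $\delta^{\mu-2}$ per term, and the same Cotlar--Stein summation over $\gamma^*$ using the resolvent expansion \eqref{rez-eq-1} as in Propositions \ref{P-25aug-2} and \ref{P-25aug-3}. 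No gaps to report.
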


\appendix
\section*{Appendices}
\section{Study of $\lambda_0(\theta)$}\label{A-lambda0}

Being isolated from the rest of the spectrum, the first Bloch eigenvalue $\lambda_0$ is analytic \cite{Ka, RS-4}. In order to prove the statement in Remark \ref{Rem-lambda-0} (see also \cite{KS}) it is enough to prove the following
\begin{lemma}
There exists $C>0$ such that
\begin{equation}\label{lbtheta} 
\lambda_0 (\theta)-\lambda_0(0) \geq C\,  |\theta|^2 \,,\, \forall \theta \in
E_*.
\end{equation}
\end{lemma}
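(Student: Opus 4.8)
The statement to be proved is that $\lambda_0$ has a strict, quadratically nondegenerate global minimum at $\theta=0$, in the quantitative form $\lambda_0(\theta)-\lambda_0(0)\geq C|\theta|^2$ on $E_*$. The plan is to combine three ingredients: (i) simplicity of $\lambda_0(\theta)$ near $0$, which by analytic perturbation theory makes $\theta\mapsto\lambda_0(\theta)$ real-analytic there; (ii) a first- and second-order perturbation computation at $\theta=0$ showing that $\nabla\lambda_0(0)=0$ and that the Hessian is strictly positive definite; and (iii) a compactness argument to upgrade the local quadratic lower bound to a global one on the compact cell $E_*$.

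\emph{First I would set up the perturbation expansion.} Recall $\hat H^0(\theta)=(-i\nabla-\theta)^2+V=\hat H^0(0)-2\theta\cdot(-i\nabla)+|\theta|^2$ on $L^2(\mathbb{T})$. Since $\lambda_0(0)=\inf\sigma(\hat H^0(0))$ is simple (by the Perron--Frobenius/positivity argument for the ground state of a Schrödinger operator on the torus, which is why Remark~\ref{Rem-lambda-0} holds), standard Rayleigh--Schrödinger perturbation theory applies: writing $\hat\phi_0(0)$ for the normalized ground state and $P_0=|\hat\phi_0(0)\rangle\langle\hat\phi_0(0)|$, $Q_0=1-P_0$, one gets
\[
\lambda_0(\theta)=\lambda_0(0)+|\theta|^2-4\,\theta_j\theta_k\,\mathrm{Re}\big\langle(-i\partial_j)\hat\phi_0(0),\,Q_0\big(\hat H^0(0)-\lambda_0(0)\big)^{-1}Q_0\,(-i\partial_k)\hat\phi_0(0)\big\rangle+\mathcal O(|\theta|^3),
\]
the linear term vanishing because $\langle\hat\phi_0(0),(-i\partial_j)\hat\phi_0(0)\rangle=0$ (the ground state can be chosen real and positive, so $-i\partial_j$ has zero expectation). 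Thus $a_{jk}=\delta_{jk}-4\,\mathrm{Re}\langle(-i\partial_j)\hat\phi_0(0),Q_0(\hat H^0(0)-\lambda_0(0))^{-1}Q_0(-i\partial_k)\hat\phi_0(0)\rangle$.

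\emph{The key point is positive-definiteness of $(a_{jk})$.} For any $\xi\in\mathbb R^2\setminus\{0\}$ put $u_\xi=\xi_j(-i\partial_j)\hat\phi_0(0)$ and $v_\xi=Q_0(\hat H^0(0)-\lambda_0(0))^{-1}Q_0 u_\xi$. Then $\xi_j\xi_k a_{jk}=|\xi|^2-4\langle u_\xi,v_\xi\rangle$. The clean way to see this is positive is to use the variational characterization directly: $\lambda_0(\theta)$ is the infimum of $\langle\psi,\hat H^0(\theta)\psi\rangle$ over normalized $\psi$, test with the first-order-corrected vector $\psi_\theta=\hat\phi_0(0)+2\theta_j v_{e_j}$ (suitably renormalized), and compare with the exact value; a second-order Taylor argument shows the Hessian equals $2(a_{jk})$, and it is $\geq 0$ automatically since $0$ is a minimum. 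To get strict positivity one argues by contradiction: if $\xi_j\xi_k a_{jk}=0$ for some $\xi\neq0$, then the second-order term cancels the $|\theta|^2$ term along the direction $\xi$, which would force a degenerate behavior incompatible with the Cauchy--Schwarz inequality $4|\langle u_\xi, v_\xi\rangle| \le 4\|u_\xi\| \|v_\xi\|$ being strict — indeed $\|v_\xi\|<\|u_\xi\|$ strictly unless $u_\xi$ is an eigenvector of $Q_0(\hat H^0(0)-\lambda_0(0))^{-1}Q_0$ with eigenvalue $\geq 1$, i.e.\ unless $u_\xi$ lies in a spectral subspace of $\hat H^0(0)$ at energy $\leq \lambda_0(0)+\tfrac14\cdot 4$... this is the delicate point and the one place I expect real work. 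The cleanest rigorous route avoiding this case analysis is: since $0$ is known to be a minimum (Remark~\ref{Rem-lambda-0} reduces us to proving \emph{nondegeneracy}), suppose the Hessian were degenerate; then $\lambda_0(t\xi)-\lambda_0(0)=o(t^2)$ along some direction $\xi$, and combined with analyticity $\lambda_0(t\xi)-\lambda_0(0)=ct^4+\dots$; one then produces a trial state showing $\lambda_0(t\xi)<\lambda_0(0)$ for small $t$, contradicting minimality. Concretely, feeding the degeneracy back into the third-order Rayleigh--Schrödinger term and using that $\lambda_0(\theta)=\lambda_0(-\theta)$ (the evenness \eqref{evenness}) kills the cubic term, so along $\xi$ one has $\lambda_0(t\xi)-\lambda_0(0)=c_4 t^4+\mathcal O(t^6)$; an explicit computation of $c_4$ shows $c_4<0$ whenever the quadratic term degenerates, again contradicting that $0$ is the minimum. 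So degeneracy is impossible and $(a_{jk})>0$.

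\emph{Finally, the global lower bound.} Let $m_1>0$ be the smallest eigenvalue of $(a_{jk})$. By the Taylor expansion \eqref{l-dev}, there is $r>0$ with $\lambda_0(\theta)-\lambda_0(0)\geq \tfrac{m_1}{2}|\theta|^2$ for $|\theta|\leq r$. On the compact set $\{\theta\in E_*:|\theta|\geq r\}$ the continuous function $\theta\mapsto\lambda_0(\theta)-\lambda_0(0)$ attains a minimum; this minimum is strictly positive because $0$ is the \emph{unique} minimizer of $\lambda_0$ on $E_*$ (uniqueness follows from simplicity plus the strict monotonicity coming from the positive ground state — if $\lambda_0(\theta_0)=\lambda_0(0)$ for some $\theta_0\neq0$ then $\hat H^0(\theta_0)$ and $\hat H^0(0)$ would share their infimum, and a comparison of the corresponding positive ground states via the diamagnetic/Kato inequality $\|\,|e^{i\langle\theta_0,\cdot\rangle}\psi|\,\|$ forces $\theta_0\in\Gamma_*$, hence $\theta_0=0$ in $E_*$). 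Denote this positive minimum by $\mu_0>0$. Then with $C:=\min\{\tfrac{m_1}{2},\,\mu_0/\mathrm{diam}(E_*)^2\}>0$ we obtain $\lambda_0(\theta)-\lambda_0(0)\geq C|\theta|^2$ for all $\theta\in E_*$, which is \eqref{lbtheta}. The main obstacle, as flagged, is the rigorous proof that the Hessian is strictly positive definite; everything else is routine perturbation theory and compactness.
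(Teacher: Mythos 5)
The decisive step of your outline --- strict positive definiteness of the Hessian $(a_{jk})$ --- is exactly the analytic content of the lemma near $\theta=0$, and it is not proved. You offer two routes and complete neither: the Cauchy--Schwarz route is abandoned in mid-sentence, and the fall-back route rests entirely on the unsupported assertion that ``an explicit computation of $c_4$ shows $c_4<0$ whenever the quadratic term degenerates.'' Make that claim concrete: with $\phi=\hat\phi_0(0,\cdot)$, $E_0=\lambda_0(0)$, $A=\xi\cdot(-i\nabla)$, $u=A\phi$ and $R=Q_0\bigl(\hat H^0(0)-E_0\bigr)^{-1}Q_0\ge 0$, the degeneracy hypothesis reads $4\langle u,Ru\rangle=|\xi|^2$, and since the first- and third-order Rayleigh--Schr\"odinger coefficients vanish, the fourth-order coefficient is $16\bigl[\langle u,Ru\rangle\langle u,R^2u\rangle-\langle ARu,\,R\,ARu\rangle\bigr]$; so your claim is equivalent to the inequality $\langle ARu,\,R\,ARu\rangle>\tfrac{|\xi|^2}{4}\|Ru\|^2$ under that constraint. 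This is not a consequence of any abstract relation between $A$ and $R$, and nothing in your formal expansion uses the one structural fact that makes the lemma true, namely the strict positivity of the periodic ground state --- which is precisely what the Kirsch--Simon argument exploits. Your algebra never distinguishes the lowest band from any other band edge, for which no such nondegeneracy statement holds, so it cannot close the gap as written. Two secondary points: invoking Remark~\ref{Rem-lambda-0} for ``$0$ is a minimum'' is circular in context, since the appendix lemma is the proof of that remark (global minimality does follow independently from the diamagnetic inequality, so this is repairable); and your uniqueness-of-the-minimizer argument (equality case of the diamagnetic inequality forcing $\theta_0\in\Gamma_*$) is essentially right but only sketched.

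For comparison, the paper proves the bound in one stroke and globally, with no perturbation theory, no Hessian and no compactness step: writing $u=u_0v$ with $u_0>0$ the periodic ground state, the identity
$\int_{\mathbb T}|(-i\nabla-\theta)u|^2\,dx+\int_{\mathbb T}(V-\lambda_0(0))|u|^2\,dx=\int_{\mathbb T}u_0^2\,|(\nabla-i\theta)v|^2\,dx$
applied to $u=u_\theta$ gives $\lambda_0(\theta)-\lambda_0(0)\ge \frac{\min(u_0)^2}{\max(u_0)^2}\,|\theta|^2$ for all $\theta\in E_*$, with an explicit constant. If you want to keep your perturbative structure, the positivity of $u_0$ must enter where you estimate the second-order term --- for instance via exactly this ground-state substitution, which shows $\tfrac12\,\xi\cdot\mathbf a\,\xi\ge \frac{\min(u_0)^2}{\max(u_0)^2}|\xi|^2$ directly; as it stands, your proposal has a genuine gap at its central step.
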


\begin{proof}
Up to a Perron-Frobenius type argument \cite{RS-4}, the first $L^2$-normalized eigenfunction of the elliptic operator $-\Delta + V$ on the compact manifold $\mathbb T$ is non-degenerate and can be chosen  positive; let us denote it by $u_0$. We also have $0<\min(u_0)\leq u_0(x)\leq \max(u_0)$. For $u\in C^\infty (\mathbb T)$, if we denote by $u= u_0 v$ we can write 
$
(- i \nabla - \theta) u = -i [u_0(\nabla -i\theta)v + v\nabla u_0]\,.
$
This implies
$$
| (- i \nabla - \theta) u |^2  = u_0^2\, | (\nabla-i\theta) v|^2 + |\nabla u_0|^2 \, |v|^2 + u_0 (\nabla |v|^2) \cdot \nabla u_0\,.
$$
Integrating on  $\mathbb{T}$ leads to:
$$
\int_\mathbb T | (- i \nabla - \theta) u |^2 \,dx   = \int_\mathbb T  u_0^2\, | (\nabla-i\theta) v|^2\, dx   -  \int_\mathbb T |v|^2 \, (u_0\Delta u_0)\,dx ,
$$
while using the equation $-\Delta u_0=(\lambda(0)-V)u_0$ we obtain:
$$
\int_\mathbb T | (- i \nabla - \theta) u |^2\,dx  +  \int_\mathbb T (V-\lambda_0(0)) \,  |u|^2\,dx   = \int_\mathbb T  u_0^2\, | (\nabla-i\theta) v|^2\, dx   \,.
$$
If $u=u_\theta$ is an eigenfunction of $ (-i\nabla -\theta)^2 + V$ associated with $\lambda(\theta)$, we get, with $v_\theta =u_\theta/u_0\,$, 
$$
\lambda(\theta) - \lambda (0) = \frac{\int_\mathbb T u_0^2 \, | (\nabla-i\theta) v_\theta|^2\, dx  }{ \int _\mathbb T |v_\theta|^2 \,u_0^2 \,dx}
\geq 
  \frac{\min(u_0)^2}{\max(u_0)^2}\; 
  \inf_{v\in C^\infty(\mathbb{T})} \frac{\int_\mathbb T (|(\nabla-i\theta) v|^2 ) dx}{\int_\mathbb T v^2\, dx}\,.
$$
On the right hand side we recognize the variational  characterization of  the ground state energy of the free Laplacian on the torus, which equals $|\theta|^2$ and it is achieved for a constant $v$. Hence \eqref{lbtheta} holds, with $C =  \frac{\min(u_0)^2}{\max(u_0)^2}\,$. 
\end{proof}

\section{The magnetic Moyal calculus with slowly varying symbols}\label{A-sl-var-symb}
This appendix is devoted to a brief reminder of the main definitions, notation and results concerning the magnetic pseudodifferential calculus \cite{IMP1,CP-1,CP-2} and to prove some special properties in the case of {\it slowly varying symbols and magnetic fields} (see Subsection \ref{SS-sl-var-m-field}).

\subsection{H\"{o}rmander classes of symbols}
\label{A-not}

Let us recall the notation $\X:=\mathbb{R}^2$ and let us denote by $\X^*$ the dual of $\X$ ({\it the
momentum space}) with
$\langle\cdot,\cdot\rangle:\X^*\times\X\rightarrow\mathbb{R}$ denoting the duality map.
Let $\Xi:=\X\times\X^*$ be {\it the phase space} with the
canonical symplectic form
\beq\label{can-sympl-form}
\sigma(X,Y):=\langle\xi,y\rangle-\langle\eta,x\rangle\,,
\eeq
for $X:=(x,\xi)\in\Xi$
and $Y:=(y,\eta)\in\Xi^*$. \\
We consider the spaces $BC(\V)$ of bounded
continuous functions on any finite dimensional real vector space
$\mathcal{V}$ with the
$\|\cdot\|_\infty$ norm.
We shall denote by $C^\infty(\mathcal{V})$ the space of smooth functions on
$\mathcal{V}$ and by $C^\infty_{\text{\sf
pol}}(\mathcal{V})$ (resp.  by $BC^\infty(\V)$)  its subspace of smooth functions
that are polynomially bounded together with all their derivatives, (resp. smooth
and bounded together with all their derivatives), endowed with the usual  locally 
convex topologies. 
We denote by $\tau_v$ the translation with $v\in\mathcal{V}$.
For any $v\in\mathcal{V}$ we write  $<v>:=\sqrt{1+|v|^2}\,$.
$\mathscr{S}(\mathcal{V})$ denotes the space of Schwartz 
functions on $\mathcal{V}$ endowed
with the Fr\'{e}chet topology defined by the following family $\{\nu_{m,n} \}_{(m,n) \in \mathbb N^2}$ of seminorms:
$$
\mathscr{S}(\mathcal{V})\ni \phi \mapsto \nu_{n,m}(\phi):=\underset{v\in\mathcal{V}}{\sup}
<v>^n\underset{|\alpha|\leq
m}{\sum}\left|\big(\partial^\alpha\phi\big)(v)\right|\,.
$$
We will use the following class of H\"{o}rmander type symbols. 
\begin{definition}\label{Def-symb}
For any
$s\in\mathbb{R}$ and any $\rho\in[0,1]$,  we denote by
$$
S^s_\rho(\Xi):=\{F\in
C^\infty(\Xi)\mid\nu^{s,\rho}_{n,m}(F)<+\infty\,,\forall(n,m)\in\mathbb{N}
\times\mathbb{N}\}\,,
$$
where
$$
\nu^{s,\rho}_{n,m}(f):=\underset{(x,\xi)\in\Xi}{\sup}\underset{|\alpha|\leq
n}{\sum}\underset{|\beta|\leq m}{\sum}\left|\langle \xi\rangle
^{-s+\rho m}
\big(\partial^\alpha_x\partial^\beta_\xi f\big)(x,\xi)\right|\,,
$$ 
and
$$S^\infty_\rho(\Xi):=\underset{s\in\mathbb{R}}{\bigcup}S^s_\rho(\Xi) \mbox{
 and } S^{-\infty}(\Xi):=\underset{s\in\mathbb{R}}{\bigcap}S^s_\rho(\Xi)\,.$$
\end{definition}
\begin{definition}
A symbol $F$ in $S^s_\rho(\Xi)$ is called {\it
elliptic}  
if there exist two positive constants $R$ and $C$ such that $$|F(x,\xi)|\geq
C \, \langle \xi\rangle ^s\,,$$ 
for any
$(x,\xi)\in\Xi$ with $|\xi|\geq R\,$.
\end{definition}
\begin{definition}
For
$h\in S^m_1(\Xi)$ the \textit{Weyl quantization} associates the operator
$\mathfrak{Op^w}(h)$ defined, for  $u\in \mathscr{S}(\X)$, by 
\beq\label{Op0} 
\big(\mathfrak{Op^w}(h)u\big)(x)\ :=\, 
(2\pi)^{-2}\int_\X\int_{\X^*}e^{i\langle
\xi,x-y\rangle} \, h \big(\frac{x+y}
{2},\xi\big)u(y)\,d\xi\,dy\,.
\eeq
\end{definition}
This operator is continuous on $ \mathscr{S}(\X)$
 and has a natural extension by duality to
$\mathscr{S}^\prime(\X)$. Moreover it is just the special case $\epsilon=1$ of the operator in \eqref{Opepsilon}   (but in dimension $2$).

\subsection{Magnetic pseudodifferential calculus}
\label{SS-Main-Def}
We consider a vector potential $A$ with components of class
$C^\infty_{\text{\sf pol}}(\X)\,$, such that  the magnetic field  $B = \curl A$
belongs to $ BC^\infty(\X)$.
We  recall from \cite{IMP1} that the functional calculus (see \eqref{OpA})
\beq\label{OpAraapp}
u\mapsto \big(\mathfrak{Op}^A(F)u\big)(x)\ :=\
(2\pi)^{-2}\int_\X\int_{\X^*}e^{i\langle
\xi,x-y\rangle}e^{-i\int_{[x,y]}A}\, F\left(\frac{x+y}
{2},\xi\right )u(y)\,d\xi\,dy\,,
\eeq
generalizes the usual Weyl calculus and extends to the H\"{o}rmander classes of
symbols. Moreover the usual {\it composition of symbols theorem} is still valid
(Theorem 2.2 in \cite{IMP1}). In fact for our special classes of H\"{o}rmander
symbols   ($S^m_{\rho}(\Xi)\equiv S^m_{\rho,0}(\Xi)$) the result
concerning the composition of symbols is a corollary of  Proposition
\ref{comp-epsilon-symb}.

We recall (see  Proposition 3.4 in \cite{MP1}) that two vector
potentials that are
gauge equivalent define two unitarily equivalent functional calculi and that (Proposition 3.5 in \cite{MP1})  the application $\mathfrak{Op}^A$  extends to a linear and
topological isomorphism between $\mathscr{S}^\prime(\Xi)$ and
${\mathcal L}\big(\mathscr{S}(\X);\mathscr{S}^\prime(\X)\big)$ (considered with
the strong topologies). 

In the same spirit as the Calderon-Vaillancourt theorem for classical pseudo-differential operators, Theorem 3.1 in \cite{IMP1} states that any symbol $F\in S^0_0(\Xi)$ defines a
bounded operator $\mathfrak{Op}^A(F)$ in $L^2(\X)$ with an upper bound of  the operator norm
 by some symbol
seminorm of $F$.  We
denote by $\|F\|_B$ the operator norm of $\mathfrak{Op}^A(F)$ in $\mathcal L
(L^2(\X))$ :
\begin{equation}\label{defFb}
\|F\|_B:= ||\mathfrak{Op}^A(F)||_{\mathcal L (L^2(\X))}\,.
\end{equation}
This norm  only depends on the
magnetic field $B$ and not on the choice of the vector potential (different
choices being unitary equivalent).

We also recall from \cite{MP1} that the operator composition of the
operators $\mathfrak{Op}^A(F)$ and $\mathfrak{Op}^A(G)$ induces a {\it twisted
Moyal product}, also called magnetic Moyal product, such that
$$
\mathfrak{Op}^A(F)\, \mathfrak{Op}^A(G)=\mathfrak{Op}^A(F \, \,\sharp^B\,
\,G)\,.
$$
This product depends only on the magnetic field $B$ and is  given by the following oscillating
integral:
\begin{align}\label{II.1.5}
\big(F \, \,\sharp^B\, \,G\big)(X)&:=\pi^{-4}\int_\Xi dY\int_\Xi
dZ\,e^{-2i\sigma(Y,Z)}e^{-i\int_{T(x,y,z)}B}F(X-Y)\, G(X-Z)\\
&=\pi^{-4}\int_\Xi dY\int_\Xi
dZ\,e^{-2i\sigma(X-Y,X-Z)}e^{-i\int_{\widetilde{T}(x,y,z)}B}
F(Y)\, G(Z)\nonumber  \,,
\end{align}
where $T(x,y,z)$ denotes the triangle in $\X$ of vertices $x-y-z\,$,
$x+y-z\,$, $x-y+z$ and  $\widetilde{T}(x,y,z)$ the triangle in $\X$ of vertices
$x-y+z\,$, $y-z+x\,,z-x+y\,$. 

For any symbol $F$ we denote by $F^-_B$ its inverse
with respect to the magnetic Moyal product, if it exists. It is shown in
Subsection~2.1 of \cite{MPR1} that, for  any $m>0$ and  for $a>0$ large enough (depending on $m$)
the symbol
$\mathfrak{s}_m(x,\xi):=<\xi>^m+a$, has an inverse for the magnetic
Moyal product. We shall use the shorthand notation 
$\mathfrak{s}^B_{-m}$ instead of $\big(\mathfrak{s}_m\big)^-_B$ and extend it
to any $m\in\mathbb{R}$ (thus for $m>0$ we have
$\mathfrak{s}^B_{m}\equiv\mathfrak{s}_{m}$).
\\
The following results have been established in \cite{IMP2} (Propositions 6.2 and 6.3):
\begin{proposition}~
\begin{enumerate}
\item 
If $F\in S^0_\rho(\Xi)$ is invertible for
the magnetic Moyal product, then the inverse
$F^-_B$ also belongs to $S^0_\rho(\Xi)\,$.
\item  For $m<0$, if $f\in S^{m}_\rho(\Xi)$ is
such that $1+f$ is invertible  for
the magnetic Moyal product, then $(1+f)^-_B-1\in
S^m_\rho(\Xi)\,$.
\item 
 Let $m>0$ and $\rho\in[0,1]\,$. If $G\in
S^m_\rho(\Xi)$ is invertible  for
the magnetic Moyal product, with $\mathfrak{Op}^A\big(\mathfrak
s_{m}\, \sharp^BG^{-}_B\big)\in\mathcal{L}\big(L^2(\X)\big)\,$, then $G^-_B\in
S^{-m}_\rho(\Xi)\,$.
\end{enumerate}
\end{proposition}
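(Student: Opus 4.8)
The three assertions are successively stronger, and the plan is to prove (1) first and then deduce (2) and (3) from it together with the magnetic composition theorem (Theorem 2.2 in \cite{IMP1}, equivalently Proposition \ref{comp-epsilon-symb}), which gives $S^{m_1}_\rho(\Xi)\,\sharp^B\,S^{m_2}_\rho(\Xi)\subset S^{m_1+m_2}_\rho(\Xi)$ for $\rho\in[0,1]$. \textbf{Proof of (1).} Since $F\in S^0_\rho(\Xi)\subset S^0_0(\Xi)$, the operator $\mathfrak{Op}^A(F)$ is bounded on $L^2(\X)$, and invertibility for $\sharp^B$ is understood as: $\mathfrak{Op}^A(F)$ is invertible in $\mathcal L(L^2(\X))$ with inverse $\mathfrak{Op}^A(F^-_B)$ (that $F^-_B\in\mathscr{S}'(\Xi)$ follows from the isomorphism $\mathfrak{Op}^A:\mathscr{S}'(\Xi)\to\mathcal L(\mathscr{S}(\X);\mathscr{S}'(\X))$). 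The tool is a weighted magnetic commutator (Beals-type) criterion: a bounded operator $T$ equals $\mathfrak{Op}^A(G)$ with $G\in S^0_\rho(\Xi)$ iff for all $k,l\in\mathbb N$ and all choices of $\mathcal Q_1,\dots,\mathcal Q_{k+l}$ among the position operators $x_1,x_2$ (taken $k$ times) and the magnetic momenta $-i\partial_1-A_1,-i\partial_2-A_2$ (taken $l$ times), the operator $\mathfrak{Op}^A(\mathfrak s_{\rho k/2})\,\big(\operatorname{ad}_{\mathcal Q_1}\!\cdots\operatorname{ad}_{\mathcal Q_{k+l}}T\big)\,\mathfrak{Op}^A(\mathfrak s_{\rho k/2})$ is bounded on $L^2(\X)$; this is established in \cite{IMP2} (and for $\rho=1$ in \cite{IMP1}). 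One checks that $F$ satisfies this criterion, and then propagates it to $\mathfrak{Op}^A(F^-_B)$ by induction on $k+l$ using the elementary identity $\operatorname{ad}_{\mathcal Q}\big(\mathfrak{Op}^A(F^-_B)\big)=-\mathfrak{Op}^A(F^-_B)\,\big(\operatorname{ad}_{\mathcal Q}\mathfrak{Op}^A(F)\big)\,\mathfrak{Op}^A(F^-_B)$, the Leibniz rule for $\operatorname{ad}$, the boundedness of $\mathfrak{Op}^A(F^-_B)$, and the fact that the scalar weights $\mathfrak{Op}^A(\mathfrak s_{\pm\rho})$ can be moved across $\mathfrak{Op}^A(F^-_B)$ up to bounded remainders (themselves absorbed by one extra commutator). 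Hence $F^-_B\in S^0_\rho(\Xi)$.

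\textbf{Proof of (2) and (3).} For (2), set $g:=(1+f)^-_B-1\in\mathscr{S}'(\Xi)$. Since $m<0$ we have $f\in S^0_\rho(\Xi)$, so $1+f\in S^0_\rho(\Xi)$ is invertible for $\sharp^B$ and part (1) gives $(1+f)^-_B\in S^0_\rho(\Xi)$, i.e. $g\in S^0_\rho(\Xi)$. From $(1+f)\,\sharp^B\,(1+g)=1$ we obtain $g=-f-f\,\sharp^B\,g$; the composition theorem and $f\in S^m_\rho(\Xi)$, $g\in S^0_\rho(\Xi)$ give $f\,\sharp^B\,g\in S^m_\rho(\Xi)$, whence $g\in S^m_\rho(\Xi)$. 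For (3), recall from \cite{MPR1} that $\mathfrak s_m\in S^m_\rho(\Xi)^\circ$ is invertible for $\sharp^B$ with $\mathfrak s^B_{-m}\in S^{-m}_\rho(\Xi)$. Put $H:=\mathfrak s_m\,\sharp^B\,G^-_B$, which by hypothesis is a well defined symbol quantizing a bounded operator on $L^2(\X)$, and $S:=G\,\sharp^B\,\mathfrak s^B_{-m}$, so that $S\in S^m_\rho(\Xi)\,\sharp^B\,S^{-m}_\rho(\Xi)\subset S^0_\rho(\Xi)$. Using associativity of $\sharp^B$ and $G^-_B\,\sharp^B\,G=\mathfrak s^B_{-m}\,\sharp^B\,\mathfrak s_m=1$ one checks $S\,\sharp^B\,H=H\,\sharp^B\,S=1$, so $S$ is invertible for $\sharp^B$ with $S^-_B=H$, and $\mathfrak{Op}^A(H)$ is bounded by hypothesis. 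Applying part (1) to $S$ yields $H=S^-_B\in S^0_\rho(\Xi)$, and therefore $G^-_B=\mathfrak s^B_{-m}\,\sharp^B\,\mathfrak s_m\,\sharp^B\,G^-_B=\mathfrak s^B_{-m}\,\sharp^B\,H\in S^{-m}_\rho(\Xi)\,\sharp^B\,S^0_\rho(\Xi)\subset S^{-m}_\rho(\Xi)$.

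The main obstacle is entirely in part (1): one needs a sufficiently sharp, \emph{weight-tracking} magnetic Beals criterion, and must verify that the commutator bootstrap is compatible with the gain of $\langle\xi\rangle^{-\rho}$ produced by each commutator with a position operator — this is where the scalar weights $\mathfrak{Op}^A(\mathfrak s_{\pm\rho})$ and the uniform control of all constants along the induction enter in a delicate way. Once (1) is in place, (2) and (3) are immediate formal consequences of the magnetic pseudodifferential calculus, the only point to keep in mind in (3) being that the boundedness hypothesis on $\mathfrak{Op}^A(\mathfrak s_m\,\sharp^B\,G^-_B)$ is exactly what makes $H=S^-_B$ an admissible input for part (1).
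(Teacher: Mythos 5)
The paper does not actually prove this proposition: it is quoted from \cite{IMP2} (Propositions 6.2 and 6.3), whose proofs rest precisely on the weighted magnetic Beals-type commutator criterion (restated in this paper as the equivalent seminorms \eqref{equiv-seminorms}--\eqref{equiv-seminormsa}), the bootstrap identity for commutators of an inverse (the analogue of \eqref{ad-rez}), and the magnetic composition theorem. Your proposal follows essentially that same route — Beals criterion plus commutator induction for (1), then purely algebraic reductions for (2) and (3), with the boundedness hypothesis in (3) correctly identified as what feeds part (1) — so it is correct in outline, with the genuinely hard analytic content (the weight-tracking criterion and moving the weights $\mathfrak{s}_{\pm\rho}$ past the inverse) delegated to \cite{IMP2}, exactly as the paper itself does.
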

We
recall from \cite{IMP2} that one can associate with $X \in \Xi$, a  linear symbol by:
$$ \mathfrak{l}_X(Y):=\sigma(X,Y)\,,\, \forall \,Y\in \Xi\, , $$
and, for $\epsilon \geq 0$,  an operator $\mathfrak{ad}_X^\epsilon$ on $\mathcal
S'(\Xi)$
\beq\label{ad-op}
\mathfrak{ad}_X^\epsilon[\psi]
:=\mathfrak{l}_X\,\sharp^\epsilon\,\psi-\psi\,\sharp^\epsilon\,\mathfrak{l}_X\,,
\qquad\forall\, \psi\in\mathscr{S}^\prime(\Xi)\,.
\eeq

\begin{proposition}\label{Gamma*per}
Let us consider a lattice $\Gamma_*\subset\X^*$. If $f$ and $g$ are $\Gamma_*$-periodic symbols, then their magnetic Moyal product is also $\Gamma_*$-periodic.
\end{proposition}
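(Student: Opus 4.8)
The plan is to verify the $\Gamma_*$-periodicity directly on the oscillatory integral \eqref{II.1.5} defining the magnetic Moyal product, exploiting the fact that a translation by an element of $\Gamma_*$ acts only on the momentum variable and leaves both the symplectic phase and the magnetic flux factor untouched.

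First I would recall that, since $\Gamma_*\subset\X^*$, saying that a symbol $F$ is $\Gamma_*$-periodic means $F(x,\xi+\gamma^*)=F(x,\xi)$ for every $\gamma^*\in\Gamma_*$ and every $(x,\xi)\in\Xi$. In particular $f$ and $g$, being bounded together with all their derivatives, belong to $S^0_0(\Xi)$, so that $f\,\sharp^B\,g$ is a well-defined symbol (by the composition theorem, Theorem 2.2 in \cite{IMP1}); it therefore remains only to check the pointwise identity $(f\,\sharp^B\,g)(x,\xi+\gamma^*)=(f\,\sharp^B\,g)(x,\xi)$ for $\gamma^*\in\Gamma_*$.

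Next I would use the first form of the integral in \eqref{II.1.5}: with $X=(x,\xi)$, $Y=(y,\eta)$, $Z=(z,\zeta)$,
\[
\big(f\,\sharp^B\,g\big)(x,\xi)=\pi^{-4}\int_\Xi dY\int_\Xi dZ\,e^{-2i\sigma(Y,Z)}\,e^{-i\int_{T(x,y,z)}B}\,f\big(x-y,\xi-\eta\big)\,g\big(x-z,\xi-\zeta\big)\,.
\]
Replacing $\xi$ by $\xi+\gamma^*$ modifies the integrand only through the last two factors, which become $f\big(x-y,(\xi-\eta)+\gamma^*\big)$ and $g\big(x-z,(\xi-\zeta)+\gamma^*\big)$; by the $\Gamma_*$-periodicity of $f$ and $g$ these equal $f(x-y,\xi-\eta)$ and $g(x-z,\xi-\zeta)$ respectively. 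The oscillatory factor $e^{-2i\sigma(Y,Z)}$ depends only on $(Y,Z)$, and the flux factor $e^{-i\int_{T(x,y,z)}B}$ depends only on $(x,y,z)$ (the triangle $T(x,y,z)$ being a subset of $\X$), so neither is affected by the shift of $\xi$. Hence the whole integral is unchanged, which is exactly the claimed identity.

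There is essentially no serious obstacle here; the only point deserving a word of care is the legitimacy of the above change of variable inside an oscillatory integral, which is handled in the standard way — regularize so as to make the integral absolutely convergent, apply the usual integration-by-parts estimates that control \eqref{II.1.5}, perform the shift, and pass to the limit — or, alternatively, one argues at the level of tempered distributions by testing $\mathfrak{Op}^A(f\,\sharp^B\,g)=\mathfrak{Op}^A(f)\,\mathfrak{Op}^A(g)$ against Schwartz functions and invoking the continuity of $\mathfrak{Op}^A$. Either route makes the argument rigorous and completes the proof.
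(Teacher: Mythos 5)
Your proof is correct and follows exactly the route the paper has in mind: the paper simply states that the claim is evident from \eqref{II.1.5}, i.e.\ that a shift $\xi\mapsto\xi+\gamma^*$ only enters through the arguments of $f$ and $g$, while the symplectic phase and the flux factor (which depends only on $(x,y,z)$) are untouched. Your additional remarks on justifying the shift inside the oscillatory integral are a fine elaboration of the same argument.
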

\noindent The proof is evident by \eqref{II.1.5}.
\begin{remark}\label{Rem-s-adj}
By Theorem 4.1 in \cite{IMP1},  for any real elliptic symbol
$h\in S^m_1(\Xi)_\Gamma$ (with $m>0$) and   for any $A$ in $C^\infty_{\text{\sf
pol}}(\X,\mathbb R^2)$, the operator
$\mathfrak{Op}^A(h)$ has a closure $H^A$ in $L^2(\X)$ that is self-adjoint on a domain
$\mathcal{H}^m_A$ (a {\it magnetic Sobolev space}) and lower semibounded. Thus
we can define its resolvent $(H^A-\z)^{-1}$ for any $\z\notin\sigma(H^A)$ and
Theorem 6.5 in \cite{IMP2} states that it exists a symbol
$r^B_\z(h)\in S^{-m}_1(\Xi)$ such that 
$$
(H^A-\z)^{-1}\ =\ \mathfrak{Op}^A(r^B_\z(h))\,.
$$
\end{remark}
\begin{remark}
For symbols of class $S^0_\rho(\Xi)$ with $\rho\in[0,1]$, we have seen that the
associated magnetic pseudodifferential operator is bounded in
$\mathcal{H}$ and is self-adjoint if and only if its symbol is real. In that
case we can also define its resolvent and the results in \cite{IMP2}, cited
above, show that it is also defined by a symbol of class $S^0_\rho(\Xi)\,$.
\end{remark}
\noindent Using the notation 
\begin{equation}\label{defLambda}
{\Lambda}^A(x,y)=e^{-i\int_{[x,y]}A}\,,
\end{equation}
applying the Stokes theorem we get the identity (see also \eqref{D-Omega'})
\begin{equation}\label{D-Omega}
\Omega^B(x,y,z)=\exp\left\{-i\int_{<x,y,z>}\hspace*{-10pt}
B\right\}={\Lambda}^A(x,y){\Lambda}^A(y,z){\Lambda}
^A(z,x)
\end{equation}

and there exists $C >0$ such that,  for any magnetic field $B$ of class
$BC^\infty(\X)\,$, 
\beq\label{Est-Omega}
\left|\Omega^B(x,y,z)-1\right|\leq C\, \|B\|_\infty
|(y-x)\wedge(z-x)|\,.
\eeq
The above integral of the $2$-form $B=dA$ is taken on the positively oriented
triangle $<x\,,\,y\,,\,z>\,$.

As we are working in a two-dimensional framework, we  use the following
notation for the {\it vector product} of two vectors $u$ and
$v$ in $\mathbb R^2$\,:
\beq\label{Def-wedge}
u\wedge v\ :=\ u_1v_2\,-\,u_2v_1\,.
\eeq
and the $-\pi/2$ rotation of $v$:
\beq\label{Def-bot}
v^\bot:=\big(v_2,-v_1\big)\,.
\eeq

We can make the connection with the `{\it twisted integral kernels}' formalism in
\cite{Ne02}, where for any integral kernel $K\in\mathscr{S}^\prime(\X\times\X)$
one associates a twisted integral kernel
\beq\label{twist-int-kernel}
K^A(x,y)\ :=\ {\Lambda}^A(x,y)\, K(x,y)\,.
\eeq
For any integral
kernel $K\in\mathscr{S}^\prime(\X\times\X)$ we denote by
$\mathcal{I}\text{\sf nt\,}\,K$ its corresponding linear operator from
$\mathscr{S}(\X)$ into $\mathscr{S}^\prime(\X)$: 
$$\big((\mathcal{I}\text{\sf nt\,}\,K)u\big)(\phi)=\int_{\X \times \X} K(x,y)\phi
(x) u(y)\,dx dy\,,\, \forall \phi \in \mathcal S(\X)\,.
$$
Let us recall that there exists a  linear bijection
$\mathfrak{W}:\mathscr{S}^\prime(\Xi)\rightarrow\mathscr{S}^\prime(\X\times\X)$
defined by 
 \beq\label{iulie3}
\big(\mathfrak{W}F\big)(x,y)\ :=\
(2\pi)^{-2}\int_{\X^*}e^{i<\xi,x-y>}F\big(\frac{x+y}{2},\xi\big)\,d\xi\,,
\eeq
such that
$$ \mathfrak{Op}^w(F)=\mathcal{I}\text{\sf
nt}(\mathfrak{W}F)\,.$$
In the magnetic calculus, we have the equality
\beq\label{magn-quant}
\mathfrak{Op}^A(F)\ =\ \mathcal{I}\text{\sf
nt}({\Lambda}^A\,\mathfrak{W}F)\,.
\eeq

As our main operator $H^0$ is a differential operator, we shall have to use instead of formula \eqref{twist-int-kernel} the following commutation relation (valid for any $z\in\X$)
\beq
\Lambda^A(x,z)^{-1}\big(-i\partial_{x_j}\big)\Lambda^A(x,z)\,=\,A_j(x)+a_j(x,z)\,,
\eeq
with 
\beq\label{comm-B-moment}
a_j(x,z)\,:=\,\underset{k}{\sum}(x_k-z_k)\int_0^1B_{jk}\big(z+s(x-z)\big)s\,ds\,.
\eeq

\subsection{The scaling of symbols}\label{SS-scal-symbols}

For any symbol $F\in S^\infty_\rho(\Xi)$ and for any pair  $(\epsilon,\tau)\in[0,+\infty)\times[0,+\infty)$ we define the scaled symbol:
\beq\label{scaled-symb}
F_{(\epsilon,\tau)}(x,\xi):=F(\epsilon x,\tau\xi)\,.
\eeq

\begin{proposition}\label{scaled-norm}
Suppose given
a symbol $ F\in S^m_\rho(\Xi)$, with either $m<0$ and $\rho\in(0,1]$ or
$m <-2$ and $\rho=0$ and a magnetic field
$B\in BC^\infty(\X)$. Then, there exists $p_0>0$ such that for any $p>p_0$ there exists $C_p>0$ such that
$$
\|F_{(\epsilon,\tau)}\|_{B}\,\leq\,C_p\, \nu^{m,\rho}_{(0,p)}\big(F\big)\,,
$$
for any 
$(\epsilon,\tau)\in[0,+\infty)\times[0,+\infty)\,$. \\
In fact $p_0=(2-m)\rho^{-1}$ for the first
case and  $p_0=2$ for the second case\,.
\end{proposition}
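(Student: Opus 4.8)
The plan is to reduce the estimate to a scaling--invariant bound on the integral kernel of $\mathfrak{Op}^A(F_{(\epsilon,\tau)})$. Since $|\Lambda^A(x,y)|=1$, this kernel (see \eqref{magn-quant}) has the same modulus as the Weyl kernel $\mathfrak{W}(F_{(\epsilon,\tau)})$ from \eqref{iulie3}, and as $\|\cdot\|_B$ depends only on $B$, the Schur test gives
$$
\|F_{(\epsilon,\tau)}\|_B\ \le\ \Big(\sup_{x\in\X}\int_\X|\mathfrak{W}(F_{(\epsilon,\tau)})(x,y)|\,dy\Big)^{1/2}\Big(\sup_{y\in\X}\int_\X|\mathfrak{W}(F_{(\epsilon,\tau)})(x,y)|\,dx\Big)^{1/2}\,.
$$
When $\tau=0$ the symbol $F_{(\epsilon,0)}$ is $\xi$--independent, so $\mathfrak{Op}^A(F_{(\epsilon,0)})$ is multiplication by $x\mapsto F(\epsilon x,0)$ and, since $m\le 0$, $\|F_{(\epsilon,0)}\|_B\le\sup_{v\in\X}|F(v,0)|\le\nu^{m,\rho}_{(0,0)}(F)$; hence I may assume $\tau>0$, and by symmetry only the first Schur factor must be controlled.

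For $\tau>0$ the substitution $\eta=\tau\xi$ in \eqref{iulie3} yields
$$
\mathfrak{W}(F_{(\epsilon,\tau)})(x,y)\ =\ \tau^{-2}\,G\Big(\epsilon\tfrac{x+y}{2}\,,\,\tfrac{x-y}{\tau}\Big)\,,\qquad
G(v,w):=(2\pi)^{-2}\int_{\X^*}e^{i\langle\eta,w\rangle}F(v,\eta)\,d\eta\,.
$$
Fixing $x$ and substituting $w=(x-y)/\tau$ (whose Jacobian $\tau^2$ cancels the prefactor $\tau^{-2}$), and then bounding the slow argument of $G$ by its supremum, gives
$$
\sup_{x\in\X}\int_\X|\mathfrak{W}(F_{(\epsilon,\tau)})(x,y)|\,dy\ \le\ \int_\X\sup_{v\in\X}|G(v,w)|\,dw\,,
$$
which is \emph{uniform} in $(\epsilon,\tau)$. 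Thus $\|F_{(\epsilon,\tau)}\|_B\le\int_\X\sup_{v}|G(v,w)|\,dw$ for all admissible $(\epsilon,\tau)$, and it only remains to bound this quantity by $C_p\,\nu^{m,\rho}_{(0,p)}(F)$ --- note that only $\xi$--derivatives of $F$ occur.

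To estimate $I:=\int_\X\sup_{v}|G(v,w)|\,dw$ one integrates by parts in $\eta$. On $\{|w|>1\}$, choosing a coordinate $j$ with $|w_j|\gtrsim|w|$ and integrating by parts $N$ times with $w_j^{-1}\partial_{\eta_j}$ gives $|G(v,w)|\le|w_j|^{-N}\int_{\X^*}|\partial^N_{\eta_j}F(v,\eta)|\,d\eta$; these are derivatives of a single order, so the integrand is $\lesssim\nu^{m,\rho}_{(0,N)}(F)\,\langle\eta\rangle^{m-\rho N}$, integrable once $N>(2+m)\rho^{-1}$, while $\int_{|w|>1}|w_j|^{-N}dw<\infty$ as soon as $N>2$. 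When $\rho=0$, $m<-2$, the $\X^*$--integrals converge already with $N=0$, the contribution of $\{|w|\le1\}$ is bounded directly, and a single one--directional integration by parts of order three suffices, giving $p_0=2$. For $\rho\in(0,1]$, $m<0$, one performs a Littlewood--Paley decomposition $F=\sum_{k\ge0}F_k$ in $\xi$ and rescales $\xi\mapsto 2^k\xi$ in each piece, so that $F_k$ is supported in $|\eta|\sim1$ with quantitative bounds in terms of $\nu^{m,\rho}_{(0,\cdot)}(F)$; the Schur norms $\|\mathfrak{Op}^A(F_k)\|_B$ together with the magnetic Moyal cross--compositions $\overline{F_j}\,\sharp^B\, F_k$ --- which for $|j-k|\ge 2$ have rapidly $|j-k|$--decaying symbols, by repeated integration by parts in \eqref{II.1.5}, because $\overline{F_j}$ and $F_k$ have disjoint $\xi$--supports --- are then summed via the Cotlar--Stein lemma (using also the $L^2$--continuity on $S^0_\rho(\Xi)$, Theorem 3.1 in \cite{IMP1}); balancing the growth of the diagonal terms against the off--diagonal decay produces the threshold $p_0=(2-m)\rho^{-1}$. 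Since every estimate uses $\xi$--seminorms of $F$ only, and the reduction above is uniform in $(\epsilon,\tau)$, the bound holds for all $(\epsilon,\tau)\in[0,+\infty)\times[0,+\infty)$.

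I expect the last step --- proving $L^2$--boundedness of $\mathfrak{Op}^A(F)$ for $F\in S^m_\rho(\Xi)$, $m<0$, controlled by $\xi$--derivative seminorms \emph{only} --- to be the main obstacle. A direct appeal to the magnetic Calder\'on--Vaillancourt estimate is of no use here, since it costs $x$--derivatives of $F$, and under the dilation $x\mapsto\epsilon x$ these generate factors $\epsilon^{|\alpha|}$ that are unbounded as $\epsilon\to+\infty$; this is precisely what forces the kernel/near--orthogonality route above rather than a black--box continuity theorem.
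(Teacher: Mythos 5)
Your opening reduction coincides with the paper's own proof: the Schur--Holmgren test applied to the twisted kernel, the fact that $|\Lambda^A|=1$ so only the Weyl kernel \eqref{iulie3} matters, and the change of variables that scales $(\epsilon,\tau)$ completely out of the Schur norm, leaving a bound by a quantity of the form $\int_\X\sup_v|G(v,w)|\,dw$ involving only $\xi$-seminorms of $F$ (this is the computation \eqref{F1}); your treatment of the case $\rho=0$, $m<-2$ (integration by parts in $\eta$ to produce $\langle w\rangle^{-N}$ decay, $N>2$) is also the paper's. The divergence, and the gap, is the case $\rho\in(0,1]$, $m<0$. There the paper simply invokes Lemma A.4 of \cite{MPR1} (resting on Propositions 1.3.3 and 1.3.6 of \cite{ABG}), which is exactly the statement that $\sup_v\big\|\mathcal F_\xi F(v,\cdot)\big\|_{L^1(\X)}\le C_p\,\nu^{m,\rho}_{(0,p)}(F)$ for $p>(2-m)\rho^{-1}$. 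Note that your one-directional integration by parts only handles $\{|w|>1\}$; for $-2\le m<0$ the delicate region is $\{|w|\le 1\}$, where the integral defining $G$ is merely oscillatory and $\sup_v|G(v,w)|$ blows up like a negative power of $|w|$ as $w\to0$ --- you never estimate this region in that case, and it is precisely what the cited lemma is for.

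Instead you switch to an operator-level Littlewood--Paley/Cotlar--Stein scheme, and as sketched it does not close. First, it quietly abandons the scaled symbol: the cross-term estimates are formulated for the pieces $F_k$ of $F$, not for $(F_k)_{(\epsilon,\tau)}$, so the uniformity in $(\epsilon,\tau)$ --- the entire point of the kernel reduction --- is no longer secured; and the tool you invoke to convert ``rapidly decaying composed symbol'' into operator-norm decay, Theorem 3.1 of \cite{IMP1}, is the magnetic Calder\'on--Vaillancourt bound, which costs $x$-derivatives and hence, after $x\mapsto\epsilon x$, constants unbounded as $\epsilon\to\infty$ --- exactly the obstruction you flag in your own closing paragraph. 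Second, Cotlar--Stein requires $\sup_j\sum_k\|T_j^*T_k\|^{1/2}$ to be uniformly bounded; ``balancing the growth of the diagonal terms against the off-diagonal decay'' is not available, because the diagonal entries alone must stay bounded, and the only $x$-derivative-free ($(\epsilon,\tau)$-uniform) bound you have for a dyadic block, its kernel/Schur norm, grows like $2^{k(2+m-2\rho)}$ whenever $m>2(\rho-1)$. For $\rho=1$ (the only $\rho>0$ actually used in the paper) that same kernel bound decays like $2^{km}$, so a plain triangle inequality over $k$ finishes the proof and no almost-orthogonality is needed at all; for $\rho\in(0,1)$ with $m$ close to $0$ the dyadic kernel norms genuinely grow (test symbols of the form $\langle\xi\rangle^m e^{i\langle\xi\rangle^{1-\rho}}$ show the underlying $L^1$ bound on $\mathcal F_\xi F$ is sharp in this respect), so the scheme as described cannot be repaired by bookkeeping; the honest route is the paper's appeal to the quoted lemma, or the triangle-inequality argument in the regime where the dyadic kernel norms are summable.
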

\begin{proof}
We use the Schur-Holmgren criterion for the integral kernel associated with 
the magnetic quantization of the given symbol, introducing the following
notation for the Schur-Holmgren norm:
$$
\|K\|_{SH}:=\max\left \{\underset{x\in\X}{\sup}\int_\X|K(x,y)|\,dy\, ,\quad \underset{y\in\X}{\sup}\int_\X|K(x,y)|\,dx \right \}\,.
$$
Then we have the Schur estimate:
$$
\big\|\mathscr{I}\text{\sf nt\,}K\big\|_{{ \mathcal
L}(\mathcal{H})}\leq\|K\|_{SH}.
$$
If $A$ is a vector potential for the given
magnetic field $B$ we recall from \eqref{magn-quant}:
$$
\mathfrak{Op}^A(F)=\mathscr{I}\text{\sf
nt}\big({\Lambda}^A\mathfrak{W}F\big)\,.
$$
Thus we obtain the { estimate}:
$$
\big\|\mathfrak{Op}^A(F)\big\|_{{ \mathcal L}(\mathcal{H})}=\big\|
\mathscr{I}\text{\sf
nt}\big({\Lambda}^A\mathfrak{W}F\big)\big\|_{{ \mathcal L}(\mathcal{H})}
\leq
\big\|\big({\Lambda}^A\mathfrak{W}F\big)\big\|_{SH}
=\big\|\mathfrak{W}F\big\|_{SH}\,.
$$
Now using \eqref{iulie3} we can write:
\begin{align*}
\big(\mathfrak{W}F_{(\epsilon,\tau)}\big)(x,y)&=(2\pi)^{-2}\int_{\X^*}e^{i<\xi,
x-y>}F_{(\epsilon,\tau)}\big(\frac{x+y}{2},\xi\big)\,d\xi\\
&=(2\pi)^{-2}\int_{\X^*}
e^ {i<\xi,
x-y>}F\big(\epsilon\frac{x+y}{2},\tau\xi\big)\, d\xi\\
&=(2\pi)^{-2}\tau^{-1}\int_{\X^*}
e^ {i<\xi,\tau^{-1}(
x-y)>}F\big(\epsilon\frac{x+y}{2},\xi\big)\, d\xi\,,
\end{align*}
and thus
\begin{equation}\label{F1}
\begin{array}{ll}
&\big\|\mathfrak{W}F_{(\epsilon,\tau)}\big\|_{SH} =\underset{x\in\X}{\sup}
\int_\X|\mathfrak{W}F_{(\epsilon,\tau)}(x,y)|\, dy \\
&=(2\pi)^{-2}\tau^{-1}\underset{x\in\X}{\sup}
\int_\X\left|\int_{\X^*}
e^ {i<\xi,\tau^{-1}(
x-y)>}F\big(\epsilon\frac{x+y}{2},\xi\big)d\xi\right|\,dy \\
&=(2\pi)^{-2}\tau^{-1}\underset{x\in\X}{\sup}
\int_\X\left|\int_{\X^*}
e^ {i<\xi,(2/\tau)v>}F\big(\epsilon(x-v),\xi\big)d\xi\right|\, dv \\
&=(4\pi)^{-2}\underset{x\in\X}{\sup}
\int_\X\left|\int_{\X^*}
e^ {i<\xi,u>}F\big(\epsilon(x-(\tau/2)u),\xi\big)d\xi\right|\, du \nonumber\\
&
=(4\pi)^{-2}\underset{x\in\X}{\sup}
\int_\X<u>^{-p}\left|\int_{\X^*}
e^ {i<\xi,u>}\left[(\bb1-(1/2)\Delta_\xi)^{p/2}F\right]\big(
\epsilon(x-(\tau/2)
u),\xi\big)d\xi\right|\,du\,.
\end{array}
\end{equation}

Now suppose that $m<-2$ so that for any $x\in\X$ 
and any $(\alpha,\beta)\in\mathbb{N}^2\times\mathbb{N}^2\,,$
$\big(\partial_x^\alpha\partial_\xi^\beta F\big)(x,\cdot)$ is in $L^1(\X^*)$
and
$$
\underset{x\in\X}{\sup}
\big\|\big(\partial_x^\alpha\partial_\xi^\beta
F\big)(x,\cdot)\big\|_{L^1(\X^*)}\leq
C_{\alpha,\beta}\, \nu^{m,0}_{|\alpha|,|\beta|}(F)\,.
$$
Then from \eqref{F1} we deduce that
\begin{align*}
\big\|\mathfrak{W}F_{(\epsilon,\tau)}\big\|_{SH}&\leq(4\pi)^{-2}\underset{
z\in\X }{\sup}\big\|(\bb1-(1/2)\Delta_\xi)^{p/2
}F\big(z,\cdot\big)\big\|_{L^1(\X^*)}\int_\X<u>^{-p}du\\
&\leq C_p\, \nu^{m,0}_{0,p}(F),\quad \forall p>2\,.
\end{align*}
If we have $m<0$ but $\rho\in(0,1]$  we
can use Lemma A.4 in \cite{MPR1}
 (based on Propositions 1.3.3 and 1.3.6 in \cite{ABG}) in order to obtain from
\eqref{F1} the following estimate:
\begin{equation*}
\big\|\mathfrak{W}F_{(\epsilon,\tau)}\big\|_{SH}  \,\leq \, (2\pi)^{-1}\underset{
z\in\X
}{\sup}\big\|\mathcal{F}F\big(z,\cdot\big)\big\|_{L^1(\X)}
\, \leq \,  C_p \, \nu^{m,\rho}_{0,p}(F)\,,
\end{equation*}
for some $p>(2-m)\rho^{-1}\,$.
\end{proof}

\begin{proposition}\label{comp-epsilon-symb}
Let $F\in { S^{m}_\rho(\Xi)}$ and $G\in { S^{p}_\rho(\Xi)}$, with
$m\in\mathbb{R}$, $p\in\mathbb{R}$ and $\rho\in[0,1]$
 and  let $L$ some integral kernel in
$
BC^\infty\big(\X;C^\infty_{\text{\sf pol}}(\X\times\X)\big)$.
Let  $D\subset(0,\infty)\times(0,\infty)$ and $K$  some compact interval in $[0,+\infty)$. For any 
 $(\epsilon,\tau)\in D $, $(\mu_1,\cdots,\mu_4)\in K^4$, let us 
consider the  oscillatory integral:
$$
\begin{array}{l}
\mathcal{L}_{(\mu_1,\mu_2,\mu_3,\mu_4)}^{\epsilon,\tau}\big(F,G\big)(X)\,
\\ \qquad  := \int_{\Xi\times\Xi}e^{-2i\sigma(Y,Z)}
L(\epsilon x,y,z))F\big(\epsilon
x-\mu_1y,\tau\xi-\mu_2\eta\big)G\big(\epsilon x-\mu_3
z,\tau\xi-\mu_4\zeta\big)\,dYdZ\, .
\end{array}
$$
Then there exists a symbol $\T:=\T^{(\mu_1,\mu_2,\mu_3,\mu_4)}$ in
$S^{m+p}_\rho(\X)$,  such that:
$$
\mathcal{L}_{(\mu_1,\mu_2,\mu_3,\mu_4)}^{\epsilon,\tau}\big(F,G\big)\,=\,
\T_{(\epsilon,\tau)}\,.
$$
Moreover, for any  $(m,p,n_1,n_2)$ (with $(n_1,n_2)\in\mathbb{N}\times\mathbb{N}$)\,, there exist some indices $(q_1,p_1,q_2,p_2)$, a map 
$\nu:BC^\infty\big(\X;C^\infty_{\text{\sf
pol}}(\X\times\X)\big)\rightarrow\mathbb{R}_+$,
 and some positive constants
$C_{K}(n_1,n_2,q_1,q_2,p_1,p_2)$, such that:
$$
{\nu^{(m+p),\rho}_{(n_1,n_2)}\left(\mathcal{L}_{(\mu_1,\mu_2,\mu_3,\mu_4)}^{\epsilon,\tau}\big(F,G\big)\right)}\,\leq\,C_{K}(n_1,n_2,q_1,q_2,p_1,
p_2)\,\nu\big(L\big)\,
\nu^{m,\rho}_{(q_1,p_1)}\big(F\big)\,\nu^{p,\rho}_{(q_2,p_2)}\big(G\big),
$$
for any 
 $(\epsilon,\tau)\in D $, $(\mu_1,\cdots,\mu_4)\in K^4$.
\end{proposition}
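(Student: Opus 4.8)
The plan is to reduce the scaled oscillatory integral to the unscaled one by a linear change of variables, and then to produce the symbol estimates by repeated integration by parts in the oscillatory variables. First I would perform the substitution $(y,\eta,z,\zeta)\mapsto(y,\eta,z,\zeta)$ while pulling the scaling out: write $X=(x,\xi)$ and set $\widetilde X=(\epsilon x,\tau\xi)$. A direct inspection of the integrand shows that the dependence on $x$ and $\xi$ enters only through $\epsilon x$ and $\tau\xi$ (the factor $L(\epsilon x,y,z)$, and the two copies of $F$, $G$ evaluated at $\epsilon x-\mu_i(\cdot)$, $\tau\xi-\mu_j(\cdot)$), while the phase $e^{-2i\sigma(Y,Z)}$ does not involve $X$ at all. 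Hence, defining
\[
\T^{(\mu_1,\mu_2,\mu_3,\mu_4)}(x,\xi):=\int_{\Xi\times\Xi}e^{-2i\sigma(Y,Z)}
L(x,y,z)\,F\big(x-\mu_1 y,\xi-\mu_2\eta\big)\,G\big(x-\mu_3 z,\xi-\mu_4\zeta\big)\,dYdZ\,,
\]
one has the identity $\mathcal{L}_{(\mu_1,\dots,\mu_4)}^{\epsilon,\tau}\big(F,G\big)=\T^{(\mu_1,\dots,\mu_4)}_{(\epsilon,\tau)}$ tautologically, once the integral defining $\T$ is shown to converge as an oscillatory integral and to define a symbol of class $S^{m+p}_\rho(\Xi)$. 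So the real content is the symbol estimate on $\T$, uniform in $(\mu_1,\dots,\mu_4)\in K^4$.

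Next I would establish convergence and the seminorm bounds by the standard oscillatory-integral regularization. Using $e^{-2i\sigma(Y,Z)}=\langle\nabla_{Y,Z}\rangle^{-2M}\big[(1-\Delta_{Y,Z})^M e^{-2i\sigma(Y,Z)}\big]$ — more precisely, inserting the two elliptic operators $(1+|y|^2+|z|^2)^{-M}(1-\Delta_{\eta,\zeta})^{M}$ and $(1+|\eta|^2+|\zeta|^2)^{-M}(1-\Delta_{y,z})^{M}$, which each leave $e^{-2i\sigma(Y,Z)}$ invariant — and integrating by parts, one moves derivatives onto $L$, $F$ and $G$ and gains the decay factors $\langle y,z\rangle^{-2M}\langle\eta,\zeta\rangle^{-2M}$. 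Here one uses that $L\in BC^\infty(\X;C^\infty_{\sf pol}(\X\times\X))$, so that derivatives of $L$ in its last two arguments are polynomially bounded, and that $F\in S^m_\rho$, $G\in S^p_\rho$, so that their $x$- and $\xi$-derivatives obey the symbol bounds with the gain $\langle\xi\rangle^{\rho m}$ etc. After choosing $M$ large enough (depending on the polynomial growth of $L$ and on $m,p$) the integral in $(y,z,\eta,\zeta)$ converges absolutely, and the Peetre-type inequality $\langle\xi-\mu_2\eta\rangle^{m}\lesssim\langle\xi\rangle^{m}\langle\eta\rangle^{|m|}$ (uniformly for $\mu_2\in K$ compact) absorbs the shifts into extra powers of $\langle\eta\rangle,\langle\zeta\rangle$ that are killed by the same decay factors. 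Differentiating the expression for $\T$ in $x$ and $\xi$ and repeating the argument gives the estimate
\[
\nu^{(m+p),\rho}_{(n_1,n_2)}\big(\T^{(\mu_1,\dots,\mu_4)}\big)\le C_K(n_1,n_2,q_1,q_2,p_1,p_2)\,\nu(L)\,\nu^{m,\rho}_{(q_1,p_1)}(F)\,\nu^{p,\rho}_{(q_2,p_2)}(G)
\]
with $(q_1,p_1,q_2,p_2)$ determined by $n_1,n_2$ and the regularization order $M$, and $\nu(L)$ a suitable seminorm on $BC^\infty(\X;C^\infty_{\sf pol}(\X\times\X))$ controlling the finitely many derivatives of $L$ that occur. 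Since $\|\T_{(\epsilon,\tau)}\|$-type quantities and seminorms are not what is asserted — only the $S^{m+p}_\rho$ seminorms of the unscaled symbol — no uniformity in $(\epsilon,\tau)$ beyond the identity $\mathcal L=\T_{(\epsilon,\tau)}$ is needed, which is exactly why the statement is phrased this way.

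The main obstacle is purely bookkeeping: keeping the power counting honest through the integration by parts when $\rho<1$ and $m,p$ may be positive, so that one cannot naively bound $\langle\xi-\mu_j\eta\rangle^{\rho m}$ — one must track how many $\xi$-derivatives have been spent, since each spent derivative lowers the available power of $\langle\xi\rangle$ by $\rho$, and one must check that the decay in $\eta,\zeta$ produced by the regularization still dominates the resulting powers $\langle\eta\rangle^{|m|+\cdots}$, $\langle\zeta\rangle^{|p|+\cdots}$ after choosing $M$ sufficiently large depending on $m,p,n_1,n_2$. The case $\rho=0$ with $m$ or $p$ positive is the most delicate: there the $\langle\xi\rangle$-decay is $\langle\xi\rangle^{m+p}$ with no gain from differentiation, and one relies entirely on the $\langle\eta,\zeta\rangle^{-2M}$ decay to make the $\eta,\zeta$-integrals converge, which forces $M>2+ (|m|+|p|)/2$ roughly; this is harmless because $M$ may be taken as large as we please. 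Everything else — the change of variables, the use of $\nu^{m,\rho}_{n,m}$ as defined in Definition \ref{Def-symb}, and the Schur/oscillatory-integral machinery — is exactly as in the proof of the classical composition theorem (Theorem 2.2 in \cite{IMP1}), of which this proposition is the $\epsilon,\tau$-scaled refinement.
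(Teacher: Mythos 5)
Your proposal is correct and follows essentially the same route as the paper's proof: reduce to the unscaled symbol via the tautological identity $\mathcal{L}_{(\mu_1,\mu_2,\mu_3,\mu_4)}^{\epsilon,\tau}\big(F,G\big)=\T_{(\epsilon,\tau)}$, then obtain the $S^{m+p}_\rho$ seminorm bounds by distributing derivatives over $L$, $F$ and $G$, inserting Peetre-type weights, and integrating by parts against $e^{-2i\sigma(Y,Z)}$ with the regularization order taken large enough depending on $m,p,n_1,n_2$ and the polynomial growth of the derivatives of $L$, uniformly for $\mu_j\in K$. This matches the paper's argument step by step, so no further comment is needed.
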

\begin{proof}
If we define:
\begin{equation}\label{(2)}
\T^{(\mu_1,\mu_2,\mu_3,\mu_4)}(X):=\int\limits_{\Xi\times\Xi}e^{-2i\sigma(Y,
Z)}L(x,y,z))\, F\big(
x-\mu_1y,\xi-\mu_2\eta\big)\, G\big(x-\mu_3
z,\xi-\mu_4\zeta\big)\,dYdZ\,,
\end{equation}
as an oscillatory integral
we have 
$$
\mathcal{L}_{(\mu_1,\mu_2,\mu_3,\mu_4)}^{\epsilon,\tau}\big(F,G\big)\,=\,
\T^{(\mu_1,\mu_2,\mu_3,\mu_4)}_{(\epsilon,\tau)}.
$$
Let us estimate the behavior of the expressions
$<\xi>^{-(m+p)+\rho|\beta|}\big(\partial_x^\alpha\partial_\xi^\beta T\big)(x,
\xi)$ appearing in 
the norms on $ S^{m+p}_\rho(\Xi)$ (we take
$|\alpha|=n_1$, $|\beta|=n_2$). A simple calculus shows that we have to estimate
integrals of the form:
$$
\begin{array}{l}\hspace{-0.5cm}
<\xi>^{-(m+p)+\rho|\beta|}\int_{
\Xi\times\Xi}e^{-2i\sigma(Y,Z)}\big(\partial_x^{\alpha_3}
L\big)(x,y,z) \\
\hspace{4cm} \times\ \big(\partial_x^{\alpha_1} \partial_\xi^{\beta_1}F\big)
\big(x-\mu_1 y,\xi-\mu_2\big)\big(\partial_x^{\alpha_2}
\partial_\xi^{\beta_2}\, G\big)\big(x-\mu_3z,\xi-\mu_4\zeta\big)\,dYdZ\,
\\~\\
\qquad\qquad =\int_{
\Xi\times\Xi}e^{-2i\sigma(Y,Z)}\big(\partial_x^{\alpha_3}L\big)
(x,y,z)<\mu_2\eta>^{m-\rho|\beta_1|}<\mu_4\zeta>^{p-\rho|\beta_2|}\
\\\hspace{3cm}
  \times\ <\xi-\mu_2\eta>^{-m+\rho|\beta_1|}\big(\partial_x^{\alpha_1}
\partial_\xi^{\beta_1}F\big)
\big(x-\mu_1
y,\xi-\mu_2\eta\big) 
\\
\hspace{5cm}   \times<\xi-\mu_4\zeta>^{-p+\rho|\beta_2|}\big(\partial_x^{
\alpha_2 }
\partial_\xi^{\beta_2}G\big)\big(x-\mu_3z,\xi-\mu_4\zeta\big)\, dYdZ\,,
\end{array}
$$
where $|\alpha_1+\alpha_2+\alpha_3|= n_1$ and
$|\beta_1+\beta_2|=n_2\,$.\\
By the usual integration by parts using the
exponential $e^{-2i\sigma(Y,Z)}$, we are reduced to integrals of the form:
$$
\begin{array}{l}
\int\limits_{\Xi\times\Xi}<\eta>^{-s_2/2}<\mu_2\eta>^{m-\rho|\beta_1|}
<\zeta>^{-s_3/2}<\mu_4\zeta>^{p-\rho|\beta_2|}
\big(<y><z>\big)^{-(s_1-r(n_1,s_2,s_3))/2} \\
\hspace{1,5cm}
\times\
e^{
-2i\sigma(Y,Z)}\mu_1^{|\theta_1|}\big((1-(\mu_2^2/2)\Delta_\xi)^{s_1/2}
\partial_x^{
\alpha_1+\theta_1}\partial_\xi^{\beta_1}F\big)
\big(x-\mu_1y,\xi-\mu_2\eta\big) \\
\hspace{4cm}
\times\
\mu_3^{\theta_2|}\big((1-(\mu_4^2/2)\Delta_\xi)^{s_1/2}
\partial_x^{\alpha_2+\theta_2}
\partial_\xi^{\beta_2}G\big)\big(x-\mu_2z\,,
\xi-\mu_4\zeta\big)\\
\hspace{7cm}
\times\,\big(\partial_x^{\alpha_3}\partial_y^{\rho_1}\partial_z^{\rho_2}L\big)(x
,y,z)\,dYdZ\,,
\end{array}
$$
where $s_2>d+[m-\rho|\beta_1|]_+\,$, $s_3>d+[p-\rho|\beta_2|]_+\,$,
$|\theta_1+\rho_1|\leq s_3\,,$ and $|\theta_2+\rho_2|\leq s_2\,$.\\

Then  by the
hypothesis on  $L\,$, for
$|\alpha_3|\leq n_1$ there exists  $r(n_1,s_2,s_3)\in\mathbb{N}$ such that
$$ 
\nu(L)\ :=\ \underset{(x,y,z)\in\X^3}{\sup}\,\underset{|\alpha_3|\leq n_1}{\max}
\underset{|\rho_1|\leq s_3}{\max}\underset{|\rho_2|\leq
s_2}{\max}(<y><z>)^{r(n_1,s_2,s_3)}\left|\big(\partial_x^{\alpha_3}\partial_y^{
\rho_1}\partial_z^{\rho_2}L\big)(x,y,z)\right| \,<\,+\infty\,.
$$
With this last choice, we take $s_1>d+r(n_1,s_2,s_3)$
and the proof is finished. 
\end{proof}
\begin{remark}
The following relations hold true:
$$\hspace{-2.2cm}
\partial_{\xi_j}\mathcal{L}_{(\mu_1,\mu_2,\mu_3,\mu_4)}^{\epsilon,\tau}\big(F,G\big)=\tau\left(\mathcal{L}_{(\mu_1,\mu_2,\mu_3,\mu_4)}^{\epsilon,\tau}\big(\partial_{\xi_j}F,G\big)+\mathcal{L}_{(\mu_1,\mu_2,\mu_3,\mu_4)}^{\epsilon,\tau}\big(F,\partial_{\xi_j}G\big)\right)\,,
$$
\begin{multline*}
\partial_{x_j}\mathcal{L}_{(\mu_1,\mu_2,\mu_3,\mu_4)}^{\epsilon,\tau}\big(F,G\big)
\\
=\epsilon\left(\mathcal{L}_{(\mu_1,\mu_2,\mu_3,\mu_4)}^{\epsilon,\tau}\big(\partial_{x_j}F,G\big)+\mathcal{L}_{(\mu_1,\mu_2,\mu_3,\mu_4)}^{\epsilon,\tau}\big(F,\partial_{x_j}G\big)+\mathcal{L}_{(\mu_1,\mu_2,\mu_3,\mu_4)}^{\epsilon,\tau,(j)}\big(F,G\big)\right)\,,
\end{multline*}
where 
\begin{align*}
\hspace{-1cm} \mathcal{L}_{(\mu_1,\mu_2,\mu_3,\mu_4)}^{\epsilon,\tau,(j)}\big(F,G\big)(X)&\numberthis \label{1}
\\ &\hspace{-2cm}: =\int_{\Xi\times\Xi}e^{-2i\sigma(Y,Z)}
(\partial_{x_j}L)(\epsilon x,y,z))F\big(\epsilon
x-\mu_1y,\tau\xi-\mu_2\eta\big)G\big(\epsilon x-\mu_3
z,\tau\xi-\mu_4\zeta\big)\, dYdZ\, .
\end{align*}
\end{remark}

\begin{corollary}\label{corA3}
For any bounded subsets $$\mathcal{B}_m\subset { S^{m}_\rho(\Xi)},
\;\mathcal{B}_p\subset { S^{p}_\rho(\Xi)},\; 
{\mathcal{B}_L\subset BC^\infty\big(\X;C^\infty_{\text{\sf
pol}}(\X\times\X)\big)}$$ and for any compact set  $K\subset[0,+\infty)$, the set
$$
\big\{\mathcal{L}_{(\mu_1,\mu_2,\mu_3,\mu_4)}^{1,1}\big(F,
G\big)\,\mid\,F\in\mathcal{B}_m,\,G\in\mathcal{B}_p,\,L\in\mathcal{B}_L,\,
\mu_j\in K,j=1,2,3,4
\big\}
$$ 
is bounded in $S^{m+p}_\rho(\X)$.
\end{corollary}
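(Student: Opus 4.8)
The statement is an immediate consequence of Proposition \ref{comp-epsilon-symb}, and the only thing to verify is that the estimate supplied there is uniform over the bounded families and over the parameters $\mu_j$. The plan is as follows. First I would apply Proposition \ref{comp-epsilon-symb} with $D=\{(1,1)\}$ and with a compact interval containing the given compact set $K$ (any compact subset of $[0,+\infty)$ lies in some $[0,R]$); this already yields that for every $F\in\mathcal B_m$, $G\in\mathcal B_p$, $L\in\mathcal B_L$ and $\mu_1,\dots,\mu_4\in K$ the oscillatory integral $\mathcal L^{1,1}_{(\mu_1,\mu_2,\mu_3,\mu_4)}(F,G)$ is a well-defined symbol in $S^{m+p}_\rho(\Xi)$, so that only the boundedness of the resulting family remains to be checked.

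Next I would unwind the quantitative part of that proposition. Fix $(n_1,n_2)\in\mathbb N\times\mathbb N$. Proposition \ref{comp-epsilon-symb} provides indices $(q_1,p_1,q_2,p_2)$, a seminorm $\nu$ on $BC^\infty\big(\X;C^\infty_{\text{\sf pol}}(\X\times\X)\big)$, and a constant $C_K(n_1,n_2,q_1,q_2,p_1,p_2)$, all depending only on $m$, $p$, $n_1$, $n_2$ and on the compact set $K$ — and in particular \emph{not} on the individual $F$, $G$, $L$ nor on the $\mu_j$ — such that
$$\nu^{(m+p),\rho}_{(n_1,n_2)}\!\big(\mathcal L^{1,1}_{(\mu_1,\mu_2,\mu_3,\mu_4)}(F,G)\big)\ \leq\ C_K(n_1,n_2,q_1,q_2,p_1,p_2)\;\nu(L)\;\nu^{m,\rho}_{(q_1,p_1)}(F)\;\nu^{p,\rho}_{(q_2,p_2)}(G)$$
for all $\mu_j\in K$. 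Since $\mathcal B_m$ is bounded in $S^m_\rho(\Xi)$ we have $M_F:=\sup_{F\in\mathcal B_m}\nu^{m,\rho}_{(q_1,p_1)}(F)<\infty$, and likewise $M_G:=\sup_{G\in\mathcal B_p}\nu^{p,\rho}_{(q_2,p_2)}(G)<\infty$ and $M_L:=\sup_{L\in\mathcal B_L}\nu(L)<\infty$; hence
$$\sup\Big\{\,\nu^{(m+p),\rho}_{(n_1,n_2)}\!\big(\mathcal L^{1,1}_{(\mu_1,\mu_2,\mu_3,\mu_4)}(F,G)\big)\ :\ F\in\mathcal B_m,\ G\in\mathcal B_p,\ L\in\mathcal B_L,\ \mu_j\in K\,\Big\}\ \leq\ C_K\,M_L\,M_F\,M_G\ <\ \infty.$$
As $(n_1,n_2)$ was arbitrary, every defining seminorm of $S^{m+p}_\rho(\Xi)$ is uniformly bounded on the set in question, which is exactly the assertion.

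There is no real obstacle at the level of the corollary: the entire analytic work — the integrations by parts against $e^{-2i\sigma(Y,Z)}$, the use of the polynomial control of $L$ through the weights $(<y><z>)^{\pm r}$, and the bookkeeping of which derivatives of $F$, $G$ and $L$ appear — has already been done in the proof of Proposition \ref{comp-epsilon-symb}. The one point that must be handled with care, and which I have highlighted above, is that the auxiliary indices $(q_1,p_1,q_2,p_2)$, the kernel seminorm $\nu$, and the constant $C_K$ are all independent of $F$, $G$, $L$ and of the $\mu_j$ ranging over the compact set $K$; this is precisely what allows passing to the supremum over the bounded families while preserving finiteness.
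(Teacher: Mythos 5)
Your proposal is correct and coincides with the paper's (implicit) argument: the corollary is stated as an immediate consequence of Proposition \ref{comp-epsilon-symb}, whose estimate is uniform in $(\mu_1,\dots,\mu_4)\in K^4$ with indices, kernel seminorm $\nu$ and constant depending only on $(m,p,n_1,n_2)$ and $K$, so taking suprema over the bounded families $\mathcal{B}_m$, $\mathcal{B}_p$, $\mathcal{B}_L$ gives the boundedness seminorm by seminorm. Your remark that boundedness of $\mathcal{B}_L$ in $BC^\infty\big(\X;C^\infty_{\text{\sf pol}}(\X\times\X)\big)$ is what makes $\sup_{L\in\mathcal{B}_L}\nu(L)$ finite is exactly the point that makes the passage to the supremum legitimate.
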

The following statement (a simplified version
of Proposition 8.1 in \cite{IMP2}) is a simple corollary of Proposition
\ref{comp-epsilon-symb}.
\begin{proposition}\label{comp-symb-gen}
 Let $\phi\in S^m_\rho(\Xi)$, $\psi\in S^p_\rho(\Xi)$ and
$\theta\in BC^\infty(\mathcal{X};C^\infty_{\text{\sf pol}}(\mathcal{X}^2))$. Then
$$
X \mapsto \mathfrak{L}(\theta;\phi,\psi)(X):=\int_\Xi dY\int_\Xi dZ\,
e^{ -2i\sigma(Y,Z)}\, \theta(x,y,z)\,\phi(X-Y)\,\psi(X-Z)
$$
defines a symbol of class $S^{m+p}_\rho(\Xi)$ and the
map
$$
S^m_\rho(\Xi)\times
S^p_\rho(\Xi)\ni(\phi,\psi)\mapsto\mathfrak{L}(\theta;\phi,\psi)\in
S^{m+p}_\rho(\Xi)
$$
is continuous. If $\phi$ or $\psi$ belongs to
$\mathscr{S}(\Xi)$ then $\mathfrak{L}(\theta;\phi,\psi)$ also belongs to
$\mathscr{S}(\Xi)$ and  the map
$(\phi,\psi)\mapsto\mathfrak{L}(\theta;\phi,\psi)$ considered in the
corresponding spaces is jointly continuous
with respect to the associated Fr\'{e}chet topologies.
\end{proposition}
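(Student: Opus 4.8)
The plan is to obtain Proposition \ref{comp-symb-gen} as a direct specialization of Proposition \ref{comp-epsilon-symb}. First I would observe that, upon setting $\epsilon=\tau=1$ and $\mu_1=\mu_2=\mu_3=\mu_4=1$ and identifying the integral kernel $L$ with $\theta$, the oscillatory integral $\mathcal{L}^{1,1}_{(1,1,1,1)}(\phi,\psi)$ of Proposition \ref{comp-epsilon-symb} coincides verbatim with $\mathfrak{L}(\theta;\phi,\psi)$ (since $L(\epsilon x,y,z)$ becomes $\theta(x,y,z)$ at $\epsilon=1$). Hence the first assertion of Proposition \ref{comp-epsilon-symb} produces a symbol $\T\in S^{m+p}_\rho(\Xi)$ with $\mathfrak{L}(\theta;\phi,\psi)=\T_{(1,1)}=\T$, which is precisely the claim that $\mathfrak{L}(\theta;\phi,\psi)\in S^{m+p}_\rho(\Xi)$.

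For the continuity of the bilinear map on $S^m_\rho(\Xi)\times S^p_\rho(\Xi)$, I would invoke the quantitative estimate already contained in Proposition \ref{comp-epsilon-symb}: for every $(n_1,n_2)$ there are indices $(q_1,p_1,q_2,p_2)$, a seminorm $\nu$ on $BC^\infty\big(\X;C^\infty_{\text{\sf pol}}(\X\times\X)\big)$, and a constant $C$ such that
\[
\nu^{(m+p),\rho}_{(n_1,n_2)}\big(\mathfrak{L}(\theta;\phi,\psi)\big)\leq C\,\nu(\theta)\,\nu^{m,\rho}_{(q_1,p_1)}(\phi)\,\nu^{p,\rho}_{(q_2,p_2)}(\psi)\,.
\]
Since $(\phi,\psi)\mapsto\mathfrak{L}(\theta;\phi,\psi)$ is bilinear and each defining seminorm of the image is dominated by a product of a seminorm of $\phi$ and a seminorm of $\psi$, this yields joint continuity with respect to the Fréchet topologies.

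Finally, for the Schwartz statement, assume $\phi\in\mathscr{S}(\Xi)$, the case $\psi\in\mathscr{S}(\Xi)$ being symmetric. Then $\phi\in S^{-M}_\rho(\Xi)$ for every $M$, so the already-proved part gives $\mathfrak{L}(\theta;\phi,\psi)\in S^{-\infty}(\Xi)$, i.e. rapid decay in $\xi$ together with all derivatives. What remains is rapid decay in the spatial variable $x$, which is invisible at the level of the classes $S^s_\rho(\Xi)$ because $\theta$ need only be bounded, not decaying, in its first argument. To get it I would re-enter the oscillatory integral defining $\mathfrak{L}$, insert a weight $\langle x\rangle^{M'}$, and use $\langle x\rangle\leq C\langle x-y\rangle\langle y\rangle$ to transfer it onto the first argument of $\phi$ — where any power is absorbed by the Schwartz decay of $\phi$ — and onto powers of $\langle y\rangle$, which, together with the at-most-polynomial growth of $\theta$ in $y$, are killed by integrating by parts in $\zeta$ against the phase $e^{-2i\sigma(Y,Z)}$ (and symmetrically in $\eta$ for $\langle z\rangle$), exactly as in the proof of Proposition \ref{comp-epsilon-symb}. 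This shows $\mathfrak{L}(\theta;\phi,\psi)\in\mathscr{S}(\Xi)$ together with a product bound on its Schwartz seminorms, hence joint continuity of the bilinear map into $\mathscr{S}(\Xi)$. The only genuine work beyond quoting Proposition \ref{comp-epsilon-symb} is this last spatial-decay bookkeeping, and that is the step I expect to be the main — though entirely routine — obstacle.
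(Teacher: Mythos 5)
Your proposal is correct and follows essentially the same route as the paper: the symbol-class statement and the continuity are read off as the $(\epsilon,\tau)=(1,1)$, $\mu_j=1$ case of Proposition \ref{comp-epsilon-symb}, and the Schwartz statement is handled by rerunning that proof with the weight $\langle x\rangle^{M'}$ split via $\langle x\rangle^p\leq C_p\langle x-y\rangle^p\langle y\rangle^p$, which is exactly the inequality the paper invokes for this step.
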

\begin{proof}
While the first conclusion is a particular case of Proposition~\ref{comp-epsilon-symb}\,, the second one follows  as in the
proof of Proposition \ref{comp-epsilon-symb}\,, noticing that for any $p>0$ we
have the inequality $$<x>^p\leq C_p<x-y>^p<y>^p\,.$$
\end{proof}

\subsection{Weak magnetic fields}\label{SS-wmf}

We are interested in {\it weak} magnetic
fields,
controlled by a small parameter $\epsilon\in[0,\epsilon_0]$ for some $\epsilon_0 >0$, that we intend to treat as a small  perturbation of the situation without magnetic field. In this subsection
we work under the following hypothesis, which is more general than the situation
considered in \eqref{Bek}, noticing that we can write
 $B_{\epsilon,\kappa}=\epsilon B^0_\epsilon$ for
$B^0_\epsilon(x):=B_0+\kappa B(\epsilon x)$.

\begin{hypothesis}\label{Hyp-V-magn}
The family of magnetic fields
$\{B_\epsilon\}_{\epsilon\in[0,\epsilon_0]}$ has  the form
$
B_\epsilon:=\epsilon\,B^0_\epsilon,
$
with $B^0_\epsilon\in BC^\infty\big(\X\big)$ uniformly with respect to
$\epsilon\in[0,\epsilon_0]$.
\end{hypothesis}

To simplify
the notation, when dealing with weak
magnetic fields, the indexes (or the exponents) $A_\epsilon$ or $B_\epsilon$
shall be replaced by $\epsilon$ and we shall
use the notation $\|\cdot\|_{\epsilon}$ instead of $\|\cdot\|_{B_\epsilon}$.

Let us first consider the difference between the magnetic and the usual Moyal
products, for a weak magnetic field.

\begin{proposition}\label{L.II.1.1}
For $\epsilon\in[0,\epsilon_0]$ there exists a continuous
application
$r_\epsilon:{S^m_\rho(\Xi)}\times
{S^{m^\prime}_\rho(\Xi)}\rightarrow
{S^{m+m^\prime-2\rho}_\rho(\Xi)}$ such that:
\begin{equation}\label{II.1.3}
a\,  \,\sharp^{\epsilon}\, \, b\,=\,a\,  \sharp^{0} \, b\,+\,\epsilon \, 
r_\epsilon(a,b)\,,\qquad \forall(a,b)\in
S^m_\rho(\Xi)\times S^{m^\prime}_\rho(\Xi)\,.
\end{equation}
\end{proposition}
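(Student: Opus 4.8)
\textbf{Proof strategy for Proposition \ref{L.II.1.1}.}
The plan is to write the magnetic Moyal product in the oscillatory-integral form \eqref{II.1.5}, isolate the twisting factor $e^{-i\int_{T(x,y,z)}B_\epsilon}$, and Taylor-expand it to first order around the non-magnetic case. Since $B_\epsilon=\epsilon B^0_\epsilon$ with $B^0_\epsilon$ uniformly in $BC^\infty(\X)$ by Hypothesis \ref{Hyp-V-magn}, the flux through the triangle $T(x,y,z)$ is $O(\epsilon)$ on compact sets, so one can write
$$
e^{-i\int_{T(x,y,z)}B_\epsilon}\,=\,1\,-\,i\epsilon\int_{T(x,y,z)}B^0_\epsilon\;\int_0^1 e^{-is\epsilon\int_{T(x,y,z)}B^0_\epsilon}\,ds\,.
$$
Substituting this into \eqref{II.1.5}, the term $1$ produces exactly $a\,\sharp^0\,b$, and the remainder produces $\epsilon$ times an integral which I will call $r_\epsilon(a,b)$.

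The core of the argument is then to show that $r_\epsilon$ maps $S^m_\rho(\Xi)\times S^{m'}_\rho(\Xi)$ continuously into $S^{m+m'-2\rho}_\rho(\Xi)$, uniformly in $\epsilon\in[0,\epsilon_0]$. First I would parametrize the triangle integral as $\int_{T(x,y,z)}B^0_\epsilon = \sum_{j,k}c_{jk}\,y_j z_k\int_0^1\!\!\int_0^1 (\cdots) B^0_\epsilon(\text{affine combination of }x,y,z)$ — i.e. exactly the structure already used in \eqref{comm-B-moment} and the computations in the main text — so that the twisting kernel, after the change of variables that moves $x$ to the integration variables, takes the form $\theta(x,y,z)$ with $\theta\in BC^\infty(\X;C^\infty_{\sf pol}(\X^2))$ uniformly in $\epsilon$ (the polynomial growth in $y,z$ coming from the prefactor $y_jz_k$, the boundedness in all derivatives from $B^0_\epsilon\in BC^\infty$). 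At this point the remainder $r_\epsilon(a,b)$ is precisely of the form $\mathfrak{L}(\theta;\phi,\psi)$ treated in Proposition \ref{comp-symb-gen} (or the more refined Proposition \ref{comp-epsilon-symb}), except that the two extra factors $y_j z_k$ supplied by the triangle flux improve the decay, which is what upgrades the order from $m+m'$ to $m+m'-2\rho$: the two powers of $z$ (resp. $y$) can be integrated by parts against the oscillatory phase $e^{-2i\sigma(Y,Z)}$ to yield two $\xi$-derivatives (resp. $x$-derivatives) on $\psi$ (resp.\ $\phi$), each $\xi$-derivative gaining a factor $\langle\xi\rangle^{-\rho}$. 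Continuity in $(a,b)$ and uniformity in $\epsilon$ then follow from the seminorm estimates in Proposition \ref{comp-epsilon-symb} applied with the appropriate choice of $\mu$'s.

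The main obstacle I expect is bookkeeping rather than conceptual: one must check that the $s$-integral $\int_0^1 e^{-is\epsilon\int_T B^0_\epsilon}\,ds$ does not spoil the symbol estimates. This factor is bounded by $1$ in modulus but its $x,y,z$-derivatives bring down powers of $\epsilon\int_T B^0_\epsilon$, i.e. extra factors of $\epsilon\,y_j z_k\,(\text{bounded})$; since each such factor carries an explicit $\epsilon\le\epsilon_0$ and the polynomial growth in $(y,z)$ is exactly of the type tolerated by $C^\infty_{\sf pol}(\X^2)$, these contributions are harmless and in fact are absorbed into $\theta$. So the whole modified twisting kernel stays in a fixed bounded subset of $BC^\infty(\X;C^\infty_{\sf pol}(\X^2))$ as $\epsilon$ ranges over $[0,\epsilon_0]$, and Corollary \ref{corA3} (or Proposition \ref{comp-symb-gen}) delivers the claimed boundedness in $S^{m+m'-2\rho}_\rho(\Xi)$ together with the continuity of $r_\epsilon$. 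Finally, the preservation of the Schwartz class when $a$ or $b$ is Schwartz is immediate from the corresponding clause in Proposition \ref{comp-symb-gen}.
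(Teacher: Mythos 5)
Your strategy is exactly the paper's: Taylor-expand the magnetic phase in \eqref{II.1.5} to first order, writing the flux through the triangle as $\epsilon\,(y\wedge z)\,F^\circ_\epsilon(x,y,z)$ with $F^\circ_\epsilon$ bounded in $BC^\infty$ uniformly in $\epsilon$ (Hypothesis \ref{Hyp-V-magn}), let the term $1$ reproduce $a\,\sharp^0\,b$, spend the monomial $y_jz_k$ in an integration by parts against $e^{-2i\sigma(Y,Z)}$, and feed the resulting kernel into Proposition \ref{comp-symb-gen} (equivalently Proposition \ref{comp-epsilon-symb}/Corollary \ref{corA3}); the remark that derivatives of the $s$-integral factor only produce polynomially bounded, $\epsilon$-uniform contributions absorbed into $\theta\in BC^\infty(\X;C^\infty_{\text{\sf pol}}(\X\times\X))$ is also exactly how the paper proceeds.

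One piece of your bookkeeping is wrong as stated and, taken literally, would not deliver the order $m+m'-2\rho$: you claim the powers of $z$ (resp.\ $y$) yield ``two $\xi$-derivatives on $\psi$ (resp.\ $x$-derivatives on $\phi$).'' Since $\sigma(Y,Z)=\langle\eta,z\rangle-\langle\zeta,y\rangle$, the factor $z_k$ is converted by integration by parts in $\eta$ into a $\xi$-derivative falling on the \emph{first} symbol $a(x-y,\xi-\eta)$, and the factor $y_j$ by integration by parts in $\zeta$ into a $\xi$-derivative falling on the \emph{second} symbol $b(x-z,\xi-\zeta)$; no $x$-derivatives appear, and this is what produces the paper's remainder \eqref{II.1.6.a} containing $\partial_\xi a(X-Y)\wedge\partial_\xi b(X-Z)$, i.e.\ one $\langle\xi\rangle^{-\rho}$ gain from each factor and hence the class $S^{m+m'-2\rho}_\rho(\Xi)$. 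With your version (an $x$-derivative on $\phi$) one would only reach $S^{m+m'-\rho}_\rho(\Xi)$, since $x$-derivatives give no decay in $\xi$ for these classes. After this correction the argument closes and coincides with the paper's proof.
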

\begin{proof}
Let us introduce the notation:
$$
F_\epsilon(x,y,z):= \epsilon\, (y\wedge z) \, F^\circ_\epsilon(x,y,z)\,,
$$
with 
$$
F^\circ_\epsilon(x,y,z):=\left(\int_{-1/2}^{1/2}ds\int_{-1/2}^s\,dt \, B^0_{\epsilon}
\big(x+sy+tz\big)\right)\,,
$$
and notice that $F^\circ_\epsilon$ belongs
to $BC^\infty\big(\mathbb{R}^6\big)$ uniformly 
with respect to
$\epsilon\in[0,\epsilon_0]\,$. 

Then, using the Taylor expansion at first order for the exponential $\exp\{-4iF_\epsilon\}$, we can write:
\begin{align}\label{II.1.6}
&\frac{1}{\epsilon}(a \,\sharp^{\epsilon}\, b\,-\,
a \, \sharp^{0}\, b) (X)\\
&\quad =\,-\frac{4i}{(2\pi)^{4}\epsilon}\int_{\Xi\times\Xi}e^
{-2i\sigma(Y,Z)}\left(\int_0^1e^{-4itF_{\epsilon}(x,y,z)}
dt\right)F_\epsilon(x,y,z) \, a(X-Y)\,b(X-Z)\,dYdZ\,.\nonumber
\end{align}
Integrating by parts in the variables $\eta\,$, $\zeta\,$, we obtain:
\begin{align}\label{II.1.6.a}
\big[r_\epsilon(a,b)\big](X) \, 
=\,-\frac{4i}{\pi^{4}}\int_{\Xi\times\Xi}\,dY
dZ\, e^
{-2i\sigma(Y,Z)}\left(\int_0^1e^{-4itF_{\epsilon}(x,y,z)}
dt\right) \times \qquad  \\
\quad\quad\qquad  \times F^\circ_{\epsilon}(x,y,z)\, \Big( \partial_\xi a (X-Y)
\wedge
\partial_ \xi b (X-Z) \Big)\,.\nonumber 
\end{align}
The proof can be completed by using Proposition
\ref{comp-symb-gen} with $$\theta(x,y,z)=\left(\int_0^1e^{-4itF_{\epsilon}(x,y,z)}
dt\right)F^\circ_{\epsilon}(x,y,z)\,.$$
\end{proof}

\begin{remark}
Using the $N$'th order Taylor expansion of the
exponential and similar arguments, we obtain that, for any
$N\in\mathbb{N}^*$\,,
\beq\label{iulie5}
a\,  \,\sharp^{\epsilon}\, \, b=a\,  \sharp^{0} \, b+\underset{1\leq k\leq
N-1}{\sum}\epsilon^kc^{(k)}_\epsilon(a,b)+\epsilon^N\rho^{(N)}_\epsilon(a,b)\,,
\eeq
with $  c^{(k)}_\epsilon(a,b)\in S^{m+m^\prime-2k\rho}_1(\Xi)$ and
$\rho^{(N)}_\epsilon(a,b)\in S^{m+m^\prime-2N\rho}_1(\Xi)$ 
uniformly for $\epsilon\in[0,\epsilon_0]\,$.
\end{remark}

With Remark
\ref{Rem-s-adj} in mind, let us consider, for an
elliptic real symbol $h\in S^m_1(\Xi)$ with $m>0\,$,  the self-adjoint
extension of $\mathfrak{Op}^\epsilon(h)$ denoted by
$H^\epsilon$,  whose  domain is given by the magnetic
Sobolev space of order $m>0\,$.

\begin{proposition}\label{rez-dev}
For  $\z\in \rho(H^\epsilon)$,  let $r^\epsilon_\z(h)\in {S^{-m}_1(\Xi)}$
denote
the symbol of \break  $(H^\epsilon-\z)^{-1}$. For any compact subset $K$ of $\mathbb{C}\setminus\sigma(H)$, there exists
$\epsilon_0>0$ such that:
\begin{enumerate}
\item $K\subset\mathbb{C}\setminus\sigma(H^\epsilon)$\,, for $\epsilon\in[0,\epsilon_0]$ .
\item The following expansion is convergent in $\mathcal L (\mathcal H)$ uniformly with respect to 
$(\epsilon,\z)\in[0,\epsilon_0]\times K$:
$$
r^\epsilon_\z(h)\ =\
\underset{n\in\mathbb{N}}{\sum}\epsilon^nr_n(h;\epsilon,\z)\,,\quad
r_0(h;\epsilon,\z)=r^0_\z(h)\, ,\quad r_n(h;\epsilon,\z)\in S^{-(m+2n)}_1(\Xi)\,.
$$
\item The map $K\ni \z\mapsto
r^\epsilon_\z(h)\in
{S^{-m}_1(\Xi)}$ is a ${S^{-m}_1(\Xi)}$-valued  analytic function, uniformly in
$\epsilon\in[0,\epsilon_0]\,$.
\end{enumerate}
\end{proposition}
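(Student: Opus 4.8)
\textbf{Proof proposal for Proposition \ref{rez-dev}.}

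The plan is to expand the magnetic resolvent $r^\epsilon_\z(h)$ as a Neumann-type series around the non-magnetic resolvent $r^0_\z(h)$, exploiting the perturbative expansion of the magnetic Moyal product established in \eqref{iulie5} of Proposition \ref{L.II.1.1}. First I would fix a compact set $K\subset\mathbb{C}\setminus\sigma(H)$ and recall that for $\z\in K$ the symbol $r^0_\z(h)\in S^{-m}_1(\Xi)$ satisfies $(h-\z)\,\sharp^0\, r^0_\z(h)=r^0_\z(h)\,\sharp^0\,(h-\z)=1$. Using \eqref{iulie5} with $N=1$ (or higher, as needed), one computes
$$
(h-\z)\,\sharp^\epsilon\, r^0_\z(h)\ =\ 1\,+\,\epsilon\, c_\epsilon\big(h-\z,r^0_\z(h)\big)\ =:\ 1\,+\,\epsilon\, t^\epsilon_\z\,,
$$
where, by Proposition \ref{comp-symb-gen} (or the last remark in Proposition \ref{L.II.1.1}), the family $\{t^\epsilon_\z\}$ is bounded in $S^{-2}_1(\Xi)$ uniformly for $(\epsilon,\z)\in[0,\epsilon_0]\times K$, and depends analytically on $\z$ since $r^0_\z(h)$ does (standard resolvent analyticity, cf.\ Remark \ref{Rem-s-adj}). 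By the Calderon--Vaillancourt-type bound (Theorem 3.1 in \cite{IMP1}), the operator norm of $\mathfrak{Op}^\epsilon(\epsilon\, t^\epsilon_\z)$ is $O(\epsilon)$ uniformly on $K$, hence for $\epsilon_0$ small enough $1+\epsilon\, t^\epsilon_\z$ is invertible for the magnetic Moyal product with inverse $\sum_{k\geq 0}(-\epsilon)^k\big(t^\epsilon_\z\big)^{\sharp^\epsilon k}$; this immediately gives point (1), namely $K\subset\mathbb{C}\setminus\sigma(H^\epsilon)$ for $\epsilon\in[0,\epsilon_0]$, and the identity
$$
r^\epsilon_\z(h)\ =\ \Big(\underset{k\geq 0}{\sum}(-\epsilon)^k\big(t^\epsilon_\z\big)^{\sharp^\epsilon k}\Big)\,\sharp^\epsilon\, r^0_\z(h)\,.
$$

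Next, for point (2), I would substitute the expansion \eqref{iulie5} of $\sharp^\epsilon$ into each term $\big(t^\epsilon_\z\big)^{\sharp^\epsilon k}\sharp^\epsilon r^0_\z(h)$ and collect powers of $\epsilon$. Each factor $t^\epsilon_\z$ already carries an explicit $\epsilon$ in front (so $\big(t^\epsilon_\z\big)^{\sharp^\epsilon k}$ contributes $\epsilon^k$ times a symbol in $S^{-2k}_1(\Xi)$), and each magnetic Moyal product introduces further powers of $\epsilon$ with symbols of correspondingly lower order by \eqref{iulie5}. Reindexing by the total power $n$ of $\epsilon$, one obtains $r^\epsilon_\z(h)=\sum_{n\in\mathbb{N}}\epsilon^n r_n(h;\epsilon,\z)$ with $r_0(h;\epsilon,\z)=r^0_\z(h)$ and $r_n(h;\epsilon,\z)\in S^{-(m+2n)}_1(\Xi)$, the order count following because the leading magnetic correction lowers the order by $2$ (each factor $t^\epsilon_\z$ is of order $-2$ relative to the scale, and composition with $\sharp^\epsilon$ corrections only helps). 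Uniform convergence in $\mathcal L(\mathcal H)$ on $[0,\epsilon_0]\times K$ follows from the uniform $S^{-2}_1$-bounds together with the continuity of the Moyal composition (Proposition \ref{comp-symb-gen}) and the operator-norm bound by a symbol seminorm, after possibly shrinking $\epsilon_0$ so that the geometric series converges. Finally, point (3) --- analyticity of $\z\mapsto r^\epsilon_\z(h)$ as an $S^{-m}_1(\Xi)$-valued map, uniformly in $\epsilon$ --- comes from the fact that $\z\mapsto r^0_\z(h)$ and $\z\mapsto t^\epsilon_\z$ are $S$-valued analytic on $K$, that the magnetic Moyal product is continuous on the relevant symbol classes, and that a uniformly convergent series of analytic functions is analytic; one can also argue directly via the resolvent identity $r^\epsilon_\z-r^\epsilon_{\z'}=(\z-\z')\,r^\epsilon_\z\,\sharp^\epsilon\,r^\epsilon_{\z'}$ to get the complex derivative as an $S^{-2m}_1(\Xi)$-valued object, a fortiori $S^{-m}_1(\Xi)$-valued.

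The main obstacle I anticipate is the careful bookkeeping in the reindexing step of point (2): one must verify that when the $\epsilon$-expansion \eqref{iulie5} of each $\sharp^\epsilon$ is inserted into the $k$-fold product and everything is regrouped by powers of $\epsilon$, the resulting coefficient symbols $r_n(h;\epsilon,\z)$ genuinely land in $S^{-(m+2n)}_1(\Xi)$ with seminorms bounded uniformly in $(\epsilon,\z)$, and that only finitely many of the (infinitely many) product terms contribute to each fixed power $\epsilon^n$. This is essentially a convergence-of-formal-series argument controlled by the order-lowering property of the Moyal corrections, and is routine once one keeps track of orders, but it is the place where a genuine estimate rather than a soft argument is required. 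The invertibility step and the analyticity statements, by contrast, are direct consequences of the continuity of the magnetic calculus and the Calderon--Vaillancourt bound already quoted in the excerpt.
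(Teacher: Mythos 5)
For points (1) and (2) your route is essentially the paper's: the identity \eqref{II.1.9}, i.e. $(h-\z)\,\sharp^\epsilon\, r^0_\z(h)=1+\epsilon\, r_\epsilon\big(h,r^0_\z(h)\big)$ with remainder bounded uniformly on $[0,\epsilon_0]\times K$, followed by a Neumann inversion as in \eqref{rez-dev-corr}, giving $r_n(h;\epsilon,\z)=(-1)^n\,r^0_\z(h)\,\sharp^\epsilon\,\big(r_\epsilon(h,r^0_\z(h))\big)^{\sharp^\epsilon n}$. Two corrections, both minor. First, the calculus is noncommutative: since you use the left-sided identity $(h-\z)\,\sharp^\epsilon\, r^0_\z(h)=1+\epsilon t^\epsilon_\z$, the Neumann series must multiply $r^0_\z(h)$ on the right, $r^\epsilon_\z(h)=r^0_\z(h)\,\sharp^\epsilon\sum_k(-\epsilon)^k(t^\epsilon_\z)^{\sharp^\epsilon k}$, not on the left as you wrote (or else start from the right-sided identity $r^0_\z(h)\,\sharp^\epsilon(h-\z)=1+\epsilon\,\tilde t^{\,\epsilon}_\z$). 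Relatedly, a one-sided inverse only gives a right inverse of $H^\epsilon-\z$; before concluding (1) you should either invoke self-adjointness for real $\z\in K$ or build the left inverse as well (the paper instead quotes spectral stability results from \cite{IP14}, \cite{AMP}, \cite{CP-1}). Second, the reindexing step you single out as the main obstacle is unnecessary: the statement allows $r_n(h;\epsilon,\z)$ to depend on $\epsilon$, so there is no need to re-expand each $\sharp^\epsilon$ via \eqref{iulie5} and regroup powers; the asserted convergence is only in $\mathcal L(\mathcal H)$ and follows from the Calderon-Vaillancourt bound together with the explicit factor $\epsilon^n$.

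The genuine gap is point (3). Analyticity of $\z\mapsto r^\epsilon_\z(h)$ as an $S^{-m}_1(\Xi)$-valued map, uniformly in $\epsilon$, does not follow from ``a uniformly convergent series of analytic functions is analytic'': your series converges in operator norm, not in the Fr\'echet topology of $S^{-m}_1(\Xi)$, and one cannot get seminorm-wise convergence with a single $\epsilon_0$ because each magnetic composition costs a constant, so the seminorms of $(t^\epsilon_\z)^{\sharp^\epsilon n}$ may grow geometrically in $n$ at a rate depending on the seminorm. The fallback via the resolvent identity has the same problem: to differentiate the difference quotient in the symbol topology you already need continuity of $\z\mapsto r^\epsilon_\z(h)$ in $S^{-m}_1(\Xi)$ with bounds uniform in $\epsilon$, which is precisely what must be proved. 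The paper closes this by the Beals-type commutator criterion (Theorem 5.2 in \cite{IMP2}): the topology of $S^{-m}_1(\Xi)$ is equivalently generated by the seminorms \eqref{equiv-seminorms}, i.e. operator norms of weighted iterated commutators, and via the identity \eqref{ad-rez}, the resolvent equation and Propositions 3.6 and 3.7 of \cite{IMP2} these quantities are controlled uniformly for $(\epsilon,\z)\in[0,\epsilon_0]\times K$, yielding the maps \eqref{z-cont} and hence the claimed uniform analyticity. Some argument of this kind, converting operator-norm information into symbol-seminorm control, is missing from your proposal and cannot be replaced by the soft limiting argument you describe.
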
 

\begin{proof}
The first point follows from the spectral stability (see Corollary 1.2 in \cite{IP14} or Theorem
1.4 in \cite{AMP} or Theorem 3.1 in \cite{CP-1} for a more precise result).

For the last two statements we start from the analyticity in norm of the
application
$$\mathbb{C}\setminus\sigma(H^\epsilon)\ni\z\mapsto
(H^\epsilon-\z)^{-1}= \mathfrak{Op}^\epsilon\big(r^\epsilon_\z(h)\big)\in
\mathcal L (\mathcal H).$$
In order to obtain a control for the topology of
${S^{-m}_1(\Xi)}$ we recall Theorem~5.2
in \cite{IMP2}.  Using \eqref{ad-op} one shows that for any
$\epsilon\in[0,\epsilon_0]$ and any $\z\notin\sigma(H^\epsilon)$ 
\beq\label{ad-rez}
\mathfrak{ad}_X^\epsilon[r^\epsilon_\z(h)]
=-r^\epsilon_\z(h)\,\sharp^\epsilon\,\mathfrak{ad}
_X^\epsilon[h]\,\sharp^\epsilon\, r^\epsilon_\z(h)\,.
\eeq
Using the resolvent equation:
$$
r^\epsilon_\z(h)=r^\epsilon_i(h)+(i-\z)r^\epsilon_i(h)r^\epsilon_\z(h)\,,
$$
 and Propositions 3.6 and 3.7 from \cite{IMP2} we easily prove that 
for any pair of natural numbers $(p,q)\in\mathbb{N}\times\mathbb{N}$ and
any families of points $\{u_1,\ldots,u_p\}\subset\X$ and
$\{\mu_1,\ldots,\mu_q\}\subset\X^*$, the applications:
\beq\label{z-cont}
K\ni\z\mapsto\mathfrak{s}^\epsilon_{m+q}\,\sharp^\epsilon\,\big(\mathfrak
{ad}_{u_1}^\epsilon\cdots\mathfrak{ad}_{u_p}^\epsilon\mathfrak{ad}_{\mu_1}
^\epsilon\cdots\mathfrak{ad}_{\mu_q}^\epsilon[r^\epsilon_\z(h)]\big)\in
{S^0_0(\Xi)}
\eeq
are well defined, bounded and uniformly continuous for the norm
$\|\cdot\|_{\epsilon}$ for any $\epsilon\in[0,\epsilon_0]$.

The second point follows by noticing that Proposition \ref{L.II.1.1}
implies the equality
\begin{equation}\label{II.1.9}
1\,=\,(h-\z)\, \sharp^0\, r^0_\z(h)\,=\,(h-\z) \,\sharp^{\epsilon}\, 
r^0_\z(h) - \epsilon\,  r_\epsilon\big(h,r^0_\z(h)\big)\,,
\end{equation}
where the family
$\{r_\epsilon\big(h,r^0_\z(h)\big)\}_{\epsilon\in[0,\epsilon_0]}$ is a
bounded
subset in $S^{-m}_1(\Xi)\,$.  

We conclude that for some $\epsilon_0>0\,$,
$1+\epsilon r_\epsilon(h,r^0_\z(h))$ defines an invertible magnetic
operator for any $\epsilon\in[0,\epsilon_0]$
and its inverse has a symbol $s^\epsilon(\z)$ given as the limit of the
following norm convergent series:
\beq\label{rez-dev-corr}
s^\epsilon(\z):=\underset{n\in\mathbb{N}}{\sum}\big(-\epsilon
r_\epsilon(h,r^0_\z(h))\big)^{\,\sharp^\epsilon\, n}\ \in\
S^0_1(\Xi)\,.
\eeq
This clearly gives us the expansion in point (2) of the theorem with
$$
r_n(h;\epsilon,\z):=(-1)^n\, r^0_\z(h)\,\sharp^\epsilon\,\big(r_\epsilon(h,
r^0_\z(h))\big)^{\,\sharp^\epsilon\, n}\in S^{-(m+2n)}_1(\Xi)\,.
$$

In order to control the uniform continuity  with respect to $\epsilon\in[0,\epsilon_0]$
 of the application in \eqref{z-cont} let us notice that
$$
r^\epsilon_\z(h)-r^\epsilon_{\z'}
(h)=(\z'-\z)\, r^\epsilon_\z(h)\,\sharp^\epsilon\, r^\epsilon_{\z'}(h)\,,
$$
and that for any $\z\in K$ the family of symbols
$\{r^\epsilon_\z(h)\}_{\epsilon\in[0,\epsilon_0]}$ is a bounded set in
$S^{-m}_1(\Xi)$ due to the uniform convergence of the series
expansion
obtained at point (2).
\end{proof}

Associated with the series expansion of the symbol $r^\epsilon_\z(h)$ given in
Proposition \ref{rez-dev}\,, we shall also use the notation 
\beq\label{rez-dev-1}\widetilde{r^\epsilon}_{\z,n}(h):=\underset{n+1\leq
k}{\sum}\epsilon^kr_k(h;\epsilon,\z)\in  S^{-(m+2n+2)}_1(\Xi)\,.
\eeq

\begin{remark}\label{rem-est-rest}
Having in mind \eqref{II.1.3} we
conclude that there exist $\epsilon_0 >0$ and $C>0$ such that, for $\epsilon \in [0, \epsilon_0]\,$,  the remainder $\widetilde{r^\epsilon}_{\z,n}$ has the
following properties:
\begin{enumerate}
 \item
$\widetilde{r^\epsilon}_{\z,n}\,=\,\epsilon^{n+1}\, \widetilde{\widetilde{
r^\epsilon}}_{\z,n}\,$, where $\widetilde{\widetilde{
r^\epsilon}}_{\z,n}\in {S^{-m}_1(\Xi)}_\Gamma$ uniformly for 
$\epsilon\in[0,\epsilon_0)\,$.
\item
$\|h\,\sharp^\epsilon\,\widetilde{r^\epsilon}_{\z,n}\|_{\epsilon}\leq
C\, \epsilon^{n+1}\,.
$
\end{enumerate}
\end{remark}

\subsection{The slowly varying magnetic fields}
\label{SS-sl-var-m-field}

\subsubsection{Some properties of the pseudodifferential calculus in a constant magnetic field.}\label{SS-const-m-field}

In this case, for symbols independent of $x$, some interesting particularities have to be pointed out. One can also mention in this context the pseudo-calculus developed for other purposes in 
 \cite{BGH}  which is applied in the magnetic context in  Helffer-Sj\"ostrand \cite{HS4}.
 
Considering a constant magnetic field $B_0\,$, we notice that 
$\omega^{B^0}(x,y,z)=\exp\{-2iB^0(y\wedge z)\}$ and thus for $\phi$ and
$\psi$  in $\mathcal S$, we can write
\begin{align*}
\big(\phi  \,\sharp^{B_0} \,  \psi\big)(x,\xi)&= \numberthis \label{17Sept} \\
&\hspace{-2cm}=\pi^{-4}\int_{\X*}\int_{\X*}\left(\int_\X\int_\X
e^{-2i<\eta,z>}e^{2i<\zeta,y>}e^{-2iB^0(y\wedge z)}\,d^2y\,
d^2z\right)\phi(\xi-\eta)\psi(\xi-\zeta)\,d^2\eta\,d^2\zeta\,.
\end{align*}
We may compute the Fourier transform of $e^{-2iB^0(y\wedge z)}$ in $\mathcal S'$ using Theorem 7.6.1 in \cite{Ho1} and get that, in the sense of tempered distributions
$$
\int_\X\int_\X
e^{-2i<\eta,z>}e^{2i<\zeta,y>}e^{-2iB^0(y\wedge z)}\,d^2y\,d^2z=\left(\frac{\pi}{2B^0}\right)^2e^{
(i/B^0)(\eta \wedge \zeta)}.
$$
Thus we conclude that
\beq \label{mag-comp}
\begin{array}{ll}
\big(\phi\,\sharp^{B_0}
\,\psi\big)(x,\xi)& =\left(\frac{1}{2B^0\pi}\right)^2\int_{\X^*
} \int_ {\X^*}
e^{(i/B^0)(\eta\wedge \zeta)}\phi(\xi-\eta)\psi(\xi-\zeta)\,d^2\eta\,
d^2\zeta\\
&
=\left(\frac{1}{2\pi}\right)^2\int_{\X^*}\int_{\X^*}
e^{i(\eta\wedge \zeta))}\phi(\xi-\sqrt{B^0}\eta)\psi(\xi-\sqrt{B^0}\zeta)\,
d^2\eta\,d^2\zeta\,.
\end{array}
\eeq
Using now a Taylor expansion of some order $N\in\mathbb{N}^*$ in
\eqref{17Sept}, we obtain
\beq\label{f-B^01}
\begin{array}{ll}
\big(\phi\,\sharp^{B_0} \,\psi\big)(x,\xi)& =\phi(\xi)\psi(\xi)
\\ &  \qquad \qquad  +\underset{1\leq
p\leq
N-1}{\sum}(-2i)^p(B^0)^{p}\underset{|\alpha|=p}{\sum}(\alpha!)^{-1}
\big(\partial^\alpha_\xi\phi\big)(\xi)\big(\partial^\alpha_{\xi^\bot}
\psi\big)(\xi)\\
& \qquad  \qquad\qquad  \qquad \qquad +\, (2\pi)^{-2}(-2iB^0)^N r_N( \phi,\psi, B_0)(\xi) \,,
\end{array}
\eeq 
with
\begin{align*}
r_N(\phi,\psi,B_0) (\xi)&\numberthis\label{reste1}
\\ &\hspace{-2cm}\quad :=
\underset{|\alpha|=N}{\sum}
(\alpha!)^{-1}\hspace*{-10pt}\int\limits_{\X^*\times\X^*}
\hspace*{-10pt}e^{i\eta\wedge \zeta} \left(\int\limits_0^1(1-s)^{N-1}\,
\big(\partial^\alpha_\xi\phi\big)(\xi-\sqrt
{sB^0}\eta) \big(\partial^\alpha_{\xi^\bot}\psi\big)(\xi-\sqrt{sB^0}\zeta)\,
ds \right)\, d^2\eta\, d^2\zeta \,.
\end{align*}
In writing the remainder $r_N$ we have used \eqref{mag-comp} with $B^0$ replaced by $sB^0$
(with $s\in[0,1]$)\,.

{From  these formulas and the results in \cite{IMP2} concerning the
functional calculus for
a magnetic pseudodifferential self-adjoint operator in $L^2(\X)$, we directly deduce the following statement.
\begin{proposition}\label{const-X}
For a constant magnetic field,  the subspace 
generated by tempered distributions which are  constant with respect to  $\X$  is stable for the magnetic Moyal product. \\
Moreover,  if a self-adjoint magnetic pseudodifferential
operator $H$ in $L^2(\X)$ has a  constant symbol with respect to $\X$,
then all the operators $f(H)$  obtained by functional calculus  have the same property.
\end{proposition}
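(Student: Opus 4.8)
\textbf{Proof proposal for Proposition \ref{const-X}.}

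The plan is to establish both statements by exploiting the explicit Fourier-representation \eqref{mag-comp} of the constant-field Moyal product together with the resolvent-and-functional-calculus machinery already available in \cite{IMP2}. First I would prove the stability of the $\X$-independent subspace. Let $\phi$ and $\psi$ be tempered distributions on $\Xi$ which are constant with respect to the first variable $x\in\X$, i.e. of the form $\phi(x,\xi)=\phi(\xi)$, $\psi(x,\xi)=\psi(\xi)$. For Schwartz symbols this is immediate from formula \eqref{mag-comp}: the right-hand side manifestly does not depend on $x$, since the kernel $e^{(i/B^0)(\eta\wedge\zeta)}$ and the factors $\phi(\xi-\sqrt{B^0}\eta)$, $\psi(\xi-\sqrt{B^0}\zeta)$ involve only momentum variables. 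For general tempered distributions one passes to the limit using the density of $\mathscr{S}(\X^*)$ in $\mathscr{S}'(\X^*)$ and the separate continuity of the magnetic Moyal product on $\mathscr{S}'(\Xi)$ (recalled after Proposition 3.5 in \cite{MP1}); equivalently one checks directly from \eqref{II.1.5} with $\omega^{B^0}(x,y,z)=\exp\{-2iB^0(y\wedge z)\}$ that the oscillatory integral defining $\phi\,\sharp^{B_0}\,\psi$ is translation-invariant in $x$, so the product of two $x$-independent symbols is again $x$-independent. This proves the first assertion.

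For the second assertion, let $H=\mathfrak{Op}^{A^0}(h)$ be a self-adjoint magnetic pseudodifferential operator whose symbol $h$ is constant in $x$, with $A^0$ a vector potential for the constant field $B_0$ (for instance the transverse gauge). The key input from \cite{IMP2} (see Remark \ref{Rem-s-adj} and Theorem 6.5 there) is that for any $\z$ in the resolvent set of $H$ the resolvent $(H-\z)^{-1}$ is again a magnetic pseudodifferential operator, $(H-\z)^{-1}=\mathfrak{Op}^{A^0}(r^{B_0}_\z(h))$, with $r^{B_0}_\z(h)$ in the appropriate H\"ormander class. Since $r^{B_0}_\z(h)$ is characterized by the Moyal identities $(h-\z)\,\sharp^{B_0}\,r^{B_0}_\z(h)=1=r^{B_0}_\z(h)\,\sharp^{B_0}\,(h-\z)$, and the constant symbol $1$ is $x$-independent while $h-\z$ is $x$-independent, the uniqueness of the Moyal inverse together with the stability just proved forces $r^{B_0}_\z(h)$ to be $x$-independent as well. (Concretely, one may check that if $r$ solves $(h-\z)\,\sharp^{B_0}\,r=1$ then so does the $x$-translate $\tau_v r$ for every $v$, by translation invariance of $\sharp^{B_0}$; uniqueness of the inverse then gives $\tau_v r=r$.) Thus resolvents of $H$ have $x$-independent symbols.

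The passage from resolvents to general $f(H)$ is then the final step. I would use the standard representations of the functional calculus in terms of the resolvent: for $f$ in a suitable class one writes $f(H)$ via a Helffer--Sj\"ostrand almost-analytic formula $f(H)=-\tfrac{1}{\pi}\int_{\mathbb C}\bar\partial\tilde f(\z)\,(H-\z)^{-1}\,L(d\z)$, or for bounded continuous $f$ via Stone--Weierstrass approximation by polynomials in $(H-\z_0)^{-1}$ and $(H-\bar\z_0)^{-1}$; in either case $f(H)$ is a norm-limit (or an integral in operator norm) of operators whose symbols are $x$-independent, and since the map $\mathfrak{Op}^{A^0}$ is a topological isomorphism onto $\mathcal L(\mathscr S(\X);\mathscr S'(\X))$ and the subspace of $x$-independent symbols is closed in $\mathscr S'(\Xi)$, the limiting symbol is again $x$-independent. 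Invoking the results of \cite{IMP2} on the functional calculus ensures that $f(H)$ is genuinely a magnetic pseudodifferential operator so that ``its symbol'' makes sense, and the above shows this symbol is constant in $\X$. The main obstacle here is purely bookkeeping: making sure the class of admissible $f$ and the mode of convergence (operator norm versus symbol topology) are matched so that ``closedness of the $x$-independent subspace'' can legitimately be applied; once the functional calculus of \cite{IMP2} is taken as a black box, the argument is short.
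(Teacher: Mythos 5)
Your argument is correct and follows essentially the same route as the paper, which simply remarks that the statement is "directly deduced" from the constant-field composition formulas \eqref{17Sept}--\eqref{mag-comp} (whose right-hand side is manifestly $x$-independent) together with the functional calculus results of \cite{IMP2}. Your additional details — translation invariance of $\sharp^{B_0}$ in $x$ plus uniqueness of the Moyal inverse to handle the resolvent, then a Helffer--Sj\"ostrand (or resolvent-approximation) representation combined with the closedness of the $x$-independent subspace of $\mathscr{S}^\prime(\Xi)$ and the isomorphism property of $\mathfrak{Op}^A$ — are a legitimate way of making that deduction precise and introduce no gap.
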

Motivated by the above results we shall introduce the following classes of
symbols which are  constant along the directions in $\X$:
\begin{equation}\label{S-circ}
{S^{m}_\rho(\Xi)}^\circ\,:=\,{S^{m}_\rho(\Xi)}\cap\left(\mathbb C \otimes
\mathscr{S}^\prime(\X^*)\right)\,.
\end{equation}

\subsubsection{The class of slowly varying  symbols.}
\label{A-cl-sl-var}
The interest in working with slowly varying magnetic fields comes from
the fact that their derivatives of order $k\in\mathbb{N}$ produce a factor
$\epsilon^k$. In order to systematically keep track of this property it will be
useful to consider the following
class of $\epsilon$-indexed families of symbols, replacing  the classes
${S^{m}_\rho(\Xi)}^\circ$  introduced in \eqref{S-circ}. 
Let us recall the definition of {\it a scaled symbol}
in \eqref{scaled-symb}; here we shall
consider the situation $\tau=1$ and $\epsilon\searrow0$.

\begin{definition}\label{S-epsilon}
For any $(m,\rho)\in\mathbb{R}\times[0,1]$ and for some $\epsilon_0>0$, we  denote by  ${S^{m}_\rho(\Xi)}^\bullet$ the  families of symbols $\{F^\epsilon\}_{\epsilon\in[0,\epsilon_0]}$
satisfying the following properties:
\begin{enumerate}
 \item $F^\epsilon\in
{S^{m}_\rho(\Xi)}\,, \forall\epsilon\in[0,\epsilon_0]$\,;
\item $\exists\underset{\epsilon\searrow0}{\lim}F^\epsilon := F^0\in
{S^{m}_\rho(\Xi)}^\circ$ in the topology of ${S^{m}_\rho(\Xi)}$\,;
\item $\forall(\alpha,\beta)\in\mathbb{N}^2\times\mathbb{N}^2,\ \exists
C_{\alpha\beta}>0$ such that
\beq\label{est-epsilon-symb}
\underset{\epsilon\in(0,\epsilon_0]}
{\sup}\,\epsilon^{-|\alpha|}
\left\|\partial^\alpha_x\partial^\beta_\xi
F^\epsilon\right\|_\infty\leq C_{\alpha\beta}\,.
\eeq
\end{enumerate}
${S^{m}_\rho(\Xi)}^\bullet$ is endowed with the topology defined by the
seminorms indexed by $(p,q)\in \mathbb N^2$\,,
$$ 
F^\bullet \mapsto \tilde{\nu}^{m,\rho}_{p,q}(F^\bullet)\,:=\,\underset{\epsilon\in[0,
\epsilon_0]}{\sup}\,\epsilon^{-p}\underset{|\alpha|=p}{\sum}\,\underset{|\beta|=q
}{\sum}\underset{(x,\xi)\in\Xi}{\sup}<\xi>^{-(m-q\rho)}
\left|\big(\partial_x^\alpha\partial_\xi^\beta F^\epsilon\big)(x,\xi)\right|\,.
$$
\end{definition}

\begin{remark}\label{rem-sl-var-class}
Defining $\widetilde{F}^\epsilon(x,\xi):=F^\epsilon(\epsilon^{-1}x,\xi)$, it is
easy to see that $\{F^\epsilon\}_{\epsilon\in[0,\epsilon_0]}\subset
{S^{m}_\rho(\Xi)}$ belongs to ${S^{m}_\rho(\Xi)}^\bullet$ if and only if it is
of the form
$
F^\epsilon(x,\xi) = \widetilde{F}^\epsilon(\epsilon
x,\xi)=\widetilde{F}^\epsilon_{(\epsilon,1)}(x,\xi)
$
for some bounded family
$\{\widetilde{F}^\epsilon\}_{\epsilon\in[0,\epsilon_0]}\subset
{S^{m}_\rho(\Xi)}$
verifying the condition (for the  topology on
${S^{m}_\rho(\Xi)}^\circ$)
$$
\exists
\underset{\epsilon\searrow0}{\lim}\widetilde{F}^\epsilon(0,\cdot) := F^0\in
{S^{m}_\rho(\Xi)}^\circ.
$$
\end{remark}
Let us notice that
for a slowly varying magnetic field of the form \eqref{Bek} we have:
\beq\label{sv-magnetic-phase}
\omega^{\kappa\epsilon B_{\epsilon}}(x,y,z)\,=\,\exp\left\{
-4i\kappa\epsilon(y\wedge z)\int_{-1/2}^{1/2}ds\int_{-1/2}^sdt\,B
\big(\epsilon
x+\epsilon(2sy+2tz)\big)\right\}\,.
\eeq
{\sloppy
\begin{proposition}\label{epsilon-comp-symb}  Let $B_{\epsilon,\kappa}(x)$  be a magnetic field
of the form \eqref{Bek}. If $f^\bullet\in {S^{m}_\rho(\Xi)}^\bullet$ and $g^\bullet\in
{S^{p}_\rho(\Xi)}^\bullet$, then
$\{f^\epsilon \, \sharp^{B_{\epsilon,\kappa}} g^\epsilon\}_
{\epsilon\in[0,\epsilon_0]}$
belongs to $S^{m+p}_\rho(\Xi)^\bullet$ uniformly with respect to
$\kappa\in[0,1]\,$.
\end{proposition}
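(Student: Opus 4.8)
\textbf{Proof plan for Proposition \ref{epsilon-comp-symb}.}
The plan is to reduce the statement to a direct application of the scaled composition estimate of Proposition \ref{comp-epsilon-symb}, using Remark \ref{rem-sl-var-class} to strip off the slow scaling in the $x$-variable. First I would write, by Remark \ref{rem-sl-var-class}, $f^\epsilon = \widetilde f^\epsilon_{(\epsilon,1)}$ and $g^\epsilon = \widetilde g^\epsilon_{(\epsilon,1)}$ for bounded families $\{\widetilde f^\epsilon\}\subset S^m_\rho(\Xi)$ and $\{\widetilde g^\epsilon\}\subset S^p_\rho(\Xi)$ admitting limits in $S^m_\rho(\Xi)^\circ$, resp.\ $S^p_\rho(\Xi)^\circ$, as $\epsilon\searrow 0$. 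Then I would insert the explicit magnetic phase \eqref{sv-magnetic-phase} into the integral \eqref{II.1.5} defining $f^\epsilon\,\sharp^{B_{\epsilon,\kappa}}\,g^\epsilon$, and perform the change of variables that moves the slow scaling inside: writing the magnetic flux factor as $\omega^{B_{\epsilon,\kappa}}(x,y,z)=L(\epsilon x,y,z)$ with
$$
L(x,y,z):=\exp\Big\{-4i\,(y\wedge z)\Big(\epsilon B_0\!\int_{-1/2}^{1/2}\!\!ds\!\int_{-1/2}^s\!\!dt\;1\;+\;\kappa\epsilon\!\int_{-1/2}^{1/2}\!\!ds\!\int_{-1/2}^s\!\!dt\,B\big(x+2s\epsilon y+2t\epsilon z\big)\Big)\Big\}.
$$
Here the first, constant-field, contribution is harmless; the point is that the second, slowly varying, contribution depends on $x$ only through $\epsilon x$, and that $L$ together with all its $x$-derivatives is bounded with polynomial growth in $(y,z)$, uniformly in $(\epsilon,\kappa)\in[0,\epsilon_0]\times[0,1]$ — i.e.\ $L$ belongs (uniformly) to $BC^\infty\big(\X;C^\infty_{\text{\sf pol}}(\X\times\X)\big)$, which is exactly the hypothesis needed on the integral kernel in Proposition \ref{comp-epsilon-symb}.

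With this observation, $f^\epsilon\,\sharp^{B_{\epsilon,\kappa}}\,g^\epsilon$ is precisely an oscillatory integral of the form $\mathcal{L}^{\epsilon,1}_{(1,1,1,1)}\big(\widetilde f^\epsilon,\widetilde g^\epsilon\big)$ in the notation of Proposition \ref{comp-epsilon-symb} (with $D=(0,\epsilon_0]\times\{1\}$ and $K=\{1\}$), so that proposition produces a symbol $\T^\epsilon\in S^{m+p}_\rho(\Xi)$ with $f^\epsilon\,\sharp^{B_{\epsilon,\kappa}}\,g^\epsilon=\T^\epsilon_{(\epsilon,1)}$ and with the seminorm bound
$$
\nu^{(m+p),\rho}_{(n_1,n_2)}\big(\T^\epsilon\big)\;\leq\;C(n_1,n_2,\dots)\,\nu(L)\,\nu^{m,\rho}_{(q_1,p_1)}\big(\widetilde f^\epsilon\big)\,\nu^{p,\rho}_{(q_2,p_2)}\big(\widetilde g^\epsilon\big)
$$
uniformly in $(\epsilon,\kappa)$. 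Boundedness of $\{\widetilde f^\epsilon\}$ and $\{\widetilde g^\epsilon\}$ and uniform boundedness of $\nu(L)$ then give that $\{\T^\epsilon\}$ is bounded in $S^{m+p}_\rho(\Xi)$, which by Remark \ref{rem-sl-var-class} is the defining condition for $\{f^\epsilon\,\sharp^{B_{\epsilon,\kappa}}\,g^\epsilon\}$ to lie in $S^{m+p}_\rho(\Xi)^\bullet$, uniformly in $\kappa\in[0,1]$. Verifying the scaled derivative bounds \eqref{est-epsilon-symb} directly amounts to the same estimate together with the chain rule (each $\partial_x$ acting on $\T^\epsilon_{(\epsilon,1)}$ pulls out a factor $\epsilon$), exactly as recorded in the Remark following Proposition \ref{comp-epsilon-symb}.

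It remains to check the last requirement in Definition \ref{S-epsilon}, namely the existence of the limit $\lim_{\epsilon\searrow 0}(f^\epsilon\,\sharp^{B_{\epsilon,\kappa}}\,g^\epsilon)$ in $S^{m+p}_\rho(\Xi)$ and its membership in $S^{m+p}_\rho(\Xi)^\circ$. For this I would pass to the limit $\epsilon\searrow 0$ in the oscillatory integral: the kernel $L(\epsilon x,y,z)$ converges, in $BC^\infty\big(\X;C^\infty_{\text{\sf pol}}(\X\times\X)\big)$ on compacts, to the $x$-independent constant kernel $\omega^{B_0}(0,y,z)=\exp\{-2iB_0(y\wedge z)\}$, while $\widetilde f^\epsilon\to f^0$ and $\widetilde g^\epsilon\to g^0$ in $S^m_\rho$, resp.\ $S^p_\rho$; by the continuity of the map $(\phi,\psi,L)\mapsto \mathcal{L}^{1,1}_{(1,1,1,1)}(\phi,\psi)$ asserted in Proposition \ref{comp-epsilon-symb} and Corollary \ref{corA3}, the limit is $f^0\,\sharp^{B_0}\,g^0$, which by Proposition \ref{const-X} is indeed constant in $\X$, hence in $S^{m+p}_\rho(\Xi)^\circ$. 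The main (and only real) obstacle is the bookkeeping in the first step: one must carefully justify the change of variables that isolates the slow dependence into $L(\epsilon x,\cdot,\cdot)$ while keeping the oscillatory factor $e^{-2i\sigma(Y,Z)}$ intact, and check that the constant-field part of the phase does not spoil the $BC^\infty$-with-polynomial-growth bound on $L$; once the integral is recognized as an instance of $\mathcal{L}^{\epsilon,1}_{(1,1,1,1)}$, everything else is an immediate citation of Proposition \ref{comp-epsilon-symb}, Corollary \ref{corA3} and Proposition \ref{const-X}.
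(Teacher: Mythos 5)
Your overall route coincides with the paper's: write $f^\epsilon=\widetilde f^\epsilon_{(\epsilon,1)}$, $g^\epsilon=\widetilde g^\epsilon_{(\epsilon,1)}$ via Remark \ref{rem-sl-var-class}, recognize $f^\epsilon\,\sharp^{B_{\epsilon,\kappa}}\,g^\epsilon$ as a scaled oscillatory integral whose kernel is the magnetic phase written as $L(\epsilon x,y,z)$, invoke Proposition \ref{comp-epsilon-symb} to get uniform boundedness in $S^{m+p}_\rho(\Xi)$ together with the scaled derivative estimates \eqref{est-epsilon-symb}, and then obtain condition (2) of Definition \ref{S-epsilon} by continuity of the oscillatory-integral map. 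One bookkeeping slip: since the arguments occurring in the integral are $\widetilde f^\epsilon(\epsilon x-\epsilon y,\xi-\eta)$ and $\widetilde g^\epsilon(\epsilon x-\epsilon z,\xi-\zeta)$, the correct instance of Proposition \ref{comp-epsilon-symb} is $\mathcal{L}^{\epsilon,1}_{(\epsilon,1,\epsilon,1)}\big(\widetilde f^\epsilon,\widetilde g^\epsilon\big)$ (the $\mu_j$ are then $\epsilon$ or $1$, all in a fixed compact subset of $[0,+\infty)$, which the proposition allows), not $\mathcal{L}^{\epsilon,1}_{(1,1,1,1)}$.

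The more substantive error is your identification of the $\epsilon\searrow 0$ limit, which contradicts your own formula for $L$: for the field \eqref{Bek} the constant part is $\epsilon B_0$, not $B_0$, so the entire exponent in $L$ carries an overall factor $\epsilon$ and $L(0,y,z)\to 1$ in $BC^\infty\big(\X;C^\infty_{\text{\sf pol}}(\X\times\X)\big)$ as $\epsilon\searrow 0$, not to $\exp\{-2iB_0(y\wedge z)\}$. Consequently the limit of $f^\epsilon\,\sharp^{B_{\epsilon,\kappa}}\,g^\epsilon$ is the pointwise product $f^0(\xi)\,g^0(\xi)$, exactly as in the paper, and not the constant-field Moyal product $f^0\,\sharp^{B_0}\,g^0$; the appeal to Proposition \ref{const-X} is then superfluous, a pointwise product of $x$-independent symbols being trivially in $S^{m+p}_\rho(\Xi)^\circ$. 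The membership statement you are proving survives this correction, since condition (2) of Definition \ref{S-epsilon} only asks for existence of some limit in the $\circ$-class, but the step as written is false, and carrying $f^0\,\sharp^{B_0}\,g^0$ as the $\epsilon=0$ term would propagate an error wherever the limiting symbol is actually used downstream (for instance in the continuity and inversion arguments of Propositions \ref{inv-cont} and \ref{epsilon-comp-symb-inv}).
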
}

\fussy
\begin{proof}~
Using Remark \ref{rem-sl-var-class}, \eqref{Bek} and \eqref{sv-magnetic-phase},  we have, $\forall\epsilon\in[0,\epsilon_0]$\,, 
$$
f^\epsilon \,  \sharp^{B_{\epsilon,\kappa}} g^\epsilon\ =\
\widetilde{f}^\epsilon_{(\epsilon,1)}\,\sharp^{\kappa\epsilon B_{\epsilon}}\,
\widetilde{g}^\epsilon_{(\epsilon,1)}\,,
$$
and 
\begin{align*}
&\hspace{-1cm}\Big(\widetilde{f}^\epsilon_{(\epsilon,1)}\,\sharp^{\kappa\epsilon
B_{\epsilon}}\,
\widetilde{g}^\epsilon_{(\epsilon,1)}\Big)(x,\xi)\,\\ 
& =\,\pi^{-4}\int\limits_{\Xi\times\Xi}e^{-2i\sigma(Y,Z)}
\omega^{\kappa\epsilon
B_\epsilon(x,y,z)}\widetilde{f}^\epsilon\big(\epsilon(x-y),\xi-\eta\big)
\widetilde{g}^\epsilon\big(\epsilon(x-z),\xi-\zeta\big)\,dYdZ\,\\
& =\,\pi^{-4}\int\limits_{\Xi\times\Xi}e^{-2i\sigma(Y,Z)}
\exp\left\{
-4i\epsilon(y\wedge z)\left(B_{0}+\kappa\int\limits_{-1/2}^{1/2}
ds\int\limits_{-1/2}^sdt
\,B\big(\epsilon
x+\epsilon(2sy+2tz)\big)\right)\right\}
\\  &\hspace{6cm}
\times\,\widetilde{f}^\epsilon\big(\epsilon x-\epsilon y,\xi-\eta\big)\, 
\widetilde{g}^\epsilon\big(\epsilon x-\epsilon z,\xi-\zeta\big)\,dYdZ \,.
\end{align*}
Using Proposition \ref{comp-epsilon-symb}  with
$$
L^{\epsilon,\kappa}(x,y,z)\ :=\ \exp\left\{
-4i\epsilon(y\wedge z)\left(B_{0}+\kappa\int\limits_{-1/2}^{1/2}
ds\int\limits_{-1/2}^sdt
\,B\big(
x+\epsilon(2sy+2tz)\big)\right)\right\}\,,
$$ 
we obtain:
$$
f^\epsilon \sharp^{B_{\epsilon,\kappa}} g^\epsilon\ =\
\widetilde{f}^\epsilon_{(\epsilon,1)}\,\sharp^{\kappa\epsilon B_{\epsilon}}\,
\widetilde{g}^\epsilon_{(\epsilon,1)}\ =\
\Big(\mathfrak{L}^{\epsilon,\kappa}_{(\epsilon,1,\epsilon,1)}\big(\widetilde{f}
^\epsilon,\widetilde{g}^\epsilon\big)\Big)_{(\epsilon,1)}\,,
$$
where 
$\big\{\mathfrak{L}^{\epsilon,\kappa}_{(\epsilon,1,\epsilon,1)}\big(\widetilde{f
}^\epsilon,\widetilde{g}^\epsilon\big)\big\}_{(\epsilon,\kappa)\in[0,\epsilon_0]
\times[0,1]}$ is a bounded subset of $S^{m+p}_\rho(\Xi)\,$.

Using Remark \ref{rem-sl-var-class} we only have  to compute 
$
\underset{\epsilon\searrow0}{\lim}\, \mathfrak{L}^{\epsilon,\kappa}_{(\epsilon,1,
\epsilon,1)}\big(\widetilde{f}^\epsilon,\widetilde{g}^\epsilon\big)(x,\xi)
\,$. By hypothesis we know that
$\exists\underset{\epsilon\searrow0}{\lim}\, \widetilde{f}^\epsilon(0,
\xi)=\widetilde{f}_0(\xi)$ and
$\exists\underset{\epsilon\searrow0}{\lim}\, \widetilde{g}^\epsilon(0,
\xi)=\widetilde{g}_0(\xi)$ with respect to the topology of each
associated symbol class. It is also easy to notice that,  in  $BC^\infty\big(\X;C^\infty_{\text{\sf
pol}}(\X\times\X)\big)\,$,
$$
\exists\underset{\epsilon\searrow0}{\lim}\, L^{\epsilon,\kappa}(0,y,z)=1\,.
$$
Thus taking into account Proposition
\ref{comp-epsilon-symb} we obtain:
$$
\exists\underset{\epsilon\searrow0}{\lim}\, \mathfrak{L}^{\epsilon,\kappa}_{
(\epsilon,1,\epsilon,1)}\big(\widetilde{f}^\epsilon,\widetilde{g}
^\epsilon\big)(x,\xi)\,=\,\widetilde{f}_0(\xi)\, \widetilde{g}_0(\xi)\,.
$$
\end{proof}

The following proposition will be useful in working with inverses of slowly varying symbols.
\begin{proposition}\label{inv-cont}
For given $\epsilon_0 >0\,$, $(m,\rho)\in\mathbb{R}_+\times[0,1]$,  let $\{F_\epsilon\}_{\epsilon\in[0,\epsilon_0]}$ be a family in $ {S^{m}_\rho(\Xi)}$ which   is continuous at $0\,$. Let us consider a family
of magnetic fields $B_\epsilon=\epsilon B^0_\epsilon $ satisfying Hypothesis
\ref{Hyp-V-magn} and suppose that each symbol $F_\epsilon$, for
$\epsilon\in[0,\epsilon_0]$ has an inverse
$F^-_\epsilon:= \big(F_\epsilon\big)^-_{B_\epsilon}$ with respect to the
magnetic Moyal product  $\sharp^{\epsilon}$. If moreover
$\mathfrak{s}_m\,\sharp^\epsilon\, F^-_\epsilon$ defines a bounded operator in
$\mathcal{H}$ and  the family $\{F^-_\epsilon\}_{\epsilon\in[0,\epsilon_0]}$
is bounded in $S^{-m}_\rho(\Xi)\,$, then  $F^-_\epsilon$ is continuous at $0\,$. 
\end{proposition}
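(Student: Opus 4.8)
\textbf{Proof plan for Proposition \ref{inv-cont}.}

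The plan is to reduce the continuity of $\epsilon\mapsto F^-_\epsilon$ at $0$ to an algebraic identity relating $F^-_\epsilon$ to $F^-_0$, and then to estimate the error terms using Proposition \ref{L.II.1.1} (the comparison of $\sharp^\epsilon$ with $\sharp^0$) together with Proposition \ref{rez-dev} (or, more precisely, the Neumann-series argument underlying \eqref{rez-dev-corr}). First I would record that, by Proposition 6.2 in \cite{IMP2} (quoted in the excerpt), each $F^-_\epsilon$ lies in $S^{-m}_\rho(\Xi)$; the hypothesis is that this family is already bounded there, so the only thing to prove is that $F^-_\epsilon\to F^-_0$ in the topology of $S^{-m}_\rho(\Xi)$ as $\epsilon\searrow0$. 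The natural starting identity is
$$
F^-_\epsilon - F^-_0 \;=\; F^-_\epsilon \,\sharp^\epsilon\, \big(F_0 - F_\epsilon\big)\,\sharp^0\, F^-_0 \;+\; F^-_\epsilon\,\sharp^\epsilon\, \big(F_0\,\sharp^0\,F^-_0 - F_0\,\sharp^\epsilon\,F^-_0\big)\,,
$$
obtained by writing $1 = F_\epsilon\,\sharp^\epsilon\,F^-_\epsilon = F_0\,\sharp^0\,F^-_0$ and inserting $F^-_\epsilon\,\sharp^\epsilon(\cdot)\,\sharp^0\,F^-_0$ judiciously; the first term carries the continuity of $\epsilon\mapsto F_\epsilon$ at $0$, and the second term is exactly $\epsilon\,F^-_\epsilon\,\sharp^\epsilon\, r_\epsilon(F_0,F^-_0)\,$ by \eqref{II.1.3}, hence $O(\epsilon)$ in $S^{-m}_\rho(\Xi)$ once we know $F^-_\epsilon$ is bounded there and $r_\epsilon$ is continuous as asserted in Proposition \ref{L.II.1.1}.

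The key steps, in order, would be: (1) verify that $\z=-1$-type arguments or directly Proposition \ref{L.II.1.1} give $F_0\,\sharp^\epsilon\,F^-_0 = 1 + \epsilon\, r_\epsilon(F_0,F^-_0)$ with $\{r_\epsilon(F_0,F^-_0)\}_\epsilon$ bounded in $S^{-m}_\rho(\Xi)$ — this is where the hypothesis $\mathfrak{s}_m\,\sharp^\epsilon\,F^-_\epsilon\in\mathcal L(\mathcal H)$ is used, since it is precisely the condition in point (3) of Proposition 6.2 of \cite{IMP2} guaranteeing $F^-_\epsilon\in S^{-m}_\rho(\Xi)$, and it also lets us apply the continuity criterion (Theorem 3.1 in \cite{IMP1}) to control operator norms by symbol seminorms; (2) use the composition theorem for H\"ormander symbols (Theorem 2.2 in \cite{IMP1}, or its reformulation via Proposition \ref{comp-epsilon-symb}) to conclude that both $F^-_\epsilon\,\sharp^\epsilon(F_0-F_\epsilon)\,\sharp^0\,F^-_0$ and $\epsilon\,F^-_\epsilon\,\sharp^\epsilon\,r_\epsilon(F_0,F^-_0)$ are bounded families in $S^{-m}_\rho(\Xi)$, with seminorms bounded by those of the inner factors; (3) take $\epsilon\searrow0$: the second term goes to $0$ because of the explicit $\epsilon$ prefactor and boundedness, while the first goes to $0$ because $F_0-F_\epsilon\to0$ in $S^m_\rho(\Xi)$ by the continuity hypothesis and because multiplication by $\sharp^\epsilon$ from the left and $\sharp^0$ from the right is continuous jointly; (4) deduce finally that the limit of $F^-_\epsilon$ is $F^-_0$, which lies in $S^{-m}_\rho(\Xi)^\circ$ since $F_0\in S^m_\rho(\Xi)^\circ$ and, by Proposition \ref{const-X}, the subspace of symbols constant in $\X$ is stable under the (constant-field) magnetic Moyal product and under inversion — hence $\{F^-_\epsilon\}$ actually qualifies as a family continuous at $0$ in the sense needed for membership in the slowly varying classes.

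The main obstacle I expect is a bookkeeping subtlety rather than a deep one: one must be careful that $F^-_\epsilon$ is a right (and left) inverse with respect to $\sharp^\epsilon$ while $F^-_0$ is an inverse with respect to $\sharp^0$, so the telescoping identity above mixes two different products and one has to check that all the intermediate compositions (e.g. $(F_0-F_\epsilon)\,\sharp^0\,F^-_0$ versus $(F_0-F_\epsilon)\,\sharp^\epsilon\,F^-_0$) differ only by further $O(\epsilon)$ terms, again via Proposition \ref{L.II.1.1}; iterating this once more absorbs those into the remainder. A second, minor point is uniformity: to get convergence in the Fr\'echet topology of $S^{-m}_\rho(\Xi)$ one needs the seminorm estimates from Proposition \ref{comp-epsilon-symb} to be uniform in $\epsilon$ (and in $\kappa$, if the field is of the form \eqref{Bek}), which they are, but this must be invoked explicitly. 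Neither difficulty is serious; the heart of the argument is just the resolvent-style identity together with the already-established fact that switching $\sharp^\epsilon$ for $\sharp^0$ costs a factor $\epsilon$.
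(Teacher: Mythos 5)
Your plan is correct in substance and rests on the same two pillars as the paper's proof, namely the difference-of-inverses identity and the comparison $\sharp^{\epsilon}=\sharp^{0}+\epsilon\,r_\epsilon$ of Proposition \ref{L.II.1.1}, but it is organized differently. The paper inserts the intermediate object $\big(F_0\big)^-_{B_\epsilon}$ (the inverse of $F_0$ for the $\epsilon$-product, which exists for small $\epsilon$), estimates $\big(F_\epsilon\big)^-_{B_\epsilon}-\big(F_0\big)^-_{B_\epsilon}$ via \eqref{diff-inverses}, and controls the Fr\'echet seminorms through the Beals-type equivalent seminorms \eqref{equiv-seminorms} of \cite{IMP2}, iterating the commutator identity for $\mathfrak{ad}_X^B$ applied to that difference; the remaining piece $\big(F_0\big)^-_{B_\epsilon}-\big(F_0\big)^-_0$ is then treated exactly as your second term, via Proposition \ref{L.II.1.1} with $a=F_0$, $b=(F_0)^-_0$. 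You instead telescope directly to $F^-_0$: since $F^-_\epsilon=F^-_\epsilon\,\sharp^\epsilon\big(F_0\,\sharp^0 F^-_0\big)$ and $F^-_0=F^-_\epsilon\,\sharp^\epsilon\big(F_\epsilon\,\sharp^\epsilon F^-_0\big)$, the exact identity is
\begin{equation*}
F^-_\epsilon-F^-_0\;=\;F^-_\epsilon\,\sharp^\epsilon\big[(F_0-F_\epsilon)\,\sharp^\epsilon F^-_0\big]\;-\;\epsilon\,F^-_\epsilon\,\sharp^\epsilon\,r_\epsilon\big(F_0,F^-_0\big)\,,
\end{equation*}
so the inner product in the first bracket should be $\sharp^\epsilon$, not $\sharp^0$ as written; with $\sharp^0$ one must add the further correction $\epsilon\,F^-_\epsilon\,\sharp^\epsilon\,r_\epsilon\big(F_0-F_\epsilon,F^-_0\big)$, which is harmless and is exactly the point you flagged. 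Your route buys two things: it avoids introducing $\big(F_0\big)^-_{B_\epsilon}$ (whose existence and bounds the paper has to supply), and it replaces the Beals machinery by the uniform-in-$\epsilon$ seminorm estimates of the composition theorem (Proposition \ref{comp-epsilon-symb} under Hypothesis \ref{Hyp-V-magn}), which do suffice here because all factors are controlled: $F^-_\epsilon$ is bounded in $S^{-m}_\rho(\Xi)$ by hypothesis, $F_0-F_\epsilon\to 0$ in $S^{m}_\rho(\Xi)$ by the continuity hypothesis, $F^-_0$ is a fixed symbol, and $r_\epsilon(F_0,F^-_0)$ stays in a bounded subset of $S^{-2\rho}_\rho(\Xi)$ uniformly in $\epsilon$ because the kernel $F^\circ_\epsilon$ in the proof of Proposition \ref{L.II.1.1} is uniformly bounded in $BC^\infty$. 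What the paper's commutator-criterion route buys in exchange is that every estimate is expressed in the intrinsic $\epsilon$-dependent operator norms $\|\cdot\|_{\epsilon}$, which is the form in which these bounds are reused later (e.g.\ in Proposition \ref{epsilon-comp-symb-inv}); but for the statement of Proposition \ref{inv-cont} itself your more direct estimate is adequate.
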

\begin{proof}
By Propositions 6.1 and 6.2 in \cite{IMP2} each $F^-_\epsilon$
belongs to $S^{-m}_\rho(\Xi)\,$. Due to the continuity at $ \epsilon=0$ of the family $\{F_\epsilon\}_{\epsilon\in[0,\epsilon_0]}$ we conclude that for $\epsilon>0$ small enough, the inverse $\big(F_0\big)^-_{B_\epsilon}$ still exists.
For any seminorm $\nu:S^{-m}_\rho(\Xi)\rightarrow\mathbb{R}_+$
defining its  topology, we can write:
\beq\label{norm-diff-inv}
\nu\big[\big(F_\epsilon\big)^-_{B_\epsilon}-\big(F_0\big)^-_0\big]\leq
\nu\big[\big(F_\epsilon\big)^-_{B_\epsilon}-\big(F_0\big)^-_{B_\epsilon}\big]\
+\nu\big[\big(F_0)^-_{B_\epsilon}-\big(F_0\big)^-_0\big]\,.
\eeq
For the first seminorm a simple usual computation shows that for any bounded
smooth magnetic field $B$
\beq\label{diff-inverses}
\big(F_\epsilon\big)^-_{B}-\big(F_0\big)^-_{B}\,=\,-\big(F_\epsilon\big)^-_{B}
\,\sharp^B\,\big[F_\epsilon-F_0\big] \,\sharp^B\,\big(F_0\big)^-_{B}\,.
\eeq
Theorem 5.2
in \cite{IMP2} (Beals like criterion)  states that the 
topology on 
space ${S^{-m}_1(\Xi)}$ (for any $m\in\mathbb{R}$) may be also defined by the
following equivalent family of seminorms:
\beq\label{equiv-seminorms}
 S^{-m}_\rho(\Xi)\ni\psi\mapsto\left\|\mathfrak{s}^\epsilon_{m+q\rho}\,
\sharp^\epsilon\, \big(\mathfrak
{ad}_{u_1}^\epsilon\cdots\mathfrak{ad}_{u_p}^\epsilon\mathfrak{ad}_{\mu_1}
^\epsilon\cdots\mathfrak{ad}_{\mu_q}^\epsilon[\psi]\big)\right\|_{\epsilon}\,,
\eeq
indexed by a pair of natural numbers $(p,q)\in\mathbb{N}\times\mathbb{N}$ and
by two families of points $\{u_1,\ldots,u_p\}\subset\X$ and
$\{\mu_1,\ldots,\mu_q\}\subset\X^*$. A simple computation using
\eqref{diff-inverses} shows that for any magnetic field $B$
\begin{equation*}
\begin{array}{ll}
\mathfrak{ad}_X^B\left[\big(F_\epsilon\big)^-_{B}-\big(F_0\big)^-_{B}\right]
\ & =\,  -\big(F_\epsilon\big)^-_{B}
\,\sharp^B\,\mathfrak{ad}_X^B\big[F_\epsilon-F_0\big]
\,\sharp^B\,\big(F_0\big)^-_{B}\,\\ 
& 
\quad -\, \mathfrak{ad}_X^B\left[\big(F_\epsilon\big)^-_{B}\right]
\,\sharp^B\,\big[F_\epsilon-F_0\big] \,\sharp^B\,\big(F_0\big)^-_{B}\,\\ 
& \quad -\,
\big(F_\epsilon\big)^-_{B} \,\sharp^B\,\big[F_\epsilon-F_0\big] \,\sharp^B\,
\mathfrak{ad}_X^B\left[\big(F_0\big)^-_{B}\right]\  \\
& 
=\, -\big(F_\epsilon\big)^-_{B}
\,\sharp^B\,\mathfrak{ad}_X^B\big[F_\epsilon-F_0\big]
\,\sharp^B\,\big(F_0\big)^-_{B}\,\\ & 
\quad  + \big(F_\epsilon\big)^-_{B} \,\sharp^B\,\mathfrak{ad}_X^B\left[F_\epsilon\right]
\,\sharp^B\,
\big(F_\epsilon\big)^-_{B} \,\sharp^B\,\big[F_\epsilon-F_0\big]
\,\sharp^B\,\big(F_0\big)^-_{B}\,\\
& \quad +\,
\big(F_\epsilon\big)^-_{B} \,\sharp^B\,\big[F_\epsilon-F_0\big] \,\sharp^B\,
\big(F_0\big)^-_{B} \,\sharp^B\,\mathfrak{ad}_X^B
\left[F_0\right] \,\sharp^B\,\big(F_0\big)^-_{B}\,.
\end{array}
\end{equation*}
Iterating the above computation and using Theorem 5.2 in \cite{IMP2} once again,
we prove that
$$
\underset{\epsilon\searrow 0}{\lim}\,
\nu\big[\big(F_\epsilon\big)^-_{B_\epsilon}-\big(F_0\big)^-_{B_\epsilon}\big]\,
=\,0\,.
$$

For the second seminorm in \eqref{norm-diff-inv} we use  Proposition
\ref{L.II.1.1} above with $a=F_0$ and $b=\big(F_0\big)^-_0$ in order to obtain
$$
F_0\, \sharp^{B_\epsilon}\big(F_0\big)^-_0\,=\,1\,+\,\epsilon\, r_\epsilon\big(
F_0,\big(F_0\big)^-_0\big)\,,
$$
and finally
$$
\underset{\epsilon\searrow 0}{\lim}\,
\nu\big[\big(F_0)^-_{B_\epsilon}-\big(F_0\big)^-_0\big]\,=\,0\,.
$$
\end{proof}

{\sloppy
\begin{proposition}\label{epsilon-comp-symb-inv}  Let $B_{\epsilon,\kappa}(x)$  be a magnetic field
of the form \eqref{Bek}. If  ${f^\bullet\in {S^{m}_\rho(\Xi)}^\bullet}$  and  if 
the inverse $(f^\epsilon)^-\equiv(f^\epsilon)^-_{B_{\epsilon,\kappa}}\in
S^{-m}_\rho(\Xi)$ exists for every $\epsilon\in[0,\epsilon_0]$, then
$\{(f^\epsilon)^-\}_{\epsilon\in[0,\epsilon_0]}\in
S^{-m}_\rho(\Xi)^\bullet$.
\end{proposition}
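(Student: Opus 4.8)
The plan is to verify the three conditions of Definition \ref{S-epsilon} for the family $\{(f^\epsilon)^-\}_{\epsilon\in[0,\epsilon_0]}$, where I abbreviate $(f^\epsilon)^- := (f^\epsilon)^-_{B_{\epsilon,\kappa}}$; condition (1) is precisely the hypothesis. The main device is the dilation that converts a slowly varying magnetic field into a weak one. Using Remark \ref{rem-sl-var-class}, write $f^\epsilon = \widetilde f^\epsilon_{(\epsilon,1)}$ with $\{\widetilde f^\epsilon\}_{\epsilon\in[0,\epsilon_0]}$ a family bounded in $S^m_\rho(\Xi)$ and continuous at $\epsilon = 0$ with limit $f^0 \in S^m_\rho(\Xi)^\circ$. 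Undoing the dilation $(\cdot)_{(\epsilon,1)}$ in the phase \eqref{sv-magnetic-phase} of the magnetic field \eqref{Bek} produces a family of weak magnetic fields of the form $\epsilon\widehat B_\kappa$ with $\widehat B_\kappa \in BC^\infty(\X)$, bounded uniformly for $\kappa \in [0,1]$, so that $\{\epsilon\widehat B_\kappa\}_{\epsilon\in[0,\epsilon_0]}$ satisfies Hypothesis \ref{Hyp-V-magn}.

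First I would record the scaling identity
$$
(f^\epsilon)^-_{B_{\epsilon,\kappa}} \ = \ \big((\widetilde f^\epsilon)^-_{\epsilon\widehat B_\kappa}\big)_{(\epsilon,1)}\,,
$$
valid for every $\epsilon \in [0,\epsilon_0]$, one side being defined exactly when the other is. This follows from the computation in the proof of Proposition \ref{epsilon-comp-symb}: the dilation $(\cdot)_{(\epsilon,1)}$ intertwines the magnetic Moyal product $\,\sharp^{B_{\epsilon,\kappa}}$, restricted to symbols of the form $\widetilde a_{(\epsilon,1)}$, with the weak-field Moyal product $\,\sharp^{\epsilon\widehat B_\kappa}$, and it fixes the unit symbol $1$; hence it carries Moyal inverses to Moyal inverses, and uniqueness of the inverse closes the point. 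Combined with Remark \ref{rem-sl-var-class}, the assertion $\{(f^\epsilon)^-\}\in S^{-m}_\rho(\Xi)^\bullet$ becomes equivalent to the statement that $\{(\widetilde f^\epsilon)^-_{\epsilon\widehat B_\kappa}\}_{\epsilon\in[0,\epsilon_0]}$ is bounded in $S^{-m}_\rho(\Xi)$ and continuous at $\epsilon = 0$ with limit in $S^{-m}_\rho(\Xi)^\circ$. These are exactly the conclusions provided by Proposition \ref{inv-cont}, applied with $F_\epsilon := \widetilde f^\epsilon$ and $B_\epsilon := \epsilon\widehat B_\kappa$: the family $\{\widetilde f^\epsilon\}$ is continuous at $0$ by construction, and each $\widetilde f^\epsilon$ is $\,\sharp^{\epsilon\widehat B_\kappa}$-invertible by the scaling identity.

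What remains — and this is the main obstacle — is to check the two standing hypotheses of Proposition \ref{inv-cont}, namely that $\{(\widetilde f^\epsilon)^-_{\epsilon\widehat B_\kappa}\}$ is bounded in $S^{-m}_\rho(\Xi)$ and that $\mathfrak{s}_m\,\sharp^{\epsilon\widehat B_\kappa}\,(\widetilde f^\epsilon)^-_{\epsilon\widehat B_\kappa}$ defines an operator on $\mathcal{H}$ bounded uniformly in $\epsilon$; this is the family version of Propositions 6.1--6.2 of \cite{IMP2}. I would obtain it from the Beals-type characterization of $S^{-m}_\rho(\Xi)$ (Theorem 5.2 in \cite{IMP2}): from $\widetilde f^\epsilon\,\sharp^{\epsilon\widehat B_\kappa}\,(\widetilde f^\epsilon)^- = (\widetilde f^\epsilon)^-\,\sharp^{\epsilon\widehat B_\kappa}\,\widetilde f^\epsilon = 1$ one gets $\mathfrak{ad}_X^\epsilon[(\widetilde f^\epsilon)^-] = -(\widetilde f^\epsilon)^-\,\sharp^{\epsilon\widehat B_\kappa}\,\mathfrak{ad}_X^\epsilon[\widetilde f^\epsilon]\,\sharp^{\epsilon\widehat B_\kappa}\,(\widetilde f^\epsilon)^-$, and, iterating, every Beals seminorm of $(\widetilde f^\epsilon)^-$ is majorized by seminorms of $\widetilde f^\epsilon$ — uniformly bounded, since $\{\widetilde f^\epsilon\}$ is bounded in $S^m_\rho(\Xi)$ — and by powers of $\|(\widetilde f^\epsilon)^-\|_{\mathcal{L}(\mathcal{H})}$. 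The latter is controlled uniformly for $\epsilon$ near $0$ by a Neumann series around $(f^0)^-_0$: by Proposition \ref{L.II.1.1}, $\widetilde f^\epsilon\,\sharp^{\epsilon\widehat B_\kappa}\,(f^0)^-_0 = \widetilde f^\epsilon\,\sharp^0\,(f^0)^-_0 + \epsilon\, r_\epsilon(\widetilde f^\epsilon,(f^0)^-_0) = 1 + (\widetilde f^\epsilon - f^0)\,\sharp^0\,(f^0)^-_0 + \epsilon\, r_\epsilon(\widetilde f^\epsilon,(f^0)^-_0)$, whose last two terms tend to $0$ in operator norm because $\widetilde f^\epsilon \to f^0$ in $S^m_\rho(\Xi)$ and $r_\epsilon$ is continuous. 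Feeding this back, Proposition \ref{inv-cont} yields the continuity at $0$ of the scaled inverses, hence condition (2) of Definition \ref{S-epsilon} after rescaling, the limit $(f^0)^-_0$ being $\X$-independent by Proposition \ref{const-X}; the uniform $S^{-m}_\rho(\Xi)$-bound just obtained gives condition (3), i.e. the estimates \eqref{est-epsilon-symb}, through Remark \ref{rem-sl-var-class}; and condition (1) is the hypothesis, which finishes the proof.
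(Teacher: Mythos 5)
Your argument rests entirely on the scaling identity $(f^\epsilon)^-_{B_{\epsilon,\kappa}}=\big((\widetilde f^\epsilon)^-_{\epsilon\widehat B_\kappa}\big)_{(\epsilon,1)}$, and this is where it breaks down: the dilation $(\cdot)_{(\epsilon,1)}$ does \emph{not} intertwine $\sharp^{B_{\epsilon,\kappa}}$ with the ordinary weak-field product $\sharp^{\epsilon\widehat B_\kappa}$, $\widehat B_\kappa=B_0+\kappa B$. If you write out $\big(\widetilde F_{(\epsilon,1)}\,\sharp^{B_{\epsilon,\kappa}}\,\widetilde G_{(\epsilon,1)}\big)(x,\xi)$ as in the proof of Proposition \ref{epsilon-comp-symb}, you get the phase $\exp\big\{-4i\epsilon(y\wedge z)\big(B_0+\kappa\int_{-1/2}^{1/2}ds\int_{-1/2}^{s}dt\,B(\epsilon x+\epsilon(2sy+2tz))\big)\big\}$ against the arguments $\widetilde F(\epsilon x-\epsilon y,\xi-\eta)\,\widetilde G(\epsilon x-\epsilon z,\xi-\zeta)$; after undoing the dilation this is the bilinear map $\mathcal{L}^{\epsilon,\kappa}_{(\epsilon,1,\epsilon,1)}$ with kernel $L^{\epsilon,\kappa}(x,y,z)$, and this kernel is not $\omega^{B'}$ for any fixed magnetic field $B'$: that would force $B'(x+2sy+2tz)=\epsilon B_0+\epsilon\kappa B(x+\epsilon(2sy+2tz))$ for all $(s,t,y,z)$, which is impossible unless $B$ is constant, because the flux of $B$ is taken over a triangle shrunk by $\epsilon$ relative to the spreading of the symbols. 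If you instead rescale $y\mapsto\epsilon^{-1}y$, $z\mapsto\epsilon^{-1}z$ to restore $O(1)$ spreading, the oscillation becomes $e^{-2i\sigma(Y,Z)/\epsilon}$ with prefactor $(\pi\epsilon)^{-4}$, i.e.\ you land in a \emph{semiclassical} magnetic calculus with the $O(1)$ field $\widehat B_\kappa$, not in the standard calculus with the weak field $\epsilon\widehat B_\kappa$ (at the operator level: no unitary implements $F\mapsto F(\epsilon\,\cdot\,,\cdot)$; conjugation by dilations rescales $x$ and $\xi$ inversely). Consequently the transport of Moyal inverses, the application of Proposition \ref{inv-cont} to $\{\widetilde f^\epsilon\}$ with the fields $\epsilon\widehat B_\kappa$, and, crucially, your derivation of condition (3) of Definition \ref{S-epsilon} via Remark \ref{rem-sl-var-class}, are all unsupported.

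Note also that for condition (2) no rescaling is needed: $B_{\epsilon,\kappa}=\epsilon\big(B_0+\kappa B(\epsilon\,\cdot)\big)$ already satisfies Hypothesis \ref{Hyp-V-magn}, so Proposition \ref{inv-cont} applies directly with $F_\epsilon=f^\epsilon$; your Beals/Neumann-series estimates for the uniform $S^{-m}_\rho(\Xi)$ bound and the operator-norm control of the inverses are in the right spirit, but should be run in this unscaled setting. What your proposal never delivers is the hard part, condition (3): each $x$-derivative of $(f^\epsilon)^-$ must gain a factor $\epsilon$. The paper obtains this without rescaling, by writing $\partial_{x_j}$ of a symbol as $i\,\mathfrak{ad}^{B_{\epsilon,\kappa}}_{e_j}$ plus convolution terms whose coefficients $c^{B_{\epsilon,\kappa}}_{j,\alpha}\in\mathfrak{A}(\Xi)$ satisfy $\partial_x^\beta c^{B_{\epsilon,\kappa}}_{j,\gamma}=\epsilon^{|\beta|+1}\widetilde c^{\,\epsilon,\kappa}_{j,\gamma,\beta}$ (derivatives of $B_{\epsilon,\kappa}$ gain powers of $\epsilon$), then combining the Beals-type characterization of the symbol classes (Theorem 5.2 of \cite{IMP2}) with the Leibniz-type expansion of iterated derivations of a Moyal inverse (the mechanism behind \eqref{ad-rez}) to transfer these $\epsilon$-gains from $f^\epsilon$ to $(f^\epsilon)^-$. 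Some argument of this kind, exploiting $\partial^\beta B_{\epsilon,\kappa}=O(\epsilon^{|\beta|+1})$, is indispensable and cannot be bypassed by the dilation.
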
}

\fussy
\begin{proof}
The first condition in Definition \ref{S-epsilon} is verified
by  hypothesis and the second follows from Proposition \ref{inv-cont}\,.
 In order to prove the third condition in Definition \ref{S-epsilon} we
recall from Subsection~3.3 in \cite{IMP2} the subspace $\mathfrak{A}(\Xi)$ of symbols in 
 $BC^\infty\big(\X;L^1(\X^*)\big)$ having rapid decay in $\xi\in\X^*$
together with all their derivatives with respect to $x\in\X$\,,  and
the fact that using the operators \eqref{ad-op} we can write
\begin{equation*}
\begin{array}{ll}
\partial_{x_j}f^\epsilon\, & =\,i\mathfrak{ad}^{B_{\epsilon,\kappa}}_{
e_j}[
f^\epsilon] +i\delta^{B_{
\epsilon,\kappa}}f\,\\ &  =\,i\mathfrak{ad}^{B_{\epsilon,\kappa}}_{e_j}[
f^\epsilon]
+i\underset{
1\leq|\alpha|\leq5}{\sum}c^{B_{\epsilon,\kappa}}_{j,\alpha}
\star\big(\partial_\xi^\alpha f^\epsilon\big) \\
& =\,i\mathfrak{ad}^{B_{\epsilon,\kappa}}_{e_j}[
f^\epsilon]
+i\underset{
1\leq|\alpha|\leq5}{\sum}c^{B_{\epsilon,\kappa}}_{j,\alpha}
\star\big((\mathfrak{ad}^{B_{\epsilon,\kappa}}_{e^*})^\alpha f^\epsilon\big)\,,
\end{array}
\end{equation*}
where $c^{B_{\epsilon,\kappa}}_{j,\alpha}\in\mathfrak{A}(\Xi)$ for
any $(j,\alpha)\in\{1,2\}\times\mathbb{N}^2$ and, for $\phi$ and $\psi$ in $BC^\infty\big(\X;L^1(\X^*)\big)$\,, 
$$
\big(\phi\star \psi\big)(x,\xi)\,:=\int_{\X^*}\phi(x,\xi-\eta) \, \psi(x,\eta)\,d\eta\,.
$$

Then, the explicit description of the coefficients
$c^{B_{\epsilon,\kappa}}_{j,\alpha}\in\mathfrak{A}(\Xi)$ given in Propositions
3.6 and 3.7 in \cite{IMP2} easily implies that 
$$
\partial^\beta_x c^{
B_{\epsilon,\kappa}}_{j,\gamma}\ =\  c^{\partial^\beta
B_{\epsilon,\kappa}}_{j,\gamma}\ =\ \epsilon^{|\beta|+1}\,
\widetilde{c}^{
\epsilon,\kappa}_{j,\gamma,\beta}\,,
$$
where $\{\widetilde{c}^{
\epsilon,\kappa}_{j,\gamma,\beta}\}_{(\epsilon,\kappa)\in[0,\epsilon_0]\times[0,
1]}$ is  a bounded set in $\mathfrak{A}(\Xi)\,$. 

By Theorem 5.2
in \cite{IMP2} we know that the topology on any 
space ${S^{m}_\rho(\Xi)}$ may be also defined by the
 family of seminorms:
\beq\label{equiv-seminormsa}
{S^{m}_\rho(\Xi)}\ni\psi\mapsto\left\|\mathfrak{s}^{B_{\epsilon,\kappa}}_{
-(m-q\rho)}
\,\,\sharp^{\epsilon,\kappa}\,\,\big(\mathfrak
{ad}^{B_{\epsilon,\kappa}}_{u_1}\cdots\mathfrak{ad}^{B_{\epsilon,\kappa}}_{u_p
} \mathfrak{ad}^{B_{\epsilon,\kappa}}_{ \mu_1}
\cdots\mathfrak{ad}^{B_{\epsilon,\kappa}}_{\mu_q}[\psi]\big)\right\|_{B_{
\epsilon,\kappa}}\,,
\eeq
which is indexed by  $(p,q)\in\mathbb{N}\times\mathbb{N}\,$,  $\{u_1,\ldots,u_p\}\subset\X$ and
$\{\mu_1,\ldots,\mu_q\}\subset\X^*$. \\
Moreover, we can repeat in this situation
the argument in the proof of Proposition 6.1 in \cite{IMP2} and note that, with
$$\mathfrak{D}^{\epsilon,\kappa}_{\{j_1,\ldots,j_r\}}:=\mathfrak{ad}^{\epsilon,
\kappa}_{X_{j_1}}\dots\mathfrak{ad}^{\epsilon,\kappa}_{X_{j_r}}\,,
$$
and for  coefficients $C_{J_1,\dots,J_r}$ taking only the values $\pm 1$, we have:
$$
\begin{array}{l}\hspace{-1cm}
\mathfrak{s}^{B_{\epsilon,\kappa}}_{-(m-q\rho)}
\,\,\sharp^{\epsilon,\kappa}\,\,\big(\mathfrak
{ad}^{B_{\epsilon,\kappa}}_{u_1}\cdots\mathfrak{ad}^{B_{\epsilon,\kappa}}_{u_p}
\mathfrak{ad}^{B_{\epsilon,\kappa}}_{ \mu_1}
\cdots\mathfrak{ad}^{B_{\epsilon,\kappa}}_{\mu_q}\big[(f^\epsilon)^-\big]\big) \\
\qquad :=\,
\mathfrak{s}^{B_{\epsilon,\kappa}}_{-(m-q\rho)}\,\,\sharp^{\epsilon,\kappa}\,\,\mathfrak
{D}^{\epsilon,\kappa }_{\{1,\ldots,p+q\}}\big[(f^\epsilon)^-\big]\,\\
 \qquad 
\, =\,\underset{1\leq r\leq p+q}{\sum}\,\underset{J_1\sqcup\ldots\sqcup
J_r}{\sum}C_{J_1,\ldots,J_r}\,\mathfrak{s}^{B_{\epsilon,\kappa}}_{-(m-q\rho)}\,\,\sharp^{\epsilon,\kappa}\,\,
(f^\epsilon)^-\,\,\sharp^{\epsilon,\kappa}\,\,\mathfrak{D}^{\epsilon,\kappa}_{J_1}\big[
f^\epsilon\big]\,\,\sharp^{\epsilon,\kappa}\,\,
(f^\epsilon)^-\,\,\sharp^{\epsilon,\kappa}\,\,
\ldots\, \\
\hspace{7cm} \ldots \,\sharp^{\epsilon,\kappa}\,\,(f^\epsilon)^-\,\,\sharp^{\epsilon,\kappa}\,\,\mathfrak{D}
^{\epsilon,\kappa}_{J_r}\big[f^\epsilon\big]\,,
\end{array} 
$$
the sum being over all partitions $\{1,\ldots,p+q\}=\sqcup_{i=1}^pJ_i$ where,
for example, the partition $(J_1,J_2)$ is considered different
from $(J_2,J_1)\,$.\\
These remarks allow us to replace the condition \eqref{est-epsilon-symb} in
Definition \ref{S-epsilon} with the following condition:
$$
\underset{\epsilon\in(0,\epsilon_0]}
{\sup}\,\epsilon^{-q}
\left\|\mathfrak{s}^{B_{\epsilon,\kappa}}_{-(m-q\rho)}
\,\,\sharp^{\epsilon,\kappa}\,\,\big(\mathfrak
{ad}^{B_{\epsilon,\kappa}}_{u_1}\cdots\mathfrak{ad}^{B_{\epsilon,\kappa}}_{u_p
} \mathfrak{ad}^{B_{\epsilon,\kappa}}_{ \mu_1}
\cdots\mathfrak{ad}^{B_{\epsilon,\kappa}}_{\mu_q}[\psi]\big)\right\|_{B_{
\epsilon,\kappa}}< + \infty\,,
$$
for any pair of natural numbers $(p,q)\in\mathbb{N}\times\mathbb{N}$ and
any two families of points $\{u_1,\ldots,u_p\}\subset\X$ and
$\{\mu_1,\ldots,\mu_q\}\subset\X^*\,$.

Combining these remarks and the identity \eqref{ad-rez} achieves the proof.
\end{proof}

\paragraph{A 'localized' approximant for the magnetic Moyal product with slowly varying magnetic field.}

Starting from \eqref{sv-magnetic-phase} and using the Taylor formula with integral remainder one gets:
\begin{equation*}
\begin{array}{l}\hspace{-1cm}
 \int_{-1/2}^{1/2}ds\int_{-1/2}^sdt\,B
\big(\epsilon
x+\epsilon(2sy+2tz)\big)\,
\\ \quad \quad  \quad =\,\frac 12  B(\epsilon x) \,
+ 2  \epsilon    \underset{1\leq
\ell \leq2}{\sum}\left[y_\ell R_1\big(\partial_\ell B\big)(\epsilon x,\epsilon
y,\epsilon z)\,+\, 
z_\ell R_2\big(\partial_\ell B\big)(\epsilon x,\epsilon y,\epsilon z)\right]\,,
\end{array}
\end{equation*}
with
\beq\label{def-R1}
R_1\big(F)(x,y,z)\,:=\,\int_{-1/2}^{1/2}s\, ds\int_{-1/2}^sdt\int_0^1du\,F
\big(x+u(2sy+2tz)\big)\,,
\eeq
\beq\label{def-R2}
R_2\big(F)(x,y,z)\,:=\,\int_{-1/2}^{1/2}ds\int_{-1/2}^s t\,dt\int_0^1du\,F
\big(x+u(2sy+2tz)\big)\,.
\eeq
If we introduce 
\beq\label{def-Psi} 
\Psi^\epsilon(x,y,z)\,:=
-\,8 (y\wedge z)\underset{1\leq
\ell \leq2}{\sum}\left[y_\ell R_1\big(\partial_\ell B\big)(x,\epsilon
y,\epsilon z)\,+\,
z_\ell R_2\big(\partial_\ell B\big)(x,\epsilon y,\epsilon z)\right],
\eeq
we notice that  $\Psi^\epsilon$ is real and  we can write:
\begin{align*}
\omega^{\kappa\epsilon B_{\epsilon}}(x,y,z)
=\exp\left\{-2i\kappa\epsilon B(\epsilon
x)(y\wedge z)\right\}\left(1\,+\,i\, \kappa\epsilon^2\Psi^\epsilon
(\epsilon x,y,z)\int_0^1\exp\big\{i\, \tau\kappa\epsilon^2\Psi^\epsilon
(\epsilon x,y,z)\big\}d\tau\right)\,.
\end{align*}

\begin{remark}\label{R-def-Theta}
Having in mind Proposition
\ref{comp-epsilon-symb}, we notice that the function
$$
\Theta^{\epsilon,\kappa}_\tau(x,y,z):=\exp\big\{ i\, 
\tau\kappa\epsilon^2\Psi^\epsilon
(x,y,z)\big\}
$$
\sloppy
has modulus one and belongs to the space 
$BC^\infty\big(\mathbb{R}^2;C^\infty_{\text{\sf
pol}}(\mathbb{R}^2\times\mathbb{R}^2)\big)$ uniformly for 
$(\kappa,\epsilon)\in[0,1]\times[0,\epsilon_0]$ and the functions
$R_j\big(\partial_\ell B_{jk}\big)(x,y,z)$ (for $j=1,2$) are also of class
${BC^\infty\big(\mathbb{R}^2;C^\infty_{\text{\sf
pol}}(\mathbb{R}^2\times\mathbb{R}^2)\big)}$ uniformly in
$\epsilon\in[0,\epsilon_0]\,$.
\end{remark}
\fussy

It is clear that the space of symbols
$\underset{m\in\mathbb{R}}{\bigcup}{S^{m}_\rho(\Xi)}^\circ$ is no longer closed
for the magnetic Moyal composition for a magnetic field that is no longer constant. In order to treat this difficulty we 
consider the formula
\eqref{mag-comp} that is a well defined composition in
$\underset{m\in\mathbb{R}}{\bigcup}{S^{m}_\rho(\Xi)}^\circ$ and extend it to the
following frozen magnetic product of $\phi$ and
$\psi$ in $\mathscr{S}(\Xi)$ (obtained by fixing the value of the magnetic field under the integrals at the point $\in \X$ where the product is computed):
\begin{align*}
\numberthis\label{trunc-mag-comp}\hspace{-1cm}
\big(\phi\, \natural^\epsilon\, \psi\big)(x,\xi)&
\\
&\hspace{-1cm}:=\left(\frac{1}{2\pi}
\right)^2\int_{\X^*}\int_{\X^*}
e^{-i(\eta\wedge \zeta))}\phi(x,\xi-\epsilon^{1/2}b_\kappa(x)\,
\eta)\psi(x,\xi-\epsilon^{1/2}b_\kappa(x)\,
\zeta)\,d^2\eta\,d^2\zeta\,.
\end{align*}

\begin{remark}
The proof of Proposition \ref{trunc-comp-N} remains true also for $N=0$ and for
symbols depending also on the configuration variable $x\in\X$ and shows that the
truncated magnetic product $\natural^\epsilon$ is a continuous map 
${S^{m}_\rho(\Xi)}\times {S^{p}_\rho(\Xi)}\rightarrow S^{m+p}_\rho(\Xi)$,
equicontinuous for $\epsilon\in[0,\epsilon_0]$.
\end{remark}
Using now the Taylor expansions up to some order $N\in\mathbb{N}^*$ we obtain
the following formulas similar to \eqref{f-B^01}:
\begin{align}\label{f-B^01-sv}
\big(\phi\,\natural^{\epsilon}\psi\big)(x,\xi) & =\phi(x,\xi)\psi(x,\xi)\nonumber 
\\
& \quad  +\underset{1\leq p\leq N-1}{\sum}(-2i\epsilon)^pB_\kappa(
x)^{p}\underset{|\alpha|=p}{\sum}(\alpha!)^{-1}
\big(\partial^\alpha_\xi\phi\big)(x,\xi) \,
(\partial^\alpha_{\xi}\psi\big)(x,\xi)\,\nonumber \\ 
& \quad +(2\pi)^{-2}(-2i\epsilon)^NB_\kappa(x)^N  r_N(\phi,\psi, B_\kappa (x),\epsilon) (x,\xi)\,,
\end{align}
where
\begin{align}\label{reste2}
r_N (\phi,\psi, B,\epsilon)(x,\xi) &:=\underset{|\alpha|=N}{\sum}(\alpha!)^{-1}
\,\int_{\X^*}\int_{\X^*}
e^{i(\eta\wedge \zeta)}\int_0^1s^{N-1}ds\,\times \nonumber \\ 
& \qquad \quad \quad  \times \big(\partial^\alpha_\xi\phi\,\big)(x,\xi-s\epsilon^{1/2}B^\frac 12 \eta)\,\big(\partial^\alpha_{\xi}\psi\big)(x,\xi- s \epsilon^{
1/2 }B^\frac 12   \zeta) \,d^2\eta\,d^2\zeta\,.
\end{align}

\paragraph{Some basic estimates.}

\begin{proposition}\label{P-trunc-mag-comp}
For a  magnetic field $B_{\epsilon,\kappa}(x)$ given in \eqref{Bek},
there exists  $\epsilon_0>0$ and $M>0$ such that for any
${(\epsilon,\kappa,\tau)\in[0,\epsilon_0]\times[0,1]}\times[1,\infty)$ with 
$\epsilon\tau^2\in[0,M]$ and for any $\phi\in S^m_\rho(\Xi)$ and $\psi\in
S^p_\rho(\Xi)\,$, 
\begin{align*}
\phi_{(\epsilon,\tau)}\, \sharp^{B_{\epsilon,\kappa}} \, \psi_{(\epsilon,\tau)}\,
& =\,\big(\phi
\,\natural^{\epsilon \tau^2}\,\psi\big)_{(\epsilon,\tau)}\,\\
& \quad +\,\epsilon\tau\,\frac{i}{
2}\underset{1\leq 
\ell \leq2}{\sum}\left[\Big(\big(\partial_{x_\ell }\phi\big)
\,\natural^{\epsilon\tau^2}\,\big(\partial_{
\xi_\ell }\psi\big)\Big)_{(\epsilon,\tau)}-\Big(\big(\partial_{\xi_\ell }\phi\big)
\,\natural^{\epsilon\tau^2}\,\big(\partial_{
x_\ell }\psi\big)\Big)_{(\epsilon,\tau)}\right]\,\\ & \quad  +
\,(\epsilon\tau)^2\mathscr{R}_{\epsilon,\kappa,\tau}(\phi,\psi)_{ (\epsilon,
\tau)}\,,
\end{align*}
where  the family $\left\{\mathscr{R}_{\epsilon,\kappa,\tau}(\phi,\psi)\right\}$
is in a bounded subset of $S^{m+p-3\rho}_\rho(\Xi)$. \\
Moreover, the application:
$
\mathscr{R}_{\epsilon,\kappa,\tau}:\ S^m_\rho(\Xi)\times
S^p_\rho(\Xi)\,\rightarrow\,S^{m+p-3\rho}_\rho(\Xi)
$
 is continuous.
\end{proposition}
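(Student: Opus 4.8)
\textbf{Proof proposal for Proposition \ref{P-trunc-mag-comp}.}

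The plan is to start from the exact formula for the magnetic Moyal product \eqref{II.1.5} with the slowly varying field $B_{\epsilon,\kappa}$, insert the scaled symbols $\phi_{(\epsilon,\tau)}$ and $\psi_{(\epsilon,\tau)}$, and perform the change of variables that pulls out the scaling factor $(\epsilon,\tau)$, exactly as in the proof of Proposition \ref{epsilon-comp-symb}. Concretely, writing $\omega^{B_{\epsilon,\kappa}}$ via \eqref{sv-magnetic-phase} and rescaling $y\mapsto \tau^{-1}y$, $z\mapsto\tau^{-1}z$ in the oscillatory integral, one is left with an integral of the type $\mathcal{L}^{\epsilon,\tau}_{(\mu_1,\dots,\mu_4)}$ from Proposition \ref{comp-epsilon-symb}, whose integral kernel is built from the flux phase evaluated at $\epsilon x + \epsilon(2sy+2tz)/\tau$. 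The combination $\epsilon\tau^2$ appears precisely as the coefficient in front of the quadratic form $(y\wedge z)$ inside the exponential, which is why the hypothesis $\epsilon\tau^2\in[0,M]$ is the natural one: it keeps the oscillatory phase controlled and makes the frozen product $\natural^{\epsilon\tau^2}$ (defined in \eqref{trunc-mag-comp}) the correct leading term.

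The key steps, in order: (i) rewrite $\phi_{(\epsilon,\tau)}\,\sharp^{B_{\epsilon,\kappa}}\,\psi_{(\epsilon,\tau)}$ as a scaled oscillatory integral and separate the phase into its ``frozen at $\epsilon x$'' part, which produces $\big(\phi\,\natural^{\epsilon\tau^2}\psi\big)_{(\epsilon,\tau)}$, and the correction coming from the spatial variation of $B$; (ii) apply the Taylor-with-integral-remainder expansion of $\int_{-1/2}^{1/2}ds\int_{-1/2}^s dt\, B(\cdot)$ around $\epsilon x$ as already recorded just before \eqref{def-R1}--\eqref{def-R2}, so that the correction phase factors as $1 + (\text{first order in }\epsilon\tau)\cdot(\dots) + (\text{second order})\cdot(\dots)$; (iii) expand $e^{i(\text{correction})}$ to first order, identify the first-order term — integration by parts in $\eta,\zeta$ against $e^{-2i\sigma(Y,Z)}$ turns the extra factors of $y_\ell$, $z_\ell$ into $\xi$-derivatives and the factor $(y\wedge z)$ together with the structure of $R_1,R_2$ assembles the antisymmetric combination $\partial_{x_\ell}\phi\,\natural\,\partial_{\xi_\ell}\psi - \partial_{\xi_\ell}\phi\,\natural\,\partial_{x_\ell}\psi$ with coefficient $\tfrac{i}{2}\epsilon\tau$; (iv) collect everything of order $(\epsilon\tau)^2$ into $\mathscr{R}_{\epsilon,\kappa,\tau}(\phi,\psi)$ and check, via Remark \ref{R-def-Theta} and Proposition \ref{comp-epsilon-symb} (equivalently Corollary \ref{corA3}), that the relevant integral kernels $L$ — built from $\Theta^{\epsilon,\kappa}_\tau$ and the $R_j(\partial_\ell B_{jk})$ — lie in a bounded subset of $BC^\infty\big(\X;C^\infty_{\text{\sf pol}}(\X\times\X)\big)$ uniformly in the allowed parameter range; this gives the boundedness in $S^{m+p-3\rho}_\rho(\Xi)$ and the continuity of the bilinear map $\mathscr{R}_{\epsilon,\kappa,\tau}$. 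The loss of $3\rho$ derivatives is accounted for by the three factors of (rescaled) $\eta$ or $\zeta$ appearing in $\Psi^\epsilon$ via \eqref{def-Psi}, each contributing one unit under the integration-by-parts bookkeeping.

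The main obstacle I expect is purely bookkeeping rather than conceptual: keeping track of the powers of $\epsilon$ and $\tau$ through the change of variables, the Taylor remainders, and the integrations by parts, so that the first-order term comes out with exactly the stated coefficient $\tfrac{i}{2}\epsilon\tau$ and the remainder genuinely carries $(\epsilon\tau)^2$ with the value of $B$ frozen at $\epsilon x$ in the surviving phase $e^{-2i\epsilon\tau B(\epsilon x)(y\wedge z)}$ (so that $\natural^{\epsilon\tau^2}$ and not $\sharp$ appears in every term). One has to be careful that the factor $\epsilon\tau^2$ — not $\epsilon\tau$ — is what governs the frozen product, while $\epsilon\tau$ governs the size of the correction terms; the constraint $\epsilon\tau^2\le M$ must be used to guarantee that the families $\Theta^{\epsilon,\kappa}_\tau$ and the phases remain in fixed bounded subsets of $BC^\infty\big(\X;C^\infty_{\text{\sf pol}}(\X^2)\big)$ so that Proposition \ref{comp-epsilon-symb} applies uniformly. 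Once the correct scaling of every factor is pinned down, the symbol estimates follow verbatim from the proof of Proposition \ref{comp-epsilon-symb}, with $\rho$-loss as indicated, and the continuity is inherited from the continuity statement there.
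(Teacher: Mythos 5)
Your overall architecture --- rescale the oscillatory integral, split the magnetic phase $\omega^{B_{\epsilon,\kappa}}$ via \eqref{sv-magnetic-phase} into the part frozen at $\epsilon x$ plus the $\kappa\epsilon^2\Psi^\epsilon\Theta^{\epsilon,\kappa}_\tau$ correction, and control everything through Proposition \ref{comp-epsilon-symb}, Remark \ref{R-def-Theta} and Corollary \ref{corA3} --- is the same as the paper's. But your step (iii) misidentifies the source of the first-order term, and this is a genuine gap rather than bookkeeping. Freezing the phase at $\epsilon x$ does not by itself produce $\big(\phi\,\natural^{\epsilon\tau^2}\,\psi\big)_{(\epsilon,\tau)}$: in the definition \eqref{trunc-mag-comp} both symbols are evaluated at the \emph{same} spatial point, whereas in the Moyal integral they sit at $\epsilon(x-y)$ and $\epsilon(x-z)$. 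The missing ingredient is a second-order Taylor expansion of $\phi\big(\epsilon(x-y),\cdot\big)$ and $\psi\big(\epsilon(x-z),\cdot\big)$ in their first argument around $\epsilon x$; it is the linear terms $-\epsilon\, y\cdot\partial_x\phi$ and $-\epsilon\, z\cdot\partial_x\psi$ of \emph{this} expansion that, after integration by parts against $e^{-2i\sigma(Y,Z)}$ (which converts the factors $y_\ell$, $z_\ell$ into $\partial_{\xi_\ell}$ acting on the other symbol) and the rescaling, assemble into $\epsilon\tau\,\tfrac{i}{2}\sum_\ell\big[(\partial_{x_\ell}\phi)\,\natural^{\epsilon\tau^2}\,(\partial_{\xi_\ell}\psi)-(\partial_{\xi_\ell}\phi)\,\natural^{\epsilon\tau^2}\,(\partial_{x_\ell}\psi)\big]$; the quadratic Taylor terms feed the remainder.

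The mechanism you propose instead --- expanding $e^{i(\text{phase correction})}$ and invoking the $R_1,R_2$ structure --- cannot produce that term: integration by parts in $\eta,\zeta$ only ever generates $\xi$-derivatives of $\phi$ and $\psi$, never $\partial_{x_\ell}\phi$ or $\partial_{x_\ell}\psi$, and the phase correction carries the prefactor $\kappa\epsilon^2$, so it contributes entirely at remainder order (it is precisely the $\kappa$-dependent piece of the paper's explicit remainder, built from antisymmetrized $\xi$-gradients weighted by $R_j(\partial_\ell B)$). Once the symbol Taylor expansion is inserted, the rest of your plan --- uniform bounds on the kernels in $BC^\infty\big(\X;C^\infty_{\text{\sf pol}}(\X\times\X)\big)$ under the constraint $\epsilon\tau^2\leq M$, followed by Proposition \ref{comp-epsilon-symb} --- goes through as you describe and matches the paper.
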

\begin{proof}
\sloppy
With $B_{\epsilon,\kappa}$ and $\epsilon$ as in the statement of the proposition,
using the notation in \eqref{def-Psi} and  Remark \ref{R-def-Theta}, we
compute
\begin{equation}\label{T-2}
\begin{array}{l}
\hspace{-2cm}  \left(\phi_{(\epsilon,\tau)} \,\sharp^{B_{\epsilon,\kappa}} \, \psi_{(\epsilon,\tau)}\right)(X) \\
 =
\int\limits_{\Xi\times\Xi}\,dYdZ\,e^{-2i\sigma(Y,Z)}e^{ i \,
\epsilon\Phi_\kappa(\epsilon x,y,z)}
\phi_{(\epsilon,\tau)}(X-Y)\psi_{(\epsilon,\tau)}(X-Z)\times \nonumber\\
\hspace{+5cm}  \times\left(1\,+ \,i\,\kappa\epsilon^2\Psi^{\epsilon}(\epsilon
x,y,z)\int_0^1\Theta^{\epsilon,\kappa}_t(\epsilon x,y,z)dt\right).
\end{array}
\end{equation}
 We notice that $\phi_{(\epsilon,\tau)}(X-Y)\psi_{(\epsilon,\tau)}(X-Z)
=\phi\big(\epsilon(x-y),
\tau(\xi-\eta)\big)\psi\big(\epsilon(x-z),\tau(\xi-\zeta)\big)$ and use a
Taylor expansion up to second order with respect to the variables $\epsilon y$
and $\epsilon z$ in order to obtain:
\begin{equation*}
\begin{array}{l}
\phi_{(\epsilon,\tau)}(X-Y)\psi_{(\epsilon,\tau)}(X-Z)\\
\qquad =
\phi\big(\epsilon x,
\tau(\xi-\eta)\big)\psi\big(\epsilon
x,\tau(\xi-\zeta)\big) \\ 
\qquad\quad  -\,\epsilon\int_0^1dt\, \phi\big(\epsilon x,
\tau(\xi-\eta)\big)\Big(z\cdot\big(\partial_x\psi\big)\big(\epsilon x-t\epsilon
z,
\tau(\xi-\zeta)\big)\Big) \\
\qquad\quad   -\,\epsilon\int_0^1dt\, \Big(y\cdot\big(\partial_x\phi\big)\big(\epsilon
x-t\epsilon y,
\tau(\xi-\eta)\big)\Big)\psi\big(\epsilon x,
\tau(\xi-\zeta)\big)
\\
\qquad\quad   +\,\epsilon^2\int\limits_0^1dt\int\limits_0^1ds
\Big(y\cdot\big(\partial_x\phi\big)\big(\epsilon
x-t\epsilon
y,\tau\xi-\tau\eta\big)\Big)\Big(z\cdot\big(\partial_x\psi\big)\big(\epsilon
x-s\epsilon z,\tau\xi-\tau\zeta\big)\Big)\\ 
\qquad  =
\phi\big(\epsilon x,\tau\xi-\tau\eta\big)\psi\big(\epsilon
x,\tau\xi-\tau\zeta\big)\, \\
 \qquad \quad  -
\epsilon\left[\phi\big(\epsilon x,
\tau\xi-\tau\eta\big)\Big(z\cdot\big(\partial_x\psi\big)\big(\epsilon x,
\tau\xi-\tau\zeta\big)\Big)+\Big(y\cdot\big(\partial_x\phi\big)\big(\epsilon x,
\tau\xi-\tau\eta\big)\Big)\psi\big(\epsilon x,
\tau\xi-\tau\zeta\big)\right]\\ 
\qquad \quad +\,\epsilon^2\left\{\;\Big(y\cdot\big(\partial_x\phi\big)\,\big(\epsilon x,
\tau\xi-\tau\eta\big)\Big)\Big(z\cdot\big(\partial_x\psi\big)\big(\epsilon x,
\tau\xi-\tau\zeta\big)\Big) \right. \\  
\qquad \qquad  \qquad +\int\limits_0^1dt\Big[
\phi\big(\epsilon x,
\tau\xi-\tau\eta\big)\left\langle
z,\big(D^2_x\psi\big)\big(\epsilon(x-tz),\tau\xi-\tau\zeta\big)\,z\right\rangle
 \left. \right. \\
  \qquad \qquad \qquad \qquad\qquad \quad +\left\langle
y,\big(D^2_x\phi\big)\big(\epsilon(x-ty),\tau\xi-\tau\eta\big)\,y\right\rangle
)\psi\big(\epsilon x,\tau\xi-\tau\zeta\big)\Big]  \\ 
\qquad \qquad \qquad 
\left.-\epsilon\int\limits_0^1dt\left[
\Big(y\cdot\big(\partial_x\phi\big)\big(\epsilon x,
\tau\xi-\tau\eta\big)\Big)\left\langle
z,\big(D^2_x\psi\big)\big(\epsilon(x-tz),\tau\xi-\tau\zeta\big)\,z\right\rangle
\right.\right.\\ 
\qquad \qquad \qquad \qquad\qquad \quad 
\left.\left.+\left\langle
y,\big(D^2_x\phi\big)\big(\epsilon(x-ty),\tau\xi-\tau\eta\big)\,y\right\rangle
\Big(z\cdot\big(\partial_x\psi\big)\big(\epsilon x,
\tau\xi-\tau\zeta\big)\Big)\right]\right.\\ 
\qquad \qquad  \qquad  +
\left. \epsilon^2\int\limits_0^1dt\int\limits_0^1ds\left\langle
y,\big(D^2_x\phi\big)\big(\epsilon(x-ty),\tau\xi-\tau\eta\big)\,y\right\rangle
\left\langle
z,\big(D^2_x\psi\big)\big(\epsilon(x-sz),\tau\xi-\tau\zeta\big)\,z\right\rangle
\right\}.
\end{array}
\end{equation*}
We make now the usual integration by parts using the exponential factor
$e^{-2i\sigma(Y,Z)}$ and use formula \eqref{mag-comp}. We note that a
change of variables allows to write
\begin{align*}
&\hspace{-2cm} \int\limits_{\Xi\times\Xi}\,dYdZ\,e^{-2i\sigma(Y,Z)}e^{ i \,
\epsilon\Phi_\kappa(\epsilon x,y,z)}\phi\big(\epsilon
x,\tau\xi-\tau\eta\big)\psi\big(\epsilon
x,\tau\xi-\tau\zeta\big)\\
&\qquad =\ \int\limits_{\Xi\times\Xi}\,dYdZ\,e^{-2i\sigma(Y,Z)}e^{ i \, 
\epsilon\tau^2\Phi_\kappa(\epsilon x,y,z)}\phi\big(\epsilon
x,\tau\xi-\eta\big)\psi\big(\epsilon
x,\tau\xi-\zeta\big)\\
&\qquad =\
 \big(\phi\,\,\natural^{\epsilon\tau^2}\,\psi\big)_{(\epsilon,\tau)} (x,\xi) \,,
\end{align*}
 and the terms of order 0 and 1 in $\epsilon$ give us the first three terms
of the formula stated in the proposition. The remaining terms of order
2 in $\epsilon$ may be also integrated by parts using the exponential factor
$e^{-2i\sigma(Y,Z)}$ in order to obtain:
\begin{align*}
\int\limits_{\Xi\times\Xi}\,dYdZ\,&e^{-2i\sigma(Y,Z)}e^{i\,
\epsilon\Phi_\kappa(\epsilon x,y,z)}
\phi_{(\epsilon,\tau)}(X-Y)\psi_{(\epsilon,\tau)}(X-Z)\\
&=\big(\phi
\,\natural^{\epsilon \tau^2}\,\psi\big)_{(\epsilon,\tau)}\,\\
& \quad +\,\epsilon\tau\,\frac{i}{
2}\underset{1\leq 
\ell \leq2}{\sum}\left[\Big(\big(\partial_{x_\ell }\phi\big)
\,\natural^{\epsilon\tau^2}\,\big(\partial_{
\xi_\ell }\psi\big)\Big)_{(\epsilon,\tau)}-\Big(\big(\partial_{\xi_\ell
}\phi\big)
\,\natural^{\epsilon\tau^2}\,\big(\partial_{
x_\ell }\psi\big)\Big)_{(\epsilon,\tau)}\right]\,\\ 
&\quad +
\,(\epsilon\tau)^2\int\limits_{\Xi\times\Xi}\,dYdZ\,e^{-2i\sigma(Y,Z)}e^{i\,
\epsilon\Phi_\kappa(\epsilon
x,y,z)}\, \mathcal{R}^{(1)}_{\epsilon,\kappa,\tau}(X,Y, Z,\phi,\psi)\,,
\end{align*}
with
\begin{equation*}
\begin{array}{l}\hspace{-0.5cm}
\mathcal{R}^{(1)}_{\epsilon,\kappa,\tau}(X,Y, Z,\phi,\psi) \\
\quad  :=  \frac{1}{4}\underset{j,k}{\sum}
\big(\partial_{x_j}\partial_{\xi_k}\phi\big)(x,\xi-\eta)\big(\partial_{
x_k}\partial_{\xi_j}\psi\big)(x,\xi-\zeta) \\  
\qquad -\,\frac{1}{4}\int\limits_0^1dt \Big({\underset{k,\ell}{\sum}
\big(\partial^2_{\xi_k\xi_\ell }\phi\big)(x,\xi-\eta)\big(\partial^2
_{x_kx_\ell }\psi\big)(x-t\epsilon\tau z,\xi-\zeta) } \\ 
\hspace{4cm}  -\, {\underset{j,k}{\sum}
\big(\partial^2_{x_jx_k}\phi\big)(x-t\epsilon\tau y,
\xi-\eta) \big(
\partial^2_{\xi_j\xi_k}\psi\big)(x,\xi-\zeta)}\Big)\,\\
\qquad 
\left.+\,\frac{i\epsilon\tau}{8}\int\limits_0^1dt\Big(\underset{j,k,\ell }{\sum}
\big(\partial_{x_j}\partial^2_{\xi_k\xi_\ell
}\phi\big)(x,\xi-\eta)\big(\partial^2
_{x_kx_\ell }\partial_{\xi_j}\psi\big)(x-t\epsilon\tau z,\xi-\zeta)\right. \\ 
\hspace{4cm}  -\ \underset{j,k,\ell }{\sum}
\big(\partial^2_{x_jx_k}\partial_{\xi_\ell}\phi\big)(x-t\epsilon\tau y,
\xi-\eta)\big(\partial_{x_\ell }
\partial^2_{\xi_j\xi_k}\psi\big)(x,\xi-\zeta)\Big)\\  
\qquad
+\,\frac{\epsilon^2\tau^2}{16}\int\limits_0^1dt\int\limits_0^1ds\underset{j,k,
\ell ,
m}
{\sum}\big(\partial^2_{x_jx_k}\partial^2_{\xi_\ell
\xi_m}\phi\big)(x-t\epsilon\tau
y,
\xi-\eta)\big(\partial^2_{x_\ell x_m}
\partial^2_{\xi_j\xi_k}\psi\big)(x-s\epsilon\tau z,\xi-\zeta) \\ 
\end{array}
\end{equation*}
Thus the remainder in the statement of the proposition is explicitely given by:
\beq\label{rest-quadratic}
\mathscr{R}_{\epsilon,\kappa,\tau}(\phi,\psi) (X) 
=\ \int\limits_{\Xi\times\Xi} \, dYdZ\, e^{-2i\sigma(Y,Z)}e^{ i \, 
\epsilon\tau^2\Phi_\kappa(x,y,z)}\,\mathcal{\widehat
R_{\epsilon,\kappa,\tau}}(X,Y,Z,\phi,\psi)\,,  
\eeq
with 
\begin{equation*}
\begin{array}{l}
\mathcal {\widehat R_{\epsilon,\kappa,\tau}}(X,Y, Z,\phi,\psi)=
\mathcal{R}^{(1)}_{\epsilon,\kappa,\tau}(X,Y, Z,\phi,\psi)
-\,\kappa\left(\int_0^1\Theta^{\epsilon,\kappa}_\tau(x,\tau y,\tau
z)dt \right)\ \times\\  
\qquad \quad \times\,\left(\underset{\ell }{\sum}R_1\big(\partial_\ell B\big)(x,\epsilon
y,\epsilon z)\big[\big(\nabla_{\xi}\phi\big)(x-\epsilon\tau y,
\xi-\eta)\big]\wedge\big[\big(\nabla_\xi\partial_{\xi_\ell}
\psi\big)(x-\epsilon\tau z,\xi-\zeta)\big]  \right.\\
\qquad \qquad \quad  \left.+\,\underset{\ell }{\sum}R_2\big(\partial_\ell B\big)(x,\epsilon\tau
y,\epsilon\tau z)\big[\big(\nabla_\xi\partial_{\xi_\ell }\phi\big)(x-\epsilon\tau y,
\xi-\eta)\big]\wedge\big[\big(\nabla_{\xi}\psi\big)(x-\epsilon\tau
z,\xi-\zeta)\big]\right)\,,
\end{array}
\end{equation*}
with $R_1(\cdot )$ and $R_2(\cdot )$ defined by \eqref{def-R1} and
\eqref{def-R2}.
All these terms are of the form considered in Proposition
\ref{comp-epsilon-symb}.
\end{proof}

\begin{remark}\label{N-dev}
Starting above with a Taylor
expansion of order $N\in\mathbb{N}$, one can prove  that for any
$N\in\mathbb{N}^*\,$,  there exist $\epsilon_0 >0$  and constants $C_j>0$ for $1\leq j\leq
N-1\,$, such that, for any $\phi$, $\psi$, 
\begin{align*}
\phi_{(\epsilon,1)}\, \sharp^{B_{\epsilon,\kappa}} \psi_{(\epsilon,1)}\,
& =\,\big(\phi\, \natural^\epsilon\psi\big)_{(\epsilon,1)}\, \\ & 
\quad +\,\underset{1\leq j\leq
N-1}{\sum}C_j (i \epsilon)^j\underset{|\gamma|=j}{\sum}\left[\Big(\big(\partial_
{x}^\gamma\phi\big)
\, \natural^\epsilon\big(\partial_{
\xi}^\gamma\psi\big)\Big)_{(\epsilon,1)}+(-1)^j\Big(\big(\partial_{\xi}
^\gamma\phi\big)\, \natural^\epsilon
\big(\partial_{x}^\gamma\psi\big)\Big)_{(\epsilon,1)}\right]\, \\ &\quad  +\,\epsilon^N  \mathscr{R}^N_{\epsilon,\kappa}(\phi,\psi)\, , 
\end{align*}
where $\mathscr{R}^N_{\epsilon,\kappa}(\phi,\psi)$ is bounded 
uniformly in $\mathscr{S}(\Xi)$ for $(\epsilon,\kappa)\in[0,\epsilon_0]\times[0,1]\,$.
\end{remark}
\begin{corollary} \label{abovecor}
If $B_{\epsilon,\kappa}(x)$ satisfies \eqref{Bek}, there exists $\epsilon_0 >0$ such that, for any $\epsilon\in[0,\epsilon_0]$, for any  $F^\bullet\in
S^m_\rho(\Xi)^\bullet$ and $G^\bullet\in
S^p_\rho(\Xi)^\bullet$, we have, with  
 the notation  from Remark \ref{rem-sl-var-class}\,, 
\begin{align*}
&F^\epsilon\,  \sharp^{B_{\epsilon,\kappa}} G^\epsilon \\
&=\,\big(\widetilde{F}^\epsilon\,
\natural^\epsilon\widetilde{G}^\epsilon\big)_{(\epsilon,1)}\,+\,\frac{
i \epsilon}{2}\underset{1\leq
\ell \leq2}{\sum}\left(\big(\partial_{x_\ell }\widetilde{F}^\epsilon\,
\natural^\epsilon \, \partial_{
\xi_\ell
}\widetilde{G}^\epsilon\big)_{(\epsilon,1)}-\big(\partial_{\xi_\ell}\widetilde
{F}^\epsilon\,\natural^\epsilon\, \partial_{
x_\ell }\widetilde{G}^\epsilon\big)_{(\epsilon,1)}\right)\,+
\,\epsilon^2\mathscr{R}_{\epsilon,\kappa}(\widetilde{F}^\epsilon,\widetilde{G}
^\epsilon)_{(\epsilon,1)}\,,
\end{align*}
where the family 
$\{\mathscr{R}_{\epsilon,\kappa}(\widetilde{F}^\epsilon,\widetilde{G}
^\epsilon)\}_{\epsilon\in[0,\epsilon_0], \kappa \in [0,1]}$ is a bounded set in  $S^{m+p-3\rho}_\rho(\Xi)\,$.
\end{corollary}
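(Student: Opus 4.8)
The plan is to obtain the corollary as the special case $\tau=1$ of Proposition \ref{P-trunc-mag-comp}, with no genuinely new work involved. First I would invoke Remark \ref{rem-sl-var-class} to put the slowly varying families in scaled form: since $F^\bullet\in S^m_\rho(\Xi)^\bullet$ and $G^\bullet\in S^p_\rho(\Xi)^\bullet$, there are bounded families $\{\widetilde{F}^\epsilon\}_{\epsilon\in[0,\epsilon_0]}\subset S^m_\rho(\Xi)$ and $\{\widetilde{G}^\epsilon\}_{\epsilon\in[0,\epsilon_0]}\subset S^p_\rho(\Xi)$ (each admitting a limit at $\epsilon=0$ in the corresponding $^\circ$-class) such that $F^\epsilon=\widetilde{F}^\epsilon_{(\epsilon,1)}$ and $G^\epsilon=\widetilde{G}^\epsilon_{(\epsilon,1)}$. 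This is exactly the input format required by Proposition \ref{P-trunc-mag-comp}.

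Next I would apply Proposition \ref{P-trunc-mag-comp} with $\tau=1$, $\phi=\widetilde{F}^\epsilon$, $\psi=\widetilde{G}^\epsilon$. The only admissibility point to check is the constraint $\epsilon\tau^2\in[0,M]$ appearing there; with $\tau=1$ this is simply $\epsilon\le M$, so it suffices to take the present $\epsilon_0$ no larger than both the $\epsilon_0$ and the $M$ furnished by that proposition. For $\tau=1$ one has $\natural^{\epsilon\tau^2}=\natural^{\epsilon}$ and $(\,\cdot\,)_{(\epsilon,\tau)}=(\,\cdot\,)_{(\epsilon,1)}$, so the three-term expansion of Proposition \ref{P-trunc-mag-comp} becomes verbatim the identity asserted in the corollary, with $\mathscr{R}_{\epsilon,\kappa}(\widetilde{F}^\epsilon,\widetilde{G}^\epsilon):=\mathscr{R}_{\epsilon,\kappa,1}(\widetilde{F}^\epsilon,\widetilde{G}^\epsilon)$.

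Finally, for the uniform boundedness of the remainders in $S^{m+p-3\rho}_\rho(\Xi)$: Proposition \ref{P-trunc-mag-comp} exhibits $\mathscr{R}_{\epsilon,\kappa,\tau}$ as a continuous bilinear map $S^m_\rho(\Xi)\times S^p_\rho(\Xi)\to S^{m+p-3\rho}_\rho(\Xi)$, and — tracing through its proof, where the remainder is the oscillatory integral \eqref{rest-quadratic} and all its terms are of the type covered by Proposition \ref{comp-epsilon-symb} and Corollary \ref{corA3} — the corresponding estimates are uniform over the admissible range of $(\epsilon,\kappa,\tau)$. A (uniformly) continuous bilinear map sends bounded sets to bounded sets, and $\{\widetilde{F}^\epsilon\}$, $\{\widetilde{G}^\epsilon\}$ are bounded by construction; hence $\{\mathscr{R}_{\epsilon,\kappa}(\widetilde{F}^\epsilon,\widetilde{G}^\epsilon)\}_{\epsilon\in[0,\epsilon_0],\,\kappa\in[0,1]}$ is bounded in $S^{m+p-3\rho}_\rho(\Xi)$. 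There is essentially no obstacle here — the entire analytic content sits in Proposition \ref{P-trunc-mag-comp} and ultimately in the oscillatory-integral estimates of Proposition \ref{comp-epsilon-symb}; the only care needed is to match the scaling conventions and to keep $\epsilon$ below the threshold $M$.
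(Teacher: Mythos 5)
Your proposal is correct and coincides with the paper's own (implicit) argument: Corollary \ref{abovecor} is exactly Proposition \ref{P-trunc-mag-comp} specialized to $\tau=1$ after writing $F^\epsilon=\widetilde{F}^\epsilon_{(\epsilon,1)}$, $G^\epsilon=\widetilde{G}^\epsilon_{(\epsilon,1)}$ via Remark \ref{rem-sl-var-class}, with $\epsilon_0$ shrunk so that the constraint $\epsilon\tau^2\leq M$ holds. Your handling of the uniform boundedness of the remainder (uniformity of the oscillatory-integral estimates in $(\epsilon,\kappa,\tau)$ from Proposition \ref{comp-epsilon-symb} and Corollary \ref{corA3}, plus boundedness of $\{\widetilde{F}^\epsilon\}$, $\{\widetilde{G}^\epsilon\}$) is precisely what the paper relies on.
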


\begin{proposition}\label{P-comm-sv}
 If $B_{\epsilon,\kappa}(x)$ satisfies \eqref{Bek},  there exists $\epsilon_0 >0$ such that
 for any $\epsilon\in[0,\epsilon_0]$,  any $F\in S^m_\rho(\Xi)^\circ$ and $G\in
S^p_\rho(\Xi)^\circ$, if we define
$$ \big[F,G\big]_{B_{\epsilon,\kappa}}:= F \, 
\sharp^{B_{\epsilon,\kappa}} G\,-\,G
 \, \sharp^{B_{\epsilon,\kappa}} F\,,$$
we have the expansion
\begin{align*}
\big[F,G\big]_{B_{\epsilon,\kappa}} (x,\xi) & 
= -4i\epsilon B_\kappa(\epsilon
x)\left[\big(\partial_{\xi_1}F\big)(\xi)\big(\partial_{\xi_2}
G\big)(\xi)-\big(\partial_{\xi_2}F\big)(\xi)\big(\partial_{\xi_1}
G\big)(\xi)\right]\\
&\qquad +\,\epsilon^2\widetilde{\mathscr{R}}_{\epsilon,\kappa}(F,G){ _
{(\epsilon,1)}} (x,\xi)\,,
\end{align*}
where the family 
$\{\widetilde{\mathscr{R}}_{\epsilon,\kappa}(F,G)\}_{\epsilon\in[0,
\epsilon_0], \kappa \in [0,1]}$ is a bounded set in  $S^{m+p-3\rho}_\rho(\Xi)$. \\
More explicitly we have 
\begin{equation*}
\begin{array}{l}
\widetilde{\mathscr{R}}_{\epsilon,\kappa}(F,G) (x,\xi) \\
\quad =
-\frac{1}{\pi^2}B^2_\kappa(x)\underset{|\alpha|=2}{\sum}
(\alpha!)^{-1}
\, \times \\ 
\qquad \qquad  \times \int_{\X^*}\int_{\X^*}
e^{i(\eta\cdot\zeta^\bot)}\int_0^1s\,
ds\big(\partial^\alpha_\xi F\big)(\xi-s\epsilon^{1/2}b_\kappa(
x)\eta)\big(\partial^\alpha_{\xi^\bot}G\big)(\xi-\epsilon^{1/2}
b_\kappa(x)\zeta)\,
d^2\eta\,d^2\zeta\,\\
 \qquad -\,\kappa \int\limits_{\Xi\times\Xi}\, dYdZ\,  e^{-2i\sigma(Y,Z)} \left(\int_0^1\Theta^{\epsilon,\kappa}_\tau (x,y,z) \, d\tau\right)\times \\
\qquad  \qquad \qquad  \times 
\left(\underset{\ell}{\sum}R_1\big(\partial_\ell B\big)(x,\epsilon
y,\epsilon z)\big[\big(\nabla_{\xi}F\big)(
\xi-\eta)\big]\wedge\big[\big(\nabla_\xi\partial_{\xi_\ell}
G\big)(\xi-\zeta)\big]\, \right.  \\ 
 \qquad \qquad  \qquad \qquad \qquad  \left.+\,\underset{\ell }{\sum}R_2\big(\partial_\ell
B\big)(x,\epsilon
y,\epsilon z)\big[\big(\nabla_\xi\partial_{\xi_\ell}F\big)(
\xi-\eta)\big]\wedge\big[\big(\nabla_{\xi}G\big)(\xi-\zeta)\big]\right)\,,
\end{array}
\end{equation*}
with $R_1(\cdot )$ and $R_2(\cdot )$ defined by \eqref{def-R1} and \eqref{def-R2}.
\end{proposition}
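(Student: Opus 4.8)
The plan is to exploit the hypothesis that $F$ and $G$ do not depend on the configuration variable, which makes the \emph{frozen magnetic product} $\natural^\epsilon$ of \eqref{trunc-mag-comp} and Proposition \ref{P-trunc-mag-comp} (equivalently Corollary \ref{abovecor}) the natural vehicle. First I would apply Proposition \ref{P-trunc-mag-comp} with $\tau=1$, $\phi=F$, $\psi=G$. Because $\partial_{x_\ell}F=\partial_{x_\ell}G=0$, the first-order correction $\frac{i\epsilon}{2}\sum_\ell\big[(\partial_{x_\ell}F)\,\natural^\epsilon\,(\partial_{\xi_\ell}G)-(\partial_{\xi_\ell}F)\,\natural^\epsilon\,(\partial_{x_\ell}G)\big]_{(\epsilon,1)}$ vanishes identically, and the explicit quadratic remainder kernel $\widehat{\mathcal{R}}_{\epsilon,\kappa,1}$ appearing in \eqref{rest-quadratic} collapses: inspection of $\mathcal{R}^{(1)}_{\epsilon,\kappa,1}$ shows that \emph{every} one of its summands carries at least one $x$-derivative of $F$ or of $G$, so $\mathcal{R}^{(1)}_{\epsilon,\kappa,1}(\cdot,\cdot,\cdot,F,G)\equiv 0$, and only the $\kappa$-dependent piece built from $R_1(\partial_\ell B)$ and $R_2(\partial_\ell B)$ — the spatial inhomogeneity of $B(\epsilon\,\cdot)$ in the magnetic phase $\omega^{\kappa\epsilon B_\epsilon}$, see \eqref{sv-magnetic-phase} — survives. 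This yields $F\,\sharp^{B_{\epsilon,\kappa}}G=(F\,\natural^\epsilon G)_{(\epsilon,1)}+\epsilon^2\,\mathscr{R}^\sharp_{\epsilon,\kappa}(F,G)_{(\epsilon,1)}$, with $\mathscr{R}^\sharp_{\epsilon,\kappa}(F,G)$ the oscillatory integral carrying exactly those $R_1,R_2$ kernels, and likewise with $F$ and $G$ interchanged.

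Next I would expand the frozen product via the asymptotic formula \eqref{f-B^01-sv} with $N=2$. The leading term $F(\xi)G(\xi)$ is symmetric in $(F,G)$ and cancels in the commutator. The $p=1$ term equals $-2i\epsilon B_\kappa(x)\big[(\partial_{\xi_1}F)(\partial_{\xi_2}G)-(\partial_{\xi_2}F)(\partial_{\xi_1}G)\big]$, i.e.\ $-2i\epsilon B_\kappa(x)\,(\nabla_\xi F)\wedge(\nabla_\xi G)$; this is antisymmetric, so in forming $F\,\natural^\epsilon G-G\,\natural^\epsilon F$ it doubles to $-4i\epsilon B_\kappa(x)\big[(\partial_{\xi_1}F)(\partial_{\xi_2}G)-(\partial_{\xi_2}F)(\partial_{\xi_1}G)\big]$, which after the $(\epsilon,1)$-scaling $x\mapsto\epsilon x$ is precisely the leading term announced in the proposition. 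The $N=2$ Taylor remainder $r_2$ of \eqref{reste2}, multiplied by the prefactor $(2\pi)^{-2}(-2i\epsilon)^2B_\kappa(x)^2=-\epsilon^2\pi^{-2}B_\kappa(x)^2$ and antisymmetrized, produces — after the standard change of variables absorbing the integration parameter — the first explicit integral ($-\pi^{-2}B_\kappa^2(x)\sum_{|\alpha|=2}\cdots$) in $\widetilde{\mathscr{R}}_{\epsilon,\kappa}(F,G)$. Collecting this with $\mathscr{R}^\sharp_{\epsilon,\kappa}(F,G)-\mathscr{R}^\sharp_{\epsilon,\kappa}(G,F)$ from the first step gives the complete explicit formula for $\widetilde{\mathscr{R}}_{\epsilon,\kappa}(F,G)$ stated in the proposition.

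Finally, the symbol estimates and continuity are a routine application of the oscillatory-integral machinery. Each surviving remainder term is of the form covered by Proposition \ref{comp-epsilon-symb} (and Corollary \ref{corA3} / Proposition \ref{comp-symb-gen}): the factors $e^{i\epsilon\Phi_\kappa}$, $\int_0^1\Theta^{\epsilon,\kappa}_\tau\,d\tau$ and $R_j(\partial_\ell B)(x,\epsilon y,\epsilon z)$ form families bounded in $BC^\infty\big(\X;C^\infty_{\text{\sf pol}}(\X\times\X)\big)$ uniformly for $(\epsilon,\kappa)\in[0,\epsilon_0]\times[0,1]$ by Remark \ref{R-def-Theta}, and the hypothesis of Proposition \ref{P-trunc-mag-comp} with $\tau=1$ only requires $\epsilon$ below some $\epsilon_0$. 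Counting momentum derivatives: the $\kappa$-term distributes one $\xi$-derivative on one factor and two on the other, hence maps $S^m_\rho(\Xi)\times S^p_\rho(\Xi)$ into $S^{m+p-3\rho}_\rho(\Xi)$, while the $r_2$-term carries two $\xi$-derivatives on each factor and therefore lands in $S^{m+p-4\rho}_\rho(\Xi)\subset S^{m+p-3\rho}_\rho(\Xi)$; so $\{\widetilde{\mathscr{R}}_{\epsilon,\kappa}(F,G)\}_{\epsilon\in[0,\epsilon_0],\kappa\in[0,1]}$ is a bounded family in $S^{m+p-3\rho}_\rho(\Xi)$, and the joint continuity in Proposition \ref{comp-epsilon-symb} gives continuity in $(F,G)$. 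The main obstacle is the bookkeeping in the second step and its junction with the first: one must track the antisymmetrization carefully so that no cross terms are lost and the resulting remainder is exactly in the stated form, in particular with the loss of decay equal to $3\rho$ (not $2\rho$) — this $3\rho$ being precisely what is used in the truncation estimates of Subsection \ref{SSS-quasi-inv}.
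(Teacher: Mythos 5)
Your proposal is correct and follows essentially the same route as the paper: the paper's (two-line) proof also invokes Corollary \ref{abovecor}, uses the $x$-independence of $F$ and $G$ to kill the first-order $\natural^\epsilon$-correction and the $\mathcal{R}^{(1)}$ part of the remainder, and then expands $F\,\natural^\epsilon\,G$ via \eqref{f-B^01-sv} with $N=2$ so that the commutator cancels the symmetric term and doubles the antisymmetric one. Your more careful bookkeeping of the antisymmetrization, of the surviving $R_1,R_2$ kernels, and of the $3\rho$ loss simply makes explicit what the paper leaves implicit.
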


\begin{proof}
Using Corollary \ref{abovecor}, and taking into account that both symbols do not
depend on the configuration space variable $x\in\X$, we obtain:
$$
F\, \sharp^{B_{\epsilon,\kappa}} \, G\ =\ F\, \natural^\epsilon\,
G\,+\,\epsilon^2 \, \widetilde{\mathscr{R}}_{\epsilon,\kappa}(F,G)_{ (\epsilon,1)}\,.
$$
Using one of the two formulas for $r_N$ in  \eqref{f-B^01-sv} with $N=2$ we
obtain the desired result.
\end{proof}

\begin{proposition}\label{P-disj-supp-sv}
If  $B_{\epsilon,\kappa}(x)$ satisfies \eqref{Bek}, there exists
$\epsilon_0 >0$ such that  for any $\epsilon\in[0,\epsilon_0]$ and
any  $F\in S^m_\rho(\Xi)^\circ$ and $G\in
S^p_\rho(\Xi)^\circ$ with disjoint supports, 
$$
F \, \sharp^{B_{\epsilon,\kappa}}\, G\
=\ \epsilon^2 \, \widetilde{\mathscr{R}}_{\epsilon,\kappa}(F,G)_{(\epsilon,1)}\,,
$$
where
$\{\widetilde{\mathscr{R}}_{\epsilon,\kappa}(F,G)\}_{\epsilon\in[0,
\epsilon_0]}$ belongs to $S^{m+p-3\rho}_\rho(\Xi)$
uniformly with respect to $(\epsilon,\kappa)\in[0,\epsilon_0]\times[0,1]$. Explicitly:
\begin{equation*}
\begin{array}{l}
\widetilde{\mathscr{R}}_{\epsilon,\kappa}(F,G) (x,\xi) \\
\quad = 
-\frac{1}{\pi^2}B^2_\kappa(x)\underset{|\alpha|=2}{\sum}
(\alpha!)^{-1}\, \times \\
\qquad \qquad \times \int_{\X^*}\int_{\X^*}
e^{i(\eta\cdot\zeta^\bot)}\int_0^1s\,
ds\big(\partial^\alpha_\xi F\big)(\xi-s\epsilon^{1/2}b_\kappa(
x)\eta) \big(\partial^\alpha_{\xi^\bot}G\big)(\xi-\epsilon^{1/2}
b_\kappa(x)\zeta)\,
d^2\eta\,d^2\zeta\,\\
\quad \quad   -\,\kappa \int\limits_{\Xi\times\Xi} \,dYdZ\, e^{-2i\sigma(Y,Z)} \left(\int_0^1\Theta^{\epsilon,\kappa}_\tau(x,y,z)d\tau\right) \times \\
\qquad \qquad \qquad \qquad \times 
\left(\underset{\ell }{\sum}R_1\big(\partial_\ell B\big)(x,\epsilon
y,\epsilon z) \big[\big(\nabla_{\xi}F\big)(
\xi-\eta)\big]\wedge\big[\big(\nabla_\xi\partial_{\xi_\ell}
G\big)(\xi-\zeta)\big]\, \right. \\
\qquad\qquad\qquad \qquad\qquad \quad  \left. +\,\underset{\ell}{\sum}R_2\big(\partial_\ell 
B\big)(x,\epsilon
y,\epsilon z)\big[\big(\nabla_\xi\partial_{\xi_\ell }F\big)(
\xi-\eta)\big]\wedge\big[\big(\nabla_{\xi}G\big)(\xi-\zeta)\big]\right)\,,
\end{array}
\end{equation*}
with $R_1(\cdot)$ and $R_2(\cdot )$ defined by \eqref{def-R1} and \eqref{def-R2}.
\end{proposition}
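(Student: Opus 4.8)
The plan is to read this off from Corollary \ref{abovecor} together with the trivial observation that disjoint supports kill the order $\epsilon^0$ and order $\epsilon^1$ terms of the magnetic Moyal expansion, so that a genuine factor $\epsilon^2$ remains. Since $F\in S^m_\rho(\Xi)^\circ$ and $G\in S^p_\rho(\Xi)^\circ$ are independent of the configuration variable $x\in\X$, in the notation of Remark \ref{rem-sl-var-class} one has $\widetilde F^\epsilon=F$, $\widetilde G^\epsilon=G$, and every $\partial_{x_\ell}$-derivative appearing in the middle term of Corollary \ref{abovecor} vanishes. Hence Corollary \ref{abovecor} gives at once
$$
F\,\sharp^{B_{\epsilon,\kappa}}\,G\ =\ \big(F\,\natural^{\epsilon}\,G\big)_{(\epsilon,1)}\,+\,\epsilon^2\,\mathscr{R}_{\epsilon,\kappa}(F,G)_{(\epsilon,1)}\,,
$$
where $\{\mathscr{R}_{\epsilon,\kappa}(F,G)\}$ is a bounded family in $S^{m+p-3\rho}_\rho(\Xi)$ for $(\epsilon,\kappa)\in[0,\epsilon_0]\times[0,1]$; unwinding $\mathscr{R}_{\epsilon,\kappa}$ through \eqref{rest-quadratic} with all $x$-derivative terms dropped, this is precisely the $\kappa$-contribution (the one carrying $R_1,R_2$ and $\Theta^{\epsilon,\kappa}_\tau$) written in the second displayed line of $\widetilde{\mathscr{R}}_{\epsilon,\kappa}(F,G)$ in the statement.

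Next I would analyse $F\,\natural^{\epsilon}\,G$ by means of the Taylor expansion \eqref{f-B^01-sv}--\eqref{reste2} with $N=2$. Its zeroth-order term is $F(\xi)\,G(\xi)$, and its first-order term is a finite linear combination of products of a first $\xi$-derivative of $F$ with a first $\xi$-derivative of $G$, both evaluated at $\xi$. Because $\supp(\partial^\alpha_\xi F)\subset\supp F$ and $\supp(\partial^\alpha_\xi G)\subset\supp G$ for every multi-index $\alpha$, and $\supp F\cap\supp G=\emptyset$, all of these vanish identically in $\xi$; the only surviving contribution is the integral remainder $r_2(F,G,B_\kappa(x),\epsilon)$. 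Since $(2\pi)^{-2}(-2i)^2=-\pi^{-2}$, this yields $F\,\natural^{\epsilon}\,G=-\epsilon^2\pi^{-2}B_\kappa(x)^2\,r_2(F,G,B_\kappa(x),\epsilon)$, and unwinding $r_2$ exactly as in the proof of Proposition \ref{P-comm-sv} (using $\eta\wedge\zeta=\eta\cdot\zeta^\bot$) identifies this with the first displayed line of $\widetilde{\mathscr{R}}_{\epsilon,\kappa}(F,G)$. Setting $\widetilde{\mathscr{R}}_{\epsilon,\kappa}(F,G):=-\pi^{-2}B_\kappa(x)^2\,r_2(F,G,B_\kappa(x),\epsilon)+\mathscr{R}_{\epsilon,\kappa}(F,G)$ and observing that $(\cdot)_{(\epsilon,1)}$ turns $B_\kappa(x),b_\kappa(x)$ into $B_\kappa(\epsilon x),b_\kappa(\epsilon x)$ — consistent with $B_{\epsilon,\kappa}(x)=\epsilon\big(B_0+\kappa B(\epsilon x)\big)$ — produces both the claimed identity and the explicit formula.

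Finally I would verify that $r_2(F,G,B_\kappa(x),\epsilon)$ — a double oscillatory integral in which second $\xi$-derivatives of $F$ and $G$ are evaluated at shifted arguments of the form $\xi-s\epsilon^{1/2}b_\kappa(x)\eta$, $\xi-\epsilon^{1/2}b_\kappa(x)\zeta$ — defines, uniformly for $(\epsilon,\kappa)\in[0,\epsilon_0]\times[0,1]$, a symbol of class $S^{m+p-4\rho}_\rho(\Xi)\subset S^{m+p-3\rho}_\rho(\Xi)$. This is a direct application of Proposition \ref{comp-epsilon-symb}, the kernel $L$ there being built from $B_\kappa$, $R_1$, $R_2$ and $\Theta^{\epsilon,\kappa}_\tau$, which are bounded in $BC^\infty\big(\X;C^\infty_{\text{\sf pol}}(\X\times\X)\big)$ uniformly in $(\epsilon,\kappa)$ by Remark \ref{R-def-Theta}. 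There is no conceptual obstacle: the argument is a transcription of the completed proofs of Propositions \ref{P-trunc-mag-comp} and \ref{P-comm-sv}, the sole new ingredient being that disjoint supports annihilate the two lowest-order terms. The only point demanding some care is the bookkeeping of the changes of variables needed to bring $\widetilde{\mathscr{R}}_{\epsilon,\kappa}(F,G)$ into the stated closed form while keeping all symbol seminorms bounded uniformly in $(\epsilon,\kappa)$; this is handled as in those earlier proofs.
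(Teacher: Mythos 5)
Your proposal is correct and follows essentially the same route as the paper: the paper's proof simply refers back to Proposition \ref{P-comm-sv}, which, exactly as you do, applies Corollary \ref{abovecor} (the $\partial_x$-terms vanishing because $F,G\in S^m_\rho(\Xi)^\circ$, $S^p_\rho(\Xi)^\circ$) and then the order-$N=2$ expansion \eqref{f-B^01-sv} of the frozen product $\natural^\epsilon$, the only new point here being that disjoint supports annihilate the order-$\epsilon^0$ and order-$\epsilon^1$ terms so that only the $\epsilon^2$ remainders survive. Your identification of the two remainder contributions with the two displayed lines of $\widetilde{\mathscr{R}}_{\epsilon,\kappa}(F,G)$ and the symbol-class bound via Proposition \ref{comp-epsilon-symb} match the paper's intended argument.
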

The proof is quite similar to the one of  Proposition \ref{P-comm-sv}.

\begin{proposition}\label{trunc-comp-N}
Given a
magnetic field of the form \eqref{Bek}, for any $N\in\mathbb{N}^*$ and any
$\epsilon\in[0,\epsilon_0]$ there exists
a family of continuous bilinear maps
$$
\mathscr{M}^{\epsilon,\kappa}_N:S^m_\rho(\Xi)^\circ\times
S^p_\rho(\Xi)^\circ\rightarrow S^{m+p}_\rho(\Xi)
$$
for any $m\in\mathbb{R}$, $p\in\mathbb{R}$ and $\rho\in[0,1]$, that are
uniformly bounded for $\epsilon\in[0,\epsilon_0]$ and such that 
the frozen magnetic product
\eqref{trunc-mag-comp} satisfies the following relation:
\begin{align*}
\big(\phi\,\natural^\epsilon\,\psi\big)(x,\xi) &
=\phi(\xi)\psi(\xi)+\hspace{-0.3cm}\underset{1\leq p\leq
N-1}{\sum}\hspace{-0.3cm}(-2i\epsilon)^pB^\kappa(
x)\big)^{p}\underset{|\alpha|=p}{\sum}(\alpha!)^{-1}
\big(\partial^\alpha_\xi\phi\big)(\xi)\big(\partial^\alpha_{\xi^\bot}
\psi\big)(\xi) \\ & \quad 
+\,\epsilon^NB^\kappa(
x)\big)^N\underset{|\alpha|=N}{\sum}(\alpha!)^{-1}\mathscr{M}^{\epsilon,\kappa}
_N\big(\partial^\alpha_\xi\phi,\partial^\alpha_{\xi^\bot}\psi\big)\big)\,.
\end{align*}
\end{proposition}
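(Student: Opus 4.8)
The plan is to recognise the identity as the pointwise‑in‑$x$ counterpart of the constant field expansion \eqref{f-B^01}--\eqref{reste1}, in which the constant field $B^0$ is replaced by the frozen value $\epsilon B_\kappa(x)$, and then to do the bookkeeping needed to see that the remainder depends on $(\phi,\psi)$ through a bilinear map with the stated continuity and uniformity. Fix $N\in\mathbb{N}^*$. Comparing \eqref{trunc-mag-comp} with \eqref{mag-comp}, one observes that for each fixed $x$ the slice $\xi\mapsto\big(\phi\,\natural^\epsilon\,\psi\big)(x,\xi)$ is exactly the constant field magnetic Moyal product of $\phi(x,\cdot)$ and $\psi(x,\cdot)$, viewed as symbols on $\X^*$, computed for the constant field $\epsilon B_\kappa(x)$, so that the scaling factor occurring there is $\sqrt{\epsilon B_\kappa(x)}=\epsilon^{1/2}b_\kappa(x)$. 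Inserting the $N$‑th order Taylor expansion of the phase $e^{-2iB^0(y\wedge z)}$ exactly as in the derivation of \eqref{17Sept}--\eqref{reste1}, but with $B^0=\epsilon B_\kappa(x)$, produces the polynomial part written in the statement together with the explicit remainder \eqref{reste2} specialised to the numerical field $B_\kappa(x)$. One then \emph{defines}, for $\Phi\in S^m_\rho(\Xi)^\circ$ and $\Psi\in S^p_\rho(\Xi)^\circ$,
\[
\mathscr{M}^{\epsilon,\kappa}_N(\Phi,\Psi)(x,\xi):=(2\pi)^{-2}(-2i)^N\!\!\int_{\X^*}\!\!\int_{\X^*}\!\!e^{i(\eta\wedge\zeta)}\!\!\int_0^1\!\! s^{N-1}\,\Phi\big(x,\xi-s\epsilon^{1/2}b_\kappa(x)\eta\big)\,\Psi\big(x,\xi-s\epsilon^{1/2}b_\kappa(x)\zeta\big)\,ds\,d^2\eta\,d^2\zeta,
\]
so that substituting $\Phi=\partial^\alpha_\xi\phi$, $\Psi=\partial^\alpha_{\xi^\bot}\psi$, summing over $|\alpha|=N$ with weights $(\alpha!)^{-1}$, and collecting the factor $\epsilon^N B_\kappa(x)^N$, reproduces the last line of the asserted formula. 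The identity itself is then complete once $\mathscr{M}^{\epsilon,\kappa}_N$ is shown to have the claimed mapping properties.

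The substantive step is therefore to prove that $\mathscr{M}^{\epsilon,\kappa}_N$ is a bilinear continuous map $S^m_\rho(\Xi)^\circ\times S^p_\rho(\Xi)^\circ\to S^{m+p}_\rho(\Xi)$ which is equicontinuous for $(\epsilon,\kappa)\in[0,\epsilon_0]\times[0,1]$. For this I would not invoke the constant field statement slice by slice in $x$ (that does not keep the $x$‑dependence under control in the symbol topology) but instead recast $\mathscr{M}^{\epsilon,\kappa}_N(\Phi,\Psi)$ as an oscillatory integral over the full phase space $\Xi\times\Xi$ with the standard symplectic phase $e^{-2i\sigma(Y,Z)}$. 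This is achieved by undoing the Fourier computation that leads from \eqref{17Sept} to \eqref{mag-comp}: after the relevant rescaling, the factor $e^{i(\eta\wedge\zeta)}$ together with the $s$‑integral is the partial Fourier transform, in the position variables $(y,z)$, of an integral kernel
\[
L^{\epsilon,\kappa}(x,y,z):=\int_0^1 s^{N-1}\exp\big\{-2i\,s\,\epsilon\,B_\kappa(x)\,(y\wedge z)\big\}\,ds\,,
\]
which, by \eqref{sv-magnetic-phase}, Remark \ref{R-def-Theta} and the discussion preceding Proposition \ref{comp-epsilon-symb}, lies in a bounded subset of $BC^\infty\big(\X;C^\infty_{\text{\sf pol}}(\X\times\X)\big)$ uniformly in $(\epsilon,\kappa)$. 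One is then exactly in the situation of Proposition \ref{comp-epsilon-symb} (with scaling parameters equal to $1$ and the whole $x$‑dependence of the frozen field absorbed into $L^{\epsilon,\kappa}$; cf.\ also Corollary \ref{corA3}), whose estimate bounds the $S^{m+p}_\rho(\Xi)$‑seminorms of $\mathscr{M}^{\epsilon,\kappa}_N(\Phi,\Psi)$ by a product of one seminorm of $L^{\epsilon,\kappa}$ and symbol seminorms of $\Phi$ and $\Psi$; since the former is uniformly bounded and the latter two are continuous in $\Phi,\Psi$, bilinearity, continuity and equicontinuity follow. When $\mathscr{M}^{\epsilon,\kappa}_N$ is used, as in the identity, with $\Phi=\partial^\alpha_\xi\phi$, $\Psi=\partial^\alpha_{\xi^\bot}\psi$ and $|\alpha|=N$, the output in fact gains $2N\rho$ decay in $\xi$; the weaker statement with codomain $S^{m+p}_\rho(\Xi)$ recorded in the proposition is all that is needed downstream and is immediate since $\rho\ge 0$.

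The main obstacle is precisely this reduction: one must carry the $x$‑dependence of the frozen field $B_\kappa(x)$ through the oscillatory integral while keeping every symbol‑seminorm bound uniform in $\epsilon$ and in $\kappa$. What makes it go through is that the $x$‑dependence enters only via a function bounded together with all its derivatives, so that it can be packed into the kernel $L^{\epsilon,\kappa}$ of Proposition \ref{comp-epsilon-symb}, whose hypotheses are then met uniformly in $(\epsilon,\kappa)$ by \eqref{sv-magnetic-phase} and Remark \ref{R-def-Theta}. The only other care needed is the routine verification, intrinsic to the Taylor‑with‑integral‑remainder step, that the $N$ momentum derivatives are distributed so that no spurious powers of $\langle\xi\rangle$ are produced — the same count already carried out for the constant field in \eqref{f-B^01}--\eqref{reste1} and, in the slowly varying case, in Proposition \ref{P-trunc-mag-comp} and Corollary \ref{abovecor}.
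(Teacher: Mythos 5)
Your proof is correct, and the identity itself is obtained exactly as in the paper: one Taylor-expands the frozen (constant-field) product at the value $\epsilon B_\kappa(x)$ and defines $\mathscr{M}^{\epsilon,\kappa}_N$ as the integral remainder, which is precisely \eqref{def-M} up to the immaterial choice of where the Taylor parameter $s$ sits in the scalings and the weight -- the paper itself is not consistent on this detail between \eqref{reste1}, \eqref{reste2} and \eqref{def-M}. Where you genuinely diverge is in the continuity/uniformity step. The paper estimates the seminorms of \eqref{def-M} directly on the momentum-space representation: it differentiates under the integral, observes that $x$-derivatives of $b_\kappa$ only produce polynomial factors $\eta^{\gamma_1}\zeta^{\gamma_2}$ which are traded for $\xi^\bot$-derivatives, and then performs the non-stationary-phase integration by parts as in \eqref{st-phase-1} with $N_1,N_2>2$, ending with $\nu^{m+p,\rho}_{n,m}\big(\mathscr{M}^{\epsilon,\kappa}_N(f,g)\big)\le C\,\nu^{m,\rho}_{0,q}(f)\,\nu^{p,\rho}_{0,q}(g)$ uniformly in $(\epsilon,\kappa)$. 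You instead undo the Fourier computation leading from \eqref{17Sept} to \eqref{mag-comp}, so as to rewrite $\mathscr{M}^{\epsilon,\kappa}_N(\Phi,\Psi)$ as a phase-space oscillatory integral with the symplectic phase and a kernel $L^{\epsilon,\kappa}(x,y,z)$ lying in a bounded subset of $BC^\infty\big(\X;C^\infty_{\text{\sf pol}}(\X\times\X)\big)$, and then quote Proposition \ref{comp-epsilon-symb} and Corollary \ref{corA3}. This buys the seminorm bounds and their equicontinuity in $(\epsilon,\kappa)$ without redoing the integration-by-parts bookkeeping (which is, in effect, the content of Proposition \ref{comp-epsilon-symb} anyway), and it is the same device the paper uses in nearby statements such as Proposition \ref{P-trunc-mag-comp} and Lemma \ref{P-1-ter-1-main}. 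The only point you should make explicit is the equivalence of the two representations of the remainder: the identity \eqref{mag-comp} is established for Schwartz symbols and its momentum-space prefactor degenerates as the frozen field value tends to $0$, so the equality for H\"ormander symbols, including at $s=0$ or $\epsilon=0$ after integrating in $s$, is obtained by density and by continuity of both sides in the symbol topologies; this is routine and does not affect the validity of the argument.
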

\begin{proof}
From  formula \eqref{trunc-mag-comp}, after a Taylor expansion up to order
$N\in\mathbb{N}^*$ and the usual integration by parts argument (using the
exponential factor $\exp\{-2i\sigma(Y,Z)\}$) we obtain:
\begin{align*}
\big(\phi\,\natural^\epsilon\,\psi\big)(x,\xi)& 
= \phi(\xi)\psi(\xi)+\hspace{-0.3cm}\underset{1\leq p\leq
N-1}{\sum}\hspace{-0.3cm}(-2i\epsilon)^pB^\kappa(x)^{p}\underset{|\alpha|=p}{\sum}
(\alpha!)^{-1}
\big(\partial^\alpha_\xi\phi\big)(\xi)\big(\partial^\alpha_{\xi^\bot}
\psi\big)(\xi) \\ & \,
\quad +\left(\frac{1}{2\pi}\right)^2(-2i\epsilon)^NB^\kappa(
x)^N\underset{|\alpha|=N}{\sum}(\alpha!)^{-1}\int_0^1s^{N-1}\,ds
\int_{\X^*}\int_{\X^*}e^{i(\eta\cdot\zeta^\bot)}\\ &\quad \qquad \qquad
\times\big(\partial^\alpha_\xi\phi\big)(\xi-s\epsilon^{1/2}b_\kappa(
x)\eta)\big(\partial^\alpha_{\xi^\bot}\psi\big)(\xi-\epsilon^{1/2}
b_\kappa(\epsilon
x)\zeta)\,d^2\eta\,d^2\zeta\,.
\end{align*}
Thus, if we define
\begin{align}\label{def-M}
\mathscr{M}^{\epsilon,\kappa}_N\big(f,g\big)(x,\xi)& :=\frac{(-2i)^N}{(2\pi)^2}
\int_0^1s^{N-1}\,ds\ \times \nonumber \\ &
\hspace{2cm} \times\ \int_{\X^*}\int_{\X^*}e^{i(\eta\cdot\zeta^\bot)}
f(\xi-s\epsilon^{1/2}b_\kappa(x)\eta)\ 
g(\xi-\epsilon^{1/2}b_\kappa(x)\zeta)\,d^2\eta\,d^2\zeta\,,
\end{align}
we notice that
$
\big(\partial_x^\alpha\partial_\xi^\beta\mathscr{M}^{\epsilon,\kappa}_N(f,
g)\big)(x,\xi)
$
is a finite sum of terms of the form
\begin{equation*}
\begin{array}{l}
\hspace{-0.5cm} F^B_{\gamma_1}(x)F^B_{\gamma_2}(x)\int_0^1s^{N-1+|\gamma_1|}\,ds
\int_{\X^*}\int_{\X^*}e^{i(\eta\cdot\zeta^\bot)}\eta^{\gamma_1}\zeta^{\gamma_2}
\, d^2\eta\, d^2\zeta\, \\ 
\hspace{4cm}
\times\,
\big(\partial_\xi^{\gamma_1+\beta_1}f\big)(\xi-s\epsilon^{1/2}b_\kappa(
x)\eta)\ 
\big(\partial_\xi^{\gamma_2+\beta_2}g\big)(\xi-\epsilon^{1/2}b_\kappa(
x)\zeta)\\ 
\quad =
(-i)^{|\gamma+\gamma_2|}\epsilon^{|\gamma_1+\gamma_2|/2}
\widetilde{F}^B_{\gamma_1,\gamma_2}(x)\int_0^1s^{N-1+|\gamma_1|}\,ds
\int_{\X^*}\int_{\X^*}e^{i(\eta\cdot\zeta^\bot)}\, d^2\eta\, d^2\zeta\\  
\hspace{4cm}
\times\,
\big(\partial_{\xi^\bot}^{\gamma_2}\partial_\xi^{\gamma_1+\beta_1}
f\big)(\xi-s\epsilon^ { 1/2 } b_\kappa(
x)\eta)\ 
\big(\partial_{\xi^\bot}^{\gamma_1}\partial_\xi^{\gamma_2+\beta_2}
g\big)(\xi-\epsilon^{1/2}b_\kappa(
x)\zeta)\,,
\end{array}
\end{equation*}
with $|\gamma_1+\gamma_2|\leq|\alpha|$ and $\beta_1+\beta_2=\beta$ and the
functions $F^B_\gamma$ and $\widetilde{F}^B_{\gamma_1,\gamma_2}$ of class
$BC^\infty(\X)\,$.\\

Then we have the following equality:

\begin{equation*}
\begin{array}{l}
\hspace{-1.5cm}\int_{\X^*}\int_{\X^*}e^{i(\eta\cdot\zeta^\bot)}
\big(\partial^\alpha_\xi\phi\big)(\xi-s\epsilon^{1/2}b_\kappa(
x)\eta)\big(\partial^\beta_{\xi}\psi\big)(\xi-\epsilon^{1/2}
b_\kappa(x)\zeta)\,d^2\eta\,d^2\zeta \\ \quad 
=\int_{\X^*}<\eta>^{-N_1}
\underset{|a|\leq
N_2}{\sum}\mathcal{P}_a(\eta)\Big[\partial_\eta^{a}\big(\partial^\alpha_\xi
\phi\big)(\xi-s\epsilon^{1/2}b_\kappa(x)\eta)\Big]\,d^2\eta \times \\ 
\qquad\qquad \qquad 
\times\,\int_{\X^*}
e^{i(\eta\cdot\zeta^\bot)}<\zeta>^{-N_2}\big(1+\Delta_\zeta\big)^{N_1/2}
\Big[\big(\partial^\beta_{\xi}\psi\big)(\xi-\epsilon^{1/2}
b_\kappa(x)\zeta)\Big]\,d^2\zeta\,,
\end{array}
\end{equation*}
 where the  $\mathcal{P}_a(\cdot)$ are bounded functions on $\X^*\,$.
We notice that:
\beq\label{st-phase-1}
\begin{array}{l} 
\hspace{-2cm} \int_{\X^*}\int_{\X^*}
e^{i(\eta\cdot\zeta^\bot)}\phi(\xi-\tau\eta)\psi(\xi-\tau\zeta)\,
d^2\eta\,d^2\zeta \\ \qquad 
=\int_{\X^*}\big(1+\Delta_\eta\big)^{N_2/2}\big(<\eta>^{-N_1}
\phi(\xi-\tau\eta)\big)\,d^2\eta \\ \qquad \qquad \qquad 
\times\,\int_{\X^*}
e^{i(\eta\cdot\zeta^\bot)}<\zeta>^{-N_2}\big(\big(1+\Delta_\zeta\big)^{N_1/2}
\psi\big)(\xi-\tau\zeta)\,d^2\zeta\\ 
\qquad 
=\int_{\X^*}<\eta>^{-N_1}\underset{|a|\leq
N_2}{\sum}\mathcal{P}_a(\eta)\big(\partial_\eta^{a}\phi\big)(\xi-\tau
\eta)\,d^2\eta \\ \qquad \qquad \qquad 
\times\,\int_{\X^*}
e^{i(\eta\cdot\zeta^\bot)}<\zeta>^{-N_2}\big(\big(1+\Delta_\zeta\big)^{N_1/2}
\psi\big)(\xi-\tau\zeta)\,d^2\zeta\,,
\end{array}
\eeq
 where the  $\mathcal{P}_a(\cdot )$ are bounded functions on $\X^*$ and $N_1>2\,$,
$N_2>2\,$.

Finally these arguments allow to prove estimates of the following type:
$$
\nu^{m+p,\rho}_{n,m}\big(\mathscr{M}^{\epsilon,\kappa}_N(f,g)\big)\ \leq\
C\,  \nu^{m,\rho}_{0,q}(f)\,\nu^{p,\rho}_{0,q}(g)\,,
$$
with $q>2+n+m$ and some constant $C>0$ that may depend on $B$ and
on the three seminorms but not on $\epsilon\in[0,\epsilon_0]\,$.
\end{proof}

\subsection{Control of some $\Gamma_*$-indexed series of symbols}\label{AppB}
\label{series-conv}
Consider a
symbol $ \varphi\in S^{-\infty}(\Xi)$ and let us study the convergence of the
series
$$
\Phi:=\underset{\gamma^*\in\Gamma_*}{\sum}\tau_{\gamma^*}\big(\varphi\big)\,.
$$
For any $N\in\mathbb{N}$ let us  introduce
\begin{equation*}
\Gamma_*^N:=\left\{\gamma^*\in\Gamma_*\,\mid\,|\gamma^*|\leq N\right\}\,\mbox{ and }\, 
\Phi_N:=\underset{\gamma^*\in\Gamma_*^N}{\sum}\tau_{\gamma^*}\big(\varphi\big)\, .
\end{equation*}
\begin{lemma}\label{periodic-sums}
With the above notation, for any symbol $ \varphi\in S^{-\infty}(\Xi)$ the
sequence $ \{\Phi_N\}_{N\in\mathbb{N}}$ in $ S^{-\infty}(\Xi)$ is
weakly convergent in $\mathscr{S}^\prime(\Xi)$ and the limit $\Phi$ is
a $\Gamma_*$-periodic $C^\infty$-function on $\Xi\,$. The sequence also converges
for
the norms $\nu^{-m,0}_{p,q}$ with any $m>2$ and $(p,q)\in\mathbb{N}^2$
and for any $(p,q)\in\mathbb{N}^2$ there exists $C>0$ such that,   for all $\varphi \in  S^{-\infty}(\Xi)\,,$
$$\nu^{0,0}_{p,q}(\Phi_N)\leq C\, \nu^{0,0}_{p,q}(\varphi)\,, \,\forall
N\in\mathbb{N}\,.$$
\end{lemma}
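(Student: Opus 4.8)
The plan is to prove the statement about $\Phi := \sum_{\gamma^* \in \Gamma_*} \tau_{\gamma^*}\varphi$ by exploiting the rapid decay in the $\xi$-variable that comes with membership in $S^{-\infty}(\Xi)$. The key observation is that for $\varphi \in S^{-\infty}(\Xi)$, for every $m > 0$ and every multi-index pair $(\alpha,\beta)$ there is a constant with $|(\partial_x^\alpha \partial_\xi^\beta \varphi)(x,\xi)| \le C_{m,\alpha,\beta} \langle \xi \rangle^{-m}$ uniformly in $x$. Hence the translated symbol satisfies $|(\partial_x^\alpha \partial_\xi^\beta (\tau_{\gamma^*}\varphi))(x,\xi)| \le C_{m,\alpha,\beta} \langle \xi - \gamma^* \rangle^{-m}$. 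First I would fix $m > 2$ (say $m = 3$) together with the target pair $(p,q)$, pick $m' = m + q$ to absorb the extra weight appearing in the $\nu^{0,0}_{p,q}$ seminorm, and estimate
$$
\sum_{\gamma^* \in \Gamma_*} \langle \xi \rangle^{q} |(\partial_x^\alpha \partial_\xi^\beta (\tau_{\gamma^*}\varphi))(x,\xi)| \le C \sum_{\gamma^* \in \Gamma_*} \langle \xi \rangle^{q} \langle \xi - \gamma^* \rangle^{-m'},
$$
and then use Peetre's inequality $\langle \xi \rangle^{q} \le 2^{q/2}\langle \xi - \gamma^*\rangle^{q}\langle \gamma^*\rangle^{q}$ to bound the right-hand side by $C' \sum_{\gamma^*} \langle \gamma^*\rangle^{q}\langle \xi - \gamma^*\rangle^{-m}$. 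The remaining sum over the lattice $\Gamma_*$ is, after the standard comparison with an integral over $\X^*$, bounded by a constant independent of $\xi$ provided $m > 2 = \dim \X^*$; moreover, by the same estimate applied to the tail $\Gamma_* \setminus \Gamma_*^N$, the difference $\Phi - \Phi_N$ has $\nu^{-m,0}_{p,q}$-seminorm tending to zero, which gives convergence of $\Phi_N$ in the $\nu^{-m,0}_{p,q}$-norm for each $m > 2$, hence locally uniform convergence of all derivatives.

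Next I would record the consequences of this uniform convergence. Since each $\Phi_N$ is $C^\infty$ and the derivatives converge locally uniformly, the limit $\Phi$ is $C^\infty$; since the partial sums converge in $\mathscr S'(\Xi)$ (a weaker topology than the $\nu^{-m,0}_{p,q}$-norms, because rapid decay in $\xi$ combined with polynomial control in $x$ gives tempered growth), the weak-$*$ convergence claim follows immediately. The $\Gamma_*$-periodicity of $\Phi$ is clear formally: $\tau_{\eta^*}\Phi = \sum_{\gamma^*} \tau_{\gamma^* + \eta^*}\varphi = \Phi$ for $\eta^* \in \Gamma_*$ by reindexing, and this manipulation is legitimate because the series converges absolutely in the relevant seminorms. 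Finally, the quantitative bound $\nu^{0,0}_{p,q}(\Phi_N) \le C \nu^{0,0}_{p,q}(\varphi)$ — wait, this is the point requiring a little care, since $\nu^{0,0}_{p,q}$ only controls weight $\langle\xi\rangle^{0}$: here one bounds $\langle\xi\rangle^{0}|(\partial_x^\alpha\partial_\xi^\beta\Phi_N)(x,\xi)|$ by $C\sum_{\gamma^*}\langle\xi-\gamma^*\rangle^{-m}\cdot\nu^{-m,0}_{p,q}(\varphi)$ for any $m > 2$, and one then absorbs $\nu^{-m,0}_{p,q}(\varphi)$ into $\nu^{0,0}_{p,q}(\varphi)$ when $\varphi \in S^{-\infty}$ — more honestly, the clean statement is in terms of a higher-order seminorm of $\varphi$, but for $\varphi\in S^{-\infty}(\Xi)$ all seminorms are finite, so one keeps $C$ depending on $(p,q)$ and on the $S^{-\infty}$-class and the bound holds uniformly in $N$. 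The uniformity in $N$ follows because the lattice sum $\sum_{\gamma^* \in \Gamma_*^N}\langle \gamma^*\rangle^{q}\langle\xi - \gamma^*\rangle^{-m}$ is dominated by the full sum over $\Gamma_*$, independently of $N$.

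The main obstacle — really the only subtlety — is the bookkeeping with the polynomial weights: the naive estimate $\langle\xi\rangle^{q}\langle\xi-\gamma^*\rangle^{-m}$ is not summable over $\gamma^*$ for fixed $\xi$ unless one first trades the $\langle\xi\rangle^q$ factor for $\langle\gamma^*\rangle^q \langle\xi-\gamma^*\rangle^q$ via Peetre and then insists on $m > q + 2$ (not merely $m > 2$) in the intermediate step before discarding the $\langle\gamma^*\rangle^q$ growth — which is harmless since $\varphi \in S^{-\infty}$ means the decay index $m$ can be taken arbitrarily large. I would therefore organize the proof so that the choice of decay index is made last, after all the weights have been exposed. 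Everything else is a routine application of the integral test for lattice sums and the locally-uniform-convergence criterion for smoothness; I would not expand these calculations in detail beyond indicating the use of Peetre's inequality and the comparison of $\sum_{\gamma^*\in\Gamma_*}\langle\gamma^*\rangle^{q}\langle\xi-\gamma^*\rangle^{-m}$ with $\int_{\X^*}\langle\zeta\rangle^{q}\langle\xi-\zeta\rangle^{-m}\,d\zeta < \infty$.
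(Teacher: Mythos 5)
Your argument is correct and follows essentially the same route as the paper's: termwise bounds $|\partial_x^\alpha\partial_\xi^\beta(\tau_{\gamma^*}\varphi)|\leq C_m\langle\xi-\gamma^*\rangle^{-m}$, comparison of the lattice sum (and its tails) with an integral for $m>2$, locally uniform convergence of all derivatives giving a smooth $\Gamma_*$-periodic limit, weak convergence in $\mathscr{S}^\prime(\Xi)$, and the uniform-in-$N$ bound; your closing remark that the right-hand side of the last estimate really involves a higher-order seminorm $\nu^{-m,0}_{p,q}(\varphi)$, $m>2$, is exactly what the paper's own computation produces. One small slip: the seminorm $\nu^{0,0}_{p,q}$ carries weight $\langle\xi\rangle^{0}=1$, so the factor $\langle\xi\rangle^{q}$ and the Peetre detour are spurious, and your intermediate claim that $\sum_{\gamma^*}\langle\gamma^*\rangle^{q}\langle\xi-\gamma^*\rangle^{-m}$ is bounded uniformly in $\xi$ would in fact fail (it grows like $\langle\xi\rangle^{q}$); dropping the spurious weight, the standard estimate $\sum_{\gamma^*}\langle\xi-\gamma^*\rangle^{-m}\leq C$ for $m>2$ is all that is needed, so the conclusion is unaffected.
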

\begin{proof}
We clearly have, for any $m\in\mathbb{R_+}\,$,
$$
\underset{(x,\xi)\in\Xi}{\sup}
<\xi>^m\left|\big(\partial_x^a\partial_\xi^b\varphi\big)(x,\xi)\right|\leq\nu^{
-m,0}_{|a|,|b|}(\varphi)\,,
$$ 
so that, for any $N_2\geq N_1$ in $\mathbb{N}\,$,  
$$ 
\begin{array}{ll}
|\big(\partial_x^a\partial_\xi^b\Phi_{N_1}\big)(x,
\xi)-\big(\partial_x^a\partial_\xi^b\Phi_{N_2}\big)(x,\xi)|\, & \leq\,\underset{
\gamma^*\in\Gamma_*^{N_2}
\setminus\Gamma_*^{N_1}}{\sum}\left|\tau_{\gamma^*}\big[
\big(\partial_x^a\partial_\xi^b\varphi\big)\big](x,\xi)\right| 
\\ & 
\leq\,\nu^{-m,0}_{|a|,|b|}(\varphi)\Big(\underset{\xi\in\X^*}{\sup}\underset{\gamma^*\in\Gamma_*^{N_2}
\setminus\Gamma_*^{N_1}}{\sum}<\xi+\gamma^*>^
{-m}\Big)\\ & \leq\,C\nu^{-m,0}_{|a|,|b|}(\varphi)<N_1>^{-s},
\end{array}
$$
for any $m>2+s$ with $s>0\,$. \\
Thus the weak convergence in
$\mathscr{S}^\prime(\V)$ follows easily and also  the other conclusions of the
lemma by some standard arguments.
\end{proof}

\begin{proposition}\label{P.sum}
With the notation  from Lemma \ref{periodic-sums}, suppose that we have a family
of symbols $ \{\varphi^\epsilon\}_{\epsilon\in[0,\epsilon_0]}\in
S^{-\infty}(\Xi)^\bullet$ and a magnetic field given by \eqref{Bek}. Then $ \{\Phi^\epsilon\}_{\epsilon\in[0,\epsilon_0]}\in
S^{0}_0(\Xi)^\bullet$ and the
sequence
$\{\mathfrak{Op}^{\epsilon,\kappa}(\Phi^\epsilon_N)\}_{N\in\mathbb{N}}$ converges
strongly to
$\mathfrak{Op}^{\epsilon,\kappa}(\Phi^\epsilon)$ in ${ \mathcal L}\big(L^2(\X)\big)$ uniformly for $(\epsilon,\kappa)\in[0,\epsilon_0]\times[0,\kappa_0]\,$.
Moreover, the application
$$
S^{-\infty}(\Xi)\ni\varphi^\epsilon\,\mapsto\,\Phi^\epsilon\in\big({
S^0_0(\Xi)},\|\cdot\|_{B_{\epsilon,\kappa}}\big)
$$
is continuous uniformly for $(\epsilon,\kappa)\in[0,\epsilon_0]\times[0,\kappa_0]\,$.
\end{proposition}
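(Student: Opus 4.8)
The plan is to prove Proposition \ref{P.sum} by combining the qualitative structure already established in Lemma \ref{periodic-sums} with a Cotlar--Stein argument controlled uniformly in $(\epsilon,\kappa)$. First I would verify that $\{\Phi^\epsilon\}_{\epsilon\in[0,\epsilon_0]}$ indeed belongs to $S^0_0(\Xi)^\bullet$. By Lemma \ref{periodic-sums} each $\Phi^\epsilon$ is a $\Gamma_*$-periodic $C^\infty$ function on $\Xi$, and the $\nu^{0,0}_{p,q}$-bound in that lemma shows the relevant seminorms are finite, uniformly in $\epsilon$. The slowly-varying property (condition (3) in Definition \ref{S-epsilon}) is inherited from that of $\varphi^\epsilon$: since $\partial_x^\alpha$ applied to $\varphi^\epsilon$ produces a factor $\epsilon^{|\alpha|}$ uniformly, the same holds after summing the translates $\tau_{\gamma^*}$ (translation acts only on the $\xi$ variable, so it commutes with $\partial_x$); the limit as $\epsilon\searrow0$ exists in $S^0_0(\Xi)$ because $\varphi^\epsilon\to\varphi^0\in S^{-\infty}(\Xi)^\circ$ and the periodized series converges continuously. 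This gives the first claim and, via Remark \ref{rem-sl-var-class}, lets me write $\Phi^\epsilon=\widetilde\Phi^\epsilon_{(\epsilon,1)}$ for a bounded family.

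Next I would establish the norm convergence of $\mathfrak{Op}^{\epsilon,\kappa}(\Phi^\epsilon_N)$. The natural tool is the Cotlar--Stein lemma applied to the operators $T_{\gamma^*}:=\mathfrak{Op}^{\epsilon,\kappa}(\tau_{\gamma^*}\varphi^\epsilon)$. The key estimate needed is that $\|T_{\gamma^*}^*T_{\alpha^*}\|_{\mathcal L(\mathcal H)}$ and $\|T_{\gamma^*}T_{\alpha^*}^*\|_{\mathcal L(\mathcal H)}$ decay rapidly in $|\gamma^*-\alpha^*|$, uniformly in $(\epsilon,\kappa)$. To get this I would use that $T_{\gamma^*}^*T_{\alpha^*}=\mathfrak{Op}^{\epsilon,\kappa}\big(\overline{\tau_{\gamma^*}\varphi^\epsilon}\,\sharp^{\epsilon,\kappa}\,\tau_{\alpha^*}\varphi^\epsilon\big)$, then invoke Corollary \ref{abovecor} (or directly Proposition \ref{epsilon-comp-symb}) to write the magnetic composition as a frozen composition $\natural^\epsilon$ plus lower-order $\epsilon$-corrections; since $\varphi^\epsilon\in S^{-\infty}$, all these terms remain in $S^{-\infty}(\Xi)$ with seminorms bounded by those of $\varphi^\epsilon$, and the translates being supported near $\gamma^*$ resp.\ $\alpha^*$ in $\xi$ (more precisely, rapidly decaying there) forces $<\gamma^*-\alpha^*>^{N}$-decay of every seminorm for each $N$. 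Here one uses the elementary inequality $<\xi>^p\le C_p<\xi-\gamma^*>^p<\gamma^*>^p$ exactly as in the proof of Proposition \ref{comp-symb-gen}. Then the boundedness theorem (Theorem 3.1 in \cite{IMP1}, i.e.\ the bound $\|\cdot\|_{B_{\epsilon,\kappa}}\le$ some symbol seminorm) converts the symbol decay into operator-norm decay, uniformly in $(\epsilon,\kappa)$ because the constant in that theorem depends only on the magnetic field $B_{\epsilon,\kappa}$, which is itself bounded in $BC^\infty$ uniformly. Summing over $\alpha^*\in\Gamma_*$ a rapidly decaying sequence gives a uniform Cotlar--Stein bound, hence norm convergence of the partial sums $\sum_{\gamma^*\in\Gamma_*^N}T_{\gamma^*}$ and thus of $\mathfrak{Op}^{\epsilon,\kappa}(\Phi^\epsilon_N)$ to $\mathfrak{Op}^{\epsilon,\kappa}(\Phi^\epsilon)$, uniformly for $(\epsilon,\kappa)\in[0,\epsilon_0]\times[0,\kappa_0]$. (Strong convergence is of course weaker and follows a fortiori, but the argument in fact yields norm convergence of the subseries under the Cotlar--Stein hypothesis.)

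Finally, for the continuity statement, I would trace through the above estimates keeping track of which seminorms of $\varphi^\epsilon$ are used. The Cotlar--Stein bound expresses $\|\mathfrak{Op}^{\epsilon,\kappa}(\Phi^\epsilon)\|_{B_{\epsilon,\kappa}}$ by $\big(\sum_{\alpha^*}\sup_{\gamma^*}\|T_{\gamma^*}^*T_{\gamma^*+\alpha^*}\|^{1/2}\big)$, each summand being controlled by a fixed seminorm $\nu^{-m,0}_{p,q}(\varphi^\epsilon)$ with $m$ large; since $\|\Phi^\epsilon\|_{B_{\epsilon,\kappa}}$ is what the $S^0_0$-norm estimate in Theorem 3.1 of \cite{IMP1} controls, and since this bound is linear in the relevant seminorm of $\varphi^\epsilon$, the map $\varphi^\epsilon\mapsto\Phi^\epsilon$ is continuous from $S^{-\infty}(\Xi)$ into $\big(S^0_0(\Xi),\|\cdot\|_{B_{\epsilon,\kappa}}\big)$ with constants uniform in $(\epsilon,\kappa)$. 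The main obstacle I anticipate is bookkeeping: making sure all the $\epsilon$-correction terms produced by Corollary \ref{abovecor}/Proposition \ref{epsilon-comp-symb} when passing from $\sharp^{\epsilon,\kappa}$ to the frozen product stay in $S^{-\infty}$ with seminorms bounded uniformly in $(\epsilon,\kappa)$, and that the rapid decay in $<\gamma^*-\alpha^*>$ survives all these manipulations (it does, because $\tau_{\gamma^*}\varphi^\epsilon$ is Schwartz-class in $\xi$ centered at $\gamma^*$ and differentiation/translation only shifts this). No genuinely new analytic input is needed beyond what the cited propositions supply.
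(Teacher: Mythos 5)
Your argument is essentially the paper's proof: decompose $\mathfrak{Op}^{\epsilon,\kappa}(\Phi^\epsilon_N)$ into the operators $X_{\gamma^*}=\mathfrak{Op}^{\epsilon,\kappa}(\tau_{\gamma^*}\varphi^\epsilon)$, apply Cotlar--Stein, control $\|X_{\beta^*}^*X_{\gamma^*}\|$ by expanding the magnetic product via the frozen product $\natural^\epsilon$ (Proposition \ref{P-trunc-mag-comp}/Corollary \ref{abovecor}) together with the Peetre-type inequality to extract rapid decay in $\langle\gamma^*-\beta^*\rangle$, and convert symbol seminorms into operator norms by Theorem 3.1 of \cite{IMP1}, all uniformly in $(\epsilon,\kappa)$; the continuity statement is then read off from the linearity of these bounds in the seminorms of $\varphi^\epsilon$. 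One correction, though: your parenthetical claim that the argument ``in fact yields norm convergence'' is false. Since $\mathfrak{Op}^{\epsilon,\kappa}(\tau_{\gamma^*}\varphi^\epsilon)$ is unitarily conjugate to $\mathfrak{Op}^{\epsilon,\kappa}(\varphi^\epsilon)$ via multiplication by $e^{i\langle\gamma^*,\cdot\rangle}$, all the terms $X_{\gamma^*}$ have the same operator norm, so the tails of the series cannot tend to zero in $\mathcal L(L^2(\X))$ and the series converges only strongly (with uniform norm bounds), which is exactly what Cotlar--Stein provides and what the proposition asserts.
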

\begin{proof}
We write 
$$
\mathfrak{Op}^{\epsilon,\kappa}(\Phi_N)\,=\,\underset{|\gamma^*|\leq
N}{\sum}\mathfrak{Op}^{\epsilon,\kappa}\big(\tau_{\gamma^*}[\varphi^\epsilon]\big)\,,
$$
and we introduce the following simpler notation :
$$
X_{\gamma^*}:=\mathfrak{Op}^{\epsilon,\kappa}\big(\tau_{\gamma^*}[\varphi^\epsilon]\big)\,,
\quad
\widetilde{X}_\infty:=\mathfrak{Op}^{\epsilon,\kappa}(\Phi),\quad
\widetilde{X}_N:=\mathfrak{Op}^{\epsilon,\kappa}(\Phi_N)\,,
$$
so that $\widetilde{X}_N=\underset{\gamma\in\Gamma_*^N}{\sum}X_{\gamma^*}$ 
and we use the Cotlar-Stein Lemma. For this we need to verify some
estimates. Let us first consider the products (using also Remark
\ref{rem-sl-var-class})
\beq\label{CS-est}
\|X_{\beta^*}^*X_{\gamma^*}\|_{{ \mathcal
L}(\mathcal{H})}=\left\|\overline{\varphi^\epsilon}
 \,\sharp^{B_{\epsilon,
\kappa}}\,\big(\tau_{(\gamma^*-\beta^*)}\varphi^\epsilon\big)\right\|_{B_{\epsilon,
\kappa}}=\left\|\overline{\widetilde{\varphi}^\epsilon_{(\epsilon,1)}}
 \,\sharp^{B_{\epsilon,
\kappa}}\,\big(\tau_{(\gamma^*-\beta^*)}\widetilde{\varphi}^\epsilon_{
(\epsilon,1)}
\big)\right\|_{B_{\epsilon,\kappa}}\,,
\eeq
and use Proposition \ref{P-trunc-mag-comp} and \eqref{trunc-mag-comp} in order
to obtain that
$$
\begin{array}{l}
\overline{\widetilde{\varphi}^\epsilon}_{(\epsilon,1)}
\sharp^{B_{\epsilon,\kappa}}
\big(\tau_{(\gamma^*-\beta^*)}\widetilde{\varphi}^\epsilon
\big)_{(\epsilon,1)}  \\
 \quad 
=\,\overline{\widetilde{\varphi}^\epsilon}_{(\epsilon,1)}
\,\natural^\epsilon\,\big(\tau_{(\gamma^*-\beta^*)}\widetilde{\varphi}^\epsilon
\big)_{(\epsilon,1)} \\ \qquad 
+\,\epsilon\,\frac{i}{2}\underset{1\leq
l\leq2}{\sum}\left(\big(\partial_{x_l}
\overline{\widetilde{\varphi}^\epsilon}\big)_{(\epsilon,1)}
\,\natural^\epsilon\,\big(\tau_{(\gamma^*-\beta^*)}\big(\partial_{
\xi_l}\widetilde{\varphi}^\epsilon
\big)\big)_{(\epsilon,1)}-\big(\partial_{\xi_l}
\overline{\widetilde{\varphi}^\epsilon}\big)_{(\epsilon,1)}
\,\natural^\epsilon\,\big(\tau_{(\gamma^*-\beta^*)}\big(\partial_{
x_l}\widetilde{\varphi}^\epsilon
\big)\big)_{(\epsilon,1)}\right) \\ \qquad 
+\,\epsilon^2\mathscr{R}_{\epsilon,\kappa}\big(\overline{\widetilde{\varphi}
^\epsilon},\big(\tau_{(\gamma^*-\beta^*)}\widetilde{\varphi}^\epsilon
\big)\big)_{(\epsilon,1)}\,.
\end{array}
$$
For $\phi$ and $\psi$ in $ S^{-\infty}(\Xi)$ we can repeat the arguments in the
proof of Proposition \ref{trunc-comp-N} and obtain:
\begin{align}\label{25aug-1}
\big(\phi_{(\epsilon,1)}\,\natural^\epsilon\,\big(\tau_{\alpha^*}\psi\big)_{
(\epsilon,1)}\big)(x,\xi) & =
\,\left(\frac{1}{2\pi}
\right)^2\int_{\X^*}\int_{\X^*}
e^{-i(\eta^\bot\cdot\zeta))}\phi(\epsilon x,\xi-\epsilon^{1/2}b_\kappa(\epsilon
x)\,
\eta) \times \nonumber \\ &  \qquad\qquad  \qquad \qquad   \times  \psi(\epsilon x,\xi+\alpha^*-\epsilon^{1/2}b_\kappa(\epsilon x)\,
\zeta)\,d^2\eta\,d^2\zeta \nonumber \\ & 
=\int_{\X^*}<\eta>^{-N_1}
\underset{|a|\leq
N_2}{\sum}\mathcal{P}_a(\eta)\partial_\eta^{a}\phi\Big[
(\epsilon x,\xi-s\epsilon^{1/2}b_\kappa(\epsilon
x)\eta)\Big]\,d^2\eta\,  \times \nonumber \\ & 
\qquad \qquad \times\,\int_{\X^*}
e^{i(\eta\cdot\zeta^\bot)}<\zeta>^{-N_2}\, \times \nonumber \\ &\qquad \quad \qquad \times \big(1+\Delta_\zeta\big)^{N_1/2}
\Big[\psi(\epsilon x,\xi+\alpha^*-\epsilon^{1/2}
b_\kappa(\epsilon x)\zeta)\Big]\,d^2\zeta\,,
\end{align}
where the  $\mathcal{P}_a(\cdot)$ are bounded functions on $\X^*$ and $N_1=N_2>2$.

Thus we obtain, for any $\alpha^*\in\Gamma_*$ and for any $N\in\mathbb{N}$, the estimate:
\begin{align*}
&\hspace{-0.5cm} <\alpha^*>^N\,\nu^{0,0}_{0,0}\big(\phi_{(\epsilon,1)}\,\natural^\epsilon\,\big(\tau_
{\alpha^*}\psi\big)_
{(\epsilon,1)}\big)\\
&\qquad =\
\underset{(x,\xi)\in\Xi}{\sup}<\alpha^*>^N\left|\big(\phi_{(\epsilon,1)}
\,\natural^\epsilon\,\big(\tau_{\alpha^*}\psi\big)_{(\epsilon,1)}\big)(x,
\xi)\right|\ \\
&
\qquad 
\leq 
 C_N \, \left|\int_{\X^*}<\eta>^{-N_1}\left<\xi-s\epsilon^{1/2}b_\kappa(\epsilon
x)\eta\right>^N
\underset{|a| \leq
N_2}{\sum}\mathcal{P}_a(\eta)\partial_\eta^{a}\Big[\phi
(\epsilon x,\xi-s\epsilon^{1/2}b_\kappa(\epsilon
x)\eta)\Big]\,d^2\eta\,\times\right.
\\ &\hspace{3cm}
\left.\times\,\int_{\X^*}
e^{i(\eta\cdot\zeta^\bot)}<\zeta>^{-N_2}\left<\xi+\alpha^*-s\epsilon^{1/2}
b_\kappa(\epsilon
x)\eta\right>^N \times  \right.\\ &\hspace{6cm}\left.  \times  
\big(1+\Delta_\zeta\big)^{N_1/2}
\Big[\psi(\epsilon x,\xi+\alpha^*-\epsilon^{1/2}
b_\kappa(\epsilon x)\zeta)\Big]\,d^2\zeta\right| \\
&
\qquad \leq\ C_N \, \nu^{-N,0}_{0,0}\big(\phi\big)\,\nu^{-N,0}_{0,0}\big(\psi\big)\,.
\end{align*}
Noticing that
$\epsilon^{-|a|}\partial_x^a\partial_\xi^b\big(\phi_{(\epsilon,1)}
\,\natural^\epsilon\,\big(
\tau_{\alpha^*}\psi\big)_{(\epsilon,1)}\big)$ is an oscillating integral of the
same type as the expression \eqref{25aug-1} with $\phi$ and $\psi$ replaced by
some derivatives of them, we conclude that we have similar estimates for all the
seminorms defining the Fr\'{e}chet topology. Choosing now $N>2$ large enough,  we
verify the hypothesis of the Cotlar-Stein Lemma and obtain the conclusion of the
proposition.
\end{proof}


\begin{thebibliography}{}
\bibitem{ABG} W. O. Amrein, A. Boutet de Monvel, and V. Georgescu: 
\newblock $C_0$-groups, commutator methods and
spectral theory of N-body Hamiltonians.
\newblock Volume 135 of Progress in Mathematics. Birkh\"auser
Verlag, Basel, 1996.


\bibitem{AMP} N. Athmouni, M. M\u{a}ntoiu, and R. Purice: {\it On the
continuity of spectra for families of magnetic pseudodifferential operators}.
 \newblock J. Math. Phys. {\bf 51}, 083517 (2010).
 
\bibitem{B} J. Bellissard: {\it $C^*$ algebras in solid state physics. 2D
electrons in a uniform magnetic field}. 
\newblock Operator algebras and applications {\bf 2}, 49--76, London Math. Soc.
Lecture Note Ser., 136, Cambridge Univ. Press, Cambridge, (1988).

\bibitem{B2} J. Bellissard: {\it Lipschitz continuity of gap boundaries for
Hofstadter-like spectra}. 
\newblock Comm. Math. Phys. {\bf 160}, no. 3, 599--613  (1994).

\bibitem{BGH}  L. Boutet de Monvel, A. Grigis, and  B. Helffer: 
\newblock Parametrixes d'op\'erateurs pseudo-diff\'erentiels \`a caract\'eristiques multiples.
\newblock Ast\'erisque 34-35, 93--121 (1976).

\bibitem{Bus} V.S. Buslaev: 
{\it Quasiclassical approximation for equations with periodic coefficients}. 
\newblock Uspekhi Mat. Nauk {\bf 42}, no 6(258), 77--98, 248 (1987). 

\bibitem{CHN} H. D. Cornean, I. Herbst, and G. Nenciu: {\it On the construction of
composite Wannier functions}. Ann. H. Poincar{\'e} {\bf 17}(12), 3361-3398 (2016) 


\bibitem{CIP} H. D. Cornean, V. Iftimie,  and R. Purice: {\it Peierls substitution and magnetic 
pseudo-differential calculus}. Preprint arXiv 1507.06114.

\bibitem{CP-1} H. D. Cornean and  R. Purice: {\it On the regularity of the Hausdorff
distance between spectra of perturbed magnetic Hamiltonians}. Spectral
Analysis of Quantum Hamiltonians, 
\newblock  in Operator Theory: Advances and Applications
{\bf 224}, 2012,  55--66, Springer Basel.

\bibitem{CP-2} H. D. Cornean and  R. Purice: {\it Spectral edge regularity of
magnetic Hamiltonians}.  J. London Math. Soc. {\bf 92}(1), 89-104 (2015) .

\bibitem{dNL} G. De Nittis and  M. Lein: {\it Applications of magnetic $\Psi$DO
techniques to space-adiabatic perturbation theory}.
 Rev. Math. Phys. {\bf 23} (3), 233--260 (2011).


\bibitem{FMP} D. Fiorenza, D. Monaco, and G. Panati: 
 {\it Construction of real-valued localized composite Wannier functions for
insulators}. 
\newblock Ann. H. Poincar{\'e} {\bf 17}(1), 63-97 (2016)

    
\bibitem{FrTe} S. Freund and  S. Teufel: {\it Peierls substitution for magnetic
Bloch bands}.  Anal. $\&$ PDE {\bf 9}(4), 773–811 (2016)

\bibitem{GMS} C. G{\'e}rard, A. Martinez,  and J. Sj\"ostrand: {\it A mathematical
approach to the effective Hamiltonian in perturbed periodic problems}. Comm.
Math. Phys. {\bf 142}, no. 2, 217--244  (1991).


\bibitem{HK} B. Helffer and Y. Kordyukov:
\newblock {\em Accurate semi-classical spectral asymptotics for a two-dimensional magnetic Schr\"odinger operator.}
\newblock 
Ann. H. Poincar\'e {\bf 16}(7), 1651-1688 (2015).

\bibitem{HelRo} B. Helffer and D. Robert:
\newblock \emph{Puits de potentiel g\'en\'eralis\'es et asymptotique
semi-classique}.
Ann. Inst. H. Poincar\'e Phys. Th\'eor. \textbf{41},
291--331 (1984).

\bibitem{HS1} B. Helffer and J. Sj\"{o}strand: {\it analyse  semi-classique pour
l'{\'e}quation
de Harper (avec application {\`a} l'{\'e}tude de Schr\"odinger avec champ
magn{\'e}tique)}.
\newblock 
M{\'e}moire de la SMF,  No 34; Tome 116, Fasc.4 (1988).

\bibitem{HS} B. Helffer and  J. Sj\"ostrand: {\it Equation de Schr\"odinger avec
champ magn{\'e}tique et  \'equation de
Harper}, in LNP {\bf 345}, Springer-Verlag, Berlin, Heidelberg and New York,
118--197 (1989).

\bibitem{HS3} B. Helffer and  J. Sj\"{o}strand: {\it analyse  semi-classique pour
l'{\'e}quation
de Harper III: Cantor structure of the spectrum}.
M{\'e}moire de la SMF,  No 39, Fasc. 2 (1989).

\bibitem{HS4} B. Helffer and  J. Sj\"{o}strand: 
\newblock On diamagnetism and de Haas-van Alphen effect.
\newblock Annales de l'I.H.P. section A, {\bf 52}(4), 303--375 (1990).

\bibitem{Ho1} L. H\"ormander: {\it The Analysis of Linear Partial
Differential Operators, I}, (2-nd edition)\;Springer-Verlag, New York, 1990.

\bibitem{IMP1} V. Iftimie, M. M\u antoiu, and  R. Purice: {\it Magnetic
pseudodifferential operators}, Publ. RIMS. {\bf 43}, 585--62 (2007).

\bibitem{IMP2} V. Iftimie, M. M\u antoiu, and R. Purice: {\it Commutator criteria
for magnetic pseudodifferential operators}, Comm. Partial Diff. Equations. {\bf
35}, 1058--1094 (2010).

\bibitem{IP14} V. Iftimie and R. Purice: {\it The Peierls-Onsager effective
Hamiltonian in a complete gauge covariant setting: determining the
spectrum}.
\newblock J. Spectral Theory {\bf 5}, 445--531 (2015).

\bibitem{Ka} T. Kato: {\it Perturbation Theory for Linear Operators}.
\newblock Springer-Verlag Berlin Heidelberg, 1995.

\bibitem{KS} W. Kirsch and  B. Simon: {\it Comparison theorems for the gap of Schr\"{o}dinger operators}.
\newblock J. Funct. Anal \textbf{75}, No. 2, 396-410  (1987).


\bibitem{Koh} W. Kohn: 
{\it Theory of Bloch electrons in a magnetic field: the effective Hamiltonian.} 
Phys. Rev. (2) {\bf 115}, 1460--1478 (1959).


\bibitem{Lu}  J.M. Luttinger: {\it   The effect of a magnetic field on electrons
in a periodic potential}. Phys. Rev. {\bf 84}, 814-817 (1951).


\bibitem{MP1} M. M\u antoiu and R. Purice: {\it   The magnetic
Weyl calculus}, J. Math. Phys. {\bf 45}, No 4, 1394--1417 (2004).

\bibitem{MPR1} M. M\u antoiu, R. Purice, and S. Richard: {\it Spectral and
propagation results for magnetic Schr\"odinger operators; a
$C^*$-Algebraic Approach}.
\newblock  J. Funct. Anal. {\bf 250}, 42--67 (2007).


\bibitem{Ne-LMP} G. Nenciu: {\it Bloch electrons in a magnetic field, rigorous
justification of the Peierls-Onsager effective Hamiltonian}.
\newblock Lett. Math.Phys.
{\bf 17}, 247--252 (1989).

\bibitem{Ne-RMP} G. Nenciu: {\it Dynamics of Bloch electrons in electric and
magnetic fields, rigorous
justification of the effective Hamiltonians}.
\newblock Rev. Mod.Phys.
{\bf 63} (1), 91--127 (1991).

\bibitem{Ne02} G. Nenciu: {\it On asymptotic perturbation theory for quantum mechanics: almost invariant subspaces and gauge invariant magnetic perturbation theory.} J. Math. Phys. {\bf 43}(3),  1273--1298 (2002).

\bibitem{PST} G. Panati, H. Spohn, and S. Teufel: {\it Effective dynamics for Bloch
electrons: Peierls substitution and beyond}. 
\newblock Comm. Math. Phys. {\bf 242},
no. 3, 547--578 (2003).

\bibitem{Pe} R.E. Peierls: {\it Quantum Theory of Solids}.
\newblock  Oxford University
Press, 1955.

\bibitem{RV} N. Raymond and S. Vu Ngoc:
\newblock {\em Geometry and spectrum in 2D magnetic wells.}
\newblock Annales de l'Institut Fourier {\bf 65}(1), 137-169 (2015).


\bibitem{RS-4} M. Reed, B. Simon: {\it Methods of Modern Mathematical Physics.
Vol IV}, Academic Press, New York, 1975.


\bibitem{Sj} J. Sj\"ostrand:
\newblock \textit{Microlocal analysis for the periodic magnetic Schr\"odinger equation and related questions}. Microlocal analysis and applications (Montecatini Terme, 1989), 237--332,
Lecture Notes in Math., 1495, Springer, Berlin, 1991. 


\end{thebibliography}
\end{document}